\documentclass[article]{article}
\usepackage{color}
\usepackage{graphicx}
\usepackage{amsmath}
\usepackage{amssymb}
\usepackage{mathrsfs}
\usepackage[numbers,sort&compress]{natbib}
\usepackage{amsthm}
\numberwithin{equation}{section}
\bibliographystyle{unsrt}

\usepackage{pgf}
\usepackage{graphicx}
\usepackage{tikz}
\usepackage{stmaryrd}
\usepackage[latin1]{inputenc}
\usepackage[T1]{fontenc}
\usepackage[english]{babel}
\usepackage{listings}
\usepackage{xcolor,mathrsfs,url}
\usepackage{amssymb}
\usepackage{amsmath}
\usepackage{ifthen}
\newtheorem{theorem}{Theorem}
\newtheorem{lemma}{Lemma}

\newtheorem{corollary}{Corollary}
\newtheorem{proposition}{Proposition}
\newtheorem{Assumption}{Assumption}
\usepackage {caption}
\usepackage{xcolor}
\usepackage{cases}

\usepackage{float}
\usepackage{subfigure}
\usepackage{hyperref}
\hypersetup{
    colorlinks=true, 
    linktoc=all,     
    linkcolor=blue,  
    citecolor=blue
}

\textwidth 14cm
\begin{document}

\title{ Long-time asymptotics  for the focusing Fokas-Lenells equation in the solitonic region of space-time}
\author{Qiaoyuan Cheng$^1$,  Engui Fan$^1$\thanks{\ Corresponding author and email address: faneg@fudan.edu.cn } }
\footnotetext[1]{ \  School of Mathematical Sciences  and Key Laboratory of Mathematics for Nonlinear Science, Fudan University, Shanghai 200433, P.R. China.}

\date{}

\maketitle
\begin{abstract}
\baselineskip=16pt

We study the long-time asymptotic behavior of the focusing Fokas-Lenells (FL) equation
$$
u_{xt}+\alpha\beta^2u-2i\alpha\beta u_x-\alpha u_{xx}-i\alpha\beta^2|u|^2u_x=0 \label{cs}
$$
with generic initial data in a Sobolev space which supports bright soliton solutions. The FL equation is an integrable generalization of  the
 well-known Schrodinger equation, and also linked to the derivative Schrodinger model, but it  exhibits   several different characteristics   from  theirs.  (i) The Lax pair of the FL equation involves an additional spectral singularity at $k=0$.  (ii) four stationary phase points will appear during asymptotic analysis, which require a more detailed  necessary  description to obtain the long-time asymptotics of the focusing FL equation.
    Based on the  Riemann-Hilbert problem for the  initial value problem  of  the focusing FL equation,  we
    show that inside any fixed time-spatial  cone
$$\mathcal{C}\left(x_{1}, x_{2}, v_{1}, v_{2}\right)=\left\{(x, t) \in \mathbb{R}^{2} | x=x_{0}+v t, x_{0} \in\left[x_{1}, x_{2}\right], v \in\left[v_{1}, v_{2}\right]\right\},$$
the long-time asymptotic behavior of the solution $u(x,t)$ for the focusing  FL equation
can be characterized with an $N(\mathcal{I})$-soliton on discrete spectrums and a leading order term
$\mathcal{O}(|t|^{-1/2})$ on continuous spectrum up to a residual error order $\mathcal{O}(|t|^{-3/4})$. The main tool is  the  $\overline{\partial}$  nonlinear steepest descent method  and   the $\overline{\partial}$-analysis.
\\
{\bf Keywords:}   Focusing Fokas-Lenells equation, Riemann-Hilbert problem,  $\overline{\partial}$ steepest descent method, long-time asymptotics,  soliton resolution.\\
{\bf   Mathematics Subject Classification:} 35Q51; 35Q15; 35C20; 37K15; 37K40.
\end{abstract}
\baselineskip=17pt

\newpage

\tableofcontents

\section{Introduction}
\hspace*{\parindent}
  The study of long-time behaviors of solutions to nonlinear integrable systems goes
back  to the work of Zakharov and Manakov \cite{Zakharov}.  Using the monodromy theory, Its was able to reduce the Riemann-Hilbert (RH) problem formulation for the Schr\"{o}dinger (NLS) equation
to a model case, which can then be solved explicitly, giving the desired asymptotics \cite{Its}.
Deift and Zhou developed a rigorous nonlinear steepest descent method to
study the oscillatory RH problem  associated with the mKdV equation \cite{asd13}.
Later it becomes  a   powerful tool for the long-time asymptotics of the  integrable  nonlinear evolution equations, such as the NLS equation \cite{RN9,RN10},
  the KdV equation \cite{Grunert2009},    the Camassa-Holm equation \cite{MonvelCH1,MonvelCH2},
  the modified Camassa-Holm equation \cite{MonvelCH3}, the Degasperis-Procesi equation \cite{MonvelCH4,MonvelCH5},  the sine-Gordon equation \cite{Cheng,Huang}, the Fokas-Lenells equation \cite{Lta} and so on.
  Recently the Deift-Zhou steepest descent method   was further  extended to  the  $\overline{\partial}$-steepest descent method
of  McLaughlin and Miller, which first appeared in the
orthogonal polynomial setting \cite{tds23,tsd24}.  The  $\overline{\partial}$-method follows the general scheme of the Deift-Zhou steepest
descent argument, while the nonanalytic data are now continued to
the desired contours via the solution of a $\overline{\partial}$-equation.   The  $\overline{\partial}$-steepest descent method
allows asymptotic analysis for nonanalytic phases with two Lipschitz derivatives near the stationary points. This method was adapted to obtain the long-time
asymptotics for solutions to the NLS equation and the derivative NLS equation, with a sharp error bound for the weighted Sobolev initial data
\cite{lta29, oas30,lta31, gwp33}.

In this paper we study the long-time asymptotic behavior of the   Fokas-Lenells (FL) equation
\begin{equation}
u_{tx}+\alpha\beta^2u-2i\alpha\beta u_x-\alpha u_{xx}+\sigma i\alpha\beta^2|u|^2u_x=0,\label{cs}
\end{equation}
which is an integrable generalization of nonlinear Schr\"{o}dinger equation, and is also related to the derivative NLS model.
As is well-known, the Camassa-Holm equation can be  mathematically  derived by utilizing the two Hamiltonian operators associated with the KdV equation \cite{oac36}. Similarly, by utilizing the two Hamiltonian operators associated with the NLS equation, it is possible to derive the  FL equation  \cite{esm8}.
In optics, considering suitable higher-order linear and nonlinear optical effects, the FL equation has been derived as a model to describe the femtosecond
 pulse propagation through single mode optical silica fiber, for which several interesting solutions have been constructed. It also belongs to the deformed derivative  NLS hierarchy proposed by  Kundu \cite{tfi6,ith7}.

The soliton solutions for the  FL equation   have been constructed by the inverse scattering transform  method  and the dressing method \cite{oan36, dfa37}.
 Matsuno obtained  the bright and the dark soliton solutions for the  FL equation by  using the Hirota method \cite{adm38,adm39}.
The   double Wronskian solutions for the FL  equation was further  given  via bilinear approach \cite{zhang2021}.   The lattice representation and the n-dark solitons of the FL equation have been presented in \cite{lra40}. The breather solutions of the FL equation have also been constructed via a dressing-B\"{a}cklund transformation related to the RH  problem formulation  \cite{shc41}. It has been shown that the periodic initial value problem for the FL equation is well-posed in a Sobolev space   \cite{wpo10}.
A kind of rogue wave solution for the FL equation were obtained by using the Darboux transformation  \cite{rwo34}.  The Fokas method was used to investigate the initial-boundary value problem
for the FL equation  on the half-line and a finite interval, respectively \cite{Lenells2009,xiaofan}.
An algebro-geometric method was used to obtain algebro-geometric solutions for the  FL equation \cite{zhao2013}. The explicit one-soliton of  the  initial value problem for the FL equation
was given by the RH method \cite{xujian2}.
The inverse scattering transformation for the FL
equation with nonzero boundary conditions was further investigated by using the RH method  \cite{zf1}.

It is noted that for the defocusing case $\sigma=1$, the FL equation (\ref{cs}) with zero boundary conditions does not admit a soliton solution. For the Schwartz initial value  $u(x,0)\in \mathcal{S}(\mathbb{R})$, Xu obtained  the long-time asymptotics for the FL equation without solitons by using the Deift-Zhou method \cite{Lta}. However, for the focusing case  $\sigma=-1$,  the FL equation (\ref{cs}) takes
\begin{align}
&u_{tx}+\alpha\beta^2u-2i\alpha\beta u_x-\alpha u_{xx}-i\alpha\beta^2|u|^2u_x=0,\label{cs2}\\
&u(x, 0)=u_{0}(x),\label{cz}
\end{align}
whose soliton solutions will appear for zero boundary conditions which correspond to discrete spectrums. In this case,
the long-time asymptotic  behavior of solutions   is
more complicated than the defocusing case due to the presence of solitons.
A more detailed  necessary  description on the  asymptotic analysis and some new  techniques have to be adapted.

In this work, we apply the  $\bar\partial$-techniques  to obtain the long-time asymptotic behavior of solutions to the focusing FL equation (\ref{cs2}) with a Sobolev initial data $u_0(x)\in  H^{3,3}(\mathbb{R}).$
Comparing with the classical  NLS equation and the derivative NLS equation \cite{oas30,lta31,gwp33}, to conduct an asymptotic analysis on the    FL equation (\ref{cs2})
confronts with the following complications,

 \begin{itemize}

\item[$\blacktriangleright$]  The  FL equation (\ref{cs2})  belongs to a negative hierarchy,
 with two
 spectral singularities at $k=0$ and $k=\infty$ involved in its Lax pair.

\item[$\blacktriangleright$]  In comparison with the FL spectral problem  (\ref{Lax pair}), the Zakharov-Shabat spectral
  problem has no multiplication of matrix potential $U_x$ by $k$. As a result, Neumann series solutions for the
   Jost functions of Zakharov-Shabat spectral
  problem converge if $u_0(x)\in L^1(\mathbb{R})$.  However, for  the FL spectral problem  (\ref{Lax pair}),
    the Voterra equation  will   involve  the  term  $k u_{0x}(x)$ (see (\ref{gtit}) in next section ),
  which is not $L^2(\mathbb{R})$  bounded   since    $k$  may go  to  infinity.
Through the small $k$ and large $k$ estimates respectively,  we overcome this difficulty and prove  the existence and the differentiability of the eigenfunctions, further
    establish the scattering map from the initial data $  u_{0 }(x)$ to the scattering coefficient  $r(k)$.

  \item[$\blacktriangleright$]  In the   RH problem of the  FL equation (\ref{cs2}),
   the   oscillatory term $ e^{ it \theta(k) }$ involves four stationary phase  points: $z_1,z_3$ are  located on the real axis and $z_2, z_4$ are  located on the imaginary axis.
  The corresponding  local models  need two kinds of parabolic cylinder models to described in  asymptotic analysis.
  To simplify the expression of the RH problem  formula in our analysis, we propose to use a uniform parabolic cylinder model to match the solvable RH model  of the four stationary phase points.

 \item[$\blacktriangleright$]   When  the variables $ x $ and  $  t $  change   inside a time-spatial cone $\mathcal{C}(x_1,x_2,v_1,v_2)$,
  the corresponding  discrete spectrums fall in a   band  region   $ -v_2/2< {\rm Re}(k) <-v_1/2$  for  the NLS equation and derivative NLS equation \cite{oas30,lta31,gwp33}, while  for our  FL equation, the discrete spectrums fall in  a tyre region  $f(v_1) <|k| <f(v_2)$ (please see Figure \ref{division4} in Section \ref{sec:section7}).

\end{itemize}

The structure of the paper is as follows.   In Section \ref{sec:section2}, based on   the Lax pair of the FL equation (\ref{cs2}) and through the small $k$ and large $k$ estimates,  we  prove  the existence and the differentiability of the eigenfunctions. We also establish the scattering map from the initial data to the scattering coefficients, and study the analyticity, the symmetries and the asymptotics of the eigenfunctions. In Section \ref{sec:section3}, inspired by \cite{Lta}, we construct a RH problem for $M(k)$ to formulate the initial value problem of the FL equation.
In our  Section \ref{sec:section31},  we  prove  the existence and  uniqueness of the RH  problem. We first show that the Beals-Coifman  integral equation associated with the RH problem has
 a unique solution.  Then we  give  a  reconstruction formula  for
the  potential $u $ in terms of the solution $\mu(k)$ of the Beals-Coifman   integral equation.  In Section \ref{sec:section4}, we introduce a function $T(k)$ to transform $M(k)$ into a new RH problem for $M^{(1)}(k)$, which admits a regular discrete spectrum and two triangular decompositions of the jump matrix near four stationary-phase points $z_n$, $n=1,2,3,4$. By introducing a matrix-valued function $ R^{(2)}(k)$, we obtain a mixed $\overline{\partial}$-RH problem for $M^{(2)}(k)$ by a continuous extension of the  $M^{(1)}(k)$. In Section
\ref{sec:section5}, we decompose $M^{(2)}(k)$ into a model RH  problem for $M^{rhp}(k)$ and a pure $\overline{\partial}$-Problem
for $M^{(3)}(k)$. The $M^{rhp}(k)$ can be obtained via an outer model $M^{(out)}(k)$ for the soliton
components to be solved in Section \ref{sec:section6}, and an inner model $M^{(in)}(k)$ which are approximated by a solvable model for $M^{fl}(k)$ obtained   in Section
\ref{sec:section7}. In Section \ref{sec:section8}, we compute the error function $E(k)$ with a small-norm RH
problem. In Section \ref{sec:section9}, we analyze the $\overline{\partial}$-problem for $M^{(3)}(k)$. Finally, in Section \ref{sec:section10}, based
on the results obtained above, we find a decomposition formula
\begin{equation}
M(k)=M^{(3)}(k) E(k) M^{(out)}(k)  R^{(2)}(k)^{-1} T(k)^{\sigma_{3}},
\end{equation}
  which  leads to the  long-time asymptotic behavior for  the solutions of  the  initial value problem (\ref{cs2})-(\ref{cz})   for  the FL equation.

\section{Spectral analysis on Lax pair }
\label{sec:section2}
\hspace*{\parindent}
In this section,  we  show  the well-posedness for the initial value problem of the Fokas-Lenells (FL) equation (\ref{cs2}).
We  prove  the existence of the eigenfunctions and
 establish the  scattering  map from the initial data to the scattering coefficients.
\subsection{Existence and differentiability  of eigenfunctions }
\hspace*{\parindent}
The FL equation (\ref{cs2}) admits the Lax pair
\begin{equation}
\begin{cases}
\psi_x+ik^2\sigma_3\psi=kU_x\psi,\\
\psi_t+i\eta^2\sigma_3\psi=V\psi,
\end{cases}\label{Lax pair}
\end{equation}
where $\psi=\psi(x,t,k)$, and
\begin{align}
& \sigma_3=\begin{pmatrix}
1&0\\
0&-1
\end{pmatrix}, \ \ \ \  U=\begin{pmatrix}
0&u\\
-\bar{u}&0
\end{pmatrix},\label{qq}\\
&
\eta=\sqrt{\alpha}(k-\frac{\beta}{2k}),\quad V= \alpha kU_x+\frac{i\alpha\beta^2}{2}\sigma_3(\frac{1}{k}U-U^2). \nonumber
\end{align}

With the transformation
\begin{equation}
\psi=\varphi e^{-i(k^2x+\eta^2t)\sigma_3},\label{psi}
\end{equation}
$\varphi$ satisfies a new Lax pair
\begin{equation}
\begin{cases}
\varphi_x+ik^2[\sigma_3,\varphi]=kU_x\varphi,\\
\varphi_t+i\eta^2[\sigma_3,\varphi]=V\varphi,
\end{cases}\label{lax}
\end{equation}
which can be written in the full derivative form
\begin{equation}
d(e^{i(k^2x+\eta^2t)\widehat{\sigma}_3}\varphi )=e^{i(k^2x+\eta^2t)\widehat{\sigma}_3}(kU_x \mathrm{dx}+V\mathrm{dt}) \varphi,
\end{equation}

The Lax pair (\ref{lax})  has singularities at $k=0$ and $k=\infty$,  which is different from NLS and derivative NLS equations. To control the behavior of solutions of (\ref{lax}) and construct the solution $u(x,t)$ of the FL equation $(\ref{cs2})$, we should resort to the $t$-part and the expansion of the eigenfunction as the spectral parameter $k \rightarrow 0$. Specifically, we consider two different asymptotic expansions respectively to analyze these two singularities $k=0$ and $k=\infty$.

From the Lax pair  (\ref{lax}), we obtain the following asymptotic expansion
\begin{align}
&\varphi(x,t,k)=I+k U+ \mathcal{O}\left(k^2\right), \quad k \rightarrow 0,\\
&\varphi(x,t,k)=\varphi_0+\mathcal{O}(k^{-1}),\quad k\rightarrow\infty,
\end{align}
where $U$ is given by (\ref{qq}) and
\begin{equation}
\varphi_0(x,t)=e^{-\frac{i}{2}\int_{-\infty}^{x}|u_y(y,t)|^2dy\sigma_3}.\label{d}
\end{equation}
We introduce   a new function $\omega=\omega(x,t,k)$ by
\begin{equation}
\varphi(x,t,k)=\varphi_0(x,t)\omega(x,t,k),\label{xx}
\end{equation}
which satisfies  the  Lax pair
\begin{align}
&\omega_x+i k^2\left[\sigma_3, \omega\right]=V_1 \omega, \label{L3}\\
&\omega_t+i \eta^2\left[\sigma_3, \omega\right]=V_2 \omega,\label{L4}
\end{align}
and can be written in full derivative form
\begin{equation}
d\left(e^{i\left(k^{2} x+\eta^{2} t\right) \widehat{\sigma}_{3}} \omega \right)=W(x, t, k),
\end{equation}
where
\begin{equation}
W(x,t,k)=e^{\mathrm{i}(k^{2} x+\eta^{2} t) \widehat{\sigma}_{3}}(V_1  d x+V_2  dt) \omega,
\end{equation}
and the matrices $V_1 $ and $V_2 $  are given  respectively, by
\begin{align}
&V_1=e^{\frac{i}{2} \int_{-\infty}^x|u_y(y, t)|^2 dy \widehat{\sigma}_{3}}(k U_x+\frac{i}{2} |u_x|^2   \sigma_{3}),\nonumber \\
&V_2=e^{\frac{i}{2} \int_{-\infty}^x |u_y(y, t)|^2 dy \widehat{\sigma}_{3}}(V+\frac{i \alpha}{2}(|u_x|^2-\beta^{2} |u|^2) \sigma_3).\nonumber
\end{align}
Now we conduct a spectral analysis on the $x$-part of (\ref{L3}). Since the analysis takes place at a fixed time, the $t$-dependence will be suppressed. Define two solutions $\omega^\pm(x,k)$ of the $x$-part of (\ref{L3}) by
\begin{equation}
\omega^{\pm}(x,k)=I+\int_{\pm\infty}^{x} \mathrm{e}^{-\mathrm{i} k^2\left(x-y \right) \widehat{ \sigma}_3}\left(V_1 \omega^{\pm}\right)\left(y,k\right) \mathrm{d}y. \label{gtit}
\end{equation}

Denoting $\omega^{\pm}(x,k)=\left(\omega^{\pm}_1,\omega^{\pm}_2\right),$  where the scripts  $1$ and $2$ denote the first and second columns of $\omega^{\pm}(x,k)$, we  have

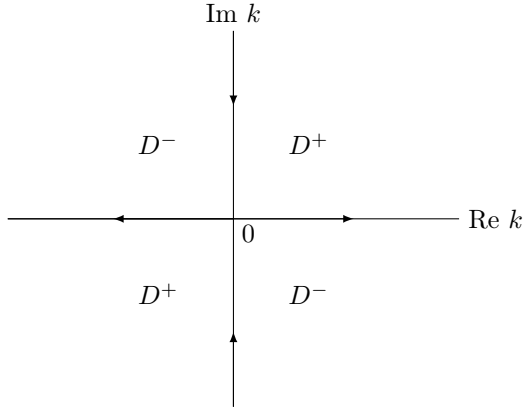
\begin{figure}[H]
\begin{center}
\begin{tikzpicture}
 \node at (1, 1 )  {$D^+$};
\draw [ ](-3,0)--(3,0)  node[right, scale=1] {Re $k$};
\draw [-latex](-3,0)--(1.6,0);
\draw [-latex](0,0)--(-1.6,0);
\draw [ ](0,-2.5)--(0,2.5)  node[above, scale=1] {Im $k$};
\draw [-latex](0, 2.5)--(0,1.5);
\draw [-latex](0, -2.5)--(0,-1.5);
\node at (0.2, -0.2 )  { $0$};
 \node at (-1, -1 )  {$D^+$};
\node at (1, -1 )  {$D^-$};
 \node at (-1, 1 )  {$D^-$};

\end{tikzpicture}
\end{center}
\caption{The analytical domains $D^+$ and  $D^-$ for $\omega^{\pm}(x,k)$.}
\label{figure1}
 \end{figure}

\begin{proposition} \label{lemma1}
 For  $u(x,0)\in H^{3,3}(\mathbb{R})$, there exist   unique   eigenfunctions  $\omega^{\pm}(x, k)$
  which satisfy  (\ref{gtit}), respectively.
  Moreover,  $ \omega^{-}_1(x,k)$, $ \omega^+_{2}(x,k)$ and $a(k)$  are analytical in $ D^+;$   $ \omega^+_1(x,k)$ and $\omega^{-}_2(x,k)$ are analytical  in $D^-$, where
$$D^+=\{ k:  {\rm Im} k^2>0\}, \  \ D^-=\{ k:  {\rm Im} k^2<0\},$$
as Figure \ref{figure1} shows.
\end{proposition}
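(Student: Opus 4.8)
The plan is to construct each $\omega^{\pm}$ as the unique fixed point of the Volterra operator in (\ref{gtit}) by successive approximation, and then to read the analyticity of its columns directly off the exponential factors. Setting $\omega^{\pm}=\sum_{n\ge0}\omega^{\pm}_{(n)}$ with $\omega^{\pm}_{(0)}=I$ and
$$\omega^{\pm}_{(n+1)}(x,k)=\int_{\pm\infty}^{x}e^{-ik^{2}(x-y)\widehat{\sigma}_{3}}\bigl(V_{1}\,\omega^{\pm}_{(n)}\bigr)(y,k)\,dy,$$
the whole statement reduces to (i) the absolute and locally uniform convergence of this series and (ii) the analyticity of each term in the claimed half-region; uniqueness is then automatic from the Volterra structure via a Gronwall argument.

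First I would separate the two scales in $V_{1}$. Splitting $V_{1}=V_{1}^{\mathrm{d}}+V_{1}^{\mathrm{o}}$ into its diagonal and off-diagonal parts, the diagonal part equals $\tfrac{i}{2}|u_{x}|^{2}\sigma_{3}$, which is $O(1)$ in $k$ and lies in $L^{1}(\mathbb{R})$ whenever $u_{x}\in L^{2}$; this piece alone yields a convergent series dominated by $\exp(\|V_{1}^{\mathrm{d}}\|_{L^{1}})$ through the classical bound $\|\omega_{(n)}\|\le\frac{1}{n!}(\int|V_{1}^{\mathrm{d}}|)^{n}$. The off-diagonal part carries the factor $k\,U_{x}$, which is not bounded as $k\to\infty$; this is the genuine obstacle, since each Volterra iteration then formally gains a power of $k$. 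For $|k|$ bounded the factor is harmless and the same estimate applies (using $u_{x}\in L^{1}\cap L^{2}$, guaranteed by $u_{0}\in H^{3,3}(\mathbb{R})$).

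The large-$k$ regime is the heart of the matter, and I would treat it by a single integration by parts in the oscillatory (off-diagonal) component, whose entry carries $e^{\pm2ik^{2}(x-y)}=\mp\frac{1}{2ik^{2}}\partial_{y}e^{\pm2ik^{2}(x-y)}$. The boundary contributions are then of order $k^{-1}$, while in the remaining integral the $y$-derivative falling on $V_{1}^{\mathrm{o}}$ produces $u_{xx}$ and, through the scalar phase $\tfrac{i}{2}\int_{-\infty}^{x}|u_{y}|^{2}dy$, a factor $|u_{x}|^{2}$, both with an effective $k^{-1}$ gain; crucially, the $y$-derivative falling on the unknown is eliminated using the equation (\ref{L3}) itself, whereupon the dangerous $-ik^{2}[\sigma_{3},\cdot]$ term cancels the $k^{-2}$ prefactor and leaves only an $O(1)$ self-coupling with $L^{1}$ density $|u_{x}|^{2}$. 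The outcome is an equivalent integral system whose kernels are uniformly bounded in $k$ with $L^{1}$ coefficients ($u_{xx}$, $|u_{x}|^{2}$, $u_{x}$), so the Neumann series converges uniformly for large $k$; the hypothesis $u_{0}\in H^{3,3}\subset H^{2}$ is exactly what supplies $u_{xx},|u_{x}|^{2}\in L^{1}$. This gives existence and uniqueness on $\mathbb{C}\setminus\{0\}$, the point $k=0$ being covered by the expansion $\omega=I+O(k)$ already recorded.

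Finally, analyticity follows by tracking the single oscillatory exponential in each column: a column integrated from $-\infty$ has $x-y\ge0$ and one from $+\infty$ has $x-y\le0$, and the off-diagonal exponential $e^{\pm2ik^{2}(x-y)}$ is bounded precisely when $\mathrm{Im}(k^{2})$ has the corresponding sign. This places $\omega^{-}_{1}$ and $\omega^{+}_{2}$ in $D^{+}$ and $\omega^{+}_{1}$, $\omega^{-}_{2}$ in $D^{-}$; since each series term is analytic there and the convergence is locally uniform, the limits are analytic by the uniform-limit (Morera) theorem. The coefficient $a(k)$ is the Wronskian-type scattering quantity $a(k)=\det\!\bigl(\omega^{-}_{1},\omega^{+}_{2}\bigr)$, which is independent of $x$ because the coefficient matrix governing (\ref{L3}) is trace-free (the gauge factor $\varphi_{0}$ has unit determinant), and hence analytic in $D^{+}$ as a determinant of two $D^{+}$-analytic columns. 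The $k$-smoothness flagged by the subsection title comes out of the same scheme: $\partial_{k}$ brings down polynomial factors $(x-y)$ that are absorbed by the spatial weight of $H^{3,3}$, which is the role of its second index.
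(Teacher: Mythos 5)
Your proposal is correct and takes essentially the same route as the paper: a direct Volterra/Neumann estimate for bounded $k$ (the paper's small-$k$ analysis with kernel $K_0$ and resolvent bound $\exp(\|\sigma\|_{L^1})$), and for large $k$ an integration by parts in $e^{\pm 2ik^2(x-y)}$ combined with substituting the equation for the derivative of the unknown, which reproduces exactly the paper's closed system (\ref{d1})--(\ref{d2}) with kernel $\widetilde{u}=\bar{u}_{xx}+\tfrac{i}{2}|u_x|^2\bar{u}_x$, the $O(k^{-1})$ boundary term absorbed into $\eta_{21}^{\pm}$, and the $O(1)$ self-coupling with $L^1$ density $|u_x|^2$. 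Your concluding steps (Volterra resolvent series, analyticity of $\omega^-_1,\omega^+_2$ in $D^+$ and $\omega^+_1,\omega^-_2$ in $D^-$ from the sign of $\mathrm{Im}(k^2)$ plus locally uniform convergence, $a(k)=\det(\omega^-_1,\omega^+_2)$ analytic in $D^+$, and $k$-derivatives controlled by the weights in $H^{3,3}$) likewise mirror the paper's Lemmas and Propositions \ref{po2}--\ref{po3}.
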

By denoting $\omega^{\pm}(x,k) =(\omega_{ij}^{\pm}(x,k) )$,
from (\ref{gtit}), we obtain two integral equations
\begin{align}
&\omega_{11}^{\pm}(x,k)=1-\int_{x}^{\pm \infty} ku_y(y) \omega_{21}^\pm(y) dy-\int_{x}^{\pm \infty} \frac{i}{2}|u_y(y)|^2 \omega_{11}^{\pm}(y, k) d y ,\label{o1}\\
&\omega_{21}^{\pm}(x,k)=\int_x^{\pm\infty} e^{2ik^2(x-y)}k \bar{u}_y(y) \omega_{11}^{\pm}(y,k) dy+\frac{i}{2}\int_x^{\pm \infty} e^{2ik^2(x-y)}|u_y(y)|^{2} \omega_{21}^{\pm}(y, k) dy,\label{o2}
\end{align}
where $\bar{u}_x(x)$ is the substitution of $\bar{u}_x e^{i\int_{-\infty}^x|u_y(y, t)|^2}$.

Without loss of generality,  we take
 $  \omega_{1}^+ =(\omega_{11}^+, \omega_{21}^+)^T$  as an illustrative example
 to prove Proposition  \ref{lemma1}   from  the small-$k$ estimates and large-$k$ estimates.

\subsubsection{Small-$k$ estimates}
\hspace*{\parindent}
 By denoting
\begin{equation}
\mathbf{w}=\left(\omega_{11}^+-1,  \omega_{21}^+\right)^{T},
\end{equation}
we write $(\ref{o1})-(\ref{o2})$ as the form
\begin{equation}
\mathbf{w}=\mathbf{w}_0+T_0 \mathbf{w}, \quad \mathbf{w}_0 \equiv T_0 \mathbf{e}_{1},\label{wc}
\end{equation}
where $T_{0}$ is an integral operator defined by
\begin{equation}
\left(T_{0} h\right)(x)=\int_{x}^{\infty} K_{0}(x, y, k)h(y) d y,
\end{equation}
and
\begin{equation}
K_{0}(x, y, k)=\begin{pmatrix}
-\frac{i}{2}|u_y|^2  & -ku_y  \\
e^{2ik^2(x-y)} k\bar{u}_y  & \frac{i}{2}|u_y|^2
\end{pmatrix}. \nonumber
\end{equation}
  To study the $k$-derivatives of the solution, we solve the integral equations
\begin{equation}
\mathbf{w}_k=\mathbf{w}_{0,k}+T_{0,k}\mathbf{w}+T_0\left(\mathbf{w}_k\right).\label{wc1}
\end{equation}
\vspace{2mm}

\noindent {\bf A. Estimates on $\mathbf{w}_0$, $T_0$ and their derivatives}
\hspace*{\parindent}

In this part, we derive estimates on the terms $\mathbf{w}_0$, $T_0$ and their derivatives  with respect to  $k$, which we later use  to estimate $r(k)$ and $r'(k)$.

\begin{lemma}\label{d11}
For  $k\in I_0=\{k\in \mathbb{R}\cup i\mathbb{R}:|k|<1\}$,  we have  the following estimates
\begin{align}
&\left\|\mathbf{w}_{0}\right\|_{C^0\left(\mathbb{R}^{+}, L^2\left(I_{0}\right)\right)}  \lesssim\left\|u\right\|_{H^{2,2}},\label{w011}\\
&\left\|\mathbf{w}_0\right\|_{L^2\left(\mathbb{R}^+ \times I_{0}\right)} \lesssim\|u\|_{H^{3,3}} ,\label{w01}\\
&\left\|\mathbf{w}_{0,k}\right\|_{C^0\left(\mathbb{R}^+, L^2\left(I_0\right)\right)}  \lesssim\|u\|_{H^{2,2}}, \\
&\left\|\mathbf{w}_{0,k}\right\|_{L^2\left(\mathbb{R}^+ \times I_0\right)} \lesssim\|u\|_{H^{2,2}}.\label{w02}
\end{align}

\end{lemma}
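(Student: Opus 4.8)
The plan is to make the four bounds completely explicit by first evaluating $\mathbf{w}_0=T_0\mathbf{e}_1$ and its $k$-derivative, and then reducing each estimate to the control of two elementary objects of genuinely different type. Applying the kernel $K_0$ to $\mathbf{e}_1=(1,0)^T$ gives
\[
\mathbf{w}_0(x,k)=\left(-\frac{i}{2}\int_x^\infty |u_y(y)|^2\,dy,\ \ k\int_x^\infty e^{2ik^2(x-y)}\bar u_y(y)\,dy\right)^T,
\]
so that its first component $G(x):=\int_x^\infty|u_y|^2\,dy$ is quadratic in $u$ and independent of $k$, while its second component is linear in $u$, carries the oscillatory factor $e^{2ik^2(x-y)}$, and comes with the prefactor $k$. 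Differentiating in $k$ annihilates the first component and yields, for the second,
\[
\mathbf{w}_{0,k}(x,k)=\left(0,\ \ \int_x^\infty e^{2ik^2(x-y)}\bar u_y\,dy+4ik^2\int_x^\infty (x-y)e^{2ik^2(x-y)}\bar u_y\,dy\right)^T.
\]
Thus every estimate in Lemma \ref{d11} reduces to a non-oscillatory tail integral and a small number of oscillatory integrals of Fourier type, the latter being the substantive part.

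The oscillatory integrals I would handle by Plancherel after the substitution $\zeta=k^2$. As $k$ runs over $I_0=\{k\in\mathbb{R}\cup i\mathbb{R}:|k|<1\}$, the map $k\mapsto\zeta=k^2$ sends the real part of the cross onto $\zeta\in(0,1)$ and the imaginary part onto $\zeta\in(-1,0)$, each value attained twice, with Jacobian $dk=\tfrac12|\zeta|^{-1/2}d\zeta$. Writing the second component of $\mathbf{w}_0$ as $k\,F_x(\zeta)$, where $F_x(\zeta)=\widehat{g_x}(\zeta)$ is the Fourier transform of $g_x(y)=\bar u_y(y)\mathbf{1}_{\{y>x\}}$ (up to the shift by $x$), the $L^2(I_0)$ norm in $k$ becomes $\int|k|^2|F_x(\zeta)|^2\,dk=\tfrac12\int|\zeta|^{1/2}|F_x(\zeta)|^2\,d\zeta$. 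Here the prefactor $|k|=|\zeta|^{1/2}$ --- which is exactly the spectral factor $k$ attached to $U_x$ in the FL Lax pair --- cancels the Jacobian singularity at the spectral singularity $k=0$, and since $|\zeta|<1$ the weight $|\zeta|^{1/2}$ is harmless; Plancherel then bounds this by $\|g_x\|_{L^2}^2\le\|u_x\|_{L^2}^2$ uniformly in $x$. Together with the trivial bound $|G(x)|\le\tfrac12\|u_x\|_{L^2}^2$, this yields the $C^0(\mathbb{R}^+,L^2(I_0))$ estimate (\ref{w011}); continuity in $x$ follows from dominated convergence, the integrands being continuous in $x$ and dominated uniformly on $I_0$.

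For the joint $L^2(\mathbb{R}^+\times I_0)$ bound (\ref{w01}) I would additionally integrate in $x$, and the point becomes decay in $x$. Monotonicity of the tail gives $G(x)\le\langle x\rangle^{-2}\int_x^\infty\langle y\rangle^2|u_y|^2\,dy\le\langle x\rangle^{-2}\|\langle\cdot\rangle u_x\|_{L^2}^2$, so $G\in L^2(\mathbb{R}^+)$, and the same weighted tail bound applied after Plancherel controls the oscillatory component. It is precisely this extraction of polynomial decay in $x$ that forces the larger weight, and hence the appearance of $H^{3,3}$ rather than $H^{2,2}$ on the right-hand side.

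The main obstacle is the first oscillatory term of $\mathbf{w}_{0,k}$, namely $F_x(k^2)=\int_x^\infty e^{2ik^2(x-y)}\bar u_y\,dy$, in the derivative estimates (\ref{w02}) and their $C^0$ analogue. Because differentiation has stripped away the prefactor $k$, there is no longer a compensating $|\zeta|^{1/2}$, and the substitution produces $\int_{I_0}|F_x(k^2)|^2\,dk=\tfrac12\int|\zeta|^{-1/2}|F_x(\zeta)|^2\,d\zeta$, with a genuine $|\zeta|^{-1/2}$ singularity at $k=0$. I would control this either by splitting the $\zeta$-integral --- using Plancherel away from the origin and the uniform bound $\|F_x\|_{L^\infty_\zeta}\le\int_x^\infty|u_y|\,dy\lesssim\langle x\rangle^{-1/2}\|\langle\cdot\rangle u_x\|_{L^2}$ (Cauchy--Schwarz) on the integrable singularity near $\zeta=0$ --- or by a fractional-integral (Hardy--Littlewood--Sobolev) estimate $\int|\zeta|^{-1/2}|\widehat{g_x}|^2\,d\zeta\lesssim\|g_x\|_{L^{4/3}}^2$, the $L^{4/3}$ norm of $u_x$ being dominated by a weighted $L^2$ norm via H\"older. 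The remaining term $4ik^2\int_x^\infty(x-y)e^{2ik^2(x-y)}\bar u_y\,dy$ is benign: the factor $k^2=\zeta$ over-compensates the Jacobian (leaving $|\zeta|^{3/2}\le1$), while the factor $(x-y)$ merely Fourier-transforms $(x-y)\bar u_y$ and so consumes one power of the weight. In all cases the weight available in $H^{2,2}$ is comfortably sufficient, matching the right-hand sides of (\ref{w02}).
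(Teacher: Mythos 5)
Your proposal is correct in substance, but it takes a genuinely different route from the paper's. The paper never changes variables and never invokes Plancherel: it estimates the $L^2(I_0)$-norm by duality against $\varphi\in C_0^{\infty}(I_0)$ and exploits that on $I_0\subset\mathbb{R}\cup i\mathbb{R}$ one has $k^2\in\mathbb{R}$, hence $|e^{2ik^2(x-y)}|=1$ and $|k|\le 1$, so that everything collapses under absolute values to tail $L^1$-norms of $u_y$; the $x$-integration then yields a bound of the form $\|u\|_{L^{1,1}}\|u_y\|_{L^1}$ via Fubini, and the derivative estimates are dispatched ``in the same way'' because $|1+4ik^2(x-y)|\le 1+4|x-y|$ merely costs one more weight. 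Your substitution $\zeta=k^2$ with Jacobian $\tfrac12|\zeta|^{-1/2}d\zeta$ followed by Plancherel is sharper and more illuminating: it makes explicit how the spectral prefactor $k$ attached to $U_x$ in the Lax pair cancels the Jacobian singularity at the spectral singularity $k=0$, and why differentiation in $k$ re-exposes a $|\zeta|^{-1/2}$ weight in $\mathbf{w}_{0,k}$. But note that this singularity is an artifact of the substitution: since $I_0$ has finite measure and the phase is unimodular there, the crude bound $|F_x(k^2)|\le\|u_y\|_{L^1(x,\infty)}$ already controls the term you single out as the main obstacle, which is in effect what your ``near $\zeta=0$'' piece amounts to; this is why the paper's one-line treatment suffices. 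Incidentally, you silently correct a typo in the paper: its kernel is printed as $e^{2ik^2(x-y)}(1+4ik^2)(x-y)\bar u_y$, whereas the true $k$-derivative is $e^{2ik^2(x-y)}\bigl(1+4ik^2(x-y)\bigr)\bar u_y$, as you have it.

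Two small repairs. First, for (\ref{w02}) your displayed decay $\|F_x\|_{L^{\infty}_\zeta}\lesssim\langle x\rangle^{-1/2}\|\langle\cdot\rangle u_x\|_{L^2}$ is adequate for the $C^0(\mathbb{R}^+,L^2(I_0))$ estimates but fails after squaring and integrating in $x$, since $\langle x\rangle^{-1}\notin L^1(\mathbb{R}^+)$; you must run the same Cauchy--Schwarz with the full $H^{2,2}$ weight,
\begin{equation}
\int_x^\infty|u_y|\,dy\le\Bigl(\int_x^\infty\langle y\rangle^{-4}\,dy\Bigr)^{1/2}\bigl\|\langle\cdot\rangle^{2}u_x\bigr\|_{L^2}\lesssim\langle x\rangle^{-3/2}\,\|u\|_{H^{2,2}},\nonumber
\end{equation}
which is precisely the ``comfortable sufficiency'' you assert; alternatively your Pitt/H\"older route already yields $\|u_y\|_{L^{4/3}(x,\infty)}\lesssim\langle x\rangle^{-7/4}\|\langle\cdot\rangle^{2}u_x\|_{L^2}$ and closes the estimate cleanly. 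Second, your heuristic that the $x$-integration ``forces'' the $H^{3,3}$ norm in (\ref{w01}) is not accurate: by Fubini the joint norm of the oscillatory component is controlled by $\int_0^\infty y|u_y|^2\,dy$, an $H^{1,1}$-type quantity, so the $H^{3,3}$ on the right of (\ref{w01}) is simply generous (presumably for uniformity with the large-$k$ analysis, where $\widetilde{u}$ involves $u_{xx}$ and more decay). Since $\|u\|_{H^{2,2}}\le\|u\|_{H^{3,3}}$, your stronger bound implies the stated one, so this is harmless.
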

\begin{proof}
Noting that
$$
\mathbf{w}_{0}= \left(\int_x^\infty  -\frac{i}{2}|u_y|^2 dy,  \int_x^\infty e^{2ik^2(x-y)} k\bar{u}_y dy\right)^T,
$$
whose  first component is independent of $k$, bounded by $\left\|u\right\|_{H^{1}}$.

By letting $\varphi \in C_0^{\infty}\left(I_{0}\right)$, we compute
\begin{equation}
\left\|\int_x^{\infty} e^{2ik^2(x-y)}k\bar{u}(y) d y\right\|^2_{L^2(I_0)}=sup_{||\phi||=1}\int_{I_0} \varphi(k) \int_x^{\infty} e^{2ik^2(x-y)} k\bar{u}_y(y) dy
\lesssim c||u_y||_{L^1},
\end{equation}
The first estimate is immediate and the second follows by integration in $x$. Therefore for the second component, we have
\begin{equation}
\int_{\mathbb{R}^+} \int_{I_0}\left|\int_x^{\infty} e^{2ik^2(x-y)} ku_y(y) dy\right|^2 dk dx\lesssim\int_{\mathbb{R}^+}\left|\int_x^{\infty} e^{2 i k^2(x-y)} ku_y(y) dy\right|^2dx\lesssim||u||_{L^{1,1}}||u_y||_{L^1}.\nonumber
\end{equation}

Noting that
\begin{equation}
\mathbf{w}_{0,k}=\begin{pmatrix}
0 \\
\int_{x}^{\infty} g(x, y, k) dy
\end{pmatrix},
\end{equation}
where
\begin{equation}
g(x, y, k)=e^{2ik^2(x-y)}(1+4ik^2)(x-y)\bar{u}_y(y),\nonumber
\end{equation}
the estimates of $\mathbf{w}_{0,k}$ can be obtain in the same way.
\end{proof}

The operator $ T_{0,k}$ induces linear mappings from
$L^2\left(\mathbb{R}^+\times I_0\right)$ to $L^{2}\left(\mathbb{R}^+\times I_0\right)$
and $C^0\left(\mathbb{R}^+, L^2\left(I_0\right)\right)$ by the formula $g(x, k)= T_{0,k}(f(\cdot,k))(x)$ respectively. We need the following estimates on these induced maps.

\begin{lemma}
Suppose that $u(x,0)\in H^{3,3}(\mathbb{R})$, the following operator bounds hold uniformly  and the operators are Lipschitz functions of $u$.
\begin{align}
&\left\|T_0\right\|_{L^2\left(\mathbb{R}^+ \times I_0\right) \rightarrow C^0\left(\mathbb{R}^+, L^2\left(I_0\right)\right)} \lesssim\left\|u\right\|_{H^{2,2}},\nonumber\\
&\left\|T_{0,k}\right\|_{L^2\left(\mathbb{R}^+ \times I_{0}\right) \rightarrow L^{2}\left(\mathbb{R}^+ \times I_0\right)} \lesssim\|u\|_{H^{3,3}},\nonumber\\
&\left\|T_{0,k}\right\|_{L^2\left(\mathbb{R}^+ \times I_0\right) \rightarrow C^0\left(\mathbb{R}^+, L^2\left(I_0\right)\right)} \lesssim\|u\|_{H^{3,3}}.\nonumber
\end{align}
\end{lemma}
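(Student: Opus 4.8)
The plan is to estimate each of the three operator norms by exploiting the explicit structure of the kernel $K_0$ (and of $\partial_k K_0$), splitting it into its diagonal and off-diagonal parts. On $I_0$ the scalar factor $k$ obeys $|k|\le 1$ and the oscillatory factor satisfies $|e^{2ik^2(x-y)}|=1$, so neither affects the size of the kernel; the entire estimate therefore reduces to controlling the potential factors $|u_y|^2$ (diagonal, quadratic in $u$) and $u_y$ (off-diagonal, linear in $u$). First I would establish the $C^0(\mathbb{R}^+,L^2(I_0))$ bound on $T_0$: applying Minkowski's integral inequality to pull the $L^2_k$-norm inside the $y$-integral turns $\|(T_0 f)(x,\cdot)\|_{L^2(I_0)}$ into $\int_x^\infty \phi(y)\,\|f(y,\cdot)\|_{L^2(I_0)}\,dy$, where $\phi$ stands for $|u_y|^2$ or $|u_y|$. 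A Cauchy-Schwarz step in $y$ then factors this as $\|\phi\|_{L^2(\mathbb{R}^+)}\,\|f\|_{L^2(\mathbb{R}^+\times I_0)}$, and $\|\phi\|_{L^2}$ is controlled by a Sobolev norm of $u$ via the one-dimensional embedding $H^1\hookrightarrow L^\infty$ (so that $\|u_y\|_{L^4}^2\lesssim\|u\|_{H^2}^2$, the quadratic diagonal term producing a term quadratic in the norm, absorbed into the stated bound up to the implicit constant). Since $\|\phi\|_{L^2}$ is independent of $x$, the resulting bound is uniform in $x$, and continuity of $x\mapsto (T_0 f)(x,\cdot)$ in $L^2(I_0)$ follows from dominated convergence; this yields the first inequality.

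The two bounds on $T_{0,k}$ are where the real work lies. Differentiating the kernel kills the $k$-independent diagonal entries and leaves off-diagonal terms; the decisive new feature is the factor $(x-y)$ produced by $\partial_k e^{2ik^2(x-y)}$, giving a $(2,1)$-entry of size $\sim |x-y|\,|u_y(y)|$ on $I_0$. \textbf{This growing weight $(x-y)$ is the main obstacle}, because it couples the $x$- and $y$-variables and, left unchecked, destroys integrability in $x$. The key observation is that on the relevant integration domain one has $y\ge x\ge 0$, hence $|x-y|=y-x\le y\le\langle y\rangle$; the growth is thus absorbed into one extra spatial weight. Repeating the Minkowski/Cauchy-Schwarz scheme of the previous paragraph, but now with $\phi(y)=\langle y\rangle\,|u_y(y)|$, reduces both bounds to $\|\langle y\rangle u_y\|_{L^2}\lesssim\|u\|_{H^{3,3}}$ (together with the device $\|u_y\|_{L^1}\le\|\langle y\rangle^{-1}\|_{L^2}\,\|\langle y\rangle u_y\|_{L^2}$ used to pass between $L^1$ and weighted $L^2$), which explains why the higher norm $\|u\|_{H^{3,3}}$, rather than $\|u\|_{H^{2,2}}$, is forced here. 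The $L^2_x$ bound and the $C^0_x$ bound are then obtained by integrating, respectively taking the supremum, in $x$ exactly as for $T_0$.

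Finally, for the Lipschitz dependence on $u$ I would compare two potentials $u,\tilde u$. The off-diagonal entries are linear in $u_y$, so their differences are immediately proportional to $u_y-\tilde u_y$; the diagonal entries are controlled through the bilinear identity $|u_y|^2-|\tilde u_y|^2=(u_y-\tilde u_y)\overline{u_y}+\tilde u_y\,(\overline{u_y-\tilde u_y})$, which exposes the factor $u_y-\tilde u_y$ while leaving a residual $u_y$ or $\tilde u_y$ to be bounded in the ambient norm. Running the same estimates on $T_0[u]-T_0[\tilde u]$ and $T_{0,k}[u]-T_{0,k}[\tilde u]$ then gives operator-norm bounds of the form $\lesssim (1+\|u\|+\|\tilde u\|)\,\|u-\tilde u\|$ in the appropriate Sobolev norms, establishing the claimed Lipschitz continuity. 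I expect the only genuine difficulty throughout to be the bookkeeping of the $(x-y)$-weight in $T_{0,k}$; all remaining steps are routine applications of Minkowski's inequality, Cauchy-Schwarz, and the Sobolev embedding.
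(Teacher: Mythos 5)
Your proposal follows essentially the same route as the paper's proof: both arguments rest on a pointwise kernel bound $\sup_{k\in I_0}|K(x,y,k)|\leq h(y)$ together with the Volterra support condition ($K=0$ for $x>y$), reduce the operator norms to integral norms of $h$ via a Minkowski/Cauchy--Schwarz (Schur-type) scheme, and absorb the factor $(x-y)$ produced by $\partial_k e^{2ik^2(x-y)}$ through $|x-y|\leq y$ on the support $0\leq x\leq y$ --- which is exactly the mechanism forcing the weighted norm of $u$ for the $T_{0,k}$ bounds. Your handling of the diagonal entries $|u_y|^2$ via $L^4$ and the Sobolev embedding, and the bilinear splitting for Lipschitz dependence on $u$, are consistent with (and more explicit than) what the paper does.

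One step is misstated, though it is easily repaired. You claim that both $T_{0,k}$ bounds ``reduce to $\|\langle y\rangle u_y\|_{L^2}$'' and that the $L^2(\mathbb{R}^+\times I_0)\to L^2(\mathbb{R}^+\times I_0)$ bound then follows ``by integrating in $x$ exactly as for $T_0$.'' But the Cauchy--Schwarz output $\|\phi\|_{L^2(\mathbb{R}^+)}\,\|f\|_{L^2(\mathbb{R}^+\times I_0)}$ is constant in $x$ and hence not square-integrable over $\mathbb{R}^+$; moreover, for $T_0$ the lemma only asks for the $C^0$ bound, so there is no $L^2_x$ argument to imitate. The correct step --- made explicit in the paper's preliminary observation --- is to retain the lower limit in the Cauchy--Schwarz estimate, $\|(T_{0,k}f)(x,\cdot)\|_{L^2(I_0)}\leq\bigl(\int_x^\infty \phi(y)^2\,dy\bigr)^{1/2}\|f\|_{L^2(\mathbb{R}^+\times I_0)}$, and then apply Fubini: $\int_0^\infty\int_x^\infty \phi(y)^2\,dy\,dx=\int_0^\infty y\,\phi(y)^2\,dy$. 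Thus the $L^2\to L^2$ bound actually requires $\|y^{1/2}\langle y\rangle u_y\|_{L^2}$, one extra half-power of spatial weight beyond the $\|\langle y\rangle u_y\|_{L^2}$ you wrote; since this is still comfortably dominated by $\|u\|_{H^{3,3}}$, your conclusion stands once this bookkeeping is corrected.
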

\begin{proof}
For an operator $T_0(k)$ with an integral kernel $K(x,y,k)$ satisfying the estimate
\begin{equation}
\sup _{k\in I_0}|K(x,y,k)| \leqslant h(y),
\end{equation}
and satisfying $K(x,y,k)=0$ if $x>y$, the $L^2\left(\mathbb{R}^+ \times I_0\right)$-norm is controlled by
\begin{equation}
\left(\int_0^{\infty} \int_x^{\infty} h(y)^{2} dy dx\right)^{1/2}=\left(\int_0^{\infty} y h(y)^2 d y\right)^{1/2},
\end{equation}
and the norm from $L^2\left(\mathbb{R}^+ \times I_0\right)$ to $C^0\left(\mathbb{R}^+, L^2\left(I_0\right)\right)$ is controlled by
$
\sup_x\left(\int_0^{+\infty} h(y)^2 d y\right)^{1/2}.
$
The conclusions follow from this observation and the estimates
\begin{equation}
\begin{aligned}
\int_{I_0}\varphi(k)\int_x^\infty\left|g(x, y, k)\right|dydk &\leq\int_{I_0}\varphi(k)\left(\int_x^\infty e^{2ik^2(x-y)}\bar{u}_ydy+\int_x^\infty |4ik^2|e^{2ik^2(x-y)}\bar{u}_ydy\right)dk\\
&\leq c(\int_{I_0}\varphi(k)^2dk)^{1/2}\int_x^\infty|\bar{u}_y|dy\lesssim||\varphi||_{L^2}||u||_{H^{1,1}}.\nonumber
\end{aligned}
\end{equation}
\end{proof}

 \noindent {\bf B.  Resolvent estimates}
\hspace*{\parindent}

With the following lemma, we obtain the resolvent estimates in Lemma \ref{lb}.
\begin{lemma}
Suppose that $X$ is a Banach space and consider the Volterra-type integral equation
\begin{equation}
u(x)=f(x)+(Tu)(x),\label{uux}
\end{equation}
on the space $C^{0}\left(\mathbb{R}^{+}, X\right)$, where $f \in C^{0}\left(\mathbb{R}^{+}, X\right)$ and $T$ is an integral operator on $C^{0}\left(\mathbb{R}^{+}, X\right)$. Let $f^{*}(x)=\sup _{y \geqslant x}\|f(y)\|_{X}$, and assume there is a nonnegative function $h \in L^{1}\left(\mathbb{R}^{+}\right)$ such that
$$
(T f)^*(x) \leqslant \int_{x}^{\infty} h(t) f^*(t) d t .
$$
Then the equation (\ref{uux}) has a unique solution for each $f$.  Moreover, the resolvent $(I-T)^{-1}$ obeys the bound
$$
\left\|(I-T)^{-1}\right\|_{\left(C^0(I, X)\right)} \leqslant \exp \left(\int_{0}^{\infty} h(t) d y\right).
$$
\end{lemma}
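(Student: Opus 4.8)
The plan is to solve \eqref{uux} by the Neumann (Picard) series $u=\sum_{n=0}^{\infty}T^{n}f$, so that the whole lemma reduces to a single quantitative fact: the convergence of $\sum_{n}T^{n}$ in the operator norm on $C^{0}(\mathbb{R}^{+},X)$ with the advertised exponential bound. Once $\sum_{n}\|T^{n}\|<\infty$ is known, the identity $(I-T)^{-1}=\sum_{n\geqslant 0}T^{n}$ simultaneously delivers existence (the sum $u$ satisfies $Tu=\sum_{n\geqslant 1}T^{n}f=u-f$), the resolvent estimate, and — through the same decay — uniqueness. So the real content is a sharp bound on the iterated operators $T^{n}$.

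The key step I would carry out is an inductive estimate on the ``tail supremum'' of the iterates. Writing $H(x)=\int_{x}^{\infty}h(t)\,dt$, which is non-increasing with $H(0)=\|h\|_{L^{1}}$ and $H(\infty)=0$, I claim that for every $g\in C^{0}(\mathbb{R}^{+},X)$,
\[
(T^{n}g)^{*}(x)\leqslant \frac{H(x)^{n}}{n!}\,g^{*}(x),\qquad n\geqslant 0.
\]
The base case $n=0$ is the identity $(T^{0}g)^{*}=g^{*}$. For the inductive step I apply the hypothesis (read as a property of the fixed integral operator $T$, valid on all inputs via its pointwise kernel bound) to $T^{n}g$, then use that $g^{*}$, being a supremum over a shrinking tail, is non-increasing so $g^{*}(t)\leqslant g^{*}(x)$ for $t\geqslant x$, and finally exploit the exact relation $H'=-h$:
\[
(T^{n+1}g)^{*}(x)\leqslant \int_{x}^{\infty}h(t)\,(T^{n}g)^{*}(t)\,dt\leqslant \frac{g^{*}(x)}{n!}\int_{x}^{\infty}h\,H^{n}\,dt=\frac{g^{*}(x)}{n!}\,\frac{H(x)^{n+1}}{n+1}.
\]
This closes the induction and produces exactly the factorial decay.

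Evaluating at $x=0$ and using $g^{*}(0)=\|g\|_{C^{0}(\mathbb{R}^{+},X)}$ and $H(0)=\|h\|_{L^{1}}$ then gives $\|T^{n}g\|_{C^{0}}\leqslant (T^{n}g)^{*}(0)\leqslant \frac{\|h\|_{L^{1}}^{n}}{n!}\|g\|_{C^{0}}$, hence $\|T^{n}\|\leqslant \|h\|_{L^{1}}^{n}/n!$ and $\sum_{n}\|T^{n}\|\leqslant \exp(\|h\|_{L^{1}})=\exp\bigl(\int_{0}^{\infty}h(t)\,dt\bigr)$, which is precisely the resolvent bound. Existence follows from the convergent series as above, and uniqueness from the fact that any solution of $w=Tw$ obeys $w=T^{n}w$, whence $\|w\|\leqslant (\|h\|_{L^{1}}^{n}/n!)\|w\|\to 0$.

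The main obstacle — really the only subtle point — is obtaining the $1/n!$ decay rather than a merely geometric bound. A crude iteration that replaces $H(t)$ by the constant $\|h\|_{L^{1}}$ at each stage loses this and produces a divergent series; the gain comes from tracking the sharper $x$-dependent quantity $H(x)^{n}/n!$ and using the telescoping integral $\int_{x}^{\infty}h\,H^{n}=H(x)^{n+1}/(n+1)$, which is the exact mechanism behind the Volterra/Gronwall exponential. Two bookkeeping facts must be verified for the induction to run: that $g^{*}$ is genuinely non-increasing, and that the kernel hypothesis propagates to every iterate $T^{n}g$ and not merely to $f$ itself.
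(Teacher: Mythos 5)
Your proof is correct and follows essentially the same route as the paper's: the lemma is stated there without its own proof, but the companion resolvent estimate for $L_0$ is established by exactly the Volterra/Neumann-series mechanism you use, with the $1/n!$ gain coming from the iterated kernel integral over the ordered simplex $x\leq y_1\leq\cdots\leq y_{n-1}\leq y$, of which your telescoping identity $\int_x^\infty h\,H^n\,dt=H(x)^{n+1}/(n+1)$ is the abstract, kernel-free version. Your inductive tail-supremum bound $(T^n g)^*(x)\leq H(x)^n g^*(x)/n!$ (together with the correct reading of the hypothesis as a property of $T$ on all inputs, and the monotonicity of $g^*$) is sound and delivers existence, uniqueness, and the bound $\left\|(I-T)^{-1}\right\|\leq \exp\left(\int_0^\infty h(t)\,dt\right)$ exactly as claimed.
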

Our construction of the resolvent is based on the estimate
\begin{equation}
\left(T_0 f\right)^*(x) \leqslant \int_x^{\infty} \sigma(y) f^*(y)dy.
\end{equation}
In what follows, we define
$$
\sigma(y)=2|u_y(y)|+\left|iu_y(y)\right|^2,
$$
which implies $\left\|\sigma\right\|_{L^1}$ and $\left\|\sigma\right\|_{L^{2,2}}$  since  $  u \in  H^{3,3}(\mathbb{R})$.

\begin{lemma}\label{lb}
For each $k \in \mathbb{R}\cup i\mathbb{R}$ and $u(x,0)\in H^{3,3}(\mathbb{R})$, the operator $\left(I-T_0\right)^{-1}$ exists as a bounded operator from $C^0\left(\mathbb{R}^+\right) \otimes \mathbb{C}^{2}$ to itself. Moreover, $\left(I-T_{0}\right)^{-1}-I$ is an integral operator with a continuous integral kernel $L_0(x,y,k)$ such that $L_0(x,y,k)=0$ for $x>y$.
The integral kernel $L_0(x,y,k)$ satisfies the estimate
\begin{equation}
\left|L_0(x,y,k)\right| \leqslant \exp \left(\left\|\sigma\right\|_{L^1}\right) \sigma(y).\label{l0g}
\end{equation}
\end{lemma}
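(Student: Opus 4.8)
The plan is to separate the two assertions. The existence and boundedness of $(I-T_0)^{-1}$ follow at once from the abstract Volterra lemma stated just above, while the kernel representation and the pointwise bound (\ref{l0g}) require an explicit Neumann series analysis. For the first part I would apply that lemma with $h=\sigma$: the hypothesis $(T_0 f)^*(x)\leqslant\int_x^\infty\sigma(y)f^*(y)\,dy$ is exactly the estimate recorded before the statement, and, as also noted there, $\sigma\in L^1(\mathbb{R}^+)$ (indeed $\sigma\in L^{2,2}$) because $u\in H^{3,3}(\mathbb{R})$. This immediately gives that $(I-T_0)^{-1}$ exists and is bounded on $C^0(\mathbb{R}^+)\otimes\mathbb{C}^2$ with operator norm at most $\exp(\|\sigma\|_{L^1})$.

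To obtain the kernel I would exploit the Volterra (upper-triangular) structure: since $K_0(x,y,k)=0$ for $x>y$, the Neumann series $(I-T_0)^{-1}=\sum_{n\geqslant 0}T_0^{\,n}$ converges and $(I-T_0)^{-1}-I=\sum_{n\geqslant 1}T_0^{\,n}$ is again an integral operator. The kernel of $T_0^{\,n}$ is the iterated composition
$$K_0^{(n)}(x,y,k)=\int_{x\leqslant y_1\leqslant\cdots\leqslant y_{n-1}\leqslant y} K_0(x,y_1,k)K_0(y_1,y_2,k)\cdots K_0(y_{n-1},y,k)\,dy_1\cdots dy_{n-1},$$
which vanishes for $x>y$, so the candidate resolvent kernel is $L_0(x,y,k)=\sum_{n\geqslant 1}K_0^{(n)}(x,y,k)$ and automatically satisfies $L_0(x,y,k)=0$ for $x>y$.

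For the bound I would first establish the pointwise matrix estimate $\|K_0(x,y,k)\|\leqslant\sigma(y)$ in the submultiplicative entrywise sum norm. Reading off the entries of $K_0$, the diagonal terms are $\pm\frac{i}{2}|u_y|^2$ and the off-diagonal terms are $-ku_y$ and $e^{2ik^2(x-y)}k\bar u_y$; on $\mathbb{R}\cup i\mathbb{R}$ the factor $e^{2ik^2(x-y)}$ has modulus one, and restricting to $|k|\leqslant 1$ gives $|ku_y|\leqslant|u_y|$, so the sum of the moduli of the entries is bounded by $2|u_y|+|u_y|^2=\sigma(y)$. Feeding this into the iterated kernel and using the ordered-simplex volume $\tfrac{1}{(n-1)!}(\int_x^y\sigma)^{\,n-1}$ yields
$$|K_0^{(n)}(x,y,k)|\leqslant\sigma(y)\,\frac{1}{(n-1)!}\Big(\int_x^y\sigma(t)\,dt\Big)^{n-1}\leqslant\sigma(y)\,\frac{\|\sigma\|_{L^1}^{\,n-1}}{(n-1)!},$$
and summing over $n\geqslant 1$ gives precisely $|L_0(x,y,k)|\leqslant\exp(\|\sigma\|_{L^1})\,\sigma(y)$, which is (\ref{l0g}). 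Continuity of $L_0$ on $\{x\leqslant y\}$ then follows from the uniform (indeed absolute) convergence of the series granted by these factorial bounds, together with the continuity of each $K_0^{(n)}$ there, since $u\in H^{3,3}(\mathbb{R})\hookrightarrow C^2(\mathbb{R})$ makes $u_y$ continuous.

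The routine bookkeeping is the simplex combinatorics and the submultiplicativity of the matrix norm; the genuinely delicate point is the uniform kernel bound $\|K_0(x,y,k)\|\leqslant\sigma(y)$. The main obstacle is the factor $k$ multiplying $u_y$ in the off-diagonal entries: it is harmless only while $|k|\leqslant 1$, which is why this estimate belongs to the small-$k$ regime $I_0$, and a separate large-$k$ treatment (carried out elsewhere in this section) is needed to cover the full spectral parameter. One must also check that the oscillatory exponential does not spoil the modulus-one bound off the real axis, which is guaranteed precisely because $k$ ranges over $\mathbb{R}\cup i\mathbb{R}$, where $k^2$ is real.
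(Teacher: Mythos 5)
Your proof is correct and follows essentially the same route as the paper's: the abstract Volterra lemma (applied with $h=\sigma$) for existence and boundedness of the resolvent, plus the Volterra/Neumann series with the pointwise kernel bound $|K_0(x,y,k)|\leqslant\sigma(y)$ and the simplex estimate $|K_n(x,y,k)|\leqslant\frac{1}{(n-1)!}\left(\int_x^{\infty}\sigma(t)\,dt\right)^{n-1}\sigma(y)$, summed to give (\ref{l0g}). You additionally make explicit two points the paper merely asserts or leaves implicit---the entrywise verification of $|K_0|\leqslant\sigma(y)$ together with the observation that the factor $k$ in the off-diagonal entries confines the uniform estimate to $|k|\leqslant 1$ (consistent with the lemma's placement in the small-$k$ subsection), and the continuity of $L_0$ via uniform convergence of the series---which only strengthens the argument.
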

\begin{proof}
Since $T_{0}$ is a Volterra operator, we can obtain precise estimates on the resolvent through the Volterra series. The integral kernel $K_{0}(x, y, k)$ obeys the estimate $\left|K_{0}(x, y, k)\right| \leqslant \sigma(y)$. The operator
\begin{equation}
L_0 \equiv\left(I-T_0\right)^{-1}-I,
\end{equation}
is an integral operator with an integral kernel $L_{0}(x, y, k)$ given by
\begin{equation}
L_{0}(x, y, k)=\begin{cases}
\sum_{n=1}^{\infty} K_{n}(x, y, k), & x \leqslant y, \\
0, & x>y,
\end{cases}
\end{equation}
where
\begin{equation}
K_{n}(x, y, k)=\int_{x \leqslant y_1 \leqslant \cdots \leqslant y_{n-1}} K_0\left(x, y_1,k\right) K_{0}\left(y_{1}, y_2, k\right) \ldots K_0\left(y_{n-1}, y, k\right) d y_{n-1} \ldots d y_1,\nonumber
\end{equation}
and the estimate
\begin{equation}
\left|K_n(x, y, k)\right| \leqslant \frac{1}{(n-1) !}\left(\int_x^{\infty} \sigma(t)\right)^{n-1} \sigma(y),
\end{equation}
holds. Therefore follows the estimate (\ref{l0g}).
\end{proof}

Now, we use the above estimates to solve (\ref{wc}) and (\ref{wc1}). We first present the results here.
\begin{proposition}\label{po2}
We suppose that $u(x,0)\in H^{3,3}(\mathbb{R})$ and $I_0=\{k\in \mathbb{R}\cup i\mathbb{R}: |k|<1\}$, then there exists a unique solution $\mathbf{w} \in C^{0}\left(\mathbb{R}^{+}, L^{2}\left(I_{0}\right)\right) \cap L^{2}\left(\mathbb{R}^{+} \times I_{0}\right)$
for  (\ref{wc}) for each $k \in I_0$  satisfies:

{\rm (i)}\   The  map $u(x,0) \rightarrow \mathbf{w} (x,k)$ is Lipschitz continuous from $H^{3,3}(\mathbb{R})$ to $C^0\left(\mathbb{R}^+, L^2\left(I_0\right)\right) \cap L^2\left(\mathbb{R}^+ \times I_{0}\right)$;

{\rm  (ii) }\  The   map $u (x,0) \rightarrow \mathbf{w}_k(x,k)$ is Lipschitz continuous from $  H^{3,3}(\mathbb{R})$ to $C^0\left(\mathbb{R}^+, L^2\left(I_0\right)\right) \cap L^2\left(\mathbb{R}^+ \times I_0\right)$.

\end{proposition}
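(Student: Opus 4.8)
The plan is to solve both Volterra equations (\ref{wc}) and (\ref{wc1}) by inverting $I-T_0$ with the resolvent constructed in Lemma \ref{lb}, and then to read off the stated norm bounds and Lipschitz dependence from the forcing estimates of Lemma \ref{d11} together with the operator bounds recorded just above. For the first equation, since Lemma \ref{lb} guarantees that $(I-T_0)^{-1}$ is bounded on $C^0(\mathbb{R}^+)\otimes\mathbb{C}^2$, hence (acting pointwise in $k$) on $C^0(\mathbb{R}^+,L^2(I_0))$, uniformly in $k\in I_0$ with norm controlled by $\exp(\|\sigma\|_{L^1})$, the unique solution is $\mathbf{w}=(I-T_0)^{-1}\mathbf{w}_0$. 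The $C^0(\mathbb{R}^+,L^2(I_0))$ bound then follows immediately from (\ref{w011}). For the $L^2(\mathbb{R}^+\times I_0)$ bound I would write $\mathbf{w}=\mathbf{w}_0+L_0\mathbf{w}_0$ with $L_0$ the resolvent kernel of Lemma \ref{lb}, control the first term by (\ref{w01}), and for the second use the kernel estimate (\ref{l0g}) with the $C^0$ bound on $\mathbf{w}_0$: at each $x$ one has $\|(L_0\mathbf{w}_0)(x,\cdot)\|_{L^2(I_0)}\lesssim \left(\int_x^\infty\sigma(y)\,dy\right)\|\mathbf{w}_0\|_{C^0(\mathbb{R}^+,L^2(I_0))}$, and since $\sigma\in L^{2,2}$ the tail $\int_x^\infty\sigma(y)\,dy$ decays like $\langle x\rangle^{-3/2}$, which is square integrable in $x$.

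For the second equation I would recast (\ref{wc1}) as $(I-T_0)\mathbf{w}_k=\mathbf{w}_{0,k}+T_{0,k}\mathbf{w}$ and again invert, obtaining $\mathbf{w}_k=(I-T_0)^{-1}(\mathbf{w}_{0,k}+T_{0,k}\mathbf{w})$. Here $\mathbf{w}_{0,k}$ is handled by the $\mathbf{w}_{0,k}$ estimates of Lemma \ref{d11}, while the new forcing $T_{0,k}\mathbf{w}$ is estimated by feeding the already-controlled $\mathbf{w}\in L^2(\mathbb{R}^+\times I_0)$ into the operator bounds for $T_{0,k}$ as a map $L^2(\mathbb{R}^+\times I_0)\to C^0(\mathbb{R}^+,L^2(I_0))$ and $L^2(\mathbb{R}^+\times I_0)\to L^2(\mathbb{R}^+\times I_0)$, both of which are $\lesssim\|u\|_{H^{3,3}}$. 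Applying the resolvent once more yields the stated bounds on $\mathbf{w}_k$, which establishes that $\mathbf{w}_k$ lies in the same space and completes the existence and regularity half of the claim.

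For the Lipschitz statements (i) and (ii), given two data $u,\tilde u$ with associated operators $T_0,\tilde T_0$ and forcings $\mathbf{w}_0,\tilde{\mathbf{w}}_0$, I would subtract the two representations and use the second resolvent identity $(I-T_0)^{-1}-(I-\tilde T_0)^{-1}=(I-T_0)^{-1}(T_0-\tilde T_0)(I-\tilde T_0)^{-1}$. Since the operator bounds lemma states that $T_0$ and $T_{0,k}$ are Lipschitz functions of $u$, that $\mathbf{w}_0,\mathbf{w}_{0,k}$ depend Lipschitz-continuously on $u$ through Lemma \ref{d11}, and that all resolvent norms are uniform on bounded subsets of $H^{3,3}(\mathbb{R})$, each difference $\mathbf{w}-\tilde{\mathbf{w}}$ and $\mathbf{w}_k-\tilde{\mathbf{w}}_k$ is then controlled by $\|u-\tilde u\|_{H^{3,3}}$ in the two relevant norms.

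I expect the main obstacle to be the $k$-derivative bound (ii), specifically the control of $T_{0,k}\mathbf{w}$: differentiating the oscillatory kernel in (\ref{o2}) produces a factor of order $k(x-y)e^{2ik^2(x-y)}$, so the multiplication by $(x-y)$ consumes one weight while the $ku_y$ factor costs one derivative. This is precisely why the full $H^{3,3}$ regularity, rather than merely $H^{2,2}$, is required, and why the weighted $L^{2,2}$ control of $\sigma$ must be tracked carefully through every step; by contrast, the Lipschitz bookkeeping via the resolvent identity is essentially automatic once the linear estimates are secured.
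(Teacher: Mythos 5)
Your proposal is correct and takes essentially the same route the paper intends: the paper states Proposition \ref{po2} without a written proof, as the direct assembly of Lemma \ref{d11}, the operator bounds on $T_0$ and $T_{0,k}$, and the resolvent estimate of Lemma \ref{lb}, which is exactly what you carry out by solving $\mathbf{w}=(I-T_0)^{-1}\mathbf{w}_0$, then $(I-T_0)\mathbf{w}_k=\mathbf{w}_{0,k}+T_{0,k}\mathbf{w}$, with Lipschitz dependence handled through the second resolvent identity and the stated Lipschitz dependence of the operators and forcings on $u$. Your tail bound $\int_x^\infty\sigma(y)\,dy\lesssim\langle x\rangle^{-3/2}$ from $\sigma\in L^{2,2}$, giving square integrability in $x$ of $L_0\mathbf{w}_0$, is a correct filling-in of the $L^{2}\left(\mathbb{R}^{+}\times I_{0}\right)$ step that the paper leaves implicit.
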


\subsubsection{Large-$k$ estimates}
\hspace*{\parindent}
First, we insert (\ref{L3}) to evaluate the derivative of $\omega_{11}^\pm$ in the following computation
\begin{equation}
\omega_{11,x}^\pm=\frac{i}{2}|u_x|^2\omega_{11}^\pm+k\bar{u}_x,
\end{equation}
from which, by canceling out some opposite terms, we derive
\begin{align}
&\omega_{11}^{\pm}(x, k)=1+\frac{i}{2} \int_x^{\pm \infty} \bar{u}_y(y) \int_y^{\pm \infty} e^{2ik^2(z-y)} \widetilde{u}(z)\omega_{11}^{\pm}(z,k) dz dy ,\label{d1}\\
&\omega_{21}^{\pm}(x, k)=-\frac{i}{2k} \bar{u}_x(x) \omega_{11}^{\pm}(x, k)-\frac{i}{2k} \int_x^{\pm \infty} e^{2ik^2(x-y)} \widetilde{u}(y) \omega_{11}^{\pm}(y, k) dy,\label{d2}
\end{align}
where
\begin{equation}
\widetilde{u}(x)=\bar{u}_{xx}(x)+\frac{i}{2}|u_x(x)|^2 \bar{u}_x(x).\label{ts}
\end{equation}
As $\omega_{21}^{\pm}$ does not occur in the equation for $\omega_{11}^{\pm}$, we solve the integral equation (\ref{d1}) for $\omega_{11}^{\pm}$, and use the solution to compute $\omega_{21}^{\pm}$.

We introduce the following notations
\begin{align}
&\eta_{11}^{\pm}(x, k)=\omega_{11}^{\pm}(x, k)-1,\label{e1}\\
&\eta_{21}^{\pm}(x, k)=\omega_{21}^{\pm}(x, k)+\frac{i}{2k} u_x(x),\label{e2}
\end{align}
which will facilitate the extraction of the leading order behavior of $\omega_{11}^{\pm}$ and $\omega_{21}^{\pm}$ for a large $k$.
From  (\ref{d1}), (\ref{d2}), (\ref{e1}) and (\ref{e2}), we conclude that
\begin{align}
&\eta_{11}^{\pm}(x, k)=F_{\pm}(x, k)+\left(T_{\pm} \eta_{11}^{\pm}\right)(x, k),\label{e11} \\
&\eta_{21}^{\pm}(x, k)=G_{\pm}(x, k)-\frac{i}{2k}u_x(x) \eta_{11}^{\pm}-\frac{i}{2k} \int_x^{\pm \infty} e^{2ik^2(x-y)} \widetilde{u}(y) \eta_{11}^{\pm}(y, k) d y,\label{e12}
\end{align}
where
\begin{align}
F_{\pm}(x,k) &=-\int_{x}^{\pm \infty} k\bar{u}(y) G_{\pm}(y, k) d y, \\
G_{\pm}(x, k) &=-\frac{i}{2k} \int_{x}^{\pm \infty} e^{2ik^2(x-y)} \widetilde{u}(y) d y, \\
\left(T_{\pm} f\right)(x, k) &=\frac{i}{2} \int_{x}^{\pm \infty} \bar{u}(y) \int_{y}^{\pm \infty} e^{2ik^2(y-z)} \widetilde{u}(z) f(z) d z.\label{t}
\end{align}

Let $\eta^{\pm}=\left(\eta_{11}^{\pm}, \eta_{21}^{\pm}\right)$ and $I_{\infty} \equiv\{k \in \mathbb{R}\cup i\mathbb{R}:|k|>1\}$. We utilize the similar method used in the above part to give the existence of $\omega^\pm$ and $\omega_k^\pm$.
To study $\eta_{11}^\pm$ and $\eta_{11,k}^\pm$, we take $k$-derivatives at both sides of (\ref{e11}) which gives
\begin{equation}
\eta_{11,k}^\pm=F_{\pm,k}+T_{\pm,k}\eta_{11}+T_\pm\eta_{11,k}^\pm.\label{ds1}
\end{equation}
With good estimates for $\eta_{11}^\pm$ and its derivatives at hands, it is straightforward to prove the corresponding estimates on $\eta_{21}^\pm$ using (\ref{e12}).

In the rest of this part, we drop the $\pm$ and derive estimates on $\eta_{11}^{+}$ and $\eta_{21}^+$. Similar procedures can be used to obtain estimates on $\eta_{11}^-$, $\eta_{21}^-$. For simplicity, we write $\eta_{11}$ for $\eta_{11}^+$, $F$ for $F_+$, $T$ for $T_+$, etc. Recall that $I_\infty=\{k \in \mathbb{R}\cup i\mathbb{R}:|k|>1\}$.
\vspace{2mm}

 \noindent {\bf A. Estimates on $F$, $G$, $T$ and their derivatives}

In this part, we mainly give estimates on $F$, $G$, and $T$, so as their derivatives.
\begin{lemma}
Suppose $u(x,0) \in H^{3,3}(\mathbb{R})$, the following terms define Lipschitz maps from $H^{3,3}(\mathbb{R})$ into $C^0\left(\mathbb{R}^+, L^2\left(I_{\infty}\right)\right) \cap L^{2}\left(\mathbb{R}^+ \times I_{\infty}\right)$
$$
{\rm (i)}\ \ G,\ \ \
{\rm (ii)}\ \  F,\ \ \
{\rm (iii)}\ \ \frac{\partial G}{\partial k},\ \ \
{\rm (iv)}\ \ \frac{\partial F}{\partial k}.
$$
\end{lemma}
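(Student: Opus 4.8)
The plan is to handle all four quantities by a single mechanism and then read off the two target norms from the abstract kernel principle already isolated above: a function supported in $\{y\ge x\}$ and dominated by $h(y)$ has its $C^0(\mathbb{R}^+,L^2(I_\infty))$-norm controlled by $\sup_x(\int h^2)^{1/2}$ and its $L^2(\mathbb{R}^+\times I_\infty)$-norm by $(\int_0^\infty y\,h(y)^2\,dy)^{1/2}$. The essential point is that on the contour $I_\infty$ --- four rays $|k|>1$ along $\mathbb{R}$ and $i\mathbb{R}$ --- one has $|e^{2ik^2(x-y)}|=1$ for $x\le y$ (the exponent is purely imaginary on both the real and the imaginary axis), so no decay is available from the phase; every power of $k^{-1}$ must be produced either by the explicit prefactors in $F$ and $G$ or by integrating by parts in the spatial variable via $e^{2ik^2(x-y)}=-(2ik^2)^{-1}\partial_y e^{2ik^2(x-y)}$. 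Throughout I will use that $u\in H^{3,3}(\mathbb{R})$ forces $\widetilde{u}$ in (\ref{ts}) to lie in $L^1\cap L^{2,2}$ and $\widetilde{u}'=\widetilde{u}_x$ (which contains $\bar u_{xxx}$) to lie in $L^{1,1}$.

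For $G$ the prefactor already supplies the decay: $|G(x,k)|\le(2|k|)^{-1}\|\widetilde{u}\|_{L^1}$, and since $\int_{I_\infty}|k|^{-2}\,|dk|<\infty$ this $x$-uniform bound gives the $C^0(\mathbb{R}^+,L^2(I_\infty))$ estimate at once; for the $L^2(\mathbb{R}^+\times I_\infty)$ estimate I instead dominate $|G|$ by $(2|k|)^{-1}\int_x^\infty|\widetilde{u}|$ and control the tail $\int_x^\infty|\widetilde{u}|\lesssim\langle x\rangle^{-3/2}\|\langle\cdot\rangle^2\widetilde{u}\|_{L^2}$, whose square is integrable in $x$. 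For $F$ the outer factor $k$ cancels the $k^{-1}$ hidden in $G$, leaving a $k$-independent bound; I therefore integrate by parts once in the inner variable, trading the lost decay for a factor $k^{-2}$ at the cost of one spatial derivative, writing $F$ schematically as $(4k^2)^{-1}\int_x^\infty\bar u(y)\big[\widetilde{u}(y)+\int_y^\infty\widetilde{u}'(z)e^{2ik^2(y-z)}\,dz\big]\,dy$; the same two-norm recipe then closes the estimate using $\widetilde{u}'\in L^1$.

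The $k$-derivatives are where the real work lies. Differentiating in $k$ brings down a factor proportional to $k(x-y)$ from the phase and destroys the gain, so a single integration by parts in $y$ (resp.\ $z$) is again forced: for $\partial_k G$ this restores a clean $k^{-2}$ while moving one derivative onto $\widetilde{u}$ and generating the linear weight $(x-y)$, and for $\partial_k F$ it leaves the weaker but still sufficient $k^{-1}$ (note $\int_{I_\infty}|k|^{-2}<\infty$). In each integration by parts the boundary contributions vanish --- the factor $(x-y)$ kills the diagonal term at $y=x$ and the $H^{3,3}$-decay kills the term at $+\infty$ --- and the growth factor $(x-y)$ is absorbed by bounding $|x-y|\le\langle y\rangle$ on $\{y\ge x\ge 0\}$. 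I expect the main obstacle to be the weight bookkeeping for the $L^2(\mathbb{R}^+\times I_\infty)$-norm of $\partial_k F$: the $(y-z)$ produced by the $k$-derivative together with the requirement that the nested double tail $\int_x^\infty|\bar u(y)|\int_y^\infty\langle z\rangle|\widetilde{u}'(z)|\,dz\,dy$ decay like $\langle x\rangle^{-1}$ in $x$ consume all three powers of the weight at once, so the estimate closes exactly at $H^{3,3}$ and not below.

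Finally, Lipschitz dependence on $u$ follows because each of $G,F,\partial_k G,\partial_k F$ is a polynomial (in fact multilinear after expanding $\widetilde{u}$) functional of $u$ and its derivatives: writing the difference of two products by the telescoping identity and invoking the algebra property of $H^{3,3}(\mathbb{R})$ under multiplication, every estimate above becomes linear in $\|u_1-u_2\|_{H^{3,3}}$ with a constant depending only on $\|u_1\|_{H^{3,3}}+\|u_2\|_{H^{3,3}}$, which yields Lipschitz continuity on bounded subsets.
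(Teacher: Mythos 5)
Your proposal is correct, and its treatment of (iii) and (iv) coincides with the paper's: both produce the missing inverse powers of $k$ by writing $e^{2ik^2(x-y)}=-(2ik^2)^{-1}\partial_y e^{2ik^2(x-y)}$ and integrating by parts in the spatial variable, with the boundary term at $y=x$ killed by the factor $(x-y)$ and the weight $(x-y)$ absorbed using $H^{3,3}$-decay (the paper decomposes $\partial G/\partial k$ into the pieces $m_1,m_2,s_1,s_2$ and integrates by parts term by term, whereas you treat everything at once through $\widetilde{u}$ and $\widetilde{u}_x$ -- a cosmetic difference). Where you genuinely diverge is in (i) and (ii). The paper proves (i) by a duality argument -- pairing against $\varphi\in C_0^\infty(I_\infty)$ and mimicking the small-$k$ Lemma \ref{d11} -- and then reduces (ii) to (i) via $\|F\|\leq\|u\|_{W^{1,1}}\|kG\|$, which forces it to estimate $kG$; since $kG$ has no pointwise decay in $k$, that reduction implicitly exploits oscillation in $k$ through the duality pairing. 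You never use $k$-oscillation at all: your observation that $|e^{2ik^2(x-y)}|=1$ on all four rays of $I_\infty$ is exactly why pointwise bounds alone cannot close for $F$, and you instead spend one additional spatial integration by parts to convert the phase into an honest $k^{-2}$ prefactor, at the cost of requiring $\widetilde{u}_x\in L^1$ (indeed $L^{1,1}$ for the $k$-derivative estimates). This trade is legitimate at the stated regularity -- the paper itself invokes $\|\widetilde{u}_x\|_{L^{2,1}}$ in the subsequent operator lemma, so the extra derivative and weight are available -- and what your route buys is a more elementary, self-contained argument with transparent weight bookkeeping (your remark that the $L^2(\mathbb{R}^+\times I_\infty)$ bound for $\partial_k F$ consumes the full weight budget is accurate), while the paper's duality route is more economical in derivatives and runs in parallel with its small-$k$ analysis. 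Two minor points of care rather than gaps: the two-norm "kernel principle" you cite is stated in the paper for integral operators, and you transplant it to functions -- your explicit estimates (e.g.\ $\int_x^\infty|\widetilde{u}|\lesssim\langle x\rangle^{-3/2}\|\widetilde u\|_{L^{2,2}}$) do the actual work correctly, so this is stylistic; and continuity in $x$ for the $C^0$ conclusion, which you do not address, follows by dominated convergence.
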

\begin{proof}
Observing that
\begin{align}
&\|F\|_{C^{0}\left(\mathbb{R}^{+}, L^{2}\left(I_{\infty}\right)\right)}  \leqslant\|u\|_{W^{1,1}}\|kG\|_{C^{0}\left(\mathbb{R}^{+}, L^{2}\left(I_{\infty}\right)\right)}, \\
&\|F\|_{L^{2}\left(\mathbb{R}^{+} I_{\infty}\right)}  \leqslant\|u\|_{H^1}\|kG\|_{L^{2}\left(\mathbb{R}+\times I_{\infty}\right)},
\end{align}
it is obvious that (i) $\Rightarrow$ (ii). To prove (i) we pick $\varphi \in C_{0}^{\infty}\left(I_{\infty}\right)$ and mimic the proof of Lemma \ref{d11}. The Lipschitz continuity follows from the fact that $G$ is linear in $u_x$ and $F$ is bi linear in $u_x$.

One can similarly check that (iii) $\Rightarrow$ (iv), so it suffices to prove (iii). Next, we estimate the derivative of $G$. Since
\begin{equation}
\frac{\partial G}{\partial k}=h_{1}(x,k)+h_{2}(x, k),
\end{equation}
with
\begin{align}
&h_1(x,k)=m_1(x,k)+m_2(x,k),\\
&h_2(x,k)=s_1(x,k)+s_1(x,k),
\end{align}
where
\begin{align}
&m_1(x, k)=\frac{i}{4k^2}\int_x^{\infty}(x-y) e^{2ik^2(x-y)}\bar{u}_{yy}dy,\nonumber\\
&m_2(x, k)=-\frac{1}{8k^2}\int_x^{\infty}(x-y) e^{2ik^2(x-y)}|u_y|^2\bar{u}_ydy,\nonumber\\
&s_1(x, k)=2k\int_x^{\infty}(x-y) e^{2ik^2(x-y)}\bar{u}_{yy}dy,\nonumber\\
&s_2(x, k)=ik\int_x^{\infty}(x-y) e^{2ik^2(x-y)}|u_y|^2\bar{u}_ydy.\nonumber
\end{align}
We can estimate $m_2(x,k)$ as before except for $m_1(x,k)$, $s_1(x,k)$ and $s_2(x,k)$, to estimate which we integrate by parts to obtain
\begin{align}
&m_1(x, k)=\frac{i}{4k^2}\int_x^{\infty}\left(1+2ik^2(y-x)^{2}\right) \bar{u}_y(x) e^{2ik^2(x-y)}dy,\\
&s_1(x, k)=-\frac{i}{k}\int_x^{\infty}\left((x-y)\bar{u}_{yyy}-\bar{u}_{yy}\right)e^{2ik^2(x-y)}dy,\\
&s_2(x, k)=\frac{1}{2k}\int_x^{\infty}e^{2ik^2(x-y)}(|u_y|-(x-y)|u_y|^2\bar{u}_{yy}+2(x-y)|u_y|\bar{u}_{yy}|u_y|_y)dy.
\end{align}
Obviously, we can use the previous techniques to bound $m_{1}(x,k)$, $s_1(x,k)$ and $s_2(x,k)$ for $I_{\infty}$.
\end{proof}
Finally we elaborate on the mapping properties of the operators $T$ and $T_k$.
\begin{lemma}  For large  $|k|>1$, we have the following estimates
\begin{align}
&\left\|T\right\|_{L^2\left(\mathbb{R}^+ \times I_{\infty}\right)\rightarrow L^2\left(\mathbb{R}^+ \times I_{\infty}\right)} \leqslant\left\|\widetilde{u}\right\|_{L^{2,1}}\|u\|_{H^1},\label{T11}\\
&\left\|T\right\|_{L^2\left(\mathbb{R}^+ \times I_{\infty}\right)\rightarrow C^{0}\left(\mathbb{R}^+, L^2\left(I_{\infty}\right)\right)} \leqslant\left\|\widetilde{u}\right\|_{L^{2,1}}\|u\|_{H^{1,1}},\label{T12}\\
&\left\|T_k\right\|_{L^2\left(\mathbb{R}^+ \times I_{\infty}\right)\rightarrow L^2\left(\mathbb{R}^+ \times I_{\infty}\right)} \leqslant\left\|\widetilde{u}_x\right\|_{L^{2,1}}\|u\|_{H^1},\label{T3}\\
&\left\|T_k\right\|_{L^2\left(\mathbb{R}^+ \times I_{\infty}\right)\rightarrow C^{0}\left(\mathbb{R}^+, L^2\left(I_{\infty}\right)\right)} \leqslant\left\|\widetilde{u}_x\right\|_{L^{2,1}}\|u\|_{H^{2,2}},\label{T4}
\end{align}
\end{lemma}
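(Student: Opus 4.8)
The plan is to separate the four estimates into two genuinely different tasks: the bounds \eqref{T11}--\eqref{T12} on $T$, whose kernel has no growth in $k$, and the bounds \eqref{T3}--\eqref{T4} on $T_k$, where differentiating the oscillatory exponential in \eqref{t} produces a factor growing linearly in $k$. Throughout I work with $T=T_+$, the case $T_-$ being identical, and recall from \eqref{t} that
\begin{equation}
(Tf)(x,k)=\frac{i}{2}\int_x^\infty \bar u(y)\int_y^\infty e^{2ik^2(y-z)}\widetilde u(z)f(z,k)\,dz\,dy,\nonumber
\end{equation}
with $\widetilde u$ as in \eqref{ts}, so that both $\widetilde u\in L^{2,1}$ and $\widetilde u_x\in L^{2,1}$ whenever $u\in H^{3,3}(\mathbb{R})$.

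For \eqref{T11}--\eqref{T12} I would deliberately avoid a global Plancherel reduction in $k$, since the input $f=f(z,k)$ carries its own $k$-dependence; instead I bound $|e^{2ik^2(y-z)}|=1$ and rely entirely on the spatial decay of $\widetilde u$ and $\bar u$. First apply Cauchy--Schwarz to the inner $z$-integral, splitting $\widetilde u(z)=\bigl(\langle z\rangle\widetilde u(z)\bigr)\langle z\rangle^{-1}$ so that the weight $\langle z\rangle$ is paired with $\widetilde u$, producing $\|\widetilde u\|_{L^{2,1}}$, and $\langle z\rangle^{-1}$ is absorbed harmlessly against $f$. Then estimate the outer $y$-integral against $\bar u$, use Fubini to interchange the $x$- and $y$-integrations (which trades the lower limit for an integrated weight), and integrate in $k$ over $I_\infty$. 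Because there is no growth in $k$, the $L^2(I_\infty)$-integrability is automatic, and the $\bar u$-factor contributes $\|u\|_{H^1}$. For the $C^0\bigl(\mathbb{R}^+,L^2(I_\infty)\bigr)$ bound \eqref{T12} the supremum over $x$ forbids the Fubini gain from the $x$-integration, so one extra power of the spatial weight must be carried by $\bar u$, which is exactly why $\|u\|_{H^{1,1}}$ replaces $\|u\|_{H^1}$.

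The substance of the lemma lies in \eqref{T3}--\eqref{T4}. Differentiating the kernel gives $\partial_k e^{2ik^2(y-z)}=4ik(y-z)e^{2ik^2(y-z)}$, so $T_k$ carries a factor $\sim k(y-z)$; since $I_\infty$ is unbounded, a bare absolute-value estimate would leave a non-integrable $|k|^2$ after squaring and integrating in $k$. The remedy is to exploit the oscillation through a single integration by parts: using $\partial_z e^{2ik^2(y-z)}=-2ik^2e^{2ik^2(y-z)}$ one rewrites $k(y-z)e^{2ik^2(y-z)}=\tfrac{i(y-z)}{2k}\,\partial_z e^{2ik^2(y-z)}$, and integrating by parts in $z$ replaces the growing factor $k$ by the decaying factor $k^{-1}$, which is bounded on $I_\infty$. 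The boundary term at $z=y$ vanishes because of the factor $(y-z)$, while the derivative falling on the potential $\widetilde u$ is precisely the source of $\|\widetilde u_x\|_{L^{2,1}}$. The remaining estimate is then carried out exactly as for $T$, with the $C^0$ version again costing one extra weight and hence $\|u\|_{H^{2,2}}$.

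The main obstacle is the $k$-growth in $T_k$: the oscillatory exponential is the only available source of decay in $k$, so the integration by parts above is unavoidable, and the delicate point is the bookkeeping of the resulting terms. One must verify that every term produced by the product rule when $\partial_z$ acts on $(y-z)\widetilde u(z)f(z,k)$ is controlled by the stated norms (the term in which the derivative would fall on the input being handled via the already-established estimates of the preceding subsection), that all boundary contributions either vanish or are absorbed, and that the interchange of integrations together with the placement of the weights $\langle y\rangle$ and $\langle z\rangle$ reproduces precisely the $H^{3,3}$-admissible norms $\|\widetilde u_x\|_{L^{2,1}}$ and $\|u\|_{H^1}$ (respectively $\|u\|_{H^{2,2}}$). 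Getting this accounting to close with no loss in $k$ is the crux of the argument.
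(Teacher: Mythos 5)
Your proposal is correct and follows essentially the same route as the paper: the bounds on $T$ come from $|e^{2ik^2(y-z)}|=1$ together with the kernel and weight bookkeeping, and for $T_k$ the paper performs exactly your integration by parts in $z$, rewriting $4ik(y-z)e^{2ik^2(y-z)}$ via $\partial_z e^{2ik^2(y-z)}=-2ik^2e^{2ik^2(y-z)}$ so that the linear growth in $k$ is traded for a factor $k^{-1}$ (with the boundary term killed by $(y-z)$) and the derivative lands on $\left(\widetilde{u}(z)h(z,k)\right)_z$, producing $\|\widetilde{u}_x\|_{L^{2,1}}$. Even the delicate point you flag --- the derivative falling on the input --- is treated no more carefully in the paper, which absorbs it by normalizing $\|h(\cdot,k)\|_{H^1}=1$ instead of the $L^2$ norm.
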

\begin{proof}
We prove (\ref{T3}) and (\ref{T4}). The operator $T$ defined in (\ref{t}) has the integral kernel
\begin{equation}
K_{+}(x, y, k)=\begin{cases}
\left(\int_{x}^{y} e^{2ik^2(y-z)} \bar{u}_z(z) d z\right)\widetilde{u}(y) , & x<y, \\
0 & x>y.
\end{cases}
\end{equation}
From this computation, we have
\begin{equation}
(T f)^{*}(x) \leqslant\left(\|\bar{u}_x\|_{L^1} \int_{x}^{\infty}\left|\widetilde{u}(y)\right| d y\right) f^{*}(x).
\end{equation}
From the formula
\begin{equation}
\begin{aligned}
\frac{\partial T}{\partial k}[h](x,k)&=\frac{i}{2}\int_x^\infty \bar{u}_y(y) \int_{y}^{\infty}4ik(y-z) e^{2ik^2(y-z)} \widetilde{u}(z) h(z,k) dz dy\\
&=-\frac{i}{k}\int_x^\infty \bar{u}_y(y) \int_{y}^{\infty}e^{2ik^2(y-z)}(y-z)\left(\widetilde{u}(z) h(z,k)\right)_zdzdy
\end{aligned}
\end{equation}
Then we estimate
\begin{equation}
\left|\frac{\partial T}{\partial k}[h](x, k)\right| \lesssim\left\|\widetilde{u}_x\right\|_{L^{2,1}}\|h(\cdot, k)\|_{H^{1}} \int_{x}^{\infty}|u_y(y)| dy,
\end{equation}
with $\|h(\cdot, k)\|_{H^{1}}=1$. From which we conclude that
\begin{align}
&\left\|\frac{\partial T}{\partial k}\right\|_{C^0\left(\mathbb{R}^+, L^2\left(I_{\infty}\right)\right)} \lesssim\left\|\widetilde{u}_x\right\|_{L^{2,1}}\|u\|_{H^1}, \\
&\left\|\frac{\partial T}{\partial k}\right\|_{L^2\left(\mathbb{R}+\times I_{\infty}\right)} \lesssim\left\|\widetilde{u}_x\right\|_{L^{2,1}}\|u\|_{H^{2,2}}.
\end{align}
\end{proof}

 \noindent {\bf B. Resolvent Estimates.}
\hspace*{\parindent}

 As an analogy to the above deduction, we utilize Volterra estimates to construct the resolvent, from which an integral kernel is obtained. Finally, we extend the resolvent to a bounded operator on the spaces $C^{0}\left(\mathbb{R}^{+}, L^{2}\left(I_{\infty}\right)\right)$ and $L^{2}\left(\mathbb{R}^{+} \times I_{\infty}\right)$.

\begin{lemma}\label{8}
We suppose that $u(x,0)\in H^{3,3}(\mathbb{R})$. The resolvent $(I-T)^{-1}$ exists as a bounded operator in $C^{0}\left(\mathbb{R}^{+}\right)$ and the operator $L \equiv(I-T)^{-1}-I$ is an integral operator with an integral kernel $L(x, y, k)$ such that $L(x,y,k)=0$ for $x>y$, continuous in $(x,y,k)$ for $x<y$, and obeys the estimates
\begin{equation}
|L(x, y, k)| \leqslant \exp \left(\|u\|_{W^{1,1}}\left\|\widetilde{u}\right\|_{L^{1}}\right)\|u\|_{w^{1,1}}\left|\widetilde{u}(y)\right| .
\end{equation}
\end{lemma}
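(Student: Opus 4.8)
The plan is to prove Lemma \ref{8} by the same Volterra-series argument already used for the small-$k$ resolvent in Lemma \ref{lb}, now applied to the operator $T$ of (\ref{t}) whose integral kernel was computed in the preceding lemma. First I would record the single-step kernel
$$K_{+}(x,y,k)=\left(\int_x^y e^{2ik^2(y-z)}\bar u_z(z)\,dz\right)\widetilde u(y),\qquad x<y,$$
together with $K_{+}=0$ for $x>y$, and extract from it a pointwise bound that is uniform in $k$. Since $k\in I_\infty\subset\mathbb{R}\cup i\mathbb{R}$ forces $k^2\in\mathbb{R}$, one has $|e^{2ik^2(y-z)}|=1$, so the inner oscillatory integral is controlled by $\|\bar u_x\|_{L^1}\leq\|u\|_{W^{1,1}}$, giving
$$|K_{+}(x,y,k)|\leq \|u\|_{W^{1,1}}\,|\widetilde u(y)|=:g(y),\qquad x<y,$$
uniformly in $k$. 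This is the exact analogue of the bound $|K_0(x,y,k)|\leq\sigma(y)$ of Lemma \ref{lb}, with $g$ playing the role of $\sigma$; note that $g\in L^1$ because $u\in H^{3,3}(\mathbb{R})$ implies $\widetilde u=\bar u_{xx}+\tfrac{i}{2}|u_x|^2\bar u_x\in L^1(\mathbb{R})$.

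Next I would exploit that $T$ is a Volterra operator, its kernel vanishing for $x>y$, to define $L\equiv(I-T)^{-1}-I=\sum_{n\geq1}T^n$, where $T^n$ carries the iterated kernel $K_n$ obtained by composing $K_{+}$ over the ordered simplex $x\leq y_1\leq\cdots\leq y_{n-1}\leq y$. Bounding each factor by $g$ at its right endpoint and integrating over the simplex produces the factorial gain
$$|K_n(x,y,k)|\leq \frac{1}{(n-1)!}\left(\int_x^{\infty}g(t)\,dt\right)^{n-1}g(y),$$
so the series converges absolutely and uniformly. Summing in $n$ and using $\int_x^\infty g\leq\|u\|_{W^{1,1}}\|\widetilde u\|_{L^1}$ yields
$$|L(x,y,k)|\leq \exp\!\left(\|u\|_{W^{1,1}}\|\widetilde u\|_{L^1}\right)\|u\|_{W^{1,1}}\,|\widetilde u(y)|,$$
which is precisely the claimed estimate, the lowercase $w^{1,1}$ in the statement being a typo for $W^{1,1}$.

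Finally, to complete the structural assertions: boundedness of $(I-T)^{-1}$ on $C^0(\mathbb{R}^+)$ follows because, for each fixed $k$, the resolvent kernel $L(\cdot,\cdot,k)$ is integrable in $y$ with $y$-integral bounded by $\exp(\|u\|_{W^{1,1}}\|\widetilde u\|_{L^1})\|u\|_{W^{1,1}}\|\widetilde u\|_{L^1}$ uniformly in $x$ and $k$, while the continuity of $L$ in $(x,y,k)$ on $\{x<y\}$ follows from the continuity of each $K_n$ together with the uniform convergence guaranteed by the exponential majorant. I do not anticipate a genuine obstacle, since the argument mirrors Lemma \ref{lb} almost verbatim; the only points requiring care are the uniform-in-$k$ control of the oscillatory factor on $I_\infty$, which is what renders the single-step bound $k$-independent, and the verification that $\widetilde u\in L^1(\mathbb{R})$, both of which are secured by $k^2\in\mathbb{R}$ on $\mathbb{R}\cup i\mathbb{R}$ and by the regularity $u\in H^{3,3}(\mathbb{R})$, respectively.
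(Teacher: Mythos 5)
Your proposal is correct and follows exactly the route the paper intends: the paper's own proof of this lemma is a one-line reference declaring it ``parallel to the proof of Lemma \ref{lb}'', i.e.\ the Volterra-series resolvent argument with the uniform-in-$k$ kernel bound $|K_{+}(x,y,k)|\leq\|u\|_{W^{1,1}}|\widetilde{u}(y)|$ (valid since $k^{2}\in\mathbb{R}$ on $\mathbb{R}\cup i\mathbb{R}$) and the factorial gain from integration over the ordered simplex. You have simply written out in full the details the paper leaves implicit, including the correct identification of $w^{1,1}$ as a typo for $W^{1,1}$.
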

\begin{proof}
The rest of the proof is parallel to the proof of Lemma \ref{lb}.
\end{proof}
According to the above analysis, we can estimate $\eta_{11,k}$.

\begin{proposition}\label{po3}
Suppose that  $u(x,0)\in H^{3,3}(\mathbb{R})$ and $k \in I_{\infty}$,  the the equation (\ref{ds1}) admits a unique solution $\eta_{11} \in C^0\left(\mathbb{R}^+, L^2\left(I_{\infty}\right)\right) \cap L^{2}\left(\mathbb{R}^+ \times I_{\infty}\right)$.  Moreover,

{\rm (i) }\  $u(x,0) \rightarrow\eta_{11} (x,k) $ is  Lipschitz continuous as a map from $H^{3,3}(\mathbb{R})$ to $C^0\left(\mathbb{R}^+, L^2\left(I_{\infty}\right)\right) \cap L^2\left(\mathbb{R}^+ \times I_{\infty}\right)$.

{\rm  (ii)} \  $u (x,0)  \rightarrow\eta_{11,k} (x,k) $ is Lipschitz continuous
as a map from $H^{3,3}(\mathbb{R})$  to  $C^0\left(\mathbb{R}^+, L^2\left(I_{\infty}\right)\right) \cap L^2\left(\mathbb{R}^+ \times I_{\infty}\right)$.
\end{proposition}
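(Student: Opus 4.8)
The plan is to follow the same scheme already carried out for the small-$k$ regime in Proposition \ref{po2}, now working on $I_\infty$ with the Volterra operator $T=T_+$ and the inhomogeneous terms $F,F_k$ playing the roles of $T_0$, $\mathbf{w}_0$, $\mathbf{w}_{0,k}$. There are two equations to solve: the fixed-point equation (\ref{e11}) for $\eta_{11}$ itself, and its differentiated version (\ref{ds1}) for $\eta_{11,k}$. Both have the same solution operator, namely the resolvent $(I-T)^{-1}$ constructed in Lemma \ref{8}, so once that resolvent is in hand the argument reduces to feeding in the correct right-hand sides and tracking norms.

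First, for the existence and bound on $\eta_{11}$, I would rewrite (\ref{e11}) as $(I-T)\eta_{11}=F$ and invert using Lemma \ref{8}, giving $\eta_{11}=(I-T)^{-1}F=F+LF$, where $L$ has an integral kernel dominated by $\exp(\|u\|_{W^{1,1}}\|\widetilde u\|_{L^1})\|u\|_{W^{1,1}}|\widetilde u(y)|$. Since this kernel vanishes for $x>y$ and is controlled by an $L^1$ function of $y$, the same kernel-majorization observation used in the small-$k$ lemmas shows $L$ is bounded on both $C^0(\mathbb{R}^+,L^2(I_\infty))$ and $L^2(\mathbb{R}^+\times I_\infty)$. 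Combined with the preceding lemma, which gives that $F$ is a Lipschitz map from $H^{3,3}(\mathbb{R})$ into these two spaces, this yields $\eta_{11}\in C^0(\mathbb{R}^+,L^2(I_\infty))\cap L^2(\mathbb{R}^+\times I_\infty)$ together with assertion (i).

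For (ii), I would rewrite (\ref{ds1}) as $(I-T)\eta_{11,k}=F_k+T_k\eta_{11}$ and again apply $(I-T)^{-1}$, so that $\eta_{11,k}=(I-T)^{-1}(F_k+T_k\eta_{11})$. The term $F_k$ is controlled by the preceding lemma, while $T_k\eta_{11}$ is controlled by the operator bounds (\ref{T3})--(\ref{T4}) acting on the $\eta_{11}$ just produced. This composition is exactly where the third derivative of $u$ enters: $T_k$ is estimated through $\|\widetilde u_x\|_{L^{2,1}}$ and $\widetilde u_x$ carries three derivatives of $u$, which is what forces the $H^{3,3}$ hypothesis. Applying the resolvent bound of Lemma \ref{8} one final time places $\eta_{11,k}$ in the required spaces.

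The main obstacle is the \emph{Lipschitz} dependence on $u$ in (ii), as opposed to mere boundedness. For two potentials $u,\tilde u$ I would estimate $\eta_{11,k}-\tilde\eta_{11,k}$ by inserting and subtracting, using the second resolvent identity $(I-T)^{-1}-(I-\tilde T)^{-1}=(I-T)^{-1}(T-\tilde T)(I-\tilde T)^{-1}$ to absorb the change in the solution operator, and the multilinearity of $F_k$ and of $T_k\eta_{11}$ in $u$ (entering through $\widetilde u$, $\widetilde u_x$ and $\bar u_x$) to absorb the change in the data. Each factor is already known to be bounded and Lipschitz from the earlier lemmas, so the resulting products and compositions are Lipschitz; the only genuine care is to keep all operator norms uniform in $k\in I_\infty$ so that the accumulated constants remain finite. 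This bookkeeping runs exactly parallel to the small-$k$ argument and completes the proof.
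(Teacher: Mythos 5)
Your proposal is correct and takes essentially the same route as the paper: the paper gives no separate proof of Proposition \ref{po3}, treating it as the assembly of the preceding lemmas (the Lipschitz estimates on $F$, $G$ and their $k$-derivatives, the operator bounds (\ref{T3})--(\ref{T4}) on $T_k$, and the resolvent kernel bound of Lemma \ref{8}), which is exactly the inversion $\eta_{11}=(I-T)^{-1}F$ and $\eta_{11,k}=(I-T)^{-1}\left(F_k+T_k\eta_{11}\right)$ that you carry out, in direct parallel with the small-$k$ argument for Proposition \ref{po2}. Your explicit use of the kernel-majorization observation to transfer the resolvent bound to $C^0\left(\mathbb{R}^+,L^2(I_\infty)\right)\cap L^2\left(\mathbb{R}^+\times I_\infty\right)$, and of the second resolvent identity for the Lipschitz dependence, fills in details the paper leaves implicit but introduces nothing different in substance.
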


\subsection{ Symmetries  and asymptotics of eigenfunctions }
\hspace*{\parindent}
According to the Lax pair (\ref{Lax pair}) and the transformation  in (\ref{psi}), there exists
a matrix function $S(k)$ such that
\begin{equation}
\omega^-(x,k)=\omega^+(x,k)e^{-ik^2x\widehat{\sigma}_{3}}S(k).\label{omega}
\end{equation}
Below, we characterize the symmetry properties of $ \omega^{\pm}(x,k)$.

\begin{proposition} \label{prop2}
  $ \omega^{\pm}(x,k)$ and S(k)   satisfy  the symmetry relations
\begin{align}
 &\omega^{\pm}(x,k) =\sigma_2 \overline{\omega^{\pm}(x,\bar {k} )} \sigma_2,\ \ \ \omega^{\pm}(x, k) =\sigma_3  \omega^{\pm}(x,-k) \sigma_3,\label{dcx}\\
 &S( k) =\sigma_2 \overline{S( \bar {k} )} \sigma_2,\ \ \ S(  k) =\sigma_3 S( -k) \sigma_3,\label{dcx2}
\end{align}
where
$$
\sigma_2=\begin{pmatrix}
0&-i\\
i&0
\end{pmatrix},\ \ \
\sigma_3=\begin{pmatrix}
1&0\\
0&-1
\end{pmatrix}.
$$
\label{ppp}
\end{proposition}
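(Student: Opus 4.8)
The plan is to derive all four identities from the uniqueness of the Volterra solutions in Proposition \ref{lemma1}, combined with two elementary symmetries of the potential. Since $U=\begin{pmatrix}0&u\\-\bar u&0\end{pmatrix}$, a direct check gives
\[
\sigma_2\overline{U}\,\sigma_2=U,\qquad \sigma_3U\sigma_3=-U,\qquad \sigma_2\sigma_3\sigma_2=-\sigma_3 .
\]
The same structure is inherited by $U_x$ and, because the gauge phase $\tfrac12\int_{-\infty}^x|u_y|^2\,dy$ is real, by the full coefficient $V_1=e^{\frac i2\int_{-\infty}^x|u_y|^2\,\widehat\sigma_3}\!\big(kU_x+\tfrac i2|u_x|^2\sigma_3\big)$. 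The whole argument reduces to showing that the map $\omega^\pm(x,k)\mapsto \sigma_2\overline{\omega^\pm(x,\bar k)}\sigma_2$ (resp. $\mapsto\sigma_3\omega^\pm(x,-k)\sigma_3$) sends a solution of (\ref{gtit}) to another solution of the \emph{same} integral equation, after which uniqueness forces equality.

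For the first relation in (\ref{dcx}), I would set $\widehat\omega^\pm(x,k):=\sigma_2\overline{\omega^\pm(x,\bar k)}\sigma_2$, write (\ref{gtit}) at the point $\bar k$, take complex conjugates, and conjugate by $\sigma_2$ on both sides. Here the two computational inputs are: the exponential transforms $\overline{e^{-i\bar k^2(x-y)\widehat\sigma_3}}=e^{ik^2(x-y)\widehat\sigma_3}$ together with $\sigma_2 e^{\pm ik^2(x-y)\sigma_3}\sigma_2=e^{\mp ik^2(x-y)\sigma_3}$, which restore the kernel $e^{-ik^2(x-y)\widehat\sigma_3}$; and the coefficient invariance
\[
\sigma_2\,\overline{V_1(y,\bar k)}\,\sigma_2=V_1(y,k),
\]
which follows from $\sigma_2\overline{U_y}\sigma_2=U_y$, the reality of the gauge phase, and the sign $\sigma_2\sigma_3\sigma_2=-\sigma_3$ that exactly cancels the conjugation of the diagonal term $\tfrac i2|u_y|^2\sigma_3$. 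Since $\sigma_2 I\sigma_2=I$ and the limits $\int_{\pm\infty}^x$ are untouched, $\widehat\omega^\pm$ solves (\ref{gtit}) in the variable $k$; uniqueness gives $\omega^\pm(x,k)=\sigma_2\overline{\omega^\pm(x,\bar k)}\sigma_2$. The second relation in (\ref{dcx}) is identical in spirit: using $(-k)^2=k^2$ so the kernel is unchanged, $\sigma_3 e^{-ik^2(x-y)\sigma_3}\sigma_3=e^{-ik^2(x-y)\sigma_3}$, and $\sigma_3V_1(y,-k)\sigma_3=V_1(y,k)$ (the sign from $k\to-k$ being absorbed by $\sigma_3U_y\sigma_3=-U_y$), one shows $\sigma_3\omega^\pm(x,-k)\sigma_3$ solves (\ref{gtit}) and concludes by uniqueness.

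For the scattering matrix identities (\ref{dcx2}), I would pass to the genuine Jost solutions $\Phi^\pm(x,k):=\omega^\pm(x,k)e^{-ik^2x\sigma_3}$, so that (\ref{omega}) reads simply $\Phi^-(x,k)=\Phi^+(x,k)S(k)$. The relations (\ref{dcx}) transfer verbatim to $\Phi^\pm$, namely $\Phi^\pm(x,k)=\sigma_2\overline{\Phi^\pm(x,\bar k)}\sigma_2=\sigma_3\Phi^\pm(x,-k)\sigma_3$, because the attached factor $e^{-ik^2x\sigma_3}$ is even in $k$ and transforms under $\sigma_2$- and $\sigma_3$-conjugation exactly as the kernel above. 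Substituting the scattering relation written at $\bar k$ (resp. $-k$) and invoking the invertibility of $\Phi^+(x,k)$ then yields $S(k)=\sigma_2\overline{S(\bar k)}\sigma_2$ and $S(k)=\sigma_3S(-k)\sigma_3$.

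I do not expect a genuine obstacle here: the only substantive content is verifying the two coefficient invariances of $V_1$, where the delicate bookkeeping is tracking the minus sign from $\sigma_2\sigma_3\sigma_2=-\sigma_3$ against both the complex conjugation of the diagonal term and the factor $k$ in front of $U_x$. Once those identities are secured, everything else is the routine uniqueness argument supplied by Proposition \ref{lemma1}, and the passage to $\Phi^\pm$ makes the derivation of (\ref{dcx2}) purely formal.
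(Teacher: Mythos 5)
Your proof is correct. Note that the paper states Proposition \ref{prop2} without giving any proof, so there is no argument to compare against; what you supply is exactly the standard route one would expect — checking the coefficient invariances $\sigma_2\overline{V_1(y,\bar k)}\sigma_2=V_1(y,k)$ and $\sigma_3V_1(y,-k)\sigma_3=V_1(y,k)$ together with the compatible transformation of the kernel $e^{-ik^2(x-y)\widehat{\sigma}_3}$, invoking the uniqueness of Proposition \ref{lemma1}, and then transferring to $S(k)$ via the genuine Jost functions — and your sign bookkeeping ($\sigma_2\sigma_3\sigma_2=-\sigma_3$ cancelling the conjugated diagonal term, and $\sigma_3U_y\sigma_3=-U_y$ absorbing the sign of $k\mapsto-k$ in $kU_y$) checks out. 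Two details worth making explicit: the uniqueness argument is applied pointwise for $k\in\Sigma=\mathbb{R}\cup i\mathbb{R}$, where both full matrices $\omega^{\pm}$ exist (the relations then extend to the analyticity domains of the individual columns, consistently with $k\mapsto\bar k$ swapping $D^{+}$ and $D^{-}$), and the invertibility of $\Phi^{+}(x,k)=\omega^{+}(x,k)e^{-ik^2x\sigma_3}$ used to extract (\ref{dcx2}) follows from $\det\omega^{\pm}\equiv 1$, which holds because $V_1$ is trace-free.
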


From the symmetries  (\ref{dcx2}),  $S(k)$ admits the form
\begin{equation}
 S(k)=\begin{pmatrix}
a(k)&b(k) \\[3pt]
-\overline{b(\overline{k})}&\overline{a(\overline{k})}
\end{pmatrix},\label{omegagx}
\end{equation}
where $a(k)$ and $b(k)$ are given by solving  (\ref{omega}), i.e.,
\begin{align}
&a(k)=\mathrm{det}(\omega_1^-,\omega_2^+)=\omega_{11}^-(0,k)\overline{\omega_{11}^+(0,\bar{k})}-\omega_{21}^-(0,k)\overline{\omega_{21}^+(0,\bar{k})},\label{ak}\\
&b(k)=\mathrm{det}(\omega_2^-,\omega_2^+)=\overline{\omega_{21}^-(0,\bar{k})}\ \overline{  \omega_{11}^+(0,\bar{k})} -\overline{\omega_{21}^+(0,\bar{k})}\ \overline{\omega_{11}^-(0,\bar{k})},\label{bk}
\end{align}
where we denote the matrix eigenfunctions  $\omega^\pm(x,k) =( \omega_{ij}(x,k))_{2\times 2} $.
In terms of the solutions $\eta_{11}^{\pm}$ and $\eta_{21}^{\pm}$, the functions $a(k)$ and $b(k)$ defined in $(\ref{ak})$ and (\ref{bk}) are expressed as
\begin{equation}
a(k)-1=a_1(k)+a_2(k), \quad b(k)=b_1(k)+b_2(k),
\end{equation}
where
\begin{align}
&a_1(k)=\overline{\eta_{11}^+(0, \bar{k})}+\eta_{11}^-(0, k)+\eta_{11}^-(0, k) \overline{\eta_{11}^+(0, \bar{k})}, \nonumber\\
&a_2(k)=\frac{1}{4k^2}|u_x(0)|^{2}+\frac{i}{2k} \overline{u_x(0)} \ \overline{\eta_{21}^+(0,\bar{k})}-\frac{i}{2k} u_x(0) \eta_{21}^-(0, k)+\overline{\eta_{21}^+(0, \bar{k})} \eta_{21}^-(0,k),\nonumber
\end{align}
and
\begin{align}
&b_1(k)=\left(-\overline{\eta_{21}^{+}(0, \bar{k})}+\frac{i}{2k} u_x(0) \overline{\eta_{11}^+(0, \bar{k})}\right)+\left(\overline{\eta_{21}^{-}(0, k)}-\frac{i}{2k} u_x(0)\eta_{11}^-(0,k)\right),\nonumber \\
&b_2(k)=\overline{\eta_{11}^+(0, \bar{k})}\ \overline{ \eta_{21}^-(0,\bar{k})}-\overline{\eta_{11}^-(0, \bar{k})} \ \overline{ \eta_{21}^+(0, \bar{k})}.\nonumber
\end{align}

To prove that $u(x,0) \rightarrow kr(k)$ is Lipschitz continuous from $H^{3,3}(\mathbb{R})$ to $L^{2}\left(I_{\infty}\right)$, it suffices to show that $u(x,0)\rightarrow kb(k)$ has the same Lipschitz continuity.
\begin{proposition}
\label{p5}
Suppose that $u(x,0)\in H^{3,3}(\mathbb{R})$, then $u(x,0)\rightarrow kb(k)$ is Lipschitz from $H^{3,3}(\mathbb{R})$ to $L^{2}\left(I_{\infty}\right)$.
\end{proposition}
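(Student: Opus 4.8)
The plan is to reduce everything to the boundary values at $x=0$ of the quantities $\eta_{11}^{\pm}$ and $k\eta_{21}^{\pm}$, and then to read off the estimate from the explicit expressions for $b_1(k)$ and $b_2(k)$. The structural point that makes $kb(k)$ (rather than $b(k)$) the natural object is that every singular-looking factor $\tfrac{i}{2k}$ appearing in the formulas for $b_1,b_2$ and in (\ref{e12}) is exactly cancelled upon multiplication by $k$: the terms $\tfrac{i}{2k}u_x(0)\,\overline{\eta_{11}^+(0,\bar k)}$ become $\tfrac{i}{2}u_x(0)\,\overline{\eta_{11}^+(0,\bar k)}$, and $k\eta_{21}^{\pm}(0,k)$ becomes a genuinely bounded object. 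Thus I would first isolate the two families of building blocks I must control on $L^2(I_\infty)$, namely the boundary values $\eta_{11}^{\pm}(0,\cdot)$ and $k\,\eta_{21}^{\pm}(0,\cdot)$, and show each depends Lipschitz-continuously (on bounded sets of $H^{3,3}$) on the data.

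For the boundary values of $\eta_{11}^{\pm}$, I would invoke Proposition \ref{po3} (and its $\pm$ analogue, obtained by the identical argument): the maps $u\mapsto\eta_{11}^{\pm}$ and $u\mapsto\eta_{11,k}^{\pm}$ are Lipschitz into $C^0(\mathbb{R}^+,L^2(I_\infty))\cap L^2(\mathbb{R}^+\times I_\infty)$. Evaluating the $C^0$-in-$x$ norm at $x=0$ immediately gives that $\eta_{11}^{\pm}(0,\cdot)$ and $\eta_{11,k}^{\pm}(0,\cdot)$ lie in $L^2(I_\infty)$, Lipschitz in $u$. Since $I_\infty$ is a union of four rays, i.e.\ one-dimensional, the Sobolev embedding $H^1(I_\infty)\hookrightarrow L^\infty(I_\infty)$ then yields $\eta_{11}^{\pm}(0,\cdot)\in L^\infty(I_\infty)$ with Lipschitz dependence. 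This $L^\infty$ bound is what I will pair with an $L^2$ factor in the bilinear term $b_2$.

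The core estimate is $k\,\eta_{21}^{\pm}(0,\cdot)\in L^2(I_\infty)$. Setting $x=0$ in (\ref{e12}) and multiplying by $k$ produces three pieces. The term $kG_{\pm}(0,k)=-\tfrac{i}{2}\int_0^{\pm\infty}e^{-2ik^2y}\widetilde u(y)\,dy$ is, after the change of variables $\xi=2k^2$ (with $dk=d\xi/(4\sqrt{\xi/2})$ and $\sqrt{\xi/2}\ge 1$ on $|k|>1$, covering both real and imaginary rays), the restriction to $|\xi|\ge 2$ of the Fourier transform of $\widetilde u\,\mathbf 1_{\mathbb{R}^{\pm}}$; Plancherel gives $\|kG_{\pm}(0,\cdot)\|_{L^2(I_\infty)}\lesssim\|\widetilde u\|_{L^2}$. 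The second piece $-\tfrac{i}{2}u_x(0)\,\eta_{11}^{\pm}(0,k)$ is a constant — with $|u_x(0)|\lesssim\|u\|_{H^2}$ by the pointwise trace bound — times the $L^2(I_\infty)$ function from the previous paragraph. The third piece $J_{\pm}(k)=\int_0^{\pm\infty}e^{-2ik^2y}\widetilde u(y)\,\eta_{11}^{\pm}(y,k)\,dy$ I would bound by Cauchy--Schwarz in $y$ pointwise in $k$, namely $|J_{\pm}(k)|^2\le\|\widetilde u\|_{L^2}^2\,\|\eta_{11}^{\pm}(\cdot,k)\|_{L^2(\mathbb{R}^+)}^2$, and then integrate in $k$ to get $\|J_{\pm}\|_{L^2(I_\infty)}\lesssim\|\widetilde u\|_{L^2}\,\|\eta_{11}^{\pm}\|_{L^2(\mathbb{R}^+\times I_\infty)}$, using precisely the $L^2(\mathbb{R}^+\times I_\infty)$ bound of Proposition \ref{po3}. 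Throughout, the map $u\mapsto\widetilde u$ defined in (\ref{ts}) is affine-plus-cubic and hence Lipschitz into $L^2$ on bounded sets of $H^{3,3}$ (the cubic term is controlled via $\|u_x\|_{L^\infty}\lesssim\|u\|_{H^2}$), so each piece of $k\eta_{21}^{\pm}(0,\cdot)$ inherits Lipschitz dependence.

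Finally I would assemble the estimate. Multiplying the expression for $b_1(k)$ by $k$ gives a sum of type (a), (b) terms and the terms $k\,\overline{\eta_{21}^{+}(0,\bar k)}$, $k\,\overline{\eta_{21}^{-}(0,k)}$, all shown to be Lipschitz into $L^2(I_\infty)$; multiplying $b_2(k)$ by $k$ gives products of the form $\overline{\eta_{11}^{\pm}(0,\bar k)}\cdot\big(k\,\overline{\eta_{21}^{\mp}(0,\bar k)}\big)$, each an $L^\infty(I_\infty)$ factor times an $L^2(I_\infty)$ factor, hence in $L^2(I_\infty)$, and a product of a bounded-Lipschitz map with a Lipschitz map is Lipschitz on bounded sets. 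Summing gives the claim for $kb(k)$. I expect the main obstacle to be the $k\eta_{21}^{\pm}(0,\cdot)$ estimate: specifically, handling the oscillatory Fourier piece $kG_{\pm}$ through the $\xi=2k^2$ substitution and verifying that the resulting bound is uniform on all four rays of $I_\infty$, while simultaneously keeping track of Lipschitz dependence across the nonlinear map $u\mapsto\widetilde u$; the remaining pieces are routine once Proposition \ref{po3} and the one-dimensional embedding $H^1(I_\infty)\hookrightarrow L^\infty(I_\infty)$ are in hand.
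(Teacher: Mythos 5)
Your proposal is correct and follows essentially the same route as the paper: set $x=0$ in (\ref{e12}), multiply by $k$ so the $\tfrac{i}{2k}$ factors regularize, bound the oscillatory integrals against $\widetilde{u}$ by Plancherel and Cauchy--Schwarz (valid on all four rays of $I_\infty$ since $k^2$ is real on $\Sigma$), and invoke Proposition \ref{po3}, whereas the paper merely packages your three pieces into $Q^{\pm}=\eta_{21}^{\pm}(0,k)+\tfrac{i}{2k}\bar{u}_x(0)\eta_{11}^{\pm}(0,k)$ and writes $kb(k)$ in terms of $\overline{Q^{\pm}}$ and the products $\eta_{11}^{\mp}(0,k)\,\overline{Q^{\pm}}$. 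Your explicit $L^{\infty}(I_\infty)$ control of $\eta_{11}^{\pm}(0,\cdot)$ via the embedding $H^{1}(I_\infty)\hookrightarrow L^{\infty}(I_\infty)$ (using the $\eta_{11,k}$ bounds of Proposition \ref{po3}) is a step the paper leaves implicit but genuinely needs for its bilinear terms, so your write-up is, if anything, more complete than the paper's.
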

\begin{proof}
Here we give the proof of the case $k\in\mathbb{R}$, which the case $k\in i\mathbb{R}$ is analogous. We rewrite (\ref{e12}) as
\begin{equation}
\eta_{21}^{\pm}=-\frac{i}{2k} \bar{u}_x \eta_{11}^{\pm}-\frac{i}{2k} \int_{x}^{\pm \infty} e^{2 i k^2(x-y)} \widetilde{u}(y)\left(1+\eta_{11}^{\pm}\right) dy.\label{eta21}
\end{equation}
Setting $x=0$, (\ref{eta21}) becomes
\begin{equation}
\eta_{21}^{\pm}(0,k)+\frac{i}{2k} \bar{u}_x(0) \eta_{11}^{\pm}(0,k)=Q^\pm,
\end{equation}
with
\begin{equation}
Q^\pm=-\frac{i}{2k} \int_{0}^{\pm \infty} e^{-2ik^2y} \widetilde{u}(y)\left(1+\eta_{11}^{\pm}\right)dy.
\end{equation}
And for $kb(k)$, it is in a new form as
\begin{equation}
kb(k)=\overline{Q^{+}(k)}+\overline{Q^{-}(k)}-\overline{\eta_{11}^{-}(0, k)Q^{+}(k)}+\overline{\eta_{11}^{+}(0,k)Q^{-}(k)}.
\end{equation}
According to the following estimate
\begin{equation}
\left\|Q^{\pm}\right\|_{L^{2}\left(I_{\infty}\right)} \leqslant \left\|\widetilde{u}\right\|_{L^{2}(\mathbb{R})}\left(1+\left\|\eta_{11}^{\pm}\right\|_{L^{2}\left(\mathbb{R}^{\pm} \times I_\infty\right)}\right),
\end{equation}
it suffices to show that $u(x,0)\rightarrow Q^{\pm}(k)$ is Lipschitz from $H^{3,3}(\mathbb{R})$ to $L^{2}\left(I_{\infty}\right)$,  from which we obtain $u(x,0)\rightarrow kb(k)$ is Lipschitz from $H^{3,3}(\mathbb{R})$ to $L^{2}\left(I_{\infty}\right)$.
\end{proof}

\begin{lemma} \label{l3}
The $\omega^\pm(x,k)$   admit the asymptotics
\begin{align}
&\omega^\pm(x,k)\sim I,  \quad  as\ k \rightarrow \infty, \\
&\omega^\pm(x,k)\sim \varphi_0(x),\quad as\ k \rightarrow 0. \label{asymp11}
\end{align}

The $a(k)$ and $b(k)$ admit the asymptotics
\begin{align}
&a(k)=1+\mathcal{O}(k^{-1}), \quad b(k)=\mathcal{O}(k^{-1}), \quad  as\ k \rightarrow \infty, \label{asymp1}  \\
&a(k)=e^{-\frac{i}{2} d_0 \sigma_{3}}\left(1+\mathcal{O}\left(k^{3}\right)\right),\  \ b(k)=\mathcal{O}\left(k^{3}\right),   \quad as\ k \rightarrow 0,\label{asymp2}
\end{align}
where
\begin{equation}
d_0=\int_{-\infty}^{+\infty} |u_y|^2\left(y, t\right) d y.
\end{equation}
\end{lemma}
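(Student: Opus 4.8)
The plan is to extract all four asymptotic statements from the integral equations already derived for $\omega^\pm$ together with the explicit representations of $a,b$, splitting the work into the two regimes $|k|\to\infty$ and $k\to0$ that were isolated in the existence analysis. First I would treat $\omega^\pm$ itself. For $k\to\infty$ I would work with the $\eta$-formulation (\ref{e11})--(\ref{e12}): Proposition \ref{po3} guarantees the solution $\eta_{11}^\pm$ exists, and the source terms $F_\pm,G_\pm$ carry explicit $k^{-1}$ prefactors multiplying the oscillatory kernel $e^{2ik^2(\cdot)}$, so integration by parts (a Riemann--Lebesgue argument) upgrades the $L^2$-in-$k$ bounds to the pointwise decay $\eta_{11}^\pm(x,k),\eta_{21}^\pm(x,k)=\mathcal{O}(k^{-1})$. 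By (\ref{e1})--(\ref{e2}) this gives $\omega_{11}^\pm=1+\mathcal{O}(k^{-1})$ and $\omega_{21}^\pm=-\tfrac{i}{2k}u_x+\mathcal{O}(k^{-1})=\mathcal{O}(k^{-1})$, i.e. $\omega^\pm\sim I$. For $k\to0$ I would instead use (\ref{o1})--(\ref{o2}): at $k=0$ the off-diagonal driving term disappears, so $\omega_{21}^\pm(x,0)$ solves a homogeneous Volterra equation and vanishes, while $\omega_{11}^\pm(x,0)$ solves the scalar equation $\partial_x\omega_{11}^\pm=\tfrac{i}{2}|u_x|^2\,\omega_{11}^\pm$, whose normalized solution is a diagonal exponential gauge factor of the type (\ref{d}); equivalently, inserting $\varphi=I+kU+\mathcal{O}(k^2)$ into $\varphi=\varphi_0\omega$ from (\ref{xx}) reproduces (\ref{asymp11}).

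Next, for $a,b$ as $k\to\infty$ I would substitute these bounds into the decompositions $a-1=a_1+a_2$ and $b=b_1+b_2$. Since $\eta_{11}^\pm(0,k),\eta_{21}^\pm(0,k)=\mathcal{O}(k^{-1})$, every bilinear term in $a_1,a_2,b_2$ is $\mathcal{O}(k^{-2})$ and every term carrying an explicit $\tfrac{1}{k}$ or $\tfrac{1}{4k^2}$ factor is $\mathcal{O}(k^{-1})$ or smaller, so $a(k)=1+\mathcal{O}(k^{-1})$ and $b(k)=\mathcal{O}(k^{-1})$, which is (\ref{asymp1}).

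For the limit $k\to0$ I would evaluate the determinant formulas (\ref{ak})--(\ref{bk}) directly at $k=0$. Using $\omega_{21}^\pm(0,0)=0$ and the diagonal exponentials found above, the off-diagonal products drop out and $a(0)=\omega_{11}^-(0,0)\,\overline{\omega_{11}^+(0,0)}$ collapses, after combining $\int_{-\infty}^{0}+\int_{0}^{+\infty}=d_0$, to the scalar $e^{-\frac{i}{2}d_0}$, while $b(0)=0$; running the same computation on the $(2,2)$ entry packages this as $S(0)=e^{-\frac{i}{2}d_0\sigma_3}$, the leading factor in (\ref{asymp2}).

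The error orders in (\ref{asymp2}) are the delicate point and the main obstacle. Here I would first exploit the discrete symmetry (\ref{dcx})--(\ref{dcx2}), which forces $a(k)=a(-k)$ and $b(k)=-b(-k)$, so $a$ is even and $b$ is odd in $k$; consequently the only obstructions to the bound $\mathcal{O}(k^3)$ are the $\mathcal{O}(k)$ coefficient of $b$ and the $\mathcal{O}(k^2)$ coefficient of $a$. I would then expand $\omega^\pm=\omega^{\pm,(0)}+k\,\omega^{\pm,(1)}+\cdots$ from (\ref{o1})--(\ref{o2}), compute the leading off-diagonal coefficients $\omega_{21}^{\pm,(1)}$, and insert them into (\ref{bk}); because $\omega^{+,(0)}$ and $\omega^{-,(0)}$ solve the \emph{same} $k=0$ ODE and differ only through their normalization points, the relevant combination cancels, yielding $b=\mathcal{O}(k^3)$, and by evenness together with $|a|^2+|b|^2=1$ one then gets $a=e^{-\frac{i}{2}d_0}(1+\mathcal{O}(k^3))$. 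The hard part is exactly this cancellation: unitarity alone fixes $|a|=1+\mathcal{O}(k^6)$ but not the phase, so one must track the hidden gauge factor $e^{i\int_{-\infty}^{x}|u_y|^2}$ attached to the potentials in (\ref{o1})--(\ref{o2}) and verify that the $k$-coefficient of $b$ and the $k^2$-coefficient of $a$ genuinely vanish, rather than merely bounding them.
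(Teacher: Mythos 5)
Your large-$k$ argument is essentially the paper's: the paper disposes of (\ref{asymp1}) in one line by inserting the Section 2 estimates into the determinant formulas (\ref{ak})--(\ref{bk}), which is exactly your substitution of the $\eta$-bounds into $a-1=a_1+a_2$, $b=b_1+b_2$; your integration-by-parts upgrade from the $L^2$-in-$k$ statements of Propositions \ref{po2}--\ref{po3} to pointwise $\mathcal{O}(k^{-1})$ decay is a legitimate supplement that the paper glosses over. For $k\to 0$, however, the paper proves nothing at all: it cites \cite{xujian2} for the asymptotics of $\omega^{\pm}$, $a$ and $b$ at the origin. So your attempt at a self-contained derivation necessarily departs from the paper, and its leading-order half is correct and elementary: $\omega_{21}^{\pm}(x,0)=0$ from the homogeneous Volterra equation in (\ref{o2}), $\omega_{11}^{\pm}(x,0)$ the diagonal exponential, hence $b(0)=0$ and $a(0)$ the scalar phase built from $d_0$.

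The genuine gap is in the $\mathcal{O}(k^3)$ rates, and it is twofold. First, the parity reduction via (\ref{dcx2}) is right ($a$ even, $b$ odd, so only $b'(0)$ and the $k^2$-coefficient of $a$ obstruct), but the cancellation mechanism you offer --- that $\omega^{+,(0)}$ and $\omega^{-,(0)}$ solve the same $k=0$ ODE and differ only through normalization points --- is not the actual reason $b'(0)$ vanishes. Expanding (\ref{o2}) to order $k$ gives $\omega_{21}^{(1),\pm}(0)$ whose difference is a \emph{full-line} integral of the gauge-dressed $\bar{u}_y$, which does not vanish by any identification of the two zeroth-order solutions; it vanishes because the FL spectral problem has the \emph{derivative} $u_x$ as its potential, so the relevant integrand is (after tracking the factor $e^{i\int_{-\infty}^{x}|u_y|^2\,dy}$) a total derivative of a quantity decaying at both ends --- in the Born approximation this is simply $\int_{\mathbb{R}}\bar{u}_y\,dy=0$. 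This computation, with the gauge phases in place and including the analogous check that the $k^2$-coefficient of $a$ vanishes (unitarity, as you correctly note, pins down only $|a|$), is the entire nontrivial content of (\ref{asymp2}), and you explicitly concede you have not carried it out. Second, invoking a third-order Taylor expansion of $a,b$ at $k=0$ presupposes three $k$-derivatives near the origin, whereas Section 2 controls only $\mathbf{w}$ and $\mathbf{w}_k$; each additional $\partial_k$ applied to $e^{2ik^2(x-y)}$ costs a factor $k(x-y)$, so the needed regularity is plausibly available from $u_0\in H^{3,3}(\mathbb{R})$, but your sketch supplies no such moment argument. As it stands, your proposal correctly reduces the lemma to these two verifications but does not close either, so it falls short of the result that the paper obtains by citation.
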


\begin{proof} The asymptotics (\ref{asymp1}) can be obtained by using (\ref{ak})-(\ref{bk}).
The $\omega^\pm(x,k)$ and the spectral data $ a(k), b(k) $ have the asymptotics at $k=0$ \cite{xujian2}.
\end{proof}

\subsection{Scattering map from initial data to scattering coefficients}
\hspace*{\parindent}
First, we define the reflection coefficient
\begin{equation}
r(k)=\frac{b(k)}{a(k)}. \label{rs}
\end{equation}
From $\det S(k)=1$, by using the symmetry (\ref{dcx}), we obtain
 \begin{align}
&1+ |r(k)|^2   =\frac{1}{ |a(k)|^2 }, \ \ \ k\in \mathbb{R}, \nonumber\\
 &1- |r(k)|^2  =\frac{1}{ |a(k)|^2 }, \ \ \ k\in i\mathbb{R}. \nonumber
\end{align}
In the absence of spectral singularities, there also exists a $c\in(0,1)$ such that $c<|a(k)|<1/c$ for $k\in  \Sigma =\mathbb{R}\cup i\mathbb{R}$.
To cope with the following derivation, we assume our initial data satisfies the assumption below.
\begin{Assumption} \label{asump1}
The initial data $u(x,0)\in H^{3,3}(\mathbb{R})$ generates generic scattering data that satisfies

1. $a(k)$ has no zeros on $\Sigma$,

2. $a(k)$ only has a finite number of simple zeros.

\end{Assumption}

Combining the above results, we immediately obtain the following theorem on $r(k)$.
\begin{theorem}\label{111}
For any given $u(x,0) \in H^{3,3}(\mathbb{R})$, we have $r(k) \in H^{1,1}(\Sigma)$.
\end{theorem}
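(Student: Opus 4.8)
The plan is to read $r(k)=b(k)/a(k)$ off formula (\ref{rs}) and to bound, piece by piece on the split $\Sigma=I_0\cup I_\infty$, the quantities $\langle k\rangle r$ and $\langle k\rangle r'$ in $L^2$, since $\|r\|_{H^{1,1}(\Sigma)}^2$ is equivalent to $\|\langle k\rangle r\|_{L^2(\Sigma)}^2+\|\langle k\rangle r'\|_{L^2(\Sigma)}^2$. The crucial structural input is that, under Assumption \ref{asump1}, $a(k)$ has no zeros on $\Sigma$ and there is $c\in(0,1)$ with $c<|a(k)|<1/c$; consequently division by $a$ is harmless, and it suffices to control $b$, $a-1$ and their first $k$-derivatives on each of the two frequency regimes and then transfer the bounds to $r$.

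On $I_\infty$ I would invoke the large-$k$ machinery directly. Proposition \ref{p5} already gives $kb\in L^2(I_\infty)$; combined with the fact that $a$ is bounded below, this yields $kr\in L^2(I_\infty)$, and since $|k|>1$ on $I_\infty$ the weaker bound $r\in L^2(I_\infty)$ follows at once. For the derivative I would use the representation of $a(k)-1$ and $b(k)$ through the boundary traces $\eta_{11}^\pm(0,k)$, $\eta_{21}^\pm(0,k)$ together with their $k$-derivatives, all of which lie in $L^2(I_\infty)$ by Proposition \ref{po3}(ii) (the trace at $x=0$ is legitimate because the estimates hold in $C^0(\mathbb{R}^+,L^2(I_\infty))$, and the $\eta_{21}$ bounds follow from those of $\eta_{11}$ via (\ref{e12})). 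Differentiating $r=b/a$ gives $kr'=(kb')/a-(kb)a'/a^2$; the asymptotics of Lemma \ref{l3} show $a'=\mathcal{O}(k^{-2})$ is bounded, while the same trace estimates bound $kb'$ in $L^2(I_\infty)$, so that $kr'\in L^2(I_\infty)$ and, dividing by $k$, $r'\in L^2(I_\infty)$.

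On the bounded set $I_0$ the analysis is softer. By Lemma \ref{l3} we have $b(k)=\mathcal{O}(k^3)$ and $a(k)\to e^{-\frac{i}{2}d_0\sigma_3}$, a nonzero constant, as $k\to0$, so $r(k)=\mathcal{O}(k^3)$ is continuous up to $k=0$; Proposition \ref{po2} provides the Lipschitz control of $\mathbf{w}$ and $\mathbf{w}_k$ on $I_0$ that bounds $a$, $b$ and their $k$-derivatives uniformly there. Since $I_0$ has finite length, uniform boundedness of $r$ and $r'$ immediately gives $\langle k\rangle r,\langle k\rangle r'\in L^2(I_0)$. Adding the $I_0$ and $I_\infty$ contributions yields $r\in H^{1,1}(\Sigma)$, and the Lipschitz dependence carried through each step shows the scattering map $u\mapsto r$ is itself Lipschitz from $H^{3,3}(\mathbb{R})$, which is what is needed downstream.

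I expect the main obstacle to be the large-$k$ derivative bound $kr'\in L^2(I_\infty)$. Differentiating the Volterra and trace formulas in $k$ reintroduces exactly the dangerous $k\,u_x$-type factors flagged in the introduction, which are not integrable by the naive $L^1$ Neumann-series argument; controlling them rests on the weighted resolvent estimates of the large-$k$ analysis and on the sharp decay $\widetilde u\in L^{2,1}$ afforded by $u\in H^{3,3}$. A secondary point requiring care is the legitimacy of dividing by $a$, which is precisely where Assumption \ref{asump1} (no spectral singularities, finitely many simple zeros off $\Sigma$) enters.
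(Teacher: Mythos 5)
Your route coincides with the paper's own proof in every structural respect: the split $\Sigma=I_0\cup I_\infty$, the control of $a$, $b$ and their $k$-derivatives through the $x=0$ traces of the eigenfunctions via Propositions \ref{po2} and \ref{po3} (the paper writes this as $a'(k)=\det(\omega^-_{1,k},\omega^+_2)+\det(\omega^-_1,\omega^+_{2,k})$ and similarly for $b'$), the use of Assumption \ref{asump1} to legitimize division by $a$, the endpoint asymptotics of Lemma \ref{l3} at $k=0$ and $k=\infty$, and Proposition \ref{p5} for the weighted bound $kb\in L^2(I_\infty)$.

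The one genuine problem is your opening normalization: you take $\|r\|_{H^{1,1}(\Sigma)}^2\simeq\|\langle k\rangle r\|_{L^2(\Sigma)}^2+\|\langle k\rangle r'\|_{L^2(\Sigma)}^2$, which puts the weight on $r'$ as well. The paper establishes only $r\in L^2(\Sigma)$, $r'\in L^2(\Sigma)$ and $kr\in L^2(I_\infty)$ --- i.e.\ the convention $H^{1,1}=\{f:\ f,\,f',\,\langle\cdot\rangle f\in L^2\}$ standard in this literature --- and nowhere proves $kr'\in L^2(I_\infty)$. Under your stronger convention, the assertion that ``the same trace estimates bound $kb'$ in $L^2(I_\infty)$'' is a real gap: Proposition \ref{po3} gives only \emph{unweighted} $L^2(I_\infty)$ control of $\eta_{11,k}$ (hence of $\eta_{21,k}$ through (\ref{e12})), and the extra factor of $k$ is estimated nowhere. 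Concretely, differentiating $Q^{\pm}(k)=-\frac{i}{2k}\int_0^{\pm\infty}e^{-2ik^2y}\,\widetilde{u}(y)\left(1+\eta_{11}^{\pm}\right)dy$ in $k$ produces the term $-2\int_0^{\pm\infty}y\,e^{-2ik^2y}\,\widetilde{u}\left(1+\eta_{11}^{\pm}\right)dy$, and after multiplication by $k$ you face $k\int y\,e^{-2ik^2y}\,\widetilde{u}\left(1+\eta_{11}^{\pm}\right)dy$, which can only be tamed by a further integration by parts trading the factor $k$ for a $y$-derivative of $y\,\widetilde{u}\,(1+\eta_{11}^{\pm})$ --- a new weighted estimate (involving $\widetilde{u}_y$ and $\eta^{\pm}_{11,y}$) that neither you nor the paper supplies. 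So either adopt the paper's weaker norm, in which case your argument collapses exactly onto the paper's proof, or you owe this additional estimate; you correctly flagged the danger but did not close it. A minor overstatement in the same spirit: Proposition \ref{po2} yields $L^2(I_0)$ traces of $\mathbf{w}$, $\mathbf{w}_k$ at $x=0$, not the ``uniform boundedness'' of $r$, $r'$ on $I_0$ that you claim; since $I_0$ is bounded and $|a|\geq c$ there, the $L^2$ control already suffices, so this one is harmless.
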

\begin{proof}   By using (\ref{asymp1})-(\ref{asymp2}) and Definition  (\ref{rs}), we have
$$r(k) =\mathcal{O}(k^3), \ k\rightarrow 0, \ \  r(k) =\mathcal{O}(k^{-1}), \  k\rightarrow \infty.$$
Again $a(k)$ has no zeros on $\Sigma$  by Assumption \ref{asump1} above, we conclude  that $r(k)$ has no spectral singularities on the contour  $\Sigma$.

Combining   (\ref{ak})-(\ref{bk}),  Proposition \ref{po2} and \ref{po3}, we derive   that
 $$a(k), b(k) \in L^{\infty}(\Sigma)\cap{L^{2}}(\Sigma), $$
by definition  (\ref{rs}),  we then  have
 \begin{align}
 r(k)\in L^{2}(\Sigma). \label{rse}
 \end{align}

Noting that
 \begin{align}
&a'(k)=\mathrm{det}(\omega_{1,k}^-,\omega_2^+)+\mathrm{det}(\omega_{1}^-,\omega_{2,k}^+),\\
&b'(k)=\mathrm{det}(\omega_{2,k}^-,\omega_2^+)+\mathrm{det}(\omega_2^-,\omega_{2,k}^+),
\end{align}
by  Proposition \ref{po2} and \ref{po3}, we obtain  that  $a'(k), b'(k) \in L^2(\Sigma)$.
So we have
\begin{equation}
r'(k)=\frac{b'(k) a(k) - a'(k)b(k)}{a^2(k)} \in L^2(\Sigma),
\end{equation}
which, combined with (\ref{rse}) and Proposition \ref{p5}, finally gives the map $u(x,0)\rightarrow r(k)$ is Lipschitz continuous from $H^{3,3}(\mathbb{R})$ into $H^{1,1}(\Sigma)$.

\end{proof}

\section{Construction of the  RH  problem}
\label{sec:section3}
\hspace*{\parindent}
Since $a(k)$ is an even function, each zero $k_j$ of $a(k)$ is accompanied by another zero at $-k_j$. We assume that $a(k)$ has $2 N$ simple zeros $\left\{k_j\right\}_{j=1}^{2N} \subset D^+$ such that $\left\{k_j\right\}_{j=1}^{N}$ belong to the first quadrant and $k_{j+N}=-k_j, j=1, \ldots N$. Similarly, $\overline{a(\overline{k})}$ has $2 N$ simple zeros $\left\{\overline{k}_j\right\}_{j=1}^{2N} \subset D^-$ such that $\left\{\overline{k}_j\right\}_{j=1}^{N}$ belong to the second quadrant and $\overline{k}_{j+N}=-\overline{k}_j, j=1, \ldots N$.
They can be denoted by
 $$
 \mathcal{K}=\{k_j, j=1, \cdots, 2N\} \subset D^+, \ \ \mathcal{\overline{K}}=\{\overline{k}_j, j=1, \cdots, 2N\} \subset D^-.
 $$
We define a sectionally meromorphic matrix
\begin{equation}
M(k;x,t)=\begin{cases}
\left(\frac{\omega^-_1(k;x, t)}{a(k)}, \omega^+_2(k;x, t)\right),\quad k\in D^+,\\[8pt]
\left(\omega^+_1(k;x, t),\frac{\omega^-_2(k;x, t)}{\overline{a(\bar{k})}}\right),\quad k\in D^-,
\end{cases}
\end{equation}
which solves the following  RH  problem. \\
\noindent\textbf{RH Problem 3.1.} Find a matrix-valued function $M(k)=M(k;x,t)$ which satisfies:

(a) \emph{Analyticity}: $M(k)$ is meromorphic in $\mathbb{C} \backslash \Sigma$  and has single poles;

(b) \emph{Symmetry}: $\overline{M(\bar{k})}=\sigma_2 M(k) \sigma_2$;

(c) \emph{Jump condition}: $M(k)$ has continuous boundary values $M_{\pm}$ on $\Sigma$ and
\begin{equation}
M_+(k)=M_-(k) V(k), \quad k \in \Sigma,
\end{equation}
where
\begin{equation}
V(k)=\begin{pmatrix}
1+ r(k) \overline{ r(\bar k)} & \overline{r(\bar k)}e^{-2it\theta(k)} \\
r(k)e^{2it\theta(k)} &1
\end{pmatrix};\label{Vk1}
\end{equation}

(d) \emph{Asymptotic behaviors}:
\begin{equation}
\begin{cases}
M(k)=I+\mathcal{O}\left(k^{-1}\right), & k \rightarrow \infty,\\
M(k)=e^{-\frac{i}{2} c_- \sigma_{3}}\left(I+kU+\mathcal{O}\left(k^{2}\right)\right) e^{\frac{i}{2} d_0\sigma_3},  & k \rightarrow 0;
\end{cases}
\end{equation}

(e) \emph{Residue conditions}: $\mathrm{M}$ has simple poles at each point in
$\mathcal{K}$ and $\mathcal{\overline{K}}$ with:
\begin{align}
&\underset{k=k_j}{\operatorname{Res}} M(k)=\lim _{k \rightarrow k_j} M(k)R(k_j), \label{35} \\
&\underset{k=\overline{k}_j}{\operatorname{Res}} M(k)=\lim _{k \rightarrow \overline{k}_j} M(k)\sigma_2\overline{R(\bar k_j) } \sigma_2, \label{36}
\end{align}
where $R(k_j)$ is the nilpotent matrix
$$
R(k_j)=\begin{pmatrix}
0 & 0 \\
c_j e^{2it\theta(k_j)} & 0
\end{pmatrix},
$$
and $\theta(k)=k^2\frac{x}{t}+\eta^2(k)$, $\eta(k)=\sqrt{\alpha}(k-\frac{\beta}{2 k})$.  Intuitively,
 the contour, poles and the stationary phase points of $M(k)$ are illustrated in Figure \ref{din31}.
\begin{figure}[H]
\begin{center}
\begin{tikzpicture}[node distance=2cm]
\draw [ ](-3,0)--(3,0)  node[right, scale=1] {Re $k$};
\draw [-latex](-3,0)--(1.2,0);
\draw [-latex](0,0)--(-1.2,0);
\draw [ ](0,-2.5)--(0,2.5)  node[above, scale=1] {Im $k$};
\draw [-latex](0, 2.5)--(0,1);
\draw [-latex](0, -2.5)--(0,-1);
\node  [below]  at (0.2,0) { $0$ };
\draw[fill,blue] (2,0) circle [radius=0.04];
\draw[fill,blue] (-2,0) circle [radius=0.04];
\draw[fill,blue] (0,2) circle [radius=0.04];
\draw[fill,blue] (0,-2) circle [radius=0.04];
\draw[fill,red] (1,1.2) circle [radius=0.04];
\draw[fill,red] (2,2) circle [radius=0.04];
\draw[fill,red] (2,-2) circle [radius=0.04];
\draw[fill,red] (-2,-2) circle [radius=0.04];
\draw[fill,red] (-2,2) circle [radius=0.04];
\draw[fill,red] (1,-1.2) circle [radius=0.04];
\draw[fill,red] (-1,-1.2) circle [radius=0.04];
\draw[fill,red] (-1,1.2) circle [radius=0.04];
\end {tikzpicture}
\end{center}
\caption{\small  The RH problem for $M(k)$, where the jump contour $\Sigma=\mathbb{R} \cup i\mathbb{R}$, the red dots  ( \textcolor{red}{$\centerdot$ } ) represent the poles, and the blue  dots  ( \textcolor{blue}{$\centerdot$ } ) represent the stationary phase points.     }
\label{din31}
\end{figure}
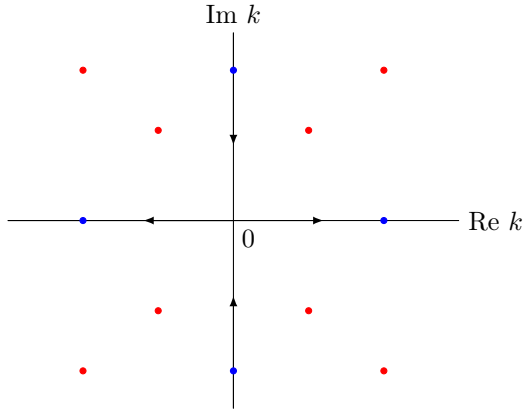

We denote the norming constant $c_j=c(k_j)=b(k_j)/a^{\prime}(k_j)$ and the collection $\mathcal{S}=\left\{k_j, c_j\right\}_{j=1}^{2N}$ is called
the scattering data.
The solution of the FL equation (\ref{cs2}) can be expressed by
\begin{equation}
u_{x}(x, t)=m(x, t) e^{-i \int_{-\infty}^{x}|m\left(x^{\prime}, t\right)|^{2} d x^{\prime}},\label{gs1}
\end{equation}
where
\begin{equation}
m(x, t)=2i\lim _{k \rightarrow \infty}(k M(x, t, k))_{12},
\end{equation}
we have
\begin{equation}
|u_{x}(x, t)|=4|m(x, t)|.\label{mgj}
\end{equation}

\section{Existence and uniqueness of  solution  to the  RH problem}\label{sec:section31}
\hspace*{\parindent}
In this section, we  prove the  existence and uniqueness of   solution for the  initial value problem   (\ref{cs2})-(\ref{cz}).
 We first transform the basic  {  RH Problem 3.1}  to  an  equivalent  RH problem---{  RH Problem 4.1},
whose  existence and uniqueness are  affirmed   by  proving  that the associated  Beals-Coifman initial integral equation   admits  a unique solution for
$r(k)\in H^{1,1}(\Sigma)$. We further  show that the solution $M(k)$ of the {RH Problem 3.1} obeys the Lax pair (\ref{Lax pair})
as a function of $x$ and $t$, and give reconstruction formulas for $u(x,t)$ in terms of the solution $\mu(x,k)$ of the Beals-Coifman initial integral equation.

  To replace the residue conditions (\ref{35})-(\ref{36}) of the {RH Problem 3.1} with Schwartz invariant jump conditions,
for all  poles  $k_j\in   \mathbb{D}^+$ and  $\bar k_j\in  \mathbb{D}^-$,  we trade their  residues
into the corresponding  jumps  on   circles  $\gamma_j$  centered at $k_j$ and
 $\overline{\gamma}_j$  centered at $\bar k_j$ respectively.   The radii of these circles
  are chosen  sufficiently small  such that they are  disjoint from all other circles.
  Please see Figure \ref{din1}.
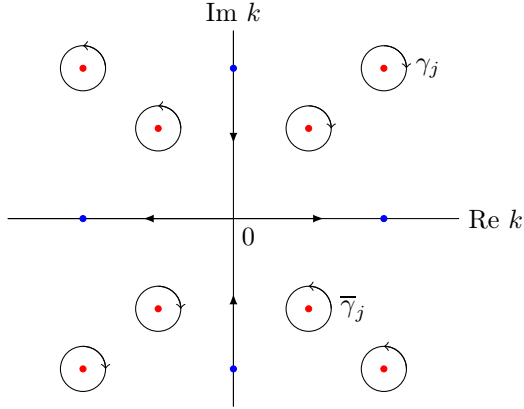
\begin{figure}[H]
\begin{center}
\begin{tikzpicture}[node distance=2cm]
\draw [ ](-3,0)--(3,0)  node[right, scale=1] {Re $k$};
\draw [-latex](-3,0)--(1.2,0);
\draw [-latex](0,0)--(-1.2,0);
\draw [ ](0,-2.5)--(0,2.5)  node[above, scale=1] {Im $k$};
\draw [-latex](0, 2.5)--(0,1);
\draw [-latex](0, -2.5)--(0,-1);
\node  [below]  at (0.2,0) { $0$ };
\draw[fill,blue] (2,0) circle [radius=0.04];
\draw[fill,blue] (-2,0) circle [radius=0.04];
\draw[fill,blue] (0,2) circle [radius=0.04];
\draw[fill,blue] (0,-2) circle [radius=0.04];
\draw[fill,red] (1,1.2) circle [radius=0.04];
\draw[fill,red] (2,2) circle [radius=0.04];
\draw[fill,red] (2,-2) circle [radius=0.04];
\draw[fill,red] (-2,-2) circle [radius=0.04];
\draw[fill,red] (-2,2) circle [radius=0.04];
\draw[fill,red] (1,-1.2) circle [radius=0.04];
\draw[fill,red] (-1,-1.2) circle [radius=0.04];
\draw[fill,red] (-1,1.2) circle [radius=0.04];
\draw [] (2,2)circle(0.3cm);
 \draw [  <-  ]  (2.3,2) to  [out=90, in=0]  (2, 2.3);	
\node  [right]  at (2.3,2) {$\gamma_j$};
\draw [](2,-2)circle(0.3cm);
 \draw [ -> ]  (2.3,-2) to  [out=90, in=0]  (2, -1.7);
\draw [](-2,-2)circle(0.3cm);
 \draw [  <-  ]  (-1.7,-2) to  [out=90, in=0]  (-2, -1.7);
\draw [] (-2,2)circle(0.3cm);
 \draw [  ->  ]  (-1.7,2) to  [out=90, in=0]  (-2, 2.3);
 \draw [] (1,1.2)circle(0.3cm);
 \draw [  <-  ]  (1.3,1.2) to  [out=90, in=0]  (1, 1.5);	
\node  [right]  at (1.3,-1.2) {$\overline{\gamma}_j$};
\draw [](1,-1.2)circle(0.3cm);
 \draw [ -> ]  (1.3,-1.2) to  [out=90, in=0]  (1, -0.9);
\draw [](-1,-1.2)circle(0.3cm);
 \draw [  <-  ]  (-0.7,-1.2) to  [out=90, in=0]  (-1, -0.9);
\draw [] (-1,1.2)circle(0.3cm);
 \draw [   ->  ]  (-0.7,1.2) to  [out=90, in=0]  (-1, 1.5);
\end {tikzpicture}
\end{center}
\caption{\small  The jump contour $\Sigma^{\prime}$ of RH problem for $M(k)$.}
\label{din1}
\end{figure}

Let
$$
\Sigma^{\prime}=\Sigma \cup\left\{ \gamma_j\right\}_{j=1}^{2N} \cup\left\{ \overline{\gamma}_j\right\}_{j=1}^{2N},
$$
we then transform the {RH Problem 3.1}  to an equivalent  RH problem (for simplicity, we still use the notation $M(k;x, t)$) on the jump contour $\Sigma^{\prime}$.

\noindent\textbf{RH Problem 4.1.}   Find a matrix-valued function $M(k)=M(k;x,t)$  which satisfies

(a) \emph{Analyticity}: $M(k;x,t)$ is analytic in $\mathbb{C} \backslash \Sigma^{\prime}$;

(b) \emph{Jump condition}: $M(k;x,t)$ has continuous boundary values $M_{\pm}(k;x,t)$ on $\Sigma^{\prime}$  and
\begin{equation}
M_+(k;x,t)=M_-(k;x,t) J(k),\label{csm1}
\end{equation}
where
\begin{numcases}{J(k) =}
\begin{pmatrix}
1+|r(k)|^2   & \overline{r(k)}  e^{-2it\theta(k)} \\
r(k) e^{2it\theta(k)}  & 1
\end{pmatrix}, \ \ k\in \mathbb{R},\label{herm11}\\
 \begin{pmatrix}
1-|r(k)|^2   &- \overline{r(k)} e^{-2it\theta(k)}  \\
r(k) e^{2it\theta(k)}  & 1
\end{pmatrix}, \ \ k\in i\mathbb{R},\label{herm21}\\
\begin{pmatrix}
1 & 0 \\
\frac{c_j e^{2it\theta(k)}}{k - k_j} & 1
\end{pmatrix}, \ \ \ \ \   k \in  \gamma_j \label{herm31}\\
\begin{pmatrix}
1 & \frac{  \overline{c}_j e^{-2 i t\theta(k)   }}{k - \overline{k}_j}, \\
0 & 1
\end{pmatrix}, \ \ \ \ \  k \in  \overline{\gamma}_j;\label{herm41}
\end{numcases}

(c) \emph{Asymptotic behaviors}:
\begin{equation}
\begin{cases}
M(k)=I+\mathcal{O}\left(k^{-1}\right), & k \rightarrow \infty,\\
M(k)=e^{-\frac{i}{2} c_- \sigma_{3}}\left(I+kU+\mathcal{O}\left(k^{2}\right)\right) e^{\frac{i}{2} d_0\sigma_3},  & k \rightarrow 0.
\end{cases}
\end{equation}

We suppress the dependence of all quantities on $t$  to  emphasize the role of $x$ (later $x, t$).
 To study the existence and the uniqueness of the solution
to the RH Problem 4.1, we need the following RH problem with the jump contour $\Sigma'$.

\noindent\textbf{RH Problem 4.2.} Find a  matrix-valued  function $M(k;x)$  with  the following properties:

(a) \emph{Analyticity}: $M(k;x)$ is analytic in $\mathbb{C} \backslash \Sigma^{\prime}$.

(b) \emph{Jump condition}: $M(k;x)$   has continuous boundary values $M_{\pm}(k;x)$ on $\Sigma^{\prime}$ and
\begin{equation}
M_+(x, k)=M_-(x, k) J(k),\label{csm}
\end{equation}
where
\begin{numcases}{J(k) =}
\begin{pmatrix}
1+|r(k)|^2   & \overline{r(k)}  e^{-2ik^2x} \\
r(k) e^{2ik^2x}  & 1
\end{pmatrix}, \ \ k\in \mathbb{R},\label{herm1}\\
\begin{pmatrix}
1-|r(k)|^2   &- \overline{r(k)} e^{-2ik^2x}  \\
r(k) e^{2ik^2x}  & 1
\end{pmatrix}, \ \ k\in i\mathbb{R},\label{herm2}\\
\begin{pmatrix}
1 & 0 \\
\frac{c_j e^{2ik^2x}}{k - k_j} & 1
\end{pmatrix}, \ \ \ \ \   k \in  \gamma_j \label{herm3}\\
\begin{pmatrix}
1 & \frac{  \overline{c}_j e^{-2 i k^2 x}}{k - \overline{k}_j}, \\
0 & 1
\end{pmatrix}, \ \ \ \ \  k \in  \overline{\gamma}_j.\label{herm4}
\end{numcases}
(c) \emph{Normalization}

{\bf Type I.}  $M(k;x)=I+\mathcal{O}\left(k^{-1}\right)$ as $k \rightarrow \infty$,

{\bf Type II.}  $M(k;x)=\mathcal{O}\left(k^{-1}\right)$ as $k \rightarrow \infty$,\\
where Type  I with inhomogeneous boundary condition is needed for the reconstruction of the potential $u_x(x)$.
Type II  is  used to show the existence and uniqueness of the solution of the RH Problem 4.2 with Type I.

\subsection{Beals-Coifman integral equation}
\hspace*{\parindent}
The unique solvability of the RH Problem 4.2 is equivalent to the unique solvability of Beals-Coifman integral equation.
We decompose the jump matrix   (\ref{csm}) as
\begin{align}
J(k)=(I-w_-)^{-1}(I+w_+),\label{ewet}
\end{align}
 and let
\begin{align}
\mu=M_{+}\left(I+w_+\right)^{-1}=M_-\left(I-w_-\right)^{-1},\label{wet}
\end{align}
from which we can write down $\mu$ explicitly:
For $k \in \Sigma$
\begin{equation}
\mu(x, k)=\begin{cases}
\begin{pmatrix}
\frac{\omega_1^-(x, k)}{a(k)} & \omega_2^{+}(x, k)
\end{pmatrix}
\begin{pmatrix}
1 & 0 \\
e^{2 i k^{2} x}{r}(k) & 1
\end{pmatrix}, \\
\begin{pmatrix}
\omega_1^{+}(x,k) & \frac{\omega_2^-(x, k)}{\overline{a(\bar{k})}}
\end{pmatrix}
\begin{pmatrix}
1 & e^{-2ik^2x} \overline{r(\bar k)} \\
0 & 1
\end{pmatrix}.
\end{cases}
\end{equation}
and for $k \in \gamma_j$
\begin{equation}
\mu(x, k)=\begin{cases}
\begin{pmatrix}
\frac{\omega_1^-(x, k)}{a(k)} & \omega_2^+(x,k)
\end{pmatrix}
\begin{pmatrix}
1 & 0 \\
-\frac{c_j e^{2 i x k^{2}}}{k-k_j} & 1
\end{pmatrix}, \\
\begin{pmatrix}
\frac{\omega_{11}^-(x, k)}{a(k)}-\frac{c_j e^{2ik^2x} \omega_{12}^{+}(x, k)}{k-k_j} & \omega_{12}^{+}(x, k) \\
\frac{\omega_{21}^-(x,k)}{a(k)}-\frac{c_j e^{2ik^2x} \omega_{22}^{+}(x, k)}{k-k_j} & \omega_{22}^{+}(x,k)
\end{pmatrix}
\begin{pmatrix}
1 & 0 \\
0 & 1
\end{pmatrix},
\end{cases}
\end{equation}
and for $k \in \overline{\gamma}_j$
\begin{equation}
\mu(x, k)=\begin{cases}
\begin{pmatrix}
\omega_{11}^{+}(x, k) & \frac{\omega_{12}(x, k)}{a(k)}+\frac{\bar{c}_j e^{-2 i k^{2}x} \omega_{11}^{+}(x,k)}{k-\bar{k}_j} \\
\omega_{21}^{+}(x, k) & \frac{\omega_{22}^{-}(x, k)}{a(k)}+\frac{\bar{c}_j e^{-2 ik^{2}x} \omega_{21}^{+}(x, k)}{k-\bar{k}_j}
\end{pmatrix}
\begin{pmatrix}
1 & 0 \\
0 & 1
\end{pmatrix}, \\
\begin{pmatrix}
\omega_1^{+}(x,k) & \frac{\omega_2^-(x, k)}{\overline{a(\bar{k})}}
\end{pmatrix}
\begin{pmatrix}
1 & \frac{\bar{c}_j e^{-2ik^2x}}{k -\bar{k}_j} \\
0 & 1
\end{pmatrix}.
\end{cases}
\end{equation}

From (\ref{wet}), we obtain
$$M_+-M_-=\mu (w_++w_-),$$
then Plemelj formula gives the following Beals-Coifman integral equation for Type I normalization in {RH Problem 4.2}
\begin{equation}
\mu=I+\mathcal{C}_{w} \mu=I+C_{\Sigma^{\prime}}^{+}\left(\mu w_{x}^-\right)+C_{\Sigma^{\prime}}\left(\mu w_+\right),\label{bcm}
\end{equation}
where $I$ is the $2 \times 2$ identity matrix. And for Type II normalization:
\begin{equation}
\mu=\mathcal{C}_{w} \mu=C_{\Sigma^{\prime}}^{+}\left(\mu w_{x}\right)+C_{\Sigma^{\prime}}^-\left(\mu w_+\right).
\end{equation}
In Type I,  for $k \in \Sigma$,
\begin{align}
\mu_{11}(x, k)=&1+\frac{1}{2 \pi i}\int_{\mathbb{R}} \frac{\mu_{12}(x, s) r(s) e^{2 i s^2 x}}{s-k+i0} ds+\frac{1}{2 \pi i}\int_{i\mathbb{R}} \frac{\mu_{12}(x, s) r(s) e^{2 is^2 x}}{s-k+0} ds
+\sum_{j=1}^{2N}\frac{\mu_{12}\left(x, k_j\right) c_j e^{2 i x k_j^{2}}}{k-k_j},\nonumber\\
\mu_{12}(x,k)=& \frac{1}{2 \pi i}\int_{\mathbb{R}} \frac{\mu_{11}(x, s) \bar{r}(s) e^{2 i s^2 x}}{s-k+i0} ds+\frac{1}{2 \pi i}\int_{i\mathbb{R}} \frac{\mu_{11}(x, s) \bar{r}(s) e^{2 i s^2 x}}{s-k+0}ds
-\sum_{i=1}^{2N}\frac{\mu_{11}\left(x, \overline{k_j}\right) \bar{c}_je^{-2 i x \bar{k}_j^{2}}}{k-\bar{k}_j},\nonumber\\
\mu_{21}(x, k)=&\frac{1}{2 \pi i}\int_{\mathbb{R}} \frac{\mu_{22}(x, s) r(s) e^{2 i s^2 x}}{s-k+i0}ds+\frac{1}{2 \pi i}\int_{i\mathbb{R}} \frac{\mu_{22}(x, s) r(s) e^{2 is^2 x}}{s-k+0} ds
+\sum_{i=1}^{2N}\frac{\mu_{22}\left(x, k_{i}\right) c_j e^{2 i x k_{l}^{2}}}{k-k_j},\nonumber\\
\mu_{22}(x,k)=&1+\frac{1}{2 \pi i}\int_{\mathbb{R}} \frac{\mu_{21}(x, s) \bar{r}(s) e^{2 i s^2 x}}{s-k+i0} ds+\frac{1}{2 \pi i}\int_{i\mathbb{R}} \frac{\mu_{21}(x, s) \bar{r}(s) e^{2 i s^2 x}}{s-k+0} ds
-\sum_{i=1}^{2N}\frac{\mu_{21}\left(x, \bar{k}_j\right) \bar{c}_j e^{2 i x \bar{k}_j^{2}}}{k-\bar{k}_j}.\nonumber
\end{align}
and in order to close the system, we have
\begin{align}
\mu_{11}\left(x,  \bar{k}_j\right)=&1+\frac{1}{2 \pi i}\int_{\Sigma} \frac{\mu_{12}(x, s)r(s) e^{2 i s^{2} x}}{s -\bar{k}_j}d s +\sum_{i=1}^{2N}\frac{\mu_{12}\left(x, k_i\right) c_i e^{2 i k_j^{2} x}}{ \bar{k}_j-k_i},\nonumber\\
\mu_{12}\left(x, k_j\right)=&\frac{1}{2 \pi i}\int_{\Sigma} \frac{\mu_{11}(x, s)\bar{r}(s) e^{-2 i s^{2} x}}{s -k_j}  {d s}  -\sum_{i=1}^{2N}\frac{\mu_{11}\left(x, \bar{k}_j\right) \bar{c}_i e^{-2 i \bar{k}_i^{2} x}}{ k_j-\bar{k}_i},\nonumber\\
\mu_{21}\left(x,  \bar{k}_j\right)=&\frac{1}{2 \pi i}\int_{\Sigma} \frac{\mu_{22}(x, s)r(s) e^{2 i s^{2} x}}{s \mp \bar{k}_j}  {d s}  +\sum_{i=1}^{2N}\frac{\mu_{22}\left(x, k_i\right) c_i e^{2 i k_i^{2} x}}{ \bar{k}_j-k_i},\nonumber\\
\mu_{22}\left(x, k_j\right)=&1+ \frac{1}{2 \pi i}\int_{\Sigma} \frac{\mu_{21}(x, s)\bar{r}(s) e^{-2 i s^{2} x}}{s-k_j}  {d s} -\sum_{i=1}^{2N}\frac{\mu_{21}\left(x, \bar{k}_i\right) \bar{c}_j e^{-2i \bar{k}_i^{2}x}}{ k_j-\bar{k}_i}.\nonumber
\end{align}
Finally, we obtain
\begin{equation}
u_x(x)=-\frac{1}{\pi} \int_{\Sigma} e^{-2i k^{2}x} \bar{r}(k) \mu_{11}(x, k) d k-\sum_{j=1}^{2N} 2 i \mu_{11}\left(x, \bar{k}_j\right) \bar{c}_j e^{-2 i \bar{k}_j^{2} x}.
\end{equation}

\subsection{Application of Zhou's vanishing lemma}
\hspace*{\parindent}
It is found that the matrix $J(k)$ in (\ref{herm1}) is Hermitian  for $k\in \mathbb{R}$. In this case, we
can use Zhou's Vanishing Lemma from \cite{zx} to obtain a unique solution of the {RH Problem 4.2}.  Though the  matrices (\ref{herm2})-(\ref{herm4}) are
 not Hermitian for $k\in i\mathbb{R}$ and $k\in \gamma_j,\  \overline{\gamma}_j$,    their jump contours are  oriented to preserve
 Schwartz reflection symmetry in the real axis $\mathbb{R}$, and the  matrix-valued function $F(k)=M(k) M(\bar{k})^{H}$
 is still analytic for $k \in \mathbb{C} \backslash \mathbb{R}$. This
 indeed  fits   into  the framework of Zhou's Vanishing Lemma  and
    obtain the same conclusion   in the  following Proposition.

\begin{proposition}
\label{ppl}
The solution to the {RH Problem 4.2} with Type II normalization is identically zero.
\end{proposition}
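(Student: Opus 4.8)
The plan is to run Zhou's vanishing lemma, for which the decisive feature is that the jump matrix $J(k)$ on $\mathbb{R}$ is Hermitian and positive definite. Writing $M^{H}$ for the conjugate transpose, I would set $F(k)=M(k)M(\bar{k})^{H}$. Under the Type II normalization $M(k)=\mathcal{O}(k^{-1})$ as $k\to\infty$, and since $M(\bar{k})^{H}=\mathcal{O}(k^{-1})$ as well, we have $F(k)=\mathcal{O}(k^{-2})$ at infinity. As recorded just before the statement, the orientation of $i\mathbb{R}$, $\gamma_j$ and $\overline{\gamma}_j$ is arranged so that across each of these contours the jump of $M(k)$ is exactly compensated by that of $M(\bar{k})^{H}$; hence $F$ is analytic in $\mathbb{C}\setminus\mathbb{R}$. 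There are no residual poles, because in RH Problem 4.2 the residue conditions of RH Problem 3.1 have already been traded for the jumps on the small circles.

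Next I would restrict to $\mathbb{R}$. Denote by $M_{\pm}$ the boundary values of $M$ taken from $\mathbb{C}^{\pm}$, so that the boundary value of $F$ from above is $F_{+}(k)=M_{+}(k)M_{-}(k)^{H}$. Inserting the jump $M_{+}=M_{-}J$ and using $J^{H}=J$ gives $F_{+}(k)=M_{-}(k)J(k)M_{-}(k)^{H}$. Because $F$ is analytic in the upper half-plane and decays like $k^{-2}$, closing the contour with a large semicircle and applying Cauchy's theorem yields $\int_{\mathbb{R}}F_{+}(k)\,dk=0$. On $\mathbb{R}$ one checks $\det J(k)=1$ and $\operatorname{tr}J(k)=2+|r(k)|^{2}>0$, so $J(k)$, and likewise $J(k)^{-1}$, is Hermitian positive definite (on the subarc of $\mathbb{R}$ oriented toward decreasing values the same computation holds with $J$ replaced by $J^{-1}$, which is equally positive definite). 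Consequently $F_{+}(k)=M_{-}JM_{-}^{H}$ is positive semidefinite for every $k$. Taking the trace, the integrand $\operatorname{tr}F_{+}(k)\ge 0$ integrates to zero, so $\operatorname{tr}F_{+}(k)=0$ pointwise; factoring $J=B^{H}B$ with $B$ invertible turns this into $\|BM_{-}(k)^{H}\|^{2}=0$, whence $M_{-}(k)=0$ for all $k\in\mathbb{R}$, and then $M_{+}(k)=M_{-}(k)J(k)=0$ as well.

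It then remains to upgrade the vanishing of the boundary values on $\mathbb{R}$ to $M\equiv 0$. In each connected component of $\mathbb{C}^{+}\setminus\Sigma'$ that borders a subarc of $\mathbb{R}$, the function $M$ is analytic and its boundary value on that subarc vanishes; by the Schwarz reflection principle across the real segment together with the identity theorem, $M$ vanishes identically on that component. The vanishing then spreads across the interior contours: along any arc of $i\mathbb{R}$ or any circle $\gamma_j,\overline{\gamma}_j$ the relation $M_{+}=M_{-}J$ with invertible $J$ forces $M_{+}=0\iff M_{-}=0$, so by connectedness $M\equiv0$ throughout $\mathbb{C}^{+}$, and the identical argument disposes of $\mathbb{C}^{-}$.

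I expect the genuinely delicate point to be the analyticity of $F$ across the non-real parts of $\Sigma'$: this is not automatic and relies on the precise Schwarz-type symmetry of the jump data together with the orientation fixed in Figure \ref{din1}, which is exactly what makes the contributions of $M(k)$ and $M(\bar{k})^{H}$ cancel there. Once that cancellation is secured, the positivity-plus-Cauchy argument and the boundary-uniqueness step are routine; the only other point requiring a little care is applying the identity theorem on components genuinely abutting $\mathbb{R}$ and then propagating the zero across every remaining contour.
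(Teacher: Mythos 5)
Your proposal is correct and takes essentially the same approach as the paper: Zhou's vanishing lemma applied to $F(k)=M(k)M(\bar{k})^{H}$, with the Schwartz-symmetric jumps cancelling across $\Sigma'\setminus\mathbb{R}$, the decay-plus-Cauchy argument killing $\int_{\mathbb{R}}F_{+}(k)\,dk$, positive definiteness of the Hermitian jump on $\mathbb{R}$ forcing $M_{\pm}=0$ there, and propagation of the vanishing across the remaining contours. The only cosmetic differences are that the paper integrates $M_{-}(J+J^{H})M_{-}^{H}$ (combining both boundary values) rather than $F_{+}$ alone, and explicitly verifies $J(k)=J(\bar{k})^{H}$ on $i\mathbb{R}$ via the symmetry $r(\bar{k})=-r(k)$, a check you cite from the surrounding text rather than carry out.
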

\begin{proof}
We first  recall that the symmetry reduction condition for the entries of the transition
matrix is given as follows: for $k\in i\mathbb{R}$, $r(\bar{k})=r(-k)=-r(k).$
So we conclude from (\ref{herm2}) that
\begin{equation}
J(k)=J(\bar{k})^{H}, \ \ k\in i\mathbb{R},\label{KH}
\end{equation}
where ${}^H$ denotes the complex conjugation and transpose of a given matrix.
 From(\ref{herm3}) and (\ref{herm4}), it is easy to check  that the same equality
 holds on $\left\{ \gamma_j \cup  \bar{\gamma}_j\right\}_{j=1}^{2N}$. So we conclude that (\ref{KH}) holds on $\Sigma^{\prime} \backslash \mathbb{R}$.

Now we formulate the matrix-valued function
\begin{align}
F(k)=M(k) M(\bar{k})^{H} \label{wewe}
\end{align}
and we want to show that
\begin{equation}
\int_{\mathbb{R}} F_{\pm}(k) dk=0.\label{f1}
\end{equation}
It is clear that $F(k)$ is analytic in $\mathbb{C} \backslash \Sigma^{\prime}$ by the Schwartz reflection principle.  For  $k \in \mathbb{C} \backslash \mathbb{R}$, we then have
$$
\begin{aligned}
&F_{+}(k)=M_{+}(k) M_{-}(\bar{k})^{H}
 =M_-(k) J(k)\left(J(\bar{k})^{-1}\right)^{H} M_{+}(\bar{k})^{H} \\
&=M_-(k) M_{+}(\bar{k})^{H} =F_{-}(k).
\end{aligned}
$$
By Morera's theorem $F(k)$ is analytic for $k\in\mathbb{C} \backslash \mathbb{R}$. Since $M_{\pm}(x, \cdot)\in L^{2} (\mathbb{R})$,   $F(k)$ is integrable.
By the Beal-Coifman theorem, we can write
$$
\begin{aligned}
M(x,k)&=\frac{1}{2 \pi i} \int_{\Sigma^{\prime}} \frac{\mu(x, s)\left(w_+(s)+w_{-}(s)\right)}{s-k}d s\\
&= \frac{1}{2 k\pi i} \int_{\Sigma^{\prime}} \frac{s}{s-k} \mu(x, s)\left(w_+(s)+w_-(s)\right) d s
 -\frac{1}{2 k\pi i} \int_{\Sigma^{\prime}} \mu(x, s)\left(w_+(s)+w_-(s)\right) d s,
\end{aligned}
$$
which implies that
$$
M(x, k) \sim \mathcal{O}\left(k^{-1}\right), \quad k \in \mathbb{C} \backslash \mathbb{R},
$$
and   $F(k) \sim \mathcal{O}\left(k^{-2}\right),\  k \in \mathbb{C} \backslash \mathbb{R}$ by the definition (\ref{wewe}).
Then the equality (\ref{f1}) follows from  Jordon theorem and Cauchy integral theorem.

For $k \in \mathbb{R}$, we have that
\begin{equation}
F_{+}(k)=M_{+}(k) M_-(k)^{H}=M_-(k)J(k) M_{-}(k)^{H},
\end{equation}
and
\begin{equation}
F_-(k)=M_-(k) M_{+}(k)^{H}=M_{-}(k) J(k)^{H} M_-(k)^{H},
\end{equation}
and from (\ref{f1}) we get
\begin{equation}
\int_{\mathbb{R}}  M_{-}(k)\left(J(k)+J(k)^{H}\right) M_{-}(k)^{H}=0, \label{fed}
\end{equation}
where for $k \in \mathbb{R}$,
\begin{equation}
J(k)+J(k)^{H}=2\begin{pmatrix}
1+|r(k)|^{2} & \overline{r(k)}e^{-2ik^{2}x} \\
r(k)e^{2ik^{2}x} & 1
\end{pmatrix}\nonumber
\end{equation}
is positive definite since it is Hermitian and has positive eigenvalues. Thus from (\ref{fed})  we conclude that $M_-(k)=0$ on $\mathbb{R}$.
Further we also  have
 $$ M_{+}(k)= M_-(k)J(k)=0, \ k \in \mathbb{R}.$$
From Morera's theorem we conclude that $M(k)$ is analytic in a neighborhood of every point on $\mathbb{R}$. Since $M(k)=0$ for $k \in \mathbb{R}$, the analytic continuation gives us that $M(k)=0$ holds all the way up to the first complex part of $\Sigma$. Applying the jump condition on this part shows that $M_{\pm}(k)$ vanishes. We can apply the same argument to the remaining parts of $\Sigma^{\prime}$ and conclude that $M(k) \equiv 0$ on the entire complex plane.
\end{proof}
The following proposition is a standard result from  the Fredholm alternative theorem.
\begin{proposition}   Let the initial date  $u(x,0) \in H^{3,3}(\mathbb{R})$,
then      the  RH Problem 4.2   with Type I normalization   has a  unique solution $M(k)$.
\end{proposition}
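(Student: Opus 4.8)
The plan is to reduce the solvability of RH Problem 4.2 with Type I normalization to the invertibility of the Beals-Coifman operator on $L^2(\Sigma')$, and then to combine the Fredholm alternative with the vanishing result of Proposition \ref{ppl}. First I would recall the equivalence already set up in (\ref{ewet})--(\ref{bcm}): a solution $M(k)$ of RH Problem 4.2 with Type I normalization exists and is unique if and only if the Beals-Coifman integral equation
\begin{equation}
(I-\mathcal{C}_w)\mu = I
\end{equation}
admits a unique solution $\mu\in I+L^2(\Sigma')$, after which $M(k)$ is reconstructed via the Cauchy--Plemelj formula
\begin{equation}
M(k)=I+\frac{1}{2\pi i}\int_{\Sigma'}\frac{\mu(s)\bigl(w_+(s)+w_-(s)\bigr)}{s-k}\,ds.
\end{equation}
Thus the entire statement is reduced to showing that $I-\mathcal{C}_w$ is invertible on $L^2(\Sigma')$, and uniqueness of $M(k)$ will follow at once from uniqueness of $\mu$.

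Next I would verify that $\mathcal{C}_w$ is bounded on $L^2(\Sigma')$ and that $I-\mathcal{C}_w$ is a Fredholm operator of index zero. Boundedness follows from the $L^\infty\cap L^2$ membership of the jump data $w_\pm$: on $\Sigma=\mathbb{R}\cup i\mathbb{R}$ this is a consequence of $r(k)\in H^{1,1}(\Sigma)$ established in Theorem \ref{111}, while on the finite circles $\gamma_j,\overline{\gamma}_j$ the kernels appearing in (\ref{herm3})--(\ref{herm4}) are smooth, bounded, and compactly supported, contributing only finite-rank-type corrections. The index-zero Fredholm property then follows from the standard theory of Cauchy singular integral operators built from such jump matrices, using in an essential way that $w_\pm\to 0$ at the two endpoints $k=0$ and $k=\infty$, which is guaranteed by the decay $r(k)=\mathcal{O}(k^3)$ as $k\to 0$ and $r(k)=\mathcal{O}(k^{-1})$ as $k\to\infty$ recorded in Theorem \ref{111} and Lemma \ref{l3}.

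With $I-\mathcal{C}_w$ Fredholm of index zero, the Fredholm alternative reduces invertibility to the triviality of $\ker(I-\mathcal{C}_w)$. An element of this kernel is precisely a solution $\mu$ of the homogeneous equation $\mu=\mathcal{C}_w\mu$, which is exactly the Beals-Coifman equation associated to RH Problem 4.2 with Type II normalization. By Proposition \ref{ppl} every such solution vanishes identically, so $\ker(I-\mathcal{C}_w)=\{0\}$. Hence $I-\mathcal{C}_w$ is invertible, the Type I equation has the unique solution $\mu=(I-\mathcal{C}_w)^{-1}I$, and the reconstruction formula yields a unique $M(k)$, as claimed.

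I expect the main obstacle to be the second step, namely establishing rigorously that $I-\mathcal{C}_w$ is genuinely Fredholm of index zero, since only then does the vanishing lemma (which supplies a trivial kernel but nothing more) actually force invertibility. This is precisely where the regularity and decay of $r(k)$ at \emph{both} spectral singularities must be used; in particular the singularity at $k=0$, which is absent in the NLS and derivative-NLS settings, requires the sharp decay $r(k)=\mathcal{O}(k^3)$ near the origin to keep the jump data well behaved on the unbounded contour $\Sigma$ and to prevent any spurious contribution to the index.
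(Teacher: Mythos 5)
Your proposal is correct and takes essentially the same route as the paper: the paper likewise reduces Type I solvability to the Beals--Coifman equation on $\Sigma'$ and invokes the Fredholm alternative, with triviality of the kernel supplied by Proposition \ref{ppl} (the Type II vanishing result obtained via Zhou's vanishing lemma). The only difference is that you spell out the boundedness and index-zero Fredholm property of $I-\mathcal{C}_{w}$ --- including the role of the decay $r(k)=\mathcal{O}(k^{3})$ at the spectral singularity $k=0$ and $r(k)=\mathcal{O}(k^{-1})$ at infinity --- a step the paper compresses into the remark that the proposition is ``a standard result from the Fredholm alternative theorem.''
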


\subsection{Reconstruction of the potential}
\hspace*{\parindent}
In this part we show that the solution to the RH Problem 4.2  with Type I normalization solves the spatial spectral problem
  (\ref{L3}) and obtain explicit formulas for
the potentials.

\begin{lemma}
\label{l10}
Let
\begin{equation}
f(x, k)=\mu(x,k)\left(w_+(k)+w_-(k)\right),\label{fxk}
\end{equation}
then we have
\begin{equation}
\int_{\Sigma^{\prime}} s^{n} f(x, s) d s=\int_{\Sigma} s^{n} f(x, s) d s+\int_{\Sigma^{\prime} \backslash \Sigma} s^{n} f(x, s) d s
\end{equation}
is diagonal when $n$ is add and off-diagonal when $n$ is even.
\end{lemma}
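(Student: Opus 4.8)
The plan is to reduce the whole statement to a single reflection symmetry of $M$ together with the change of variables $s\mapsto -s$ on $\Sigma^{\prime}$. First I would record the symmetry $M(x,-k)=\sigma_3 M(x,k)\sigma_3$ on $\mathbb{C}\setminus\Sigma^{\prime}$. It is inherited from the eigenfunction symmetry $\omega^{\pm}(x,k)=\sigma_3\omega^{\pm}(x,-k)\sigma_3$ of Proposition~\ref{prop2} together with the evenness of $a(k)$: reading off columns one gets $\omega_1^{\pm}(-k)=\sigma_3\omega_1^{\pm}(k)$ and $\omega_2^{\pm}(-k)=-\sigma_3\omega_2^{\pm}(k)$, and substituting these into the definition of $M$ gives the claim. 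I would then check that this relation survives the trading of residues into jumps on the circles: the zeros are arranged as $k_{j+N}=-k_j$, and the parities $a(-k)=a(k)$, $b(-k)=-b(k)$, $a'(-k)=-a'(k)$ force $r(-k)=-r(k)$ and $c_{j+N}=c_j$, so the jump data on $\gamma_j$ and $\gamma_{j+N}=-\gamma_j$ (and on $\overline{\gamma}_j,\overline{\gamma}_{j+N}$) are exchanged consistently under $k\mapsto -k$; since the phase $e^{2ik^2x}$ is even, the jump matrix of RH Problem~4.2 is compatible with $M(x,-k)=\sigma_3 M(x,k)\sigma_3$.

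Next I would translate this into a statement about $f$. Because $f=\mu(w_++w_-)=M_+-M_-$ from \eqref{wet} and \eqref{fxk} is exactly the jump of $M$ across $\Sigma^{\prime}$, taking boundary values of $M(x,-k)=\sigma_3 M(x,k)\sigma_3$ yields a piecewise relation: on the two lines $\Sigma=\mathbb{R}\cup i\mathbb{R}$, where reflection through the origin interchanges the two sides of the contour, one finds $f(x,-k)=-\sigma_3 f(x,k)\sigma_3$; on the circles, where the rotation $k\mapsto -k$ preserves interiors and hence the $\pm$ labels but sends $\gamma_j\to\gamma_{j+N}$, one finds instead $f(x,-k)=\sigma_3 f(x,k)\sigma_3$.

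Then I would compute $\sigma_3 I_n\sigma_3$ with $I_n=\int_{\Sigma^{\prime}}s^n f(x,s)\,ds$ by inserting $\sigma_3 f\sigma_3$ and substituting $s\mapsto -s$. The decisive bookkeeping is that the change of variables behaves oppositely on the two kinds of pieces: on a line through the origin it reverses the direction of traversal and produces a factor $(-1)^n$, whereas on a circle not centered at the origin $s\mapsto -s$ is an orientation-preserving rotation and the substitution $ds=-du$ produces $(-1)^{n+1}$. Combined with the opposite signs of the $f$-relation on lines versus circles, the contributions from $\int_{\mathbb{R}}$, $\int_{i\mathbb{R}}$, and each reflected circle pair $\int_{\gamma_j}+\int_{\gamma_{j+N}}$ all collapse to the uniform identity
\begin{equation}
\sigma_3 I_n\sigma_3=(-1)^{n+1}I_n .\nonumber
\end{equation}
Reading off entries finishes the proof: for $n$ odd this reads $\sigma_3 I_n\sigma_3=I_n$, forcing the off-diagonal entries to vanish so that $I_n$ is diagonal, while for $n$ even it reads $\sigma_3 I_n\sigma_3=-I_n$, forcing the diagonal entries to vanish so that $I_n$ is off-diagonal.

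As a cross-check (and an alternative route that bypasses the per-piece orientation analysis), I would expand $M(x,k)=I-\sum_{n\ge 0}(2\pi i)^{-1}k^{-(n+1)}\int_{\Sigma^{\prime}}s^n f(x,s)\,ds$ for large $k$ and match coefficients in $M(x,-k)=\sigma_3 M(x,k)\sigma_3$; the coefficient of $k^{-(n+1)}$ then satisfies exactly $\sigma_3(\cdot)\sigma_3=(-1)^{n+1}(\cdot)$, recovering the same conclusion without tracking orientations. I expect the main obstacle to be precisely this orientation bookkeeping on the circles $\gamma_j,\overline{\gamma}_j$ under $k\mapsto -k$: one must verify that reflection pairs them correctly and that $c_{j+N}=c_j$, so that $M(x,-k)=\sigma_3 M(x,k)\sigma_3$ genuinely holds for RH Problem~4.2 and not only for the meromorphic RH Problem~3.1. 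Everything else is a routine change of variables.
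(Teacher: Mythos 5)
Your argument is correct, and in fact the paper offers no proof at all of Lemma~\ref{l10} --- it is stated bare and then invoked in the proof of Proposition~\ref{pro7} --- so your write-up supplies the missing argument rather than duplicating one. The route you take is the natural one: the symmetry $M(x,-k)=\sigma_3 M(x,k)\sigma_3$ follows from Proposition~\ref{prop2} and the parities $a(-k)=a(k)$, $b(-k)=-b(k)$ exactly as you say, it survives the residue-to-jump trade provided one fixes the harmless normalization $\gamma_{j+N}=-\gamma_j$ with matching radii and orientations (consistent with Figure~\ref{din1}, where all $\gamma_j$ carry one rotational sense and all $\overline{\gamma}_j$ the other), and $f=M_+-M_-$ from (\ref{wet}) then inherits a reflection relation whose combination with the substitution $s\mapsto -s$ yields $\sigma_3 I_n\sigma_3=(-1)^{n+1}I_n$, which is precisely the dichotomy in the lemma. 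One small caveat on your piecewise sign bookkeeping: the paper's figures (Figures~\ref{figure1}, \ref{din31}) orient both halves of $\mathbb{R}$ outward from the origin and both halves of $i\mathbb{R}$ inward, so that $D^+$ sits on the $+$ side everywhere; under that convention $k\mapsto -k$ is orientation-preserving on the lines as well, giving $f(x,-k)=+\sigma_3 f(x,k)\sigma_3$ there rather than your $-\sigma_3 f(x,k)\sigma_3$, with the compensating sign moving from the $f$-relation into the absence of an orientation flip in the substitution. The product of the two signs per piece is the same either way, so your final identity is untouched; your large-$k$ cross-check --- matching coefficients of $k^{-(n+1)}$ in $M(x,-k)=\sigma_3 M(x,k)\sigma_3$ using $M=I-\sum_{n\geq 0}(2\pi i)^{-1}k^{-(n+1)}\int_{\Sigma'}s^n f\,ds$ --- is the cleanest confirmation precisely because it is orientation-free, and it only presupposes the existence of the moments, which the lemma's statement already assumes.
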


\begin{proposition}\label{pro7}
The functions $M_{\pm} (k) $  obey the spatial spectral problem (\ref{L3}),
 where $M(k)$ and $U_x$ are constructed from the solution $\mu(k)$ of  the Beals-Coifman integral equation (\ref{bcm}) as follows
\begin{align}
&M(x, k)=I+\frac{1}{2 \pi i}\int_{\Sigma^{\prime}} \frac{\mu(x, s)\left(w_+(s)+w_{-}(s)\right)}{s-k}d s,\\
&U_x(x)=-\frac{1}{\pi}  \left[\sigma_3,\left(\int_{\Sigma'} \mu_{11}(x,s)\bar{r}(s)e^{-2is^2x} ds\right)\right].\label{uxd}
\end{align}
\end{proposition}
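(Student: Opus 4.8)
The plan is to run the standard \emph{dressing} argument, adapted to the spectral singularity of the FL problem: I first confirm that the Cauchy--integral formula defines the solution of RH Problem 4.2, then exploit the fact that the whole $x$--dependence of the jump is a conjugation by $e^{-ik^2x\widehat{\sigma}_3}$ to produce a linear ODE in $x$, and finally read off $U_x$ from the large-$k$ expansion. For the first step I would verify, via Plemelj and the Beals--Coifman setup (\ref{ewet})--(\ref{bcm}), that the boundary values of $M(x,k)=I+\tfrac{1}{2\pi i}\int_{\Sigma'}\tfrac{\mu(x,s)(w_++w_-)}{s-k}\,ds$ satisfy $M_+=M_-J$ with the correct normalization. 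Since $r\in H^{1,1}(\Sigma)$ by Theorem \ref{111}, the operator $\mathcal{C}_w$ is bounded with a small-norm/Fredholm inverse, and $x$--differentiation can be carried under the integral sign, so $M$ is $C^1$ in $x$ and its expansion coefficients inherit this regularity. In particular the $\mathcal{O}(1/k)$ coefficient is $M_1(x)=-\tfrac{1}{2\pi i}\int_{\Sigma'}f(x,s)\,ds$ with $f$ as in (\ref{fxk}), which by Lemma \ref{l10} (case $n=0$) is off-diagonal.

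The key observation is that every piece of the jump in (\ref{herm1})--(\ref{herm4}) factors as $J(k)=e^{-ik^2x\widehat{\sigma}_3}J_0(k)$ with $\partial_x J_0=0$; the sub/super-diagonal entries carry precisely the exponentials $e^{\pm2ik^2x}$ produced by this conjugation. Setting $\Phi(x,k)=M(x,k)e^{-ik^2x\sigma_3}$, one then computes
\begin{equation}
\Phi_+=M_+e^{-ik^2x\sigma_3}=M_-e^{-ik^2x\widehat{\sigma}_3}J_0e^{-ik^2x\sigma_3}=M_-e^{-ik^2x\sigma_3}J_0=\Phi_-J_0,
\end{equation}
so $\Phi$ has $x$-independent jumps. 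Differentiating $\Phi_+=\Phi_-J_0$ in $x$ gives $(\partial_x\Phi_+)\Phi_+^{-1}=(\partial_x\Phi_-)\Phi_-^{-1}$, i.e.\ $\Phi_x\Phi^{-1}$ has no jump across $\Sigma'$. Because the residues have been traded for jumps on the circles $\gamma_j,\overline{\gamma}_j$, the function $M$ (hence $\Phi$) is analytic off $\Sigma'$ with $\det\Phi\equiv1$, so $\Phi_x\Phi^{-1}$ extends to an entire function of $k$.

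It remains to identify this entire function from its behavior at $k=\infty$ and $k=0$. Using $M=I+M_1/k+\mathcal{O}(k^{-2})$ one finds $\Phi_x\Phi^{-1}=-ik^2\sigma_3-ik[M_1,\sigma_3]+\mathcal{O}(1)$ as $k\to\infty$, while the regular normalization of RH Problem 4.2(c) at $k=0$ keeps $\Phi_x\Phi^{-1}$ bounded near the origin. A Liouville argument then forces $\Phi_x\Phi^{-1}=kU_x-ik^2\sigma_3$ with $U_x:=-i[M_1,\sigma_3]$, which is off-diagonal because $M_1$ is. Inserting $M_1=-\tfrac{1}{2\pi i}\int_{\Sigma'}f\,ds$ and extracting the $(1,2)$-entry (using $f_{12}=\mu_{11}\bar r\,e^{-2ik^2x}$) reproduces the reconstruction $u_x=-\tfrac{1}{\pi}\int_{\Sigma'}\mu_{11}\bar r\,e^{-2is^2x}\,ds$, and assembling the commutator yields (\ref{uxd}). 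Undoing the gauge $\Phi=Me^{-ik^2x\sigma_3}$ converts $\Phi_x=(kU_x-ik^2\sigma_3)\Phi$ into $M_x+ik^2[\sigma_3,M]=V_1M$, which is exactly (\ref{L3}), once the $e^{\frac{i}{2}\int|u_y|^2\,\widehat{\sigma}_3}$ factor built into the $k\to0$ normalization of $M$ is accounted for.

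The main obstacle I anticipate is the spectral singularity at $k=0$: I must show that $\Phi_x\Phi^{-1}$ is genuinely bounded there, with \emph{vanishing} constant term, so that the polynomial has the precise degree-two form $kU_x-ik^2\sigma_3$ demanded by the Lax pair rather than an anomalous $A_0$. This is where the special normalization $M=e^{-\frac{i}{2}c_-\sigma_3}(I+kU+\cdots)e^{\frac{i}{2}d_0\sigma_3}$ as $k\to0$ must be used carefully: it fixes $\Phi(x,0)$ so that $\partial_x\Phi(x,0)\,\Phi(x,0)^{-1}=0$, pinning the constant term and simultaneously matching the $V_1$-gauge. A secondary technical point is the rigorous justification of differentiation under the Cauchy integral and of the off-diagonal structure of $M_1$; both follow from the $H^{1,1}$ control of $r$ and from Lemma \ref{l10}, but they must be invoked before the Liouville step.
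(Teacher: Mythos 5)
Your overall strategy (the dressing argument: pass to $\Phi(x,k)=M(x,k)e^{-ik^2x\sigma_3}$, show $\Phi_x\Phi^{-1}$ is entire, identify it by Liouville) is a legitimate alternative to the paper's argument, but your identification step is wrong, and wrong in a way that contradicts the very equation you are trying to prove. The large-$k$ expansion gives $\Phi_x\Phi^{-1}=-ik^2\sigma_3+ik[\sigma_3,M_1]+A_0+\mathcal{O}(k^{-1})$ with a constant-in-$k$ matrix $A_0$ built from $M_2$, $M_1$ and $M_{1,x}$, so Liouville yields a quadratic polynomial with a generically \emph{nonzero} constant term. Equation (\ref{L3}) itself demands this: $V_1=e^{\frac{i}{2}\int_{-\infty}^{x}|u_y|^2dy\,\widehat{\sigma}_3}\bigl(kU_x+\frac{i}{2}|u_x|^2\sigma_3\bigr)$ contains the $k$-independent diagonal piece $\frac{i}{2}|u_x|^2\sigma_3$, so if $M$ satisfies (\ref{L3}) then necessarily $\Phi_x\Phi^{-1}=-ik^2\sigma_3+V_1(x,k)$ with this diagonal term present; forcing $A_0=0$, as you do, would instead yield the $\varphi$-gauge equation (\ref{lax}) rather than (\ref{L3}). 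Your supporting claim that the $k\to0$ normalization ``fixes $\Phi(x,0)$ so that $\partial_x\Phi(x,0)\Phi(x,0)^{-1}=0$'' is false: $\Phi(x,0)=M(x,0)=e^{-\frac{i}{2}c_-\sigma_3}e^{\frac{i}{2}d_0\sigma_3}$, and $c_-$ depends on $x$ (it is the $\varphi_0$-phase $\int_{-\infty}^{x}|u_y|^2dy$ coming from the change of gauge (\ref{xx})), so $\partial_x\Phi(x,0)\Phi(x,0)^{-1}$ is exactly the nonzero diagonal matrix that $A_0$ must equal. Repairing your proof therefore requires computing $A_0$ from the expansion and proving that its diagonal coincides with $\frac{i}{2}|u_x|^2\sigma_3$ — additional identities (relating the diagonal of the $\mathcal{O}(1)$ term, or $\partial_x c_-$, to $|u_x|^2$) that your sketch does not supply; your closing sentence, which applies an extra $e^{\frac{i}{2}\int|u_y|^2\widehat{\sigma}_3}$ gauge factor to $M$, conflates the $\varphi$- and $\omega$-gauges: $M$ is already in the $\omega$-gauge (normalized to $I$ at infinity) and must satisfy (\ref{L3}) as it stands.

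For comparison, the paper never leaves the $L^2$ Beals--Coifman framework: it differentiates the jump relation, uses the parity statement of Lemma \ref{l10} to annihilate $[\sigma_3,\int_{\Sigma'}sf(x,s)\,ds]$ and to define $U_x$ by the zeroth moment (\ref{uxd}), and then shows that $W_\pm=M_{\pm,x}+ik^2[\sigma_3,M_\pm]-kU_x(x)M_\pm-U_x(x)R(x)M_\pm$ satisfies the same jump $W_+=W_-J$ with Type II (decaying) normalization, hence vanishes by the vanishing-lemma uniqueness of Proposition \ref{ppl}. Note that the term $-U_x(x)R(x)M_\pm$ there is precisely the constant-in-$k$ correction your polynomial dropped. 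The paper's route also sidesteps a secondary weakness of your sketch: with only $r\in H^{1,1}(\Sigma)$ the boundary values $M_\pm$ are controlled in $L^2$-type spaces, so the pointwise Morera/Liouville continuation of $\Phi_x\Phi^{-1}$ across $\Sigma'$ (including through $k=0$, where the contour self-intersects) would need separate justification that the operator-theoretic argument avoids.
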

\begin{proof}
For $k \in \Sigma'$,  differentiating the    jump relation (\ref{csm})  with respect to $x$
leads to
\begin{equation}
\begin{aligned}
 M_{+,x}  &=M_{-,x} J+M_{-}\left(-i k^{2} [\sigma_3, J]\right) \\
&=M_{-,x}  J+M_{-}\left(-i k^{2}  [\sigma_3,M_{-}^{-1} M_{+}]\right) \\
&=M_{-,x}  J+i k^{2} [\sigma_3, M_-] J-i k^{2} [\sigma_3, M_+].
\end{aligned}
\end{equation}
We conclude that
\begin{equation}
M_{+,x} (x,k)+i k^{2}  [\sigma_3, M_+]=\left(M_{-,x} (x,k)+i k^{2}  [\sigma_3, M_-]\right) J.
\end{equation}
Using the fact
\begin{equation}
M_{\pm}(x,k)-I=C^{\pm}\left[\mu(x, \cdot)\left(w_-(\cdot)+w_+(\cdot)\right)\right],\label{m1c}
\end{equation}
 we conclude that
\begin{equation}
i k^{2}  [\sigma_3, M_\pm](x,k)=i \left[\sigma_3,C^{\pm}\left((\cdot)^{2} f(x, \cdot)\right)-\frac{k}{2 \pi i}
 \int_{\Sigma^{\prime}} f(x,s) d s-\frac{1}{2 \pi i} \int_{\Sigma^{\prime}} s f(x, s) d s\right],\nonumber
\end{equation}
where $f(x,k)$ is given by (\ref{fxk}). It follows from Lemma \ref{l10} that the matrix-valued integral
$
\int_{\Sigma} k f(x,k) d k
$
is a diagonal matrix. Hence,
\begin{equation}
\left[\sigma_3,\int_{\Sigma^{\prime}} sf(x,s) d s\right]=0.
\end{equation}
Defining $U_x$ by (\ref{uxd}), we have
\begin{equation}
ik^{2} [\sigma_3,\left(M_{\pm}\right)(x, k) ]=i \left[\sigma_3,C^{\pm}\left((\cdot)^{2} f(x, \cdot)\right)\right]+kU_x(x).\label{ik2}
\end{equation}
Note that Cauchy projection is bounded on $L^{2}$ and $\mu_{11}(k)-1 \in L^{2}$ for each fixed $x$. We can show that the first term defines an $L^{2}$ function of $k$ for each $x$. Next, we observe that, by (\ref{m1c}),
\begin{equation}
k U_x(x)\left(M_{\pm}-I\right)=U_x(x) C^{\pm}[(\cdot) f(x, \cdot)]-U_x(x)R(x),\label{ucr}
\end{equation}
where $R(x)$ is given by
\begin{equation}
R(x)=\frac{1}{2\pi i} \int_{\Sigma^{\prime}}\mu(x, k)\left(w_+(k)+w_-(k)\right) dk
\end{equation}
and the first right-hand term of (\ref{ucr}) is an $L^{2}$ function of $k$ for each $x$.

Now define
$$
W_{\pm}=M_{\pm,x} +i k^{2} [\sigma_3,M_{\pm}]-kU_x(x) M_{\pm}(x)-U_x(x)R(x) M_{\pm}(x).
$$
By (\ref{m1c}),(\ref{ik2}),(\ref{ucr}), and the identity
\begin{equation}
\begin{aligned}
-kU_x(x) M_{\pm}(x)-U_x(x)R(x) M_{\pm}(x)&=-kU_x(x)-k U_x(x)\left(M_{\pm}-I\right) \\
&-U_x(x)R(x)-U_x(x)R(x)\left(M_{\pm}-I\right),
\end{aligned}
\end{equation}
it now follows that $\left(W_{+}, W_-\right) \in \partial C_{\Sigma}\left(L^{2}\right)$ for each fixed $x$. More explicitly,
\begin{equation}
\begin{aligned}
W_{\pm}=&   C^{\pm}_x[f(x, \cdot)]+i [\sigma_x,C^{\pm}\left((\cdot)^{2} f(x, \cdot)\right)]-U_x(x)\left[C^{\pm}((\cdot) f(x,\cdot))\right] \\
&-U_x(x) R(x)\left[C^{\pm}((\cdot) f(x, \cdot))\right].
\end{aligned}
\end{equation}
Also,
\begin{equation}
W_+=W_- J.
\end{equation}
and we can check that $W(x,k)$ has the same residue condition in the complex plane as the RH Problem 4.2.
 It follows from Proposition  \ref{ppl} that $W_+=W_-=0$.
\end{proof}

\section{Deformation of the RH Problem}
\label{sec:section4}

\subsection{Conjugation}
\hspace*{\parindent}
In the jump matrix (\ref{Vk1}),  the oscillatory terms are  $e^{\pm it\theta(k)}$, where
\begin{equation}
\theta(k)=k^{2} \frac{x}{t}+\eta^{2}(k).\label{thetak}
\end{equation}
It will be found that the long-time asymptotic of RH Problem 3.1 is affected by the growth and the decay of the exponential function $e^{2it\theta(k)}$ appearing in both the jump relation and the residue conditions. In this section, we introduce a new transform $M(k)\rightarrow M^{(1)}(k)$, from which we make $M^{(1)}(k)$ well-behaved as
$t\rightarrow\infty$ along any characteristic line.
Without loss of generality, we focus on the case  $t>0$ below, while the case  $t<0$ can be handled with the same approach (Appendix \ref{sec:section11}).

Let $\xi=\frac{x}{t}+\alpha>0$,  from (\ref{thetak}),  we find  four phase   points of the
function $\theta(k)$
 $$z_n=  k_0e^{\frac{i(n-1)\pi}{2}},  \ \ n=1,2,3,4,  $$
where $|z_n|=k_0=(\frac{\alpha\beta^{2}}{4\xi})^{1/4}.$   We further rewrite  (\ref{thetak})   in the form
\begin{equation}
\theta(k)=\frac{\alpha\beta^2}{4}\left(\frac{k^2}{k_0^4}+\frac{1}{k^2}\right)-\alpha\beta,
\end{equation}
which leads to
\begin{equation}
\mathrm{Re}(i\theta(k))=-\frac{1}{4}\alpha\beta^2\mathrm{Im}k^2\left(\frac{1}{k_0^4}-\frac{1}{|k|^4}\right).\label{Re}
\end{equation}
The  sign   signature of $ \mathrm{Re}(i\theta(k))$  determines the    decay
domains of the oscillatory term $e^{it\theta(k)}$ with respect to $t\rightarrow \infty$, as Figure \ref{cstx} shows.

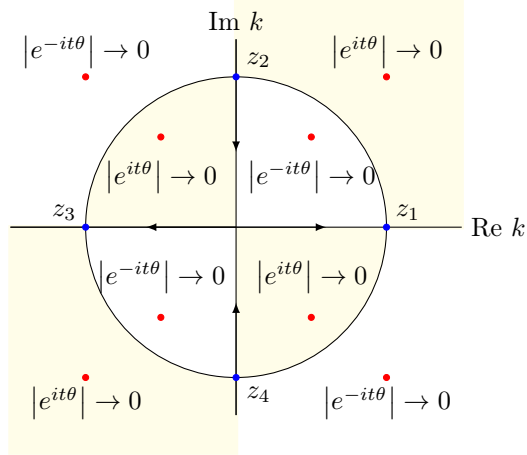
\begin{figure}[H]
\centering
\begin{tikzpicture}[node distance=2cm]
 \draw [fill=pink,ultra thick,color=yellow!10] rectangle (3,3);
 \draw [fill=blue,ultra thick,color=yellow!10] (0,0) rectangle (-3,-3);
\filldraw[white](0,0)-- (2,0) arc (0:90:2);
\filldraw[white](0,0)-- (-2,0) arc (180:270:2);
\filldraw[color=yellow!10](0,0)-- (0,-2) arc (270:360:2);
\filldraw[color=yellow!10](0,0)-- (0,2) arc (90:180:2);
\draw [ ](-3,0)--(3,0)  node[right, scale=1] {Re $k$};
\draw [-latex](-3,0)--(1.2,0);
\draw [-latex](0,0)--(-1.2,0);
\draw [ ](0,-2.5)--(0,2.5)  node[above, scale=1] {Im $k$};
\draw [-latex](0, 2.5)--(0,1);
\draw [-latex](0, -2.5)--(0,-1);
\draw(0,0)circle(2cm);
\draw[fill,blue] (0,2) circle [radius=0.04];
\draw[fill,blue] (0,-2) circle [radius=0.04];
\draw[fill,blue] (2,0) circle [radius=0.04];
\draw[fill,blue] (-2,0) circle [radius=0.04];
\node  [right]  at (2,0.2) {$z_1$};
\node  [above]  at (0.3,2) {$z_2$};
\node  [left]  at (-2,0.2) {$z_3$};
\node  [below]  at (0.3,-2) {$z_4$};
\node  [below]  at (1,1) {$\left|e^{ -i t \theta}\right|\rightarrow 0$};
\node  [above]  at (1,-1) {$\left|e^{ i t \theta}\right|\rightarrow 0$};
\node  [below]  at (-1,1) {$\left|e^{i  t\theta}\right|\rightarrow 0$};
\node  [above]  at (-1,-1) {$\left|e^{ -it  \theta}\right|\rightarrow 0$};
\node  [above]  at (2,2) {$\left|e^{i  t\theta}\right|\rightarrow 0$};
\node  [below]  at (2,-2) {$\left|e^{ -i t\theta}\right|\rightarrow 0$};
\node  [above]  at (-2,2) {$\left|e^{ -i t \theta}\right|\rightarrow 0$};
\node  [below]  at (-2,-2) {$\left|e^{ it \theta}\right|\rightarrow 0$};
\draw[fill,red] (1,1.2) circle [radius=0.04];
\draw[fill,red] (2,2) circle [radius=0.04];
\draw[fill,red] (2,-2) circle [radius=0.04];
\draw[fill,red] (-2,-2) circle [radius=0.04];
\draw[fill,red] (-2,2) circle [radius=0.04];
\draw[fill,red] (1,-1.2) circle [radius=0.04];
\draw[fill,red] (-1,-1.2) circle [radius=0.04];
\draw[fill,red] (-1,1.2) circle [radius=0.04];
\end {tikzpicture}
\caption{The decay domains of exponential oscillatory terms, where $|e^{ it\theta(k)} |\rightarrow 0$
when $t\rightarrow\infty$ in yellow-shaded regions, while  in blank regions, $|e^{-it\theta(k)}|\rightarrow 0$ when $t\rightarrow\infty$.}
\label{cstx}
\end{figure}

To facilitate the subsequent analysis in the remainder of this paper, we will uniformly deal with the four stationary phase points $z_n,   n=1,2,3,4$.
For this purpose,  we introduce some notations.  We first divide $\Sigma$   into the following eight intervals near the four stationary phase  points $z_n,   n=1,2,3,4$:
\begin{align}
&I_{n1}=(0,z_n),\ \ I_{n2}=(z_n,+\infty),\ n=1,2,\nonumber\\
&I_{n1}=(z_n,0),\ \ I_{n2}=(-\infty,z_n),\ n=3,4.\nonumber\\
&I_{+}=\cup_{n=1}^4I_{n2}, \ \ I_{-}=\cup_{n=1}^4I_{n1}, \nonumber
\end{align}
which is illustrated in Figure \ref{division31}.

The partition $\Delta_{k_0}^{\pm}$ for $k_0 \in \mathbb{R}_+$ is defined as follows:
$$
\Delta_{k_0}^+=\left\{j \in\{1, \ldots,2N\}: | k_j |>k_0\right\},
$$
$$
\Delta_{k_0}^-=\left\{j \in\{1, \ldots,2N\}:  | k_j |<k_0\right\},
$$
which splits the residue coefficients $c_j e^{2it\theta(k_j)}$ into two sets.

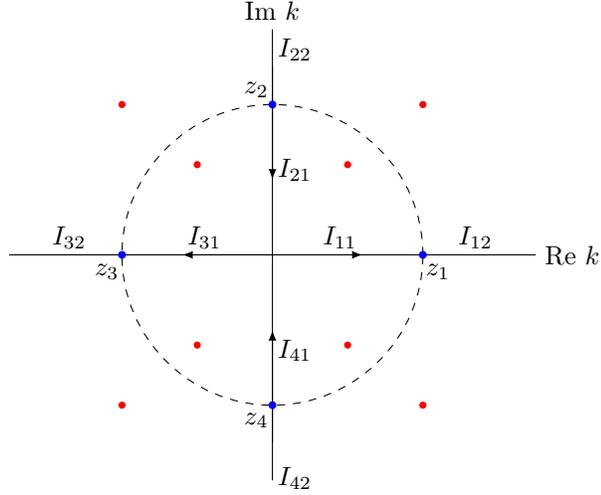
\begin{figure}[H]
\begin{center}
\begin{tikzpicture}[node distance=2cm]
\draw[dashed](0,0)circle(2cm);
\draw [ ](-3.5,0)--(3.5,0)  node[right, scale=1] {Re $k$};
\draw [-latex](-3,0)--(1.2,0);
\draw [-latex](0,0)--(-1.2,0);
\draw [ ](0,-3)--(0,3)  node[above, scale=1] {Im $k$};
\draw [-latex](0, 2.5)--(0,1);
\draw [-latex](0, -2.5)--(0,-1);
\node  [right]  at (-0.5,2.2) {$z_2$};
\node  [right]  at (-0.5,-2.2) {$z_4$};
\node  [below]  at (2.2,0) {$z_1$};
\node  [below]  at (-2.2,0) {$z_3$};
\draw[fill,blue] (2,0) circle [radius=0.045];
\draw[fill,blue] (-2,0) circle [radius=0.045];
\draw[fill,blue] (0,2) circle [radius=0.045];
\draw[fill,blue] (0,-2) circle [radius=0.045];
\node  [below]  at (0.3,-1) {$I_{41}$};
\node  [below]  at (0.3,1.4) {$I_{21}$};
\node  [below]  at (0.3,3) {$I_{22}$};
\node  [below]  at (0.3,-2.7) {$I_{42}$};
\node  [below]  at (-0.9,0.5) {$I_{31}$};
\node  [below]  at (-2.7,0.5) {$I_{32}$};
\node  [below]  at (2.7,0.5) {$I_{12}$};
\node  [below]  at (0.9,0.5) {$I_{11}$};
\draw[fill,red] (1,1.2) circle [radius=0.04];
\draw[fill,red] (2,2) circle [radius=0.04];
\draw[fill,red] (2,-2) circle [radius=0.04];
\draw[fill,red] (-2,-2) circle [radius=0.04];
\draw[fill,red] (-2,2) circle [radius=0.04];
\draw[fill,red] (1,-1.2) circle [radius=0.04];
\draw[fill,red] (-1,-1.2) circle [radius=0.04];
\draw[fill,red] (-1,1.2) circle [radius=0.04];
\end {tikzpicture}
\end{center}
\caption{The division of the jump contour  $\Sigma =\mathbb{R}\cup i\mathbb{R}$.}
\label{division31}
\end{figure}

We define
\begin{align}
&T(k) =\prod_{j \in \Delta_{k_0}^-} \left(\frac{k-\overline{k}_j}{k-k_j}\right) \delta(k),\label{Tk1}
\end{align}
where   $\delta(k)$   is  given by
\begin{align}
&\delta(k)=  \prod_{n=1,3} \left( \frac{ k-z_n }{k} \right)^{i\nu(z_n)}e^{\chi_n(k)}  \prod_{n=2,4}   \left(\frac{k}{k-z_n } \right)^{i  {\nu}(z_n)}e^{ {\chi}_n(k)},\\
&\nu(z_n)=-\frac{1}{2\pi}\ln[1+ r(z_n) \overline{r(\bar z_n)} ],\ \ n=1, 2,3,4,\nonumber\\
&\chi_{n}(k)=\frac{1}{2\pi i}\int_0^{  z_n}\ln\left(\frac{1+ r(k') \overline{r(\bar k')}}{1+ r(z_n) \overline{r(\bar z_n)}}\right)\frac{dk'}{k'-k},\ \ n=1, 2, 3, 4.\nonumber
\end{align}
Among all the formulas above, we choose the principal branch of   logarithm functions.

\begin{proposition}
The function defined by (\ref{Tk1}) has following properties:

(a) $T(k)$ is meromorphic in $\mathbb{C} \backslash I_-$, i.e., for each $j \in \Delta_{k_0}^-$, $T(k)$ has a simple pole at $k_j$ and $\overline{k}_j$ which belong to $\mathcal{K}\cup \mathcal{\overline{K}}$,

(b) For $k\in \mathbb{C} \backslash I_-$, $\overline{T(\overline{k})}=1/T(k)$,

(c) For $k \in I_- $, the boundary values $T_{\pm}(k)$ satisfy
\begin{equation}
T_+(k)/T_-(k)=1+ r(k)\overline{ r(\bar k)},
\end{equation}

(d) $|k|\rightarrow \infty$ with $|\arg(k)|\leq c<\pi$
\begin{equation}
T(k)=1+\frac{i}{k}\left[2 \sum_{j \in \Delta_{k_0}^-} \operatorname{Im}(k_j)-\int_{I_-} k(s) d s\right]+\mathcal{O}\left(k^{-2}\right),\label{tk}
\end{equation}

(e) $k\rightarrow   z_n, \ n=1,2,3,4$ along any ray $  z_n+e^{i\phi}\mathbb{R}_+$ with $|\phi|\leq c<\pi$
\begin{equation}
|T(k)-T_0(z_n)(k - z_n)^{i\nu(z_n)}| \leq c|k- z_n|^{1/2},\label{tk0}
\end{equation}
where
\begin{align}
&T_0(z_n) =\prod_{j \in \Delta_{k_0}^-}\left(\frac{z_n-\overline{k}_j}{z_n-k_j}\right) e^{i\beta(k,z_n)},\\
&\beta (k,z_n )=-\nu(z_n) \log \left(k -z_n+e^{\frac{i(n-1)\pi}{2}}\right)+\int_{I_-} \frac{\nu(s)-\chi_{n}(s) \nu(z_n)}{s-k} d s,\label{t0zn}
\end{align}
with $n=1,2,3,4$.
\label{p}
\end{proposition}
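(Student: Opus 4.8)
The plan is to recognize that the scalar function $\delta(k)$, and hence $T(k)$ after multiplication by the rational Blaschke-type prefactor, is precisely the explicit solution of a scalar Riemann--Hilbert problem on $I_-$, and then to read off properties (a)--(e) from this solution. Concretely, set $\rho(k)=1+r(k)\overline{r(\bar k)}$. By Assumption \ref{asump1} there are no spectral singularities, and on $i\mathbb{R}$ the symmetry $r(-k)=-r(k)$ gives $\rho(k)=1-|r(k)|^2=|a(k)|^{-2}$, so $\rho$ is real, strictly positive and bounded below on $I_-$; one also checks $\overline{\rho(\bar s)}=\rho(s)$. Consequently each $\nu(z_n)=-\tfrac{1}{2\pi}\ln\rho(z_n)$ is a \emph{real} number, and $\delta$ coincides with $\exp\!\big(\tfrac{1}{2\pi i}\int_{I_-}\tfrac{\ln\rho(s)}{s-k}\,ds\big)$. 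The stated product form is obtained by splitting $I_-=\cup_{n}I_{n1}$, writing $\ln\rho(s)=\ln\rho(z_n)+\ln\tfrac{\rho(s)}{\rho(z_n)}$ on each $I_{n1}$, evaluating the constant piece explicitly (which produces the factors $(\tfrac{k-z_n}{k})^{i\nu(z_n)}$ for $n=1,3$ and their reciprocals for $n=2,4$, according to orientation) and leaving the variable piece as $\chi_n(k)$.

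With this identification the first three items are essentially bookkeeping. For (a), the factors $(k-z_n)^{i\nu(z_n)}$ and the Cauchy integrals $\chi_n(k)$ are analytic off their cuts $I_{n1}\subset I_-$, while the finite product $\prod_{j\in\Delta_{k_0}^-}\tfrac{k-\bar k_j}{k-k_j}$ supplies the simple poles required at the $k_j$, the behaviour at the conjugate points $\bar k_j$ being then fixed by the symmetry (b); hence $T$ is meromorphic on $\mathbb{C}\setminus I_-$. For (b), I would substitute $k\mapsto\bar k$ and conjugate: the Blaschke product maps to its reciprocal, while the reality of $\nu(z_n)$, the identity $\overline{(k-z_n)^{i\nu}}=(\bar k-\bar z_n)^{-i\nu}$, and the symmetry $\overline{\rho(\bar s)}=\rho(s)$ turn $\overline{\delta(\bar k)}$ into $\delta(k)^{-1}$, giving $\overline{T(\bar k)}=1/T(k)$. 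For (c), the jump $T_+/T_-=\rho$ on $I_-$ follows from the Plemelj formula applied to the Cauchy integral, the Blaschke prefactor and the off-cut factors being continuous across $I_-$ and dropping out.

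For (d) I would expand each factor in powers of $1/k$ as $k\to\infty$ with $|\arg k|\le c<\pi$: the Blaschke factor gives $1+\tfrac{1}{k}(k_j-\bar k_j)+\mathcal{O}(k^{-2})=1+\tfrac{2i}{k}\,\mathrm{Im}(k_j)+\mathcal{O}(k^{-2})$, while $\tfrac{1}{s-k}=-\tfrac1k+\mathcal{O}(k^{-2})$ in the Cauchy integral contributes a term $-\tfrac{1}{k}\int_{I_-}(\cdots)\,ds$; collecting the $\mathcal{O}(k^{-1})$ coefficients yields (\ref{tk}).

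The genuine work is item (e), which is also the estimate the steepest-descent analysis actually needs. Here I would use that $r\in H^{1,1}(\Sigma)$ by Theorem \ref{111}, so that, since $\rho$ is bounded away from $0$, the density $\ln\rho$ lies in $H^1$ locally and hence, by the one-dimensional Sobolev embedding $H^1\hookrightarrow C^{1/2}$, is Hölder-$\tfrac12$ continuous; in particular $\ln\tfrac{\rho(s)}{\rho(z_n)}$ is Hölder-$\tfrac12$ and \emph{vanishes} at the endpoint $z_n$. The standard endpoint estimate for Cauchy integrals with a Hölder density vanishing at the endpoint then shows $\chi_n(k)$ extends continuously to $z_n$ with $|\chi_n(k)-\chi_n(z_n)|\lesssim|k-z_n|^{1/2}$ along any approach $z_n+e^{i\phi}\mathbb{R}_+$, $|\phi|\le c<\pi$. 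Isolating the singular factor $(k-z_n)^{i\nu(z_n)}$ and absorbing the remaining analytic and Hölder-$\tfrac12$ factors into the constant $T_0(z_n)$ and the correction $\beta(k,z_n)$ of (\ref{t0zn}) produces the bound (\ref{tk0}). The main obstacle is precisely this uniform endpoint analysis: because there are four stationary points, two of them on $i\mathbb{R}$ where $\rho=1-|r|^2$ rather than $1+|r|^2$, I must verify that the lower bound on $\rho$, the Hölder regularity, and the branch choices hold uniformly over the four arcs $I_{n1}$, so that the constant $c$ in (\ref{tk0}) may be taken independent of $n$.
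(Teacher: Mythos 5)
Your proof is sound, and it is worth noting at the outset that the paper states Proposition \ref{p} with no proof at all (it is immediately used to define the transformation (\ref{mktk})), so there is no in-paper argument to compare against; your route is the standard one from the $\overline{\partial}$-literature (cf. \cite{oas30,lta31}), namely identifying $\delta(k)$ with the explicit solution $\exp\bigl(\frac{1}{2\pi i}\int_{I_-}\ln\rho(s)\,\frac{ds}{s-k}\bigr)$, $\rho=1+r\overline{r(\bar\cdot)}$, of the scalar RH problem with jump $\rho$ on $I_-$, after which (a)--(d) are bookkeeping and (e) reduces to the Muskhelishvili endpoint estimate for a Cauchy integral with H\"older-$\frac12$ density vanishing at $z_n$, the regularity coming from $r\in H^{1,1}(\Sigma)$ (Theorem \ref{111}) together with the lower bound $\rho\geq c^2>0$ on $i\mathbb{R}$ guaranteed by Assumption \ref{asump1}. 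Two places in your sketch deserve explicit verification, both resolved by the odd symmetry $r(-k)=-r(k)$: in (b), conjugation maps the arc $(0,z_2)$ to $(0,z_4)$, so the symmetry $\overline{T(\bar k)}=1/T(k)$ requires the pairing $\nu(z_2)=\nu(z_4)$ (and likewise $\nu(z_1)=\nu(z_3)$) together with the interchange of $\chi_2$ and $\chi_4$, which your blanket appeal to $\overline{\rho(\bar s)}=\rho(s)$ covers only implicitly; and in (c), the reciprocal exponents chosen for $n=2,4$ must be matched against the paper's orientation of $i\mathbb{R}$ (toward the origin in Figure \ref{figure1}), so that the multiplicative jump $e^{\mp 2\pi\nu(z_n)}=\rho(z_n)^{\pm1}$ of the principal-branch power factor across its cut combines with the Plemelj jump $\rho(k)/\rho(z_n)$ of $e^{\chi_n}$ to produce $T_+/T_-=\rho(k)$ on every one of the four arcs, not its reciprocal --- your phrase ``according to orientation'' is exactly where this check lives. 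Finally, your argument quietly corrects two typographical slips in the statement itself: the factor $(k-\overline{k}_j)/(k-k_j)$ gives $T$ a simple \emph{zero} (not pole) at $\overline{k}_j$, consistent with your symmetry (b), and your $1/k$-expansion in (d) shows the integrand written as ``$k(s)$'' in (\ref{tk}) should be $\nu(s)$.
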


Using our partial transmission coefficient $T(k)$ defined above, we make
 a transformation
\begin{equation}
M^{(1)}(k)=M(k) T(k)^{-\sigma_{3}},\label{mktk}
\end{equation}
which  satisfies the following RH problem:\\
\textbf{RH Problem 5.1.} Find a  matrix  function $ M^{(1)}(k)=M^{(1)}(k;x,t)$  with the following properties:

(a)\emph{Analyticity}: $ M^{(1)}(k)$ is meromorphic  in $ \mathbb{C}\setminus \Sigma $;

(b)\emph{Symmetry}: $\overline{M^{(1)}(\bar{k})}=\sigma_2 M^{(1)}(k) \sigma_2$;

(c)\emph{Jump condition}: $ M^{(1)}(k)$   satisfies the jump condition
\begin{equation}
M^{(1)}_+ (k) =M^{(1)}_- (k)
V^{(1)}(k),  \label{mup}
\end{equation}
the jump matrix $ V^{(1)}(k) $ is defined  by
\begin{equation}
V^{(1)}=\begin{cases}
\begin{pmatrix}
1&0\\
(\frac{r(k)}{1+r(k)\overline{r(\overline{k})}} T_-^{-2}(k) e^{2it\theta} & 0
\end{pmatrix}
\begin{pmatrix}
1 & \frac{\overline{r(\bar{k})}}{1+r(k)\overline{r(\overline{k})}} T_+^2(k) e^{-2it\theta} \\
0 & 1
\end{pmatrix},  \quad k\in I_-,\\ \\
\begin{pmatrix}
1 & \overline{r(\bar{k})} T^2(k) e^{-2it\theta} \\
0 & 1
\end{pmatrix}
\begin{pmatrix}
1 & 0 \\
r(k) T^{-2}(k) e^{2it\theta} & 1
\end{pmatrix},\quad   k\in I_+;    \label{V11}
\end{cases}
\end{equation}

(d)\emph{Asymptotic conditions}:
\begin{equation}
\begin{cases}
M^{(1)}(k)=I+\mathcal{O}\left(k^{-1}\right), & k \rightarrow \infty,\\
M^{(1)}(k)=e^{-\frac{i}{2} c_- \sigma_{3}}\left(I+kU+\mathcal{O}\left(k^{2}\right)\right) e^{\frac{i}{2} d_0\sigma_3},  & k \rightarrow 0;
\end{cases}
\end{equation}

(e)\emph{Residue conditions}: $M^{(1)}(k)$ has simple poles at each $k_j \in \mathcal{K}\cup\mathcal{\overline{K}}$ at which
\begin{align}
&\underset{k=k_j}{\operatorname{Res}} M^{(1)}=
\underset{k \rightarrow k_j}{\lim } M^{(1)}R_{\pm}(k_j), \qquad \ k \in \Delta_{k_0}^\pm, \label{515}\\
&\underset{k=\overline{k}_j}{\operatorname{Res}} M^{(1)}=
\underset{k \rightarrow \overline{k}_j}{\lim}  M^{(1)}\sigma_2\overline{R_{ \pm} (\overline{k}_j)} \sigma_2, \ k \in \Delta_{k_0}^\pm,
\end{align}
where
\begin{equation}
R_{ -}(k_j)=\begin{pmatrix}
0&c_j^{-1}(1/T)^{\prime}(k_j)^{-2} e^{-2it\theta(k_j)}\\
0&0
\end{pmatrix},\quad
R_{ +}(k_j)=\begin{pmatrix}
0 & 0 \\
c_j T(k_j)^{-2} e^{2it\theta(k_j)} & 0
\end{pmatrix}.\label{rkj}
\end{equation}
\begin{proof}
With Proposition \ref{p} and the properties of $M$, $M^{(1)}(k)$ is unimodular, analytic in $\mathbb{C} \backslash (\Sigma \cup \mathcal{K}\cup \mathcal{\overline{K}})$, and approaches identity as $k \rightarrow \infty$ by its definition. For the residues, since $T(k)$ is analytic at each $k_j, \overline{k}_j$ with $k \in \Delta_{k_0}^+,$ the residue conditions at these poles are an immediate consequence of (\ref{Tk1}). For $k \in \Delta_{k_0}^-, T(k)$ has zeros at $\overline{k}_j$, and poles at $k_j$ so that $M_1^{(1)}(k)=M_1(k) T(k)^{-1}$ has a removable singularity at $k_j$ but acquires poles at $\overline{k}_j$. For $M_2^{(1)}(k)=M_2(k)T(k)$ the situation
is reversed where it has a pole at $k_j$ and a removable singularity at $\overline{k}_j. $ At $k_j$ we have
$$
\begin{aligned}
M_1^{(1)}(k_j)=& \underset{k=k_j}{\operatorname{Res}}  M_1(k)T(k)^{-1}=\underset{k=k_j}{\operatorname{Res}} M_1(k)\cdot(1/T)^{\prime}(k_j) \\
=&c_j e^{2i\theta(k_j)} M_2(k_j)(1/T)'(k_j), \\
\underset{k=k_j}{\operatorname{Res}}M_2^{(1)}(k)=& \underset{k=k_j}{\operatorname{Res}} M_2(k)T(k)=M_2(k_j)\left[(1/T)'(k_j)\right]^{-1} \\
=&c_j^{-1}\left[(1/T)'(k_j)\right]^{-2} e^{-2it\theta(k_j)} M_1^{(1)}(k_j).
\end{aligned}
$$
from which the first formula in (\ref{515}) clearly follows. The computation of the residue at $\overline{k}_j$ for $k \in \Delta_{k_0}^-$ can be calculated in the same way.
\end{proof}

\subsection{A mixed $\bar{\partial}$-RH problem}
\hspace*{\parindent}
The main purpose of this section is to construct  a new   matrix function $M^{(2)}(k)$  for deforming the contour $\Sigma$ into a contour $\Sigma^{(2)}$  such that: (i) $M^{(2)}(k)$  has no jump on the real and the imaginary axis. For this purpose, we choose the boundary values of $R^{(2)}(k)$ through the factorization of $V^{(1)}(k)$ in (\ref{mup}) where the new jumps on $\Sigma^{(2)}$ match a well-known model RH problem; (ii) We need to control the norm of $R^{(2)}(k)$, so that the $\overline{\partial}$-contribution to the long-time asymptotics of $u(x,t)$ can be ignored; (iii) The residues are unaffected by the transformation.

As Figure \ref{division1}, we define a   contour
$ \Sigma^{(2)}=L \cup L_{0} \cup \bar{L} \cup \bar{L}_{0}$,
where $L=L_1 \cup  {L}_2 \cup L_3 \cup  {L}_4$, and
\begin{align}
&L_j=\{k=z_j+  ue^{\frac{3\pi}{4}i}, u \in(-\infty, \frac{1}{\sqrt{2}}]\}, \ j=1,4; \nonumber\\
&{L}_j=\{k=z_j+ ue^{-\frac{\pi}{4}i}, u \in(-\infty, \frac{1}{\sqrt{2}}]\}, \ j=2,3; \nonumber\\
&L_{0}=\left\{k= u e^{  \frac{\pi}{4} i}, \quad u \in[-\frac{1}{\sqrt{2}}, \frac{1}{\sqrt{2}}]\right\}.\nonumber
\end{align}
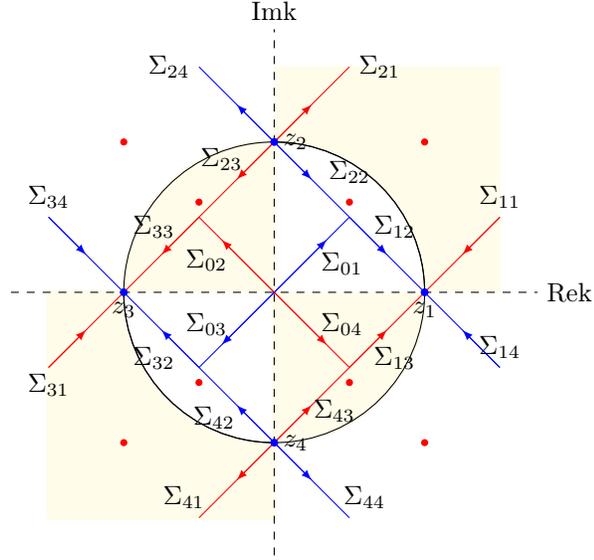
\begin{figure}[H]
\begin{center}
\begin{tikzpicture}[node distance=2cm]
\draw [fill=pink,color=yellow!10] (0,0) rectangle (3,3);
\draw [fill=pink,ultra thick,color=yellow!10] (0,0) rectangle (-3,-3);
\filldraw[white](0,0)-- (2,0) arc (0:90:2);
\filldraw[white](0,0)-- (-2,0) arc (180:270:2);
\draw [fill=white] (0,0) circle [radius=2];
\filldraw[fill=pink,ultra thick,color=yellow!10](0,0)-- (0,-2) arc (270:360:2);
\filldraw[fill=pink,ultra thick,color=yellow!10](0,0)-- (0,2) arc (90:180:2);
\draw(0,0)circle(2cm);
\draw [dashed](-3.5,0)--(3.5,0)  node[right, scale=1] {Rek};
\draw [dashed](0,-3.5)--(0,3.5)  node[above, scale=1] {Imk};
\draw [blue] (-1,3)--(3,-1)  node[right, scale=1] {};
\draw[red] (1,3)--(-3,-1)  node[above, scale=1] {};
\draw [blue] (-3,1)--(1,-3)  node[above, scale=1] {};
\draw[red] (-1,-3)--(3,1)  node[above, scale=1] {};
\draw[red]  (0,0)--(1,-1)  node[right, scale=1] {};
\draw[blue] (0,0)--(-1,-1)  node[above, scale=1] {};
\draw[blue] (0,0)--(1,1)  node[above, scale=1] {};
\draw[red] (0,0)--(-1,1)  node[above, scale=1] {};
\draw [-latex,blue] (0,0) -- (0.7,0.7);
\draw [-latex,red] (0,0) -- (0.7,-0.7);
\draw [-latex,red] (0,0) -- (-0.7,0.7);
\draw [-latex,blue] (0,0) -- (-0.7,-0.7);
\draw [-latex,blue] (0,2) -- (0.5,1.5);
\draw [-latex,red] (0,2) -- (-0.5,1.5);
\draw [-latex,blue] (1,1) -- (1.5,0.5);
\draw [-latex,red] (-1,1) -- (-1.5,0.5);
\draw [-latex,red] (0,-2) -- (0.5,-1.5);
\draw [-latex,blue] (0,-2) -- (-0.5,-1.5);
\draw [-latex,red] (1,-1) -- (1.5,-0.5);
\draw [-latex,blue] (-1,-1) -- (-1.5,-0.5);
\draw [-latex,red] (3,1) -- (2.5,0.5);
\draw [-latex,blue] (3,-1) -- (2.5,-0.5);
\draw [-latex,blue] (-3,1) -- (-2.5,0.5);
\draw [-latex,red] (-3,-1) -- (-2.5,-0.5);
\draw [-latex,red] (0,2) -- (0.5,2.5);
\draw [-latex,blue] (0,2) -- (-0.5,2.5);
\draw [-latex,blue] (0,-2) -- (0.5,-2.5);
\draw [-latex,red] (0,-2) -- (-0.5,-2.5);
\node  [right]  at (0,2) {$z_2$};
\node  [right]  at (0,-2) {$z_4$};
\node  [below]  at (2,0) {$z_1$};
\node  [below]  at (-2,0) {$z_3$};
\node  [above]  at (3,1) {$\Sigma_{11} $};
\node  [above]  at (3,-1) {$\Sigma_{14}$};
\node  [below]  at (0.8,-1.3) {$\Sigma_{43}$};
\node  [above]  at (-0.7,1.5) {$\Sigma_{23}$};
\node  [right]  at (0.6,1.6) {$\Sigma_{22}$};
\node  [right]  at (1,3) {$\Sigma_{21}$};
\node  [left]  at (-1,3) {$\Sigma_{24}$};
\node  [below]  at (-1.6,-0.6) {$\Sigma_{32}$};
\node  [above]  at (-3,1) {$\Sigma_{34}$};
\node  [above]  at (-3,-1.5) {$\Sigma_{31}$};
\node  [above]  at (1.2,-3) {$\Sigma_{44}$};
\node  [above]  at (-1.2,-3) {$\Sigma_{41}$};
\node  [above]  at (-1.6,0.6) {$\Sigma_{33}$};
\node  [below]  at (-0.8,-1.4) {$\Sigma_{42}$};
\node  [below]  at (1.6,-0.6) {$\Sigma_{13}$};
\node  [above]  at (1.6,0.6) {$\Sigma_{12}$};
\node  [below]  at (-0.9,0.7) {$\Sigma_{02}$};
\node  [above]  at (-0.9,-0.7) {$\Sigma_{03}$};
\node  [above]  at (0.9,-0.7) {$\Sigma_{04}$};
\node  [below]  at (0.9,0.65) {$\Sigma_{01}$};
\draw[fill,blue] (2,0) circle [radius=0.045];
\draw[fill,blue] (-2,0) circle [radius=0.045];
\draw[fill,blue] (0,2) circle [radius=0.045];
\draw[fill,blue] (0,-2) circle [radius=0.045];
\draw[fill,red] (1,1.2) circle [radius=0.04];
\draw[fill,red] (2,2) circle [radius=0.04];
\draw[fill,red] (2,-2) circle [radius=0.04];
\draw[fill,red] (-2,-2) circle [radius=0.04];
\draw[fill,red] (-2,2) circle [radius=0.04];
\draw[fill,red] (1,-1.2) circle [radius=0.04];
\draw[fill,red] (-1,-1.2) circle [radius=0.04];
\draw[fill,red] (-1,1.2) circle [radius=0.04];
\end {tikzpicture}
\end{center}
\caption{\small The jump contour  $\Sigma^{(2)}$ for the  $M^{(2)}$, which is consist of blue lines and red lines.
The red lines  decay along yellow domains; The blue lines decay along blank  domains }
\label{division1}
\end{figure}

Furthermore, the contour $\Sigma^{(2)}$ contains $\Sigma_{ij}$ with $i=0,1,2,3,4$ and $j=1,2,3,4$. $ \Sigma$  and $\Sigma^{(2)}$ further divides the complex plane $\mathbb{C}$ into $24$ sectors denoted by $D_{nj}$, where $n = 1,2,3,4,5,6$ and $j= 1,2,3,4$, which is plotted in Figure \ref{division2}.

\begin{figure}[H]
\centering
\begin{tikzpicture}[node distance=2cm]
\draw(0,0)circle(2cm);
\draw [dashed](-3.5,0)--(3.5,0)  node[right, scale=1] {Rek};
\draw [dashed](0,-3.5)--(0,3.5)  node[above, scale=1] {Imk};
\draw [blue] (-1,3)--(3,-1)  node[right, scale=1] {};
\draw[red] (1,3)--(-3,-1)  node[above, scale=1] {};
\draw [blue] (-3,1)--(1,-3)  node[above, scale=1] {};
\draw[red] (-1,-3)--(3,1)  node[above, scale=1] {};
\draw[red]  (0,0)--(1,-1)  node[right, scale=1] {};
\draw[blue] (0,0)--(-1,-1)  node[above, scale=1] {};
\draw[blue] (0,0)--(1,1)  node[above, scale=1] {};
\draw[red] (0,0)--(-1,1)  node[above, scale=1] {};
\draw [-latex,blue] (0,0) -- (0.7,0.7);
\draw [-latex,red] (0,0) -- (0.7,-0.7);
\draw [-latex,red] (0,0) -- (-0.7,0.7);
\draw [-latex,blue] (0,0) -- (-0.7,-0.7);
\draw [-latex,blue] (0,2) -- (0.5,1.5);
\draw [-latex,red] (0,2) -- (-0.5,1.5);
\draw [-latex,blue] (1,1) -- (1.5,0.5);
\draw [-latex,red] (-1,1) -- (-1.5,0.5);
\draw [-latex,red] (0,-2) -- (0.5,-1.5);
\draw [-latex,blue] (0,-2) -- (-0.5,-1.5);
\draw [-latex,red] (1,-1) -- (1.5,-0.5);
\draw [-latex,blue] (-1,-1) -- (-1.5,-0.5);
\draw [-latex,red] (3,1) -- (2.5,0.5);
\draw [-latex,blue] (3,-1) -- (2.5,-0.5);
\draw [-latex,blue] (-3,1) -- (-2.5,0.5);
\draw [-latex,red] (-3,-1) -- (-2.5,-0.5);
\draw [-latex,red] (0,2) -- (0.5,2.5);
\draw [-latex,blue] (0,2) -- (-0.5,2.5);
\draw [-latex,blue] (0,-2) -- (0.5,-2.5);
\draw [-latex,red] (0,-2) -- (-0.5,-2.5);
\node  [right]  at (0,2) {$ z_2$};
\node  [right]  at (0,-2) {$z_4$};
\node  [below]  at (2,0) {$z_1$};
\node  [below]  at (-2,0) {$z_3$};
\node  [right]  at (2.5,0.3) {$D_{11}$};
\node  [right]  at (2.5,-0.3) {$D_{16}$};
\node  [right]  at (1.6,0.6) {$D_{12}$};
\node  [right]  at (1.6,-0.6) {$D_{15}$};
\node  [right]  at (-1.3, 2) {$D_{25}$};
\node  [left]  at (-1.6,-0.6) {$D_{32}$};
\node  [left]  at (-2.5,0.4) {$D_{36}$};
\node  [left]  at (-1.6,0.6) {$D_{35}$};
\node  [left]  at (-2.5,-0.3) {$D_{31}$};
\node  [above]  at (0.4,2.5) {$D_{21}$};
\node  [above]  at (-0.4,2.5) {$D_{26}$};
\node  [above]  at (1,1.7) {$D_{22}$};
\node  [below]  at (0.4,-2.5) {$D_{46}$};
\node  [below]  at (-0.3,-2.5) {$D_{41}$};
\node  [below]  at (-0.6,-1.7) {$D_{42}$};
\node  [below]  at (1,-1.7) {$D_{45}$};
\node  [right]  at (0.5,0.3) {$D_{13}$};
\node  [left]  at (-0.5,0.3) {$D_{34}$};
\node  [right]  at (0.5,-0.3) {$D_{14}$};
\node  [left]  at (-0.5,-0.3) {$D_{33}$};
\node  [above]  at (0.5,0.7) {$D_{23}$};
\node  [above]  at (-0.5,0.7) {$D_{24}$};
\node  [below]  at (0.5,-0.7) {$D_{44}$};
\node  [below]  at (-0.5,-0.7) {$D_{43}$};
\draw[fill,blue] (2,0) circle [radius=0.045];
\draw[fill,blue] (-2,0) circle [radius=0.045];
\draw[fill,blue] (0,2) circle [radius=0.045];
\draw[fill,blue] (0,-2) circle [radius=0.045];
\draw[fill,red] (1,1.2) circle [radius=0.04];
\draw[fill,red] (2,2) circle [radius=0.04];
\draw[fill,red] (2,-2) circle [radius=0.04];
\draw[fill,red] (-2,-2) circle [radius=0.04];
\draw[fill,red] (-2,2) circle [radius=0.04];
\draw[fill,red] (1,-1.2) circle [radius=0.04];
\draw[fill,red] (-1,-1.2) circle [radius=0.04];
\draw[fill,red] (-1,1.2) circle [radius=0.04];
\end {tikzpicture}
\caption{\small  Jump contour $ \Sigma^{(2)}$ and axis   $ \mathbb{R}\cup i\mathbb{R}$ divide the complex plane $\mathbb{C}$   into 16 domains on which
we define the function $R^{(2)}$.}
\label{division2}
\end{figure}
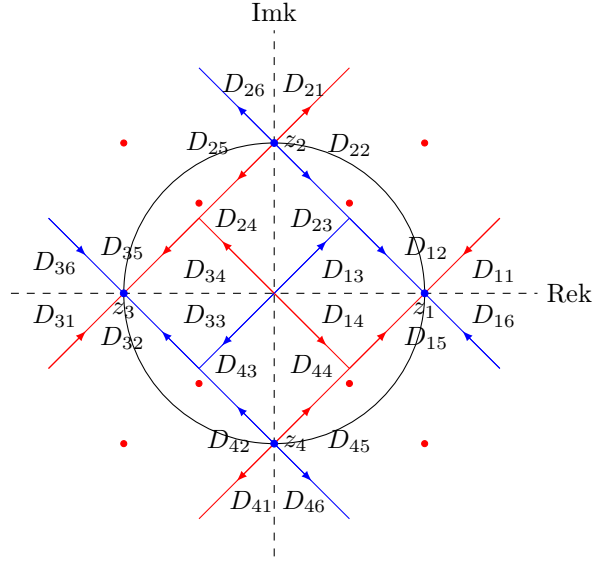

In addition, we define
\begin{equation}
\rho=\frac{1}{2} \underset{\lambda, \mu \in \mathcal{K}\cup\mathcal{\overline{K}}}{\min }|\lambda-\mu|.
\end{equation}
Note that, as poles come in conjugate pairs and  no pole lies on the real axis,
we have $\rho \leq \operatorname{dist}(\mathcal{K}\cup\mathcal{\overline{K}}, \Sigma)$. $\chi_{\mathcal{K}} \in \mathbb{C}_0^{\infty}(\mathbb{C},[0,1])$ is supported near the discrete spectrum such that
\begin{equation}
\chi_{k}(k)=\left\{\begin{array}{ll}
1 & \operatorname{dist}(k, \mathcal{K}\cup\mathcal{\overline{K}})<\rho /3, \\
0 & \operatorname{dist}(k, \mathcal{K}\cup\mathcal{\overline{K}})>2 \rho /3.
\end{array}\right.\label{xk}
\end{equation}

  The region division and the jump lines near each stationary phase point can be unified in the form of Figure \ref{division4}.

\begin{figure}[H]
 \centering
  \begin{tikzpicture}[node distance=2cm]
  \draw[thick ](0,0)--(1.5,1.5)node[above]{$\Sigma_{n1}$};
  \draw[thick ](0,0)--(1.5,-1.5)node[below]{$\Sigma_{n2}$};
  \draw[thick ](0,0)--(-1.5,1.5)node[above]{$\Sigma_{n3}$};
  \draw[thick ](0,0)--(-1.5,-1.5)node[below]{$\Sigma_{n4}$};
  \node[below] at (0, -0.1 )  { $z_n$};
  \draw[ ](1.5,1.5)--(0.5,0.5);
  \draw[thick ](0,0)--(-1.5,1.5){};
  \draw[ ](-1.5,1.5)--(-0.5,0.5);
  \draw[thick ](0,0)--(-1.5,-1.5){};
  \draw[ ](-1.5,-1.5)--(-0.5,-0.5);
  \draw[ ](1.5,-1.5)--(0.5,-0.5);
  \draw[thick ](-2.5,0)--(2.5,0)node[right]{};
  \draw[ ](-2.5,0)--(-1,0);
  \draw[ ](2.5,0)--(1,0);
  \coordinate (A) at (1,0.5);
  \coordinate (B) at (1,-0.5);
  \coordinate (G) at (-1,0.5);
  \coordinate (H) at (-1,-0.5);
  \coordinate (I) at (0,0);
  \coordinate (C) at (-0.8,0);
  \fill (C) circle (0pt) node[above] {\footnotesize$D_{n3}$};
  \coordinate (E) at (0,0.7);
  \fill (E) circle (0pt) node[above] {\footnotesize$D_{n2}$};
  \coordinate (D) at (0.8,0);
  \fill (D) circle (0pt) node[above] {\footnotesize$D_{n1}$};
  \coordinate (F) at (0.8,0);
  \fill (F) circle (0pt) node[below] {\footnotesize$D_{n6}$};
  \coordinate (J) at (0,-0.7);
  \fill (J) circle (0pt) node[below] {\footnotesize$D_{n5}$};
  \coordinate (k) at (-0.8,0);
  \fill (k) circle (0pt) node[below] {\footnotesize$D_{n4}$};
  \end{tikzpicture}
\caption{\small The   contours  and domains  near the stationary phase points $ z_n, n =1,2,3,4.$}
\label{division4}
\end{figure}
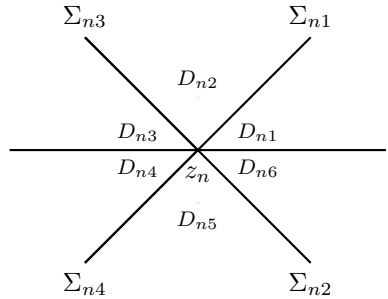

In this way, there are the same contours  and domains at  each  stationary phase  point  $z_n,   n=1,2,3,4$, as illustrated in Figure \ref{division4}. Further, we have the following proposition.

\begin{lemma}
 It is possible to define functions $R_{n,j}$: $\overline{D}_{nj}\rightarrow \mathbb{C}$, $n=1,2,3,4; j=1,3,4,6$  with boundary values satisfying:
\begin{align}
&R_{n,1}(k)=\begin{cases}
r(k)T(k)^{-2}, & k\in I_{n2},\\
f_{n,1}=r(z_n)T_0(z_n)^{-2}(k-z_n)^{-2i\nu(z_n)}[1-\chi_\mathcal{K}(k)], & k\in \Sigma_{n1},\label{R1k}
\end{cases}\\
&R_{n,3}(k)=\begin{cases}
\frac{ \overline{ r (\bar k)} T_+^2(k)}{1+r(k)\overline{ r (\bar k)}}, & k\in I_{n1}, \\[8pt]
f_{n,3}=\frac{\overline{ r(\bar z_n)} }{1+r(z_n)\overline{ r(\bar z_n)}} T_0(z_n)^2 (k-z_n)^{-2i\nu(z_n) }[1-\chi_\mathcal{K}(k)], & k\in \Sigma_{n2},
\end{cases}\\
&R_{n,4}(k)=\begin{cases}
\frac{r(k)T_-^{-2}(k)}{1+r(k)\overline{ r(\bar k)}},& k\in I_{n1},\\
f_{n,4}=\frac{r(z_n)}{1+r(z_n)\overline{ r(\bar z_n)}}T_0(z_n)^{-2}(k-z_n)^{-2i\nu(z_n)}[1-\chi_\mathcal{K}(k)], & k\in \Sigma_{n3},
\end{cases}\\
&R_{n,6}(k)=\begin{cases}
\overline{ r(\bar k)} T(k)^2,& k\in I_{n2},\\
f_{n,6}=\overline{ r(\bar z_n)}T_0(z_n)^2(k-z_n)^{-2i\nu(z_n) }[1-\chi_\mathcal{K}(k)], & k\in \Sigma_{n4},
\end{cases}\\
&R_{n, 2}(k)=R_{n, 5}(k)=1, \quad \text { if } k \in D_{n 2} \cup D_{n 5}.
\end{align}
The functions  $R_{n,j}$ defined above have the following property:
\begin{align}
&\left|R_{n,j}(k)\right| \lesssim \sin ^{2}\left(\arg \left(k-z_n\right)\right)+\langle\operatorname{Re}(k)\rangle^{-1 / 2}, \label{Rjgj}\\
&\left|\bar{\partial} R_{n,j}(k)\right| \lesssim\left|\bar{\partial} \chi_{\mathcal{K}}(k)\right|+\left|r^{\prime}(\operatorname{Re} k)\right|+\left|k-z_n\right|^{-1 / 2},\label{Rjk}
\end{align}
and
\begin{equation}
\bar{\partial} R_{n,j}(k)=0, \quad \text { if } k \in D_{n2} \cup D_{n5} \text { or } \operatorname{dist}(k, \mathcal{K}\cup\mathcal{\overline{K}})<\rho/3.\label{Rjk0}
\end{equation}

\end{lemma}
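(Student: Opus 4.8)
The plan is to build each $R_{n,j}$ as a controlled $\bar\partial$-extension, off the axes $\Sigma=\mathbb{R}\cup i\mathbb{R}$, of the factorized jump data prescribed in the lemma: on each sector $D_{nj}$ I interpolate, in the angular variable $\phi=\arg(k-z_n)$, between the jump coefficient assigned to the axis interval and the ``frozen'' model value $f_{n,j}$ assigned to the ray $\Sigma_{nj}$. Since the contours and sectors around the four stationary points coincide after the identification of Figure~\ref{division4}, it suffices to construct one representative, say $R_{n,1}$ on $D_{n1}$ (bounded by $I_{n2}$ at $\phi=0$ and $\Sigma_{n1}$ at $\phi=\tfrac{\pi}{4}$); the functions $R_{n,3},R_{n,4},R_{n,6}$ follow from the same recipe, and the symmetries of Proposition~\ref{prop2} tie the four points $z_1,\dots,z_4$ together.

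Writing $k=z_n+s e^{i\phi}$ with $s=|k-z_n|$, I would set
\[
R_{n,1}(k)=f_{n,1}(k)+\big[\,r(\operatorname{Re} k)\,T(k)^{-2}-f_{n,1}(k)\,\big]\cos(2\phi),
\]
and analogously for $j=3,4,6$, while $R_{n,2}=R_{n,5}\equiv 1$. At $\phi=0$ this returns $r(\operatorname{Re} k)T(k)^{-2}=r(k)T(k)^{-2}$ on $I_{n2}$, and at $\phi=\tfrac{\pi}{4}$ it returns $f_{n,1}$ on $\Sigma_{n1}$, so the boundary values in \eqref{R1k} hold by inspection. For the imaginary-axis points $z_2,z_4$ the role of $\operatorname{Re} k$ is played by $\operatorname{Im} k$, the estimates being stated uniformly in the coordinate transverse to the ray. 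The cut-off $1-\chi_{\mathcal K}$ built into $f_{n,j}$, together with freezing the whole interpolant wherever $\operatorname{dist}(k,\mathcal K\cup\overline{\mathcal K})<\rho/3$, makes $R_{n,j}$ locally constant there and hence yields \eqref{Rjk0} near the discrete spectrum; in $D_{n2}\cup D_{n5}$ it holds trivially since $R\equiv1$.

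The estimates \eqref{Rjgj}--\eqref{Rjk} come from applying $\bar\partial=\tfrac12 e^{i\phi}\big(\partial_s+\tfrac{i}{s}\partial_\phi\big)$. Differentiating the slowly varying amplitude produces a term controlled by $|r'(\operatorname{Re} k)|$ and by $|\bar\partial\chi_{\mathcal K}(k)|$ from the cut-off, which is the first half of \eqref{Rjk}. Differentiating $\cos(2\phi)$ produces a factor $\sim s^{-1}=|k-z_n|^{-1}$ times the amplitude difference $r(\operatorname{Re} k)T(k)^{-2}-f_{n,1}(k)$. This is the crux: by Theorem~\ref{111}, $r\in H^{1,1}(\Sigma)\hookrightarrow C^{1/2}(\Sigma)$, so $|r(\operatorname{Re} k)-r(z_n)|\lesssim|k-z_n|^{1/2}$, while \eqref{tk0} gives $T(k)^{-2}=T_0(z_n)^{-2}(k-z_n)^{-2i\nu(z_n)}\big(1+O(|k-z_n|^{1/2})\big)$; since $|(k-z_n)^{-2i\nu(z_n)}|$ is bounded, the amplitude difference is $O(|k-z_n|^{1/2})$, and multiplying by $|k-z_n|^{-1}$ leaves the integrable $|k-z_n|^{-1/2}$ term of \eqref{Rjk}. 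The pointwise bound \eqref{Rjgj} follows from the same decomposition: the $\cos(2\phi)$ interpolation contributes the $\sin^2(\arg(k-z_n))$ factor, and the boundedness and $L^2$-weighted decay of $r$ supply the $\langle\operatorname{Re} k\rangle^{-1/2}$ part.

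The only genuinely delicate point is this near-stationary estimate: one must combine the Hölder-$\tfrac12$ continuity of $r$ with the half-power remainder in the expansion \eqref{tk0} of $T$ so that the $|k-z_n|^{-1}$ singularity from $\bar\partial\cos(2\phi)$ is reduced to an integrable $|k-z_n|^{-1/2}$, uniformly in all four sectors. Uniformity in the cone is available because $x=x_0+vt$ forces $\xi=x/t+\alpha$ into a fixed compact subset of $(0,\infty)$, so $k_0=(\alpha\beta^2/4\xi)^{1/4}$, and hence each $z_n$, stays in a compact set bounded away from $0$ and $\infty$. The remaining work is routine bookkeeping across the $24$ sectors, organized by the symmetry of Proposition~\ref{prop2}.
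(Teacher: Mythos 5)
Your overall strategy is the same as the paper's: construct one representative extension (say $R_{n,1}$ on $D_{n1}$) by interpolating in $\cos(2\phi)$, $\phi=\arg(k-z_n)$, between the axis datum and the frozen value on $\Sigma_{n1}$; verify the boundary values by evaluating at $\phi=0$ and $\phi=\pi/4$; compute $\bar{\partial}$ in the polar form $\bar{\partial}=\tfrac12 e^{i\phi}(\partial_s+is^{-1}\partial_\phi)$; and tame the resulting $|k-z_n|^{-1}$ singularity by splitting $|r(\operatorname{Re}k)-f|\le|r(\operatorname{Re}k)-r(z_n)|+|r(z_n)-f|$, bounding the first piece by the H\"older-$\tfrac12$ continuity of $r\in H^{1,1}(\Sigma)$ (Theorem \ref{111}) and the second by the half-power remainder \eqref{tk0}. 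That analytic core, including the source of each of the three terms in \eqref{Rjk} and of the $\sin^2$ and $\langle\operatorname{Re}k\rangle^{-1/2}$ terms in \eqref{Rjgj}, matches the paper's proof.

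There is, however, a genuine defect in where you place the cut-off. In your formula $R_{n,1}(k)=f_{n,1}(k)+\left[r(\operatorname{Re}k)T(k)^{-2}-f_{n,1}(k)\right]\cos(2\phi)$ the factor $1-\chi_{\mathcal{K}}$ sits only inside $f_{n,1}$, so near a point of $\mathcal{K}\cup\overline{\mathcal{K}}$ lying in the open sector the surviving term $r(\operatorname{Re}k)\,T(k)^{-2}\cos(2\phi)$ has nonvanishing $\bar{\partial}$, and \eqref{Rjk0} fails there; worse, in the sectors whose extension carries $T^{2}$ (resp. $T^{-2}$), that power of $T$ has double poles at $k_j$, $j\in\Delta_{k_0}^-$ (resp. at $\overline{k}_j$), so the un-cut-off term is unbounded near such a pole and \eqref{Rjgj} fails as well. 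Your proposed repair --- ``freezing'' the interpolant on $\operatorname{dist}(k,\mathcal{K}\cup\overline{\mathcal{K}})<\rho/3$ --- does make $\bar{\partial}R=0$ there, but it introduces a jump discontinuity across the circle $\operatorname{dist}=\rho/3$, which is inadmissible: RH Problem 5.2 requires $M^{(2)}=M^{(1)}R^{(2)}$ to be continuous, and the subsequent identity $W^{(3)}=M^{rhp}\,\bar{\partial}R^{(2)}\,(M^{rhp})^{-1}$ needs $\bar{\partial}R^{(2)}$ to vanish smoothly near the poles of $M^{rhp}$. The paper avoids all of this by setting $f_{1,1}(k)=r(z_1)T(k)^{2}T_0(z_1)^{-2}(k-z_1)^{-2i\nu(z_1)}$ and multiplying the \emph{entire} bracket by $T(k)^{-2}\left(1-\chi_{\mathcal{K}}(k)\right)$: the smooth global factor forces $R_{n,j}\equiv0$ on $\operatorname{dist}<\rho/3$ (giving \eqref{Rjk0}), keeps the poles of $T^{\mp2}$ outside the support of $1-\chi_{\mathcal{K}}$ (giving boundedness), preserves the boundary values since $\chi_{\mathcal{K}}$ vanishes on $\Sigma$ by \eqref{xk} and $\rho\le\operatorname{dist}(\mathcal{K}\cup\overline{\mathcal{K}},\Sigma)$, and produces exactly the $|\bar{\partial}\chi_{\mathcal{K}}|$ term of \eqref{Rjk}. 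With the cut-off relocated as a global smooth factor, the remainder of your argument goes through as in the paper.
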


\begin{proof}
Without loss of generality, we only provide the detailed proof for $R_{1,1}$, as other cases can be proved in a similar way.
 Define the function
\begin{equation}
f_{1,1}(k)=r(z_1) T(k)^{2} T_{0}(z_1)^{-2}(k-z_1)^{-2 i \nu(z_1)}, \ k \in \overline{D}_{11}.\label{fj1}
\end{equation}
Let $k-z_1=s e^{i\phi}$, for $k \in \overline{D}_{11}$,
\begin{equation}
R_{1,1}(k)=\left[f_{1,1}(k)+\left(r(\operatorname{Re} k)-f_{1,1}(k)\right) \cos (2 \phi)\right] T(k)^{-2}\left(1-\chi_{\mathcal{K}}(k)\right).
\end{equation}
Clearly, $R_{1,1}(k)$ satisfies the boundary conditions in this lemma, as $\cos (2 \phi)$ vanishes on $\Sigma_{11}$ and $\chi_{\mathcal{K}}(k)$ is zero on the real axis. First, consider $\left|R_{1,1}(k)\right|.$   We have
\begin{equation}
\begin{aligned}
\left|R_{1,1}(k)\right| & \leq\left|2 f_{1,1}(k) T(k)^{-2}\left(1-\chi_{\mathcal{K}}(k)\right)\right| \sin ^{2}(\phi)+\left|T(k)^{-2}\left(1-\chi_{\mathcal{K}}(k)\right) \cos (2 \phi)\right||r(\operatorname{Re} k)| \\
& \leq c \left(\sin ^2(\phi)+\langle\operatorname{Re} k\rangle^{-1 / 2}\right),
\end{aligned}
\end{equation}
where, in the first line, we have bounded the first factor in each term of the sum on the left hand side using Proposition \ref{p}, Equation (\ref{xk}) and (\ref{fj1}), by noting that the poles of $T(k)^{-2}$ are outside the support of $\left(1-\chi_{\mathcal{K}}(k)\right)$. In the second line, we use the fact that $r(k) \in H^{1}(\mathbb{R})$ implies that $r$ is $1 / 2$-H\"{o}lder continuous and $|r(u)| \leq c\langle u\rangle^{-1 / 2}$. As $\bar{\partial}=\left(\partial_{x}+i \partial_{y}\right) / 2=e^{i \phi}\left(\partial_{s}+i \rho^{-1} \partial_{\phi}\right) / 2$, we have
\begin{equation}
\begin{aligned}
\bar{\partial} R_{1,1}(k) &=-\left[f_{1}(k)+\left(r(\operatorname{Re} k)-f_{1,1}(k)\right) \cos 2 \phi\right] T(k)^{-2} \bar{\partial} \chi_{\mathcal{K}}(k) \\
&+\left[\frac{1}{2} r^{\prime}(\operatorname{Re} k) \cos (2 \phi)-i e^{i \phi} \frac{r(\operatorname{Re}(k))-f_{1,1}(k)}{|k-k_j|} \sin (2 \phi)\right] T(k)^{-2}\left(1-\chi_{\mathcal{K}}(k)\right).
\end{aligned}
\end{equation}
According to the continuity and the decay of $r(\operatorname{Re} k)$ described above and the fact that both $1-\chi_{\mathcal{K}}(k)$ and $\bar{\partial} \chi_{\mathcal{K}}(k)$ are supported away from the discrete spectrum, the poles and the zeros of $T(k)$ do not affect the bound. Finally, we arrive at (\ref{Rjgj})-(\ref{Rjk0}), which gives the first two terms in the bound. To calculate the last term we write
\begin{equation}
\left|r(\operatorname{Re} k)-f_{1,1}(k)\right| \leq|r(\operatorname{Re} k)-r(z_1)|+\left|r(z_1)-f_{1,1}(k)\right|,
\end{equation}
and use Cauchy-Schwarz inequality to bound each term as follows:
\begin{equation}
|r(\operatorname{Re} k)-r(z_1)| \leq\left|\int_{z_1}^{\operatorname{Re} k} r^{\prime}(s) d s\right| \leq\|r\|_{H^1(\mathbb{R})}|k-z_1|^{1/2},
\end{equation}
and
\begin{equation}
\begin{aligned}
\left|r(z_1)-f_{1,1}(k)\right| &\leq|r(z_1)|\left(1+|r(z_1)|^2\right)\left|T(k)^2-T_0(z_1)^{2}(k-z_1)^{2 i \nu(z_1)}\right|\nonumber\\
& \leq c \|r\|_{H^1(\mathbb{R})}|k-z_1|^{1 / 2}.\nonumber
\end{aligned}
\end{equation}
The last estimate uses (\ref{tk0}) to bound $T(k)$ and $(k-z_1)^{i \nu(z_1)}$ in a neighborhood of $k=z_1$. (\ref{Rjgj})-(\ref{Rjk0}) for $k \in D_{11}$ follows immediately.
\end{proof}

Now, we construct a matrix function $R^{(2)}(k)$ for $n=1,2,3,4$ by
\begin{equation}
\mathcal{R}^{(2)}(k)=\begin{cases}
\begin{pmatrix}
1 & 0 \\
-R_{n,1}(k) e^{2 i t \theta} & 1
\end{pmatrix}, & k \in D_{n1}, \\
\begin{pmatrix}
1 & -R_{n,3}(k) e^{-2 i t \theta} \\
0 & 1
\end{pmatrix}, & k \in D_{n3}, \\
\begin{pmatrix}
1 & 0 \\
R_{n,4}(k) e^{2 i t \theta}& 1
\end{pmatrix}, & k \in D_{n4}, \\
\begin{pmatrix}
1 & R_{n,6}(k) e^{-2 i t \theta} \\
0 & 1
\end{pmatrix}, & k \in D_{n6}, \\
\begin{pmatrix}
1 & 0 \\
0 & 1
\end{pmatrix}, & k\in D_{n2}\cup D_{n5},
\end{cases}
\end{equation}
by   which  we   define a transformation
\begin{equation}
M^{(2)}(k)=M^{(1)}(k) R^{(2)}(k),\label{M2}
\end{equation}
and obtain the following mixed $\overline{\partial}$-RH problem.

\noindent\textbf{ RH Problem 5.2.} Find a function $M^{(2)}(k)=M^{(2)}(k;x,t)$ with the following properties.

(a) \textit{Analyticity}: $ M^{(2)}(k)$  is continuous in $\mathbb{C}$, sectionally continuous first partial derivatives in $\mathbb{C} \backslash (\Sigma^{(2)} \cup \mathcal{K}\cup\mathcal{\overline{K}} )$,  and meromorphic in $D_{n2}\cup D_{n5}$, $n=1,2,3,4$;

(b) \textit{Symmetry}: $\overline{M^{(2)}(\bar{k})}=\sigma_2 M^{(2)}(k) \sigma_2$;

(c) \textit{Jump condition}: The boundary value $ M^{(2)}(k)$ at $ \Sigma^{(2)}$ satisfies the jump condition
\begin{equation}
M^{(2)}_+ (k) =M^{(2)}_- (k)
V^{(2)}(k),\quad k\in \Sigma^{(2)},
\end{equation}
where
\begin{equation}V^{(2)}(k)=\begin{cases}
\begin{pmatrix}
1 & 0 \\
f_{n,1}(k) e^{2it \theta} & 1
\end{pmatrix}, & k \in \Sigma_{n1},\ n=1,2,3,4,\\
\begin{pmatrix}
1 & f_{n,3}(k) e^{-2 i t \theta}  \\
0 & 1
\end{pmatrix}, & k \in \Sigma_{n2},\ n=1,2,3,4,\\
\begin{pmatrix}
1 & 0 \\
f_{n,4}(k) e^{2 i t \theta} & 1
\end{pmatrix}, & k \in \Sigma_{n3},\ n=1,2,3,4,\\
\begin{pmatrix}
1 & f_{n,6}(k) e^{-2 i t \theta}  \\
0 & 1
\end{pmatrix}, & k \in \Sigma_{n4},\ n=1,2,3,4,\\
\begin{pmatrix}
1 & -(f_{j+1,3}-f_{j,3}) e^{-2it\theta} \\
0 & 1
\end{pmatrix}, & k \in \Sigma_{0j},\ j=1, 3,\\
\begin{pmatrix}
1 & 0 \\
(f_{j+1,4}(k)-f_{j,4}(k)) e^{2it\theta} & 1
\end{pmatrix}, & k \in \Sigma_{0j},\ j=2, 4,\ f_{5,4}\doteq f_{1,4};
\end{cases}
\end{equation}

(d)  \textit{Asymptotic conditions}:
\begin{equation}
\begin{cases}
M^{(2)}(k)=I+\mathcal{O}\left(k^{-1}\right), & k \rightarrow \infty,\\
M^{(2)}(k)=e^{-\frac{i}{2} c_- \sigma_{3}}\left(I+kU+\mathcal{O}\left(k^{2}\right)\right) e^{\frac{i}{2} d_0\sigma_3},  & k \rightarrow 0;
\end{cases}
\end{equation}

(e)  Away from $\Sigma^{(2)}$,  we have
\begin{equation}
\overline{\partial} M^{(2)}= M^{(2)} \overline{\partial} \mathcal{R}^{(2)},
\end{equation}
which holds in $\mathbb{C}\backslash\Sigma^{(2)}$, where
\begin{equation}
\overline{\partial}\mathcal{R}^{(2)}=\begin{cases}
\begin{pmatrix} 1&0\\ (-1)^j \overline{\partial}R_{n,j}(k) e^{2it\theta} &1\end{pmatrix},& k\in D_{nj},\ n=1,2,3,4;\ j=1,4,\\
\begin{pmatrix} 1&(-1)^j \overline{\partial}R_{n,j}(k) e^{-2it\theta}\\
0&1\end{pmatrix},& k\in D_{nj},\ n=1,2,3,4;\ j=3,6,\\
\begin{pmatrix} 0&0\\0&0\end{pmatrix},& k\in D_{nj}, \ n=1,2,3,4;\ j=2,5;
\end{cases}
\end{equation}

(f) \textit{Residue conditions}: $M^{(2)}(k)$ has simple poles at each point in $\mathcal{K}\cup\overline{\mathcal{K}}$ with:
\begin{align}
&\underset{k=k_j}{\operatorname{Res}} M^{(2)}(k)=
\underset{k \rightarrow k_j}{\lim } M^{(2)}(k)R_{\pm}(k_j), \qquad \ k \in \Delta_{k_0}^\pm,  \\
&\underset{k=\overline{k}_j}{\operatorname{Res}} M^{(2)}(k)=
\underset{k \rightarrow \overline{k}_j}{\lim}  M^{(2)}(k)\sigma_2\overline{R_{ \pm} (\overline{k}_j)} \sigma_2, \ k \in \Delta_{k_0}^\pm,
\end{align}
where $R_{-}(k_j)$ and $R_{+}(k_j)$ is defined in (\ref{rkj}).

\subsection{Decomposition of the mixed $\overline{\partial}$-RH problem}
\label{sec:section5}
\hspace*{\parindent}
To solve RH Problem 5.2, we decompose it into a model RH problem for $M^{rhp} ( k)=M^{rhp} ( k;x,t)$
with $\overline{\partial}R^{(2)}=0$ and a pure $\overline{\partial}$-Problem with $\overline{\partial}R^{(2)} \neq 0$. For the first step, we establish a RH problem for the $M^{rhp} (k)$ as follows.

\noindent\textbf{ RH Problem 6.1.} Find a matrix-valued function $M^{rhp} (k)$ with the following properties:

(a) \textit{Analyticity}: $M^{rhp} (k)$ is analytical in $\mathbb{C} \backslash (\Sigma^{(2)} \cup \mathcal{K}\cup\mathcal{\overline{K}}) $;

(b) \textit{Symmetry}: $\overline{M^{rhp}(\bar{k})}=\sigma_2 M^{rhp}(k)\sigma_2$;

(c) \textit{Jump condition}: $M^{rhp} ( k)$  has continuous boundary values  on   $\Sigma^{(2)}$ and
\begin{equation}
M_{+}^{rhp}( k)=M_{-}^{rhp}(k) V^{(2)}(k), \quad k \in \Sigma^{(2)};
\end{equation}

(d) $\bar{\partial}$\textit{-Derivative}: $\bar{\partial} R^{(2)}=0,$ for $k \in \mathbb{C}$;

(e) \textit{Asymptotic conditions}:
\begin{equation}
\begin{cases}
M^{rhp}(k)=I+\mathcal{O}\left(k^{-1}\right), & k \rightarrow \infty,\\
M^{rhp}(k)=e^{-\frac{i}{2} c_- \sigma_{3}}\left(I+kU+\mathcal{O}\left(k^{2}\right)\right) e^{\frac{i}{2} d_0\sigma_3},  & k \rightarrow 0;
\end{cases}
\end{equation}

(f) \textit{Residue conditions}: $M^{rhp}$ has simple poles at each point in $\mathcal{K}\cup\mathcal{\overline{K}}$ with:
\begin{align}
&\underset{k=k_j}{\operatorname{Res}} M^{rhp}( k)=
\underset{k \rightarrow k_j}{\lim } M^{rhp}( k)R_{\pm}(k_j), \qquad \ k \in \Delta_{k_0}^\pm, \label{mrhpr1}\\
&\underset{k=\overline{k}_j}{\operatorname{Res}} M^{rhp}( k)=
\underset{k \rightarrow \overline{k}_j}{\lim}  M^{rhp}( k)\sigma_2\overline{R_{\pm}(\overline{k}_j)}\sigma_2, \ k \in \Delta_{k_0}^\pm,\label{mrhpr2}
\end{align}
with the definition of $R_{\pm}(k_j)$  in (\ref{rkj}). The existence and the asymptotics of $M^{rhp} ( k)$ will be presented in Section \ref{sec:section8}.

$M^{rhp}(k)$ can be used to construct a new matrix function
\begin{equation}
M^{(3)}(k)=M^{(2)}(k) M^{rhp}(k)^{-1},\label{m3}
\end{equation}
in which the analytical component $M^{rhp}(k)$ disappears and it finally becomes a pure $\overline{\partial}$-problem.

\noindent\textbf{ RH Problem 6.2.} Find a matrix-valued function $M^{(3)}(k)=M^{(3)}(k;x,t)$ with the following properties:

(a) \textit{Analyticity}: $ M^{(3)}(k)$  is continuous in $\mathbb{C}$, continuous first partial derivatives in $\mathbb{C} \backslash (\Sigma^{(2)} \cup \mathcal{K}\cup \mathcal{\overline{K}})$;

(b) \textit{Symmetry}: $\overline{M^{(3)}(\bar{k})}=\sigma_2 M^{(3)}(k) \sigma_2$,

(c) \textit{Asymptotic conditions}:
\begin{equation}
\begin{cases}
M^{(3)}(k)=I+\mathcal{O}\left(k^{-1}\right), & k \rightarrow \infty,\\
M^{(3)}(k)=e^{-\frac{i}{2} c_- \sigma_{3}}\left(I+kU+\mathcal{O}\left(k^{2}\right)\right) e^{\frac{i}{2} d_0\sigma_3},  & k \rightarrow 0;
\end{cases}
\end{equation}

(d) $\bar{\partial}$\textit{-Derivative}: For $k\in \mathbb{C}$, we have
$$\bar{\partial} M^{(3)}(k)=M^{(3)} (k)W^{(3)}(k),$$
where
\begin{equation}
W^{(3)}(k)=M^{rhp}(k)(k) \bar{\partial} R^{(2)} M^{rhp}(k)^{-1}(k).
\end{equation}
\begin{proof}
By using the properties of the solutions $M^{(2)}(k)$ and $M^{rhp}(k)$ for  RH Problem 5.1  and RH Problem 6.1, the analyticity and the asymptotics are obtained immediately. As $M^{(2)}$ and $M^{rhp}$ have same jump matrix, we have
\begin{equation}
\begin{aligned}
M_-^{(3)}(k)^{-1} M_+^{(3)}(k) &=M_-^{(2)}(k)^{-1} M_-^{rhp}(k) M_+^{rhp}(k)^{-1} M_+^{(2)}(k) \\
&=M_-^{(2)}(k)^{-1}V^{(2)}(k)^{-1} M_+^{(2)}(k)=I,
\end{aligned}
\end{equation}
from which it follows that $M^{(3)}$ and its first partial derivative extend continuously to $\Sigma^{(2)}$, which immediately gives $M^{(3)}$ has no pole. For $k_j\in \mathcal{K}$, we can verify that the coefficient matrix of the leading term in the Laurent expansion of $M^{(2)}(k)$, denoted as $N_j$, is a nilpotent matrix. Hence, we have the Laurent expansion
of $k_j$ as
\begin{align}
&M^{(2)}(k)=a(k_j)\left[I+\frac{N_j}{k-k_j} \right]+\mathcal{O}(k-k_j), \\
&M^{rhp}(k)=A(k_j)\left[I+\frac{N_j}{k-k_j}\right]+\mathcal{O}(k-k_j),
\end{align}
where $a(k_j)$ and $A(k_j)$ are the constant terms in their Laurent expansion. Then from
$M^{rhp}(k)^{-1}=\sigma_2 M^{rhp}(k)^{T}\sigma_2$, $M^{(3)}$ becomes
\begin{equation}
\begin{aligned}
M^{(3)}(k)&=M^{(2)}(k)M^{rhp}(k)^{-1}\\
&=\left\{a(k_j)\left[I+\frac{N_j}{k-k_j} \right]\right\}\left\{\left[ I-\frac{ N_j}{k-k_j} \right]A(k_j)^{-1}\right\}+\mathcal{O}(k-k_j) \\ &=\mathcal{O}(1),
\end{aligned}
\end{equation}
which has only removable singularities at each $k_j$. The last property follows from the definition of $M^{(3)}(k)$, by exploiting the fact that $M^{rhp}(k)$ has no $\overline{\partial}$ component.
\end{proof}
We construct the solution $M^{rhp}(k)$ of the  RH problem 5.1 in the following form
\begin{equation}
M^{rhp}(k)=\begin{cases}
E(k) M^{(o u t)}(k), & k \notin \mathbb{C} \setminus U_{k_0},\\
E(k) M^{ (in )}(k), & k \in U_{k_0},
\end{cases}\label{mrhp}
\end{equation}
where  $U_{k_0}= \cup_{n=1}^4 U_{z_n}$, and  $U_{z_n}$ is  the neighborhood of $z_n$:
\begin{equation}
U_{z_n}=\left\{k:\left|k -z_n\right| \leq \min \left\{ {k_0}/{2}, \rho / 3\right\} \triangleq \varepsilon\right\},
\end{equation}
which implies that $M^{rhp}(k)$ has no poles in $U_{k_0}$ as $\text{dist}(\mathcal{K}\cup\mathcal{\overline{K}}, \Sigma) > \rho$. $M^{rhp}$ decomposes into two parts $M^{(out)}$ and $M^{(in)}$: $M^{(out)}$ solves a model RH problem obtained by ignoring the jump condition of RH Problem 6.1, which will be solved in Section \ref{sec:section6}; for $M^{(in)}$, its solution can be approximated with parabolic cylinder functions if we let $M^{(in)}$ match to $M^{(2)}$ and a parabolic cylinder model in $U_{k_0}$, which we elaborate on in Section \ref{sec:section7}. Besides, $E(k)$ is an error function, which is a solution of a small-norm RH problem, which is discussed in details in Section \ref{sec:section8}. The jump matrix in RH Problem 6.1 admits the following estimates.
\begin{proposition}
For the jump matrix $V^{(2)}(k)$, we have the following estimates
\begin{align}
&\|V^{(2)}(k)-I\|_{L^{\infty}(\Sigma^{(2)} \setminus U_{k_0})}=\mathcal{O}\left(e^{-\frac{\alpha\beta^2}{12k_0^3}|k-z_n|^3\left(k^{-2}-|k_0|^{-2}\right)t}\right) \label{V2I}, \\
&\left\|V^{(2)}(k)-I\right\|_{L^{\infty}\left(\Sigma_0^{(2)}\right)}=\mathcal{O}\left(e^{-\frac{3\alpha\beta^2}{8k_0^2}t}\right),\label{V2g}
\end{align}
where the contours are defined by
\begin{subequations}
\begin{align}
&\Sigma_{n}^{(2)}=\Sigma_{n1} \cup \Sigma_{n2} \cup \Sigma_{n3} \cup \Sigma_{n4}, \ \ n=1,2,3,4.
\end{align}
\end{subequations}
\end{proposition}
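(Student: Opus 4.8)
The plan is to reduce both bounds to sign-and-size control of $\operatorname{Re}(i\theta(k))$ along the deformed contour. On every ray of $\Sigma^{(2)}$ the matrix $V^{(2)}(k)-I$ is triangular with a single nonzero entry of the form $\pm f_{n,j}(k)\,e^{\pm 2it\theta(k)}$, so that on each component
$$
\bigl\|V^{(2)}(k)-I\bigr\|_{L^\infty(\Sigma_{nj})}\;\lesssim\;\sup_{k\in\Sigma_{nj}}|f_{n,j}(k)|\,\bigl|e^{\pm 2it\theta(k)}\bigr|
=\sup_{k\in\Sigma_{nj}}|f_{n,j}(k)|\,e^{2t\operatorname{Re}(\pm i\theta(k))}.
$$
The prefactors $f_{n,j}(k)$ are uniformly bounded: by Proposition~\ref{p} the quantities $T(k)$, $T_0(z_n)$ and the power $(k-z_n)^{-2i\nu(z_n)}$ are bounded on the support of $1-\chi_{\mathcal K}$, and $r$ is bounded by its $1/2$-H\"older continuity and decay, exactly as in the bound (\ref{Rjgj}) for $R_{n,j}$. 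Hence the whole estimate is governed by $\operatorname{Re}(\pm i\theta(k))$, for which (\ref{Re}) gives the explicit expression $\operatorname{Re}(i\theta)=-\tfrac14\alpha\beta^2\operatorname{Im}(k^2)\bigl(k_0^{-4}-|k|^{-4}\bigr)$.

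First I would insert the parametrization $\Sigma_{nj}=\{k=z_n+ue^{i\phi_{nj}}\}$ into this formula. For the representative ray at $z_1=k_0$ a short computation yields $\operatorname{Im}(k^2)=\sqrt2\,k_0u+u^2$ and $|k|^2=k_0^2+\operatorname{Im}(k^2)$, whence, writing $s=\operatorname{Im}(k^2)$,
$$
\operatorname{Re}(i\theta(k))=-\frac{\alpha\beta^2}{4}\,\frac{s^2\,(2k_0^2+s)}{k_0^4\,(k_0^2+s)^2}<0 ,
$$
so that the entry $f_{n,1}e^{2it\theta}$ indeed decays; the orientation of $\Sigma^{(2)}$ is chosen so that on each ray the exponential carried by $V^{(2)}$ is the decaying one (matching the yellow/blank decay regions of Figure~\ref{cstx}). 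On $\Sigma^{(2)}\setminus U_{k_0}$ one has $|k-z_n|\ge\varepsilon$, and I would estimate the rational function above from below monotonically along the ray in terms of $|k-z_n|$ and the factor $(k_0^{-4}-|k|^{-4})$, producing a bound of the form stated in (\ref{V2I}); the behavior $\operatorname{Re}(i\theta)\to-\infty$ as $|k|\to\infty$ on the rays guarantees the estimate only improves in the far field. By the symmetry of the four stationary points $z_n=k_0e^{i(n-1)\pi/2}$, the remaining rays reduce to this single case up to rotation.

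For the central cross $\Sigma_0^{(2)}=\cup_j\Sigma_{0j}$, which lies at a fixed positive distance from all four stationary points, $\operatorname{Re}(\pm i\theta)$ is bounded away from zero by a $t$-independent constant: substituting $k=ue^{\pm i\pi/4}$ (so $k^2=\pm i u^2$) into (\ref{Re}) and minimizing $|\operatorname{Re}(\pm i\theta)|$ over the compact segment $u\in[-\tfrac1{\sqrt2},\tfrac1{\sqrt2}]$ yields the uniform rate appearing in (\ref{V2g}). Combining with the uniform bound on the differences $f_{j+1,\cdot}-f_{j,\cdot}$ that label the jumps on $\Sigma_{0j}$ gives (\ref{V2g}).

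I expect the main obstacle to be the sign bookkeeping and the extraction of the sharp constants. Because $\operatorname{Re}(i\theta)$ factors as $\operatorname{Im}(k^2)$ times $(k_0^{-4}-|k|^{-4})$, whose second factor changes sign across $|k|=k_0$, one must verify on each of the sixteen rays (and on the cross) that the chosen orientation pairs the correct exponential $e^{\pm 2it\theta}$ with the region in which it decays, and only then extract a clean lower bound on $|\operatorname{Re}(\pm i\theta)|$ uniform in $k$. The rotational symmetry collapses the sixteen rays to essentially the single computation near $z_1$, but the monotonicity estimates that pin down the precise coefficients in the exponents of (\ref{V2I})--(\ref{V2g}) are where the care is required.
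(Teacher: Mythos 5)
Your argument is correct and follows essentially the paper's own route: a uniform bound on the single triangular prefactor (exactly the mechanism of (\ref{Rjgj}) and Proposition \ref{p}) reduces both estimates to the sign formula (\ref{Re}) for $\operatorname{Re}(i\theta)$, which the paper likewise exploits by substituting $k=ue^{\pm i\pi/4}$ on the central cross and appealing to symmetry/similarity for the sixteen rays. Your explicit ray computation near $z_1$, yielding $\operatorname{Re}(i\theta)=-\tfrac{\alpha\beta^2}{4}\,s^{2}(2k_0^{2}+s)/\bigl(k_0^{4}(k_0^{2}+s)^{2}\bigr)$ with $s=\operatorname{Im}(k^{2})$, merely writes out the step the paper dismisses as ``similar.''
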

\begin{proof}
Without loss of generality, we only prove (\ref{V2g}) for $k \in \Sigma_{01}$, as other cases follow in
a similar way. By using the definition of $V^{(2)}$ and (\ref{Rjgj}), we have
\begin{equation}
\left\|V^{(2)}(k)-I\right\|_{L^{\infty}\left(\Sigma_{01}\right)} \leq\left\|(R_{2,3}-R_{1,3}) e^{-2it\theta}\right\|_{L^{\infty}\left(\Sigma_{01}\right)}\leq  c\left\|  e^{-2it\theta}\right\|_{L^{\infty}\left(\Sigma_{01}\right)}.\label{v2r}
\end{equation}
Note that $|k|<\sqrt{2} k_0 / 2$. Combined with (\ref{Re}), we find that
\begin{equation}
|e^{-2it\theta}|=e^{-\mathrm{Re}2it\theta}=e^{\frac{t}{2}\alpha\beta^2\mathrm{Im}k^2(\frac{1}{k_0^4}-\frac{1}{|k|^4})}\leq e^{-\frac{3\alpha\beta^2}{8k_0^2}t}\rightarrow 0,\quad as \ t\rightarrow \infty,
\end{equation}
which together with (\ref{v2r}) gives (\ref{V2g}). The calculation of $\Sigma_{n}^{(2)}$, $n=1,2,3,4$ is similar.
\end{proof}
This proposition implies that the jump matrix $V^{(2)}(k)$ goes to $I$ on both $\Sigma^{(2)}\setminus U_{ k_0} $ and $\Sigma^{(2)}_0$, so outside the $U_{k_0} $ there is only a exponentially small error of $t\rightarrow\infty$ by completely ignoring the jump condition of $M^{rhp}(k)$.
According to (\ref{V2g}), it is clear that $V^{(2)}(k)-I$ goes  exponentially on $\Sigma^{(2)}_0$ and uniformly goes to zero as $k\rightarrow 0$. Therefore, the case of the neighborhood $k\rightarrow0$ requires no further discussion.

\section{Outer model RH problem}
\label{sec:section6}
\hspace*{\parindent}
In this section, we build an outer model RH problem and show that its solution can approximated with a finite sum of
soliton solutions. The key lies in the fact that we need the property of $M^{(out)}( k )$ as $k\rightarrow\infty$, from which we obtain the solution of the following outer model problem.

\subsection{Existence of soliton solutions }

\noindent\textbf{RH Problem 7.1.}  Find a matrix-valued function $M^{(out)}( k )$ with the following properties:

(a) \textit{Analyticity}: $M^{(out)}( k )$  is analytical in $\mathbb{C} \backslash(  \mathcal{K}\cup \mathcal{\overline{K}})$;

(b) \textit{Symmetry}: $\overline{M^{(out)}(\bar{k})}=\sigma_2 M^{(out)}(k) \sigma_2$;

(c) \textit{Asymptotic conditions}:
\begin{equation}
\begin{cases}
M^{out}(k)=I+\mathcal{O}\left(k^{-1}\right), & k \rightarrow \infty,\\
M^{out}(k)=e^{-\frac{i}{2} c_- \sigma_{3}}\left(I+kU+\mathcal{O}\left(k^{2}\right)\right) e^{\frac{i}{2} d_0\sigma_3},  & k \rightarrow 0;
\end{cases}
\end{equation}

(d) \textit{Residue conditions}: $M^{(out)}$ has simple poles at each point in $\mathcal{K}\cup\mathcal{\overline{K}}$ satisfying the residue conditions (\ref{mrhpr1}) and  (\ref{mrhpr2}).

To show the existence and the uniqueness of the solution to RH Problem 7.1, we first
consider RH Problem 3.1 under the condition of no reflection, i.e., $r(k)\equiv 0$. In this case the unknown function $M(k)$ is meromorphic. The RH Problem 3.1 reduces to the following RH problem.

\noindent\textbf{RH Problem 7.2.} Given the discrete data $\mathcal{S}=\{(k_j, c_j)\}_{j=1}^{2N}$, find a matrix-valued
function $m(k|\mathcal{S})$ with the following properties:

(a) \textit{Analyticity}: $m( k |\mathcal{S})$  is analytical in $\mathbb{C} \backslash(  \mathcal{K}\cup\mathcal{\overline{K}})$,

(b) \textit{Symmetry}:
\begin{equation}
\overline{m(\overline{k}|\mathcal{S})}=\sigma_2 m(k|S)\sigma_2,
\end{equation}

(c) \textit{Asymptotic conditions}:
\begin{equation}
\begin{cases}
m(k|\mathcal{S})=I+\mathcal{O}\left(k^{-1}\right), & k \rightarrow \infty,\\
m(k|\mathcal{S})=e^{-\frac{i}{2} c_- \sigma_{3}}\left(I+kU+\mathcal{O}\left(k^{2}\right)\right) e^{\frac{i}{2} d_0\sigma_3},  & k \rightarrow 0;
\end{cases}
\end{equation}

(d) \textit{Residue conditions}: $m( k |\mathcal{S})$ has simple poles at each point in $\mathcal{K}\cup\mathcal{\overline{K}}$ satisfying
\begin{align}
&\underset{k=k_j}{\operatorname{Res}} m(  k  |\mathcal{S})=\lim _{k \rightarrow k_j} m( k |\mathcal{S}) \tau_j, \\
&\underset{k=\overline{k}_j}{\operatorname{Res}} m(  k  |\mathcal{ S})=\lim _{k \rightarrow \overline{k}_j} m(k| \mathcal{S}) \widehat{\tau}_j,
\end{align}
where $\tau_j$ is a nilpotent matrix,
\begin{equation}
\tau_j=\begin{pmatrix}
0 & 0 \\
\gamma_j & 0
\end{pmatrix}, \quad \widehat{\tau}_j=\sigma_2 \overline{\tau_j} \sigma_2, \quad \gamma_j=c_j e^{-2 i t \theta(k_j)},
\end{equation}
Moreover, the solution satisfies
\begin{equation}
\left\|m( k |\mathcal{S})^{-1}\right\|_{L^{\infty}(\mathbb{C}\backslash(\mathcal{K}\cup\mathcal{\overline{K}}))} \lesssim 1.\label{md1}
\end{equation}

\begin{proposition}
The RH Problem 7.2  has a unique solution.
\end{proposition}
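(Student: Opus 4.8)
The plan is to treat Problem 7.2 as a \emph{reflectionless} RH problem, so that its solution is rational in $k$ and the whole question reduces to linear algebra governed by the discrete data $\mathcal{S}$. I would first record that $\det m(k|\mathcal{S})\equiv 1$. Indeed, since each residue matrix $\tau_j$ (and $\widehat\tau_j$) is nilpotent, the local factor at a pole is triangular with unit diagonal, so $\det m$ has only removable singularities at the points of $\mathcal{K}\cup\overline{\mathcal{K}}$; it is therefore entire, equals $1$ at $k=\infty$, and, because the prescribed $k\to 0$ leading term $e^{-\frac{i}{2}c_-\sigma_3}e^{\frac{i}{2}d_0\sigma_3}$ is a product of unimodular diagonal factors, equals $1$ at $k=0$. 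By Liouville, $\det m\equiv 1$, which makes $m$ invertible off the poles and, via the symmetry $\overline{m(\bar k)}=\sigma_2 m\sigma_2$, yields the adjugate formula underlying the bound (\ref{md1}).

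For \textbf{uniqueness}, suppose $m_1$ and $m_2$ both solve Problem 7.2 with the same data. Near a pole $k_j$ the residue condition is equivalent to the factorization $m_i(k)=\widehat m_i(k)\bigl(I+\tau_j/(k-k_j)\bigr)$ with $\widehat m_i$ analytic and invertible at $k_j$ (this uses $\tau_j^2=0$). Since $m_1$ and $m_2$ share the \emph{same} singular factor, the product $P(k):=m_1(k)m_2(k)^{-1}=\widehat m_1\widehat m_2^{-1}$ extends analytically across $k_j$, and the analogous computation with $\widehat\tau_j$ handles $\bar k_j$. Thus $P$ is entire; it is bounded near $k=0$ (both solutions are analytic and invertible there) and tends to $I$ as $k\to\infty$, so Liouville's theorem forces $P\equiv I$, i.e. $m_1=m_2$.

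For \textbf{existence}, I would write the reflectionless ansatz as a rational matrix with simple poles only at $\mathcal{K}\cup\overline{\mathcal{K}}$, normalized to $I$ at infinity and built to respect the $\sigma_2$-symmetry together with the $\pm k_j$ pairing of the zeros. Substituting this ansatz into the residue conditions closes into a finite ($4N$-dimensional) linear system for the residue vectors. Existence of $m$ is then equivalent to invertibility of the coefficient matrix of this system. A nontrivial kernel would produce a nonzero rational matrix with the same poles and residue structure but vanishing at infinity, that is, a nontrivial solution of the \emph{homogeneous} (Type II) reflectionless problem; the Liouville argument of the uniqueness step, which is the finite-dimensional analogue of the vanishing lemma used in Proposition \ref{ppl}, shows any such solution is identically zero. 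Hence the kernel is trivial, the system is solvable, and a solution exists; by the uniqueness step it is the unique one.

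The step I expect to be the main obstacle is reconciling the construction with the extra normalization at the FL singularity $k=0$, the feature absent from the NLS/DNLS problems. I would argue that property (c) at $k\to 0$ is not an independent constraint: the reflection symmetry $k\mapsto -k$ inherited from the spectral problem (Proposition \ref{ppp}), combined with the $\sigma_2$-symmetry and the pairing $k_{j+N}=-k_j$, forces $m(0)$ to be diagonal and the $\mathcal{O}(k)$ term to be off-diagonal, which is exactly the prescribed form $e^{-\frac{i}{2}c_-\sigma_3}(I+kU+\mathcal{O}(k^2))e^{\frac{i}{2}d_0\sigma_3}$ with $c_-$ read off from the solution. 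Once this compatibility is established, the point $k=0$ does not over-determine the rational ansatz and the existence/uniqueness argument goes through unchanged.
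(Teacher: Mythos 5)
Your overall route coincides with the paper's: the paper likewise proves uniqueness by Liouville's theorem and obtains existence from the explicit partial-fraction ansatz (\ref{md2}), whose substitution into the residue conditions closes into a finite linear algebraic system, with the bound (\ref{md1}) extracted afterwards from $\det m\equiv 1$ and boundedness. Your $\det m\equiv 1$ computation, the nilpotent factorization $m(k)=\widehat m(k)\bigl(I+\tau_j/(k-k_j)\bigr)$ behind the quotient argument, and your observation that the $k\to 0$ condition is not extra data are all correct and in fact considerably more detailed than the paper's terse proof; the $k=0$ compatibility can be made precise exactly as you suggest, since the pairing $k_{j+N}=-k_j$ gives $\tau_{j+N}=\tau_j$ (by Proposition \ref{ppp} one has $a(-k)=a(k)$, $b(-k)=-b(k)$, $\theta(-k)=\theta(k)$), so $\sigma_3 m(-k)\sigma_3$ solves the same problem and uniqueness forces $m(k)=\sigma_3 m(-k)\sigma_3$, making $m(0)$ diagonal and the $\mathcal{O}(k)$ term off-diagonal.

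The one step that would fail as literally written is the assertion that ``the Liouville argument of the uniqueness step'' disposes of a nontrivial kernel element. A kernel element is a rational matrix $m_0$ with $m_0(k)=\mathcal{O}(k^{-1})$ as $k\to\infty$ and homogeneous residue conditions; Liouville says nothing about such an object (rational functions vanishing at infinity with prescribed simple poles abound), the quotient construction $m_1m_2^{-1}$ cannot be run because $m_0$ is not invertible, and the alternative of comparing $m$ with $m+m_0$ presupposes that a Type I solution $m$ already exists --- circular, since kernel triviality is precisely what existence is being reduced to. What actually closes the argument is the positivity mechanism of Proposition \ref{ppl} transplanted to the residue setting: set $F(k)=m_0(k)\,m_0(\bar k)^{H}$, note $F$ is rational with $F(k)=\mathcal{O}(k^{-2})$, compute $\int_{\mathbb{R}}F(k)\,dk$ by residues using the symmetry $\widehat\tau_j=\sigma_2\overline{\tau_j}\sigma_2$ and the rank-one nilpotent structure of the residue matrices (which also cancels the apparent double poles of $F$ at the $k_j$), and conclude the integral vanishes, while on $\mathbb{R}$ the integrand $m_0(k)m_0(k)^{H}$ is positive semidefinite; hence $m_0\equiv 0$ on $\mathbb{R}$ and, being rational, everywhere. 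You do name this tool in apposition (``the finite-dimensional analogue of the vanishing lemma''), so the repair is immediate, but the two arguments must not be conflated: Liouville gives uniqueness of the normalized solution, the Hermitian vanishing argument gives triviality of the homogeneous Type II problem, and it is the latter that delivers invertibility of your $4N$-dimensional system and hence existence.
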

\begin{proof}
The uniqueness of the solution follows from the Liouville's theorem. From the symmetries of
$m\left(k|\mathcal{S}\right)$ and the residue of $m( k |\mathcal{S})$ at $k=k_j$, we know that it admits a partial fraction expansion with the following form
\begin{equation}
m( k |\mathcal{S})=I+\sum_{j=1}^{2N}\left[\frac{1}{k-k_j}\begin{pmatrix}
\nu_{j}(x, t) & 0 \\
\zeta_{j}(x, t) & 0
\end{pmatrix}+\frac{1}{k-\overline{k}_j}\begin{pmatrix}
0& -\overline{\zeta}_{j}(x, t)  \\
0 & \nu_{j}(x,t)
\end{pmatrix}\right].\label{md2}
\end{equation}
Since $\mathrm{det}(m\left( k |\mathcal{S}\right))=1$, $\left\|m\left( k |\mathcal{S}\right)\right\|_{L^{\infty}(\mathbb{C} \backslash(\mathcal{K}\cup\mathcal{\overline{K}}))}$
is bounded, by using a similar technique as in Appendix B and from (\ref{md2}), we simply obtain (\ref{md1}) and prove the existence of the solution for the RH Problem 7.2.
\end{proof}
In the reflectionless case, the transmission coefficient admits the following trace formula
\begin{equation}
a(k)=\prod_{j=1}^{2N} \left(\frac{k-k_j}{k-\overline{k}_j}\right),
\end{equation}
whose poles can be split into two parts. Let $\triangle \subseteq\{1,2, \ldots, 2N\}$, we define
\begin{equation}
a_{\Delta}(k)=\prod_{j \in \Delta} \left(\frac{k-k_j}{k-\overline{k}_j}\right).
\end{equation}

With a renormalization transformation
\begin{equation}
m^{\Delta}(k |\mathcal{ D})=m\left(k|\mathcal{S}\right)a_{\Delta}(k)^{\sigma_3},\label{mkd}
\end{equation}
where the scattering data are given by
\begin{equation}
\mathcal{D}=\left\{\left(k_j, c_j^{\prime}\right)\right\}_{j=1}^{2N}, \quad c_j^{\prime}=c_j a_{\Delta}(k)^{2},\label{Dcj}
\end{equation}
it is clear that the transformation (\ref{mkd}) splits the poles of $m_\Delta(k|\mathcal{S})$ into two columns by the choice of $\Delta$, after which it satisfies the following modified discrete RH problem.

\noindent\textbf{RH Problem 7.3.} Given the discrete data (\ref{Dcj}), find a matrix-valued function $m^\Delta( k|\mathcal{D})$ with the following properties:

(a) \textit{Analyticity}: $m^\Delta( k|\mathcal{D})$ is analytical in $\mathbb{C} \backslash(\mathcal{K}\cup\mathcal{\overline{K}})$;

(b) \textit{Symmetry}: $\overline{m^{\Delta}(\overline{k}|\mathcal{D})}=\sigma_2m^{\Delta}(k|\mathcal{D})\sigma_2$;

(c) \textit{Asymptotic conditions}:
\begin{equation}
\begin{cases}
m^{\Delta}(k|\mathcal{D})=I+\mathcal{O}\left(k^{-1}\right), & k \rightarrow \infty,\\
m^{\Delta}(k|\mathcal{D})=e^{-\frac{i}{2} c_- \sigma_{3}}\left(I+kU+\mathcal{O}\left(k^{2}\right)\right) e^{\frac{i}{2} d_0\sigma_3},  & k \rightarrow 0;
\end{cases}
\end{equation}

(d) \textit{Residue conditions}: $m^{\Delta}( k|\mathcal{D})$ has simple poles at each point in $\mathcal{K}\cup\mathcal{\overline{K}}$ satisfying
\begin{align}
&\underset{k=k_j}{\operatorname{Res}} m^{\Delta}( k|\mathcal{D})=\lim _{k \rightarrow k_j} m^{\Delta}( k |\mathcal{D}) \tau_j^{\Delta},\label{714} \\
&\underset{k=\overline{k}_j}{\operatorname{Res}} m^{\Delta}( k |\mathcal{D})=\lim _{k \rightarrow \overline{k}_j} m^{\Delta}( k |\mathcal{D}) \widehat{\tau}_j^{\Delta},
\end{align}
where $\tau_j^{\Delta}$ is a nilpotent matrix satisfies
\begin{equation}
\tau_j^{\Delta}=\begin{cases}
\begin{pmatrix}
0 & 0 \\
\gamma_j a^{\Delta}(k_j)^2 & 0
\end{pmatrix},&  j \notin \Delta, \\
\begin{pmatrix}
0 & \gamma_j^{-1} a^{\prime \Delta}(k_j)^{-2} \\
0 & 0
\end{pmatrix}, & j \in \Delta,
\end{cases}
\end{equation}
where
\begin{align}
&\widehat{\tau}_j^{\Delta}=\sigma_2\overline{\tau^{\Delta}_j}\sigma_2,\ \ \ \gamma_j=c_je^{2it\theta(k_j)}.\label{ga}
\end{align}
Since (\ref{mkd}) is an explicit transformation of $m(k|\mathcal{S})$, by Proposition 5, we obtain the
existence and the uniqueness of the solution of the RH Problem 7.3.

In the  RH Problem 7.3, taking $\Delta=\Delta^-_{k_0}$ and replacing the scattering data $\mathcal{D}$ with the scattering data
\begin{equation}
\widetilde{\mathcal{D}}=\{(k_j,\widetilde{c_j})\}_{k=1}^{N}, \quad \widetilde{c}_j=c_j \delta(k_j)^2,\label{d}
\end{equation}
we have the following corollary.
\begin{corollary}
There exists a unique solution for the RH Problem 7.1, which has the form
\begin{equation}
M^{out}(k)=m^{\Delta^-_{k_0}}(k|\widetilde{\mathcal{D}}),
\end{equation}
where scattering data $\widetilde{\mathcal{D}}$ is given by (\ref{d}).
\end{corollary}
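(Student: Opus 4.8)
The plan is to \emph{reduce} RH Problem 7.1 to the already–solved RH Problem 7.3 rather than attack it directly. The unique solvability of RH Problem 7.3 for arbitrary admissible discrete data has just been recorded (it is inherited from the unique solvability of RH Problem 7.2 via the explicit, invertible renormalization (\ref{mkd})). Hence, taking $\Delta=\Delta_{k_0}^-$ and feeding in the data $\widetilde{\mathcal{D}}$ of (\ref{d}), the function $m^{\Delta_{k_0}^-}(k|\widetilde{\mathcal{D}})$ is well defined and is the unique solution of that particular instance of RH Problem 7.3. It therefore suffices to check that this single function meets every requirement of RH Problem 7.1; uniqueness for RH Problem 7.1 will then follow from the uniqueness already available for RH Problem 7.3.

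The analyticity in $\mathbb{C}\setminus(\mathcal{K}\cup\overline{\mathcal{K}})$, the symmetry $\overline{m(\bar k)}=\sigma_2 m(k)\sigma_2$, and the two–point normalization at $k=\infty$ and $k=0$ are imposed \emph{verbatim} in RH Problems 7.1 and 7.3, so they transfer with no work. The only substantive point is to match the residue conditions (\ref{mrhpr1})–(\ref{mrhpr2}), carrying the nilpotent matrices $R_\pm(k_j)$ of (\ref{rkj}), against the residue conditions (\ref{714}) of RH Problem 7.3, carrying the triangular matrices $\tau_j^\Delta$ assembled from $\widetilde{\mathcal{D}}$. I would first note the structural compatibility: for $j\in\Delta_{k_0}^-$ (poles with $|k_j|<k_0$) the matrix $\tau_j^\Delta$ is upper–triangular, exactly as $R_-(k_j)$, whereas for $j\in\Delta_{k_0}^+$ (poles with $|k_j|>k_0$) it is lower–triangular, exactly as $R_+(k_j)$. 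This flip of triangular type is forced by Proposition \ref{p}, according to which $T(k)$ is analytic and nonvanishing at the $\Delta_{k_0}^+$–points but has a genuine simple pole/zero at the $\Delta_{k_0}^-$–points.

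The computational core is then to confirm the scalar coefficient inside each residue. Using the factorization $T(k)=a_{\Delta_{k_0}^-}(k)^{-1}\delta(k)$ read off from (\ref{Tk1}) and the definition of $a_\Delta$, I would rewrite the factors $T(k_j)^{-2}$ and $[(1/T)'(k_j)]^{-2}$ occurring in $R_\pm(k_j)$ in terms of $\delta(k_j)$ together with the regular value (respectively the derivative) of $a_{\Delta_{k_0}^-}$ at $k_j$. The $a_{\Delta_{k_0}^-}$–factors produced this way are precisely those introduced by the conjugation (\ref{mkd})–(\ref{Dcj}), so they cancel against the matching factors in $\tau_j^\Delta$, leaving exactly the reweighting $c_j\mapsto\widetilde{c}_j=c_j\delta(k_j)^2$ recorded in (\ref{d}), together with the oscillatory factors $e^{\pm 2it\theta(k_j)}$. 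I expect the \textbf{main obstacle} to be this bookkeeping: one must treat the two classes of poles separately, differentiate $1/T$ correctly at the $\Delta_{k_0}^-$–points (where it has a simple zero, so $(1/T)'(k_j)=a_{\Delta_{k_0}^-}'(k_j)\,\delta(k_j)^{-1}$), and track the powers of $\delta(k_j)$ and the $e^{\pm2it\theta(k_j)}$ factors so that the two residue data sets agree term by term. Once every coefficient is matched, $m^{\Delta_{k_0}^-}(k|\widetilde{\mathcal{D}})$ satisfies all of (a)–(d) of RH Problem 7.1, and by the uniqueness inherited from RH Problem 7.3 it equals $M^{out}(k)$, which proves the corollary.
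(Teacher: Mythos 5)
Your proposal is correct and follows essentially the same route as the paper: the paper likewise obtains the corollary by instantiating the already unique-solvable RH Problem 7.3 with $\Delta=\Delta_{k_0}^-$ and the rescaled data $\widetilde{\mathcal{D}}$ of (\ref{d}), the solvability of RH Problem 7.3 being inherited from RH Problem 7.2 through the explicit conjugation (\ref{mkd}). The only difference is that you spell out the residue bookkeeping (the factorization $T(k)=a_{\Delta_{k_0}^-}(k)^{-1}\delta(k)$, the triangularity flip between $\Delta_{k_0}^{\pm}$, and the value of $(1/T)'(k_j)$ at the $\Delta_{k_0}^-$ poles) that the paper leaves implicit, which is a harmless elaboration of the same argument.
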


\subsection{Long-time behavior of soliton solutions}
\hspace*{\parindent}
By choosing $\Delta$ appropriately, the asymptotic limits $t\rightarrow \infty$ with $\xi=\frac{x}{t}+\alpha$ and $k_0=(\frac{\alpha \beta^{2}}{4\xi})^{\frac{1}{4}}$ bounded are under a better asymptotic control. Then we consider the long-time behavior of soliton solutions.

 By using the residue coefficients (\ref{ga}), if $N=1$, in the reflectionless case, one soliton solution is given by
\begin{equation}
u_x(x,t)=2\zeta \operatorname{sech}\left[4 \xi \zeta(x-vt)-\delta_{0}\right] e^{-2 i \chi(x, t)+i \kappa_{0}} e^{-i\left\{\frac{\zeta}{\xi} \tanh \left[4 \xi \zeta(x-vt)-\delta_{0}\right]+\left|\frac{\zeta}{\xi}\right|\right\}},
\end{equation}
in which
$\delta_0$ and $\kappa_0$ are real parameters and the speed of soliton is
\begin{equation}
v=-\alpha\left(1-\frac{\beta^{2}}{4|k|^{4}}\right).
\end{equation}

Given a pair of points $  x_{1} \leq x_{2}$ with $ x_1, x_2 \in \mathbb{R}$ with velocities $v_{1} \leq v_{2}$ with  $ v_{1},  v_{2} \in \mathbb{R}^{-}$, we define a cone
\begin{equation}
C(x_1, x_2, v_1, v_2)=\{(x,t) \in \mathbb{R}^2 | x=x_0+v t, x_{0} \in[x_1, x_2], v \in[v_1, v_2]\},
\end{equation}
and denote
\begin{align}
&\mathcal{I}=\{k: f(v_2) <|k|<f(v_1)\}, \ \ f(v) \doteq (\frac{\alpha\beta^2}{4(v+\alpha)})^{1/4},\nonumber\\
&\mathcal{K}(\mathcal{I})=\{k_j\in \mathcal{K}:  k_j \in \mathcal{I}\},\quad N(\mathcal{I})=|\mathcal{K}(\mathcal{I})|,\nonumber\\
&\mathcal{K}_+(\mathcal{I})=\{k_j\in \mathcal{K}:| k_j|> f(v_1)\}\nonumber, \quad \mathcal{K}_-(\mathcal{I})=\{k_j\in\mathcal{K}:| k_j|<f(v_2)\},\nonumber \\
&c_j^\pm(\mathcal{I})=c_j\prod_{\operatorname{Re} k_n \in I_\pm \backslash \mathcal{I}}\left(\frac{k_j-k_n}{k_j-\overline{k}_n}\right)^2 \exp [\pm\frac{1}{\pi i} \int_{I_\pm} \frac{\log [1+r(\zeta)\overline{r(\overline{\zeta}})]}{\zeta-k_j} d \zeta].\label{cji}
\end{align}

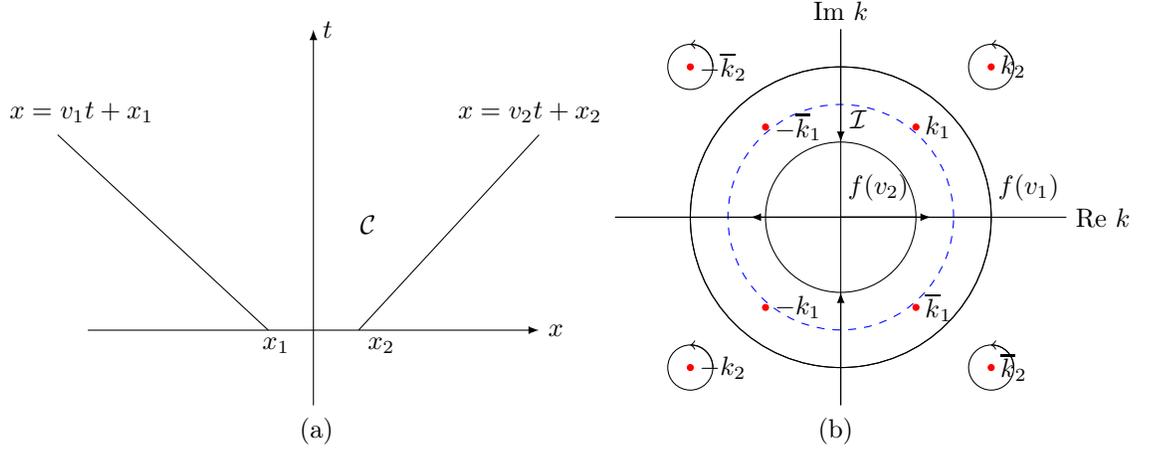
\begin{figure}[H]
\begin{minipage}[b]{5cm}
\begin{tikzpicture}[node distance=1cm]
\draw [-latex](-3,0)--(3,0)  node[right, scale=1] {$x$};
\draw [-latex](0,-1)--(0,4)  node[right, scale=1] {$t$};
\draw [](0.6,0)--(3,2.6)  node[right, scale=1] {};
\draw [](-0.6,0)--(-3.4,2.6)  node[above, scale=1] {};
\node  [right]  at (0.6,-0.2) {$x_2$};
\node  [right]  at (-0.8,-0.2) {$x_1$};
\node  [right]  at (1.8,2.9) {$x=v_2t+x_2$};
\node  [left]  at (-2,2.9) {$x=v_1t+x_1$};
\node  [right]  at (0.5, 1.4) {$\mathcal{C}$};
\end {tikzpicture}

\centerline{$\quad\quad\quad\quad\quad\quad\quad\quad\quad\quad$(a) }
\end{minipage}
\hspace{3.0cm}
\begin{minipage}[b]{6cm}
\begin{tikzpicture}[node distance=2cm]
 \draw [] (0,0) circle (2cm);
\draw [fill=pink,ultra thick,white] (0,0) circle (1cm);
\draw [dashed,blue] (0,0) circle (1.5cm);
\draw [ ](-3,0)--(3,0)  node[right, scale=1] {Re $k$};
\draw [-latex](-3,0)--(1.2,0);
\draw [-latex](0,0)--(-1.2,0);
\draw [ ](0,-2.5)--(0,2.5)  node[above, scale=1] {Im $k$};
\draw [-latex](0, 2.5)--(0,1);
\draw [-latex](0, -2.5)--(0,-1);
\draw [](0,0)--(1,0)  node[above, scale=1] {};
\draw [](0,0)--(2,0)  node[above, scale=1] {};
\draw [](1,0)--(0,0)  node[above, scale=1] {};
\draw(0,0)circle(2cm);
\draw(0,0)circle(1cm);
\draw[fill,red] (1,1.2) circle [radius=0.04];
\draw[fill,red] (2,2) circle [radius=0.04];
\draw[fill,red] (2,-2) circle [radius=0.04];
\draw[fill,red] (-2,-2) circle [radius=0.04];
\draw[fill,red] (-2,2) circle [radius=0.04];
\draw[fill,red] (1,-1.2) circle [radius=0.04];
\draw[fill,red] (-1,-1.2) circle [radius=0.04];
\draw[fill,red] (-1,1.2) circle [radius=0.04];
\node  [right]  at (2,2) {$k_2$};
\draw [] (2,2)circle(0.3cm);
 \draw [  -> ]  (2.3,2) to  [out=90, in=0]  (2, 2.3);	
\node  [right]  at (2,-2) {$\overline{k}_2$};
\draw [](2,-2)circle(0.3cm);
 \draw [ -> ]  (2.3,-2) to  [out=90, in=0]  (2, -1.7);
\node  [right]  at (-2,-2) {$-k_2$};
\draw [](-2,-2)circle(0.3cm);
 \draw [  -> ]  (-1.7,-2) to  [out=90, in=0]  (-2, -1.7);
\node  [right]  at (-2,2) {$-\overline{k}_2$};
\draw [] (-2,2)circle(0.3cm);
 \draw [  -> ]  (-1.7,2) to  [out=90, in=0]  (-2, 2.3);
\node  [right]  at (1,1.2) {$k_1$};
\node  [right]  at (1,-1.2) {$\overline{k}_1$};
\node  [right]  at (-1,-1.2) {$-k_1$};
\node  [right]  at (-1,1.2) {$-\overline{k}_1$};
\node  [right]  at (0,1.3) {$\mathcal{I}$};
\node  [above]  at (0.5,0.1) {$f(v_2)$};
\node  [above]  at (2.5,0.1) {$f(v_1)$};
\end {tikzpicture}
\centerline{(b) }
\end{minipage}
\caption{\small (a) The time-spatial cone
$\mathcal{C}(x_1 ,x_2 ,v_1 ,v_2)$; (b) In this example, the original scattering data has two pairs of zero points of discrete
spectrums, but inside the cone $\mathcal{C}(x_1 ,x_2 ,v_1 ,v_2)$ only one pair of points with $\mathcal{K}(\mathcal{I}) = \{k_1,-k_1,\overline{k}_1,-\overline{k}_1\}$, and $f(v_2)$, $f(v_1)$.}
\label{zdtx}
\end{figure}
For the $(x,t) \in \mathcal{C}\left(x_1, x_2, v_1, v_2\right)$,  a direct calculation leads to
\begin{equation}
k \in \mathcal{I}=\left\{k: f\left(v_2\right) \leq|k| \leq f\left(v_1\right)\right\},
\end{equation}
which implies that a proportion of the discrete spectrums would fall in the circular region
$$
\mathcal{K}(\mathcal{I})=\left\{k_{j} \in \mathcal{K} \mid k_{j} \in \mathcal{I}\right\}.
$$

Define
$$
\mu=\min _{k_{j} \in \mathcal{K} \backslash \mathcal{K}(\mathcal{I})}\left\{\operatorname{Re} k_{j}\operatorname{Im} k_{j} {\rm dist}  \left(v_{k_j}-v\right)\right\},
$$
where
\begin{equation}
v_{k_j}=-\alpha\left(1-\frac{\beta^{2}}{4\left|k_{j}\right|^{4}}\right),
\end{equation}
is the velocity of one soliton corresponding to the discrete spectrum $k_j$.
The value  $v_{k_j}-v$ represents the difference between the velocity of one soliton corresponding to $k_j$.

\begin{proposition}
The choice of normalization $\Delta=\Delta_{k_0}^{\mp}$ in  RH Problem 7.3 ensures that as $t\rightarrow \infty$ with $(x,t) \in C\left(x_1, x_2, v_1, v_2\right)$ that
\begin{equation}
\left\|\tau_j^{\Delta_{k_0}^{\mp}}\right\|=\begin{cases}
\mathcal{O}(1), & k_j \in \mathcal{K}(\mathcal{I}), \\
\mathcal{O}(e^{-4\mu t}), & k_j \in \mathcal{K} \backslash \mathcal{K}(\mathcal{I}),
\end{cases} \quad t \rightarrow \infty.
\end{equation}
\end{proposition}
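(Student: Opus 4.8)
The plan is to reduce the matrix norm $\|\tau_j^{\Delta}\|$ to the size of a single scalar exponential $e^{\pm\mathrm{Re}(2it\theta(k_j))}$ and to show that the choice $\Delta=\Delta_{k_0}^{-}$ (the case $\Delta_{k_0}^{+}$, relevant as $t\to-\infty$, being entirely analogous) always selects the entry whose exponent is non-positive, with a strictly negative rate precisely when $k_j$ lies outside the tyre region $\mathcal I$. First I would read off from the explicit form of $\tau_j^{\Delta}$ that, up to the $t$-independent scalars $|c_j|$, $|a^{\Delta}(k_j)|^{2}$ (for $j\notin\Delta$) and $|a'^{\Delta}(k_j)|^{-2}$ (for $j\in\Delta$), one has $\|\tau_j^{\Delta}\|\lesssim e^{\mathrm{Re}(2it\theta(k_j))}$ when $j\notin\Delta$ and $\|\tau_j^{\Delta}\|\lesssim e^{-\mathrm{Re}(2it\theta(k_j))}$ when $j\in\Delta$. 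These prefactors are $\mathcal O(1)$ uniformly in the cone: the Blaschke products $a^{\Delta}$ have their zeros and poles at the fixed, mutually separated points $k_j,\overline{k}_j$, so both $a^{\Delta}(k_j)$ and the residues $a'^{\Delta}(k_j)$ at the simple zeros are bounded above and below.

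The key computation is the imaginary part of the phase at a discrete eigenvalue. Using $\theta(k)=\xi k^{2}+\tfrac{\alpha\beta^{2}}{4k^{2}}-\alpha\beta$ with $\xi=\tfrac{x}{t}+\alpha$, together with the identity $v_{k_j}+\alpha=\tfrac{\alpha\beta^{2}}{4|k_j|^{4}}$, a direct expansion with $k_j$ in the first quadrant gives
\begin{equation}
\mathrm{Re}\bigl(2it\theta(k_j)\bigr)=-4\,\mathrm{Re}(k_j)\,\mathrm{Im}(k_j)\,\bigl(x_0+(v-v_{k_j})t\bigr).\nonumber
\end{equation}
Thus the sign of the exponent is governed by the sign of $\tfrac{x}{t}-v_{k_j}$, which is exactly the criterion separating $|k_j|\gtrless k_0$, i.e.\ $j\notin\Delta_{k_0}^{-}$ versus $j\in\Delta_{k_0}^{-}$. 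I would then verify that for every pole the partition $\Delta_{k_0}^{-}$ assigns $k_j$ to the group in which $e^{\pm\mathrm{Re}(2it\theta(k_j))}\le 1$, so that $\|\tau_j^{\Delta}\|=\mathcal O(1)$ holds for all $j$, and in particular for $k_j\in\mathcal K(\mathcal I)$.

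To upgrade this to exponential decay off $\mathcal I$, I would use that for $k_j\in\mathcal K\setminus\mathcal K(\mathcal I)$ the soliton velocity $v_{k_j}$ lies outside $[v_1,v_2]$. If $k_j\in\mathcal K_{+}(\mathcal I)$ then $v_{k_j}<v_1\le v$, and $f$-monotonicity forces $j\notin\Delta_{k_0}^{-}$ for $t$ large, whence $v-v_{k_j}\ge v_1-v_{k_j}=\mathrm{dist}(v_{k_j},[v_1,v_2])>0$; the case $k_j\in\mathcal K_{-}(\mathcal I)$ is symmetric with $j\in\Delta_{k_0}^{-}$. Absorbing the bounded term $-4\,\mathrm{Re}(k_j)\mathrm{Im}(k_j)x_0$ and the $\mathcal O(1)$ prefactors into a constant, the exponent is bounded by $-4\,\mathrm{Re}(k_j)\mathrm{Im}(k_j)\,\mathrm{dist}(v_{k_j},[v_1,v_2])\,t\le-4\mu t$, giving $\|\tau_j^{\Delta}\|=\mathcal O(e^{-4\mu t})$.

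The main obstacle is uniformity over the whole cone: since $k_0=f(x/t)$ varies with $(x,t)$, the set $\Delta_{k_0}^{-}$ is not fixed, and I must check that for all sufficiently large $t$ every pole outside $\mathcal I$ is consistently placed in the decaying group, i.e.\ that the $x_0/t$ correction in $x/t=v+x_0/t$ never flips the comparison $|k_j|\gtrless k_0$. This is exactly where the strict separation $v_{k_j}\notin[v_1,v_2]$ is essential: it yields a velocity gap that dominates the $\mathcal O(1/t)$ perturbation once $t$ exceeds a threshold depending only on the cone and the spectrum, and it simultaneously guarantees $\mu>0$.
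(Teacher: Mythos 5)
Your proposal is correct and follows essentially the same route as the paper: bound $\|\tau_j^{\Delta}\|$ by the scalar exponential $e^{\pm\operatorname{Re}(2it\theta(k_j))}$, rewrite the real part of the phase at $k_j$ via the soliton-velocity identity $v_{k_j}+\alpha=\frac{\alpha\beta^2}{4|k_j|^4}$ as $-4\operatorname{Re}(k_j)\operatorname{Im}(k_j)\left(x_0+(v-v_{k_j})t\right)$ (your constant in front of $x_0$ is in fact the correct one; the paper's $2x_0$ is a typo), and conclude decay at rate $4\mu t$ from the velocity gap $\operatorname{dist}(v_{k_j},[v_1,v_2])>0$ for $k_j\in\mathcal{K}\setminus\mathcal{K}(\mathcal{I})$. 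Your treatment is if anything more complete, since the paper only details the case $k_j\in\mathcal{K}_-(\mathcal{I})$ with $\Delta=\Delta_{k_0}^-$ and leaves the $\mathcal{O}(1)$ bound, the boundedness of the Blaschke prefactors, and the uniformity of the partition $\Delta_{k_0}^-$ over the cone implicit.
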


\begin{proof}
We take $\Delta=\Delta^-_{k_0}$ in the  RH Problem 7.3. Then, for $k_j\in   \mathcal{K}_-(\mathcal{I})$ and $(x,t)\in \mathcal{C}(x_1,x_2,v_1,v_2)$,
by using the residue condition (\ref{714}),
\begin{equation}
\left|\gamma_{j}(x_{0}+vt,k_j)\right| \leq c\left|e^{-2it\theta(k_{j})}\right|,
\end{equation}
for the exponential part on the right, we have
\begin{equation}
\begin{aligned}
-2 i t \theta(k_{j})&=-2 i x_0 k_j^2-2 i t(v k_j^2+\alpha k_j^2-\alpha \beta+\frac{\alpha \beta^{2}}{4 k_j^2}).
\end{aligned}
\end{equation}
To get the modulus of $e^{-2it\theta(k_{j})}$, we calculate the real parts of the right side
\begin{equation}
\begin{aligned}
\operatorname{Re}\left(-2ix_0 k_j^2-2 i t(v k_j^2+\alpha k_j^2-\alpha\beta+\frac{\alpha \beta^2}{4 k_j^2})\right)=2 x_0 \operatorname{Re} k_j\operatorname{Im} k_j-4t \operatorname{Re} k_j\operatorname{Im} k_j(v_j-v),
\end{aligned}
\end{equation}
where we set
\begin{equation}
v_{k_j}=-\alpha\left(1-\frac{\beta^{2}}{4\left|k_{j}\right|^{4}}\right).
\end{equation}
Therefore,
\begin{align}
&|  e^{-2it\theta(k_j)}| =e^{ 2x_0 {\rm Re}k_j{\rm Im}k_j} e^{-4t{\rm Re}k_j{\rm Im}k_j(v_{k_j}-v)} \leq c e^{-4\mu t}.
\end{align}
\end{proof}

By using the above estimates, we obtain the following result.
\begin{lemma}
Fix the reflectionless data $\mathcal{S}=\left\{\left(k_j, c_j\right)\right\}_{j=1}^N, \mathcal{D}^\pm(\mathcal{\mathcal{I}})=\left\{\left(k_j, c_j^\pm(\mathcal{I})\right) | k_j \in \mathcal{K}(\mathcal{I})\right\}$. Then as $t\rightarrow \infty$ with $(x,t) \in \mathcal{C}\left(x_1, x_2, v_1, v_2\right)$, we have
\begin{equation}
m^{\Delta_{k_0}^\mp}(k|\mathcal{D})=(I+\mathcal{O}(e^{-4\mu t})) m^{\Delta_{k_0}^\mp}(k|\mathcal{D}^\pm(\mathcal{I})),
\end{equation}
with
$$
c_j^\pm(\mathcal{I})=c_j\prod_{\operatorname{Re} k_n \in I_\pm \backslash \mathcal{I}}\left(\frac{k_j-k_n}{k_j-\overline{k}_n}\right)^2.
$$
\end{lemma}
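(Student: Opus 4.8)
The plan is to realize the quotient of the two solutions as the solution of a small-norm RH problem whose jump departs from the identity only by $\mathcal{O}(e^{-4\mu t})$. The starting point is the preceding Proposition, which splits the discrete spectrum into the retained poles $k_j\in\mathcal{K}(\mathcal{I})$, whose nilpotent residue matrices satisfy $\|\tau_j^{\Delta_{k_0}^\mp}\|=\mathcal{O}(1)$, and the distant poles $k_j\in\mathcal{K}\backslash\mathcal{K}(\mathcal{I})$, whose residue matrices are exponentially small, $\|\tau_j^{\Delta_{k_0}^\mp}\|=\mathcal{O}(e^{-4\mu t})$, since $|e^{-2it\theta(k_j)}|\le c\,e^{-4\mu t}$ there. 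Heuristically the distant solitons should be invisible to leading order, while their \emph{presence} is recorded in the norming constants of the retained poles through the Blaschke factor $\prod_{\operatorname{Re}k_n\in I_\pm\backslash\mathcal{I}}\big(\frac{k_j-k_n}{k_j-\overline{k}_n}\big)^2$ that defines $c_j^\pm(\mathcal{I})$; note that this factor is precisely the norming-constant renormalization $c_j\mapsto c_j a_\Delta(k_j)^2$ recorded in $(\ref{Dcj})$, now taken over the distant poles.

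First I would trade each residue condition at a distant pole for a jump across a small disjoint circle. Around every $k_n\in\mathcal{K}\backslash\mathcal{K}(\mathcal{I})$ and its conjugate I place circles $\gamma_n,\overline{\gamma}_n$ of radius less than $\rho/3$ and install the Schwartz-invariant triangular jumps used in the passage from RH Problem 3.1 to RH Problem 4.1. Because the off-diagonal entry of each such jump is proportional to $\tau_j^{\Delta_{k_0}^\mp}$, the preceding estimate gives that every one of these jump matrices is $I+\mathcal{O}(e^{-4\mu t})$ uniformly on its circle, while the circles stay at a fixed positive distance from the retained spectrum and from $k=\infty$.

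Next I introduce the error matrix
\begin{equation}
E(k)=m^{\Delta_{k_0}^\mp}(k|\mathcal{D})\,m^{\Delta_{k_0}^\mp}(k|\mathcal{D}^\pm(\mathcal{I}))^{-1},
\end{equation}
which tends to $I$ as $k\to\infty$ by the common normalization, and I verify that $E$ has only removable singularities at the retained poles $\mathcal{K}(\mathcal{I})\cup\overline{\mathcal{K}(\mathcal{I})}$. This is the bookkeeping core of the argument: although the two factors carry different norming data at those points, the choice $c_j^\pm(\mathcal{I})=c_j\,\prod_{\operatorname{Re}k_n\in I_\pm\backslash\mathcal{I}}\big(\frac{k_j-k_n}{k_j-\overline{k}_n}\big)^2$ is exactly the renormalization that makes the leading Laurent coefficients cancel, by the same nilpotent cancellation that was carried out for $M^{(3)}$ in the proof of RH Problem 6.2 and using the partial-fraction representation $(\ref{md2})$. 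After this cancellation $E(k)$ is analytic off the small circles $\{\gamma_n,\overline{\gamma}_n\}$, where its jump obeys $\|V_E-I\|_{L^\infty\cap L^2}=\mathcal{O}(e^{-4\mu t})$.

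Finally I close the estimate by the small-norm theory. Writing $E=I+C_{w_E}(E)$ in Beals--Coifman form, the norm of the singular integral operator $C_{w_E}$ is controlled by $\|V_E-I\|$, so $I-C_{w_E}$ is invertible for large $t$ with resolvent bounded uniformly in $t$, the uniform bound resting on $(\ref{md1})$. The Cauchy representation of $E$ then yields $E(k)=I+\mathcal{O}(e^{-4\mu t})$ uniformly for $k$ bounded away from the circles, which is the asserted identity. I expect the main obstacle to be the third step, namely checking that the Blaschke renormalization encoded in $c_j^\pm(\mathcal{I})$ matches the residues so precisely that $E$ is genuinely pole-free at the retained spectrum; once this residue matching is confirmed, the circle-trading and the small-norm estimate are routine.
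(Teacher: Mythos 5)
Your proposal is correct and follows essentially the same route as the paper's own proof: you trade the exponentially small residues at the distant poles $\mathcal{K}\backslash\mathcal{K}(\mathcal{I})$ for near-identity jumps on small disjoint circles (the paper's $\widetilde{m}^{\Delta_{k_0}^-}$ transformation with $\|\widetilde{V}-I\|_{L^\infty}=\mathcal{O}(e^{-4\mu t})$), form the quotient with the $\mathcal{D}^{\pm}(\mathcal{I})$-solution whose Blaschke-renormalized norming constants $c_j^{\pm}(\mathcal{I})$ make the effective residue data at the retained poles coincide exactly so the singularities are removable (the paper's pole-free $m_0$), and close with the conjugated jump estimate and small-norm theory resting on the boundedness (\ref{md1}). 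One cosmetic remark: in your displayed definition of $E(k)$ the first factor should be the circle-traded matrix $\widetilde{m}^{\Delta_{k_0}^{\mp}}(k|\mathcal{D})$ rather than $m^{\Delta_{k_0}^{\mp}}(k|\mathcal{D})$ itself, since the latter still carries poles at the distant spectrum; the two agree off the circles, as your subsequent statement that $E$ is analytic away from $\{\gamma_n,\overline{\gamma}_n\}$ already presupposes.
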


\begin{proof}
Denote small disks centered at each $k_j \in \mathcal{K} \backslash \mathcal{K}(\mathcal{I})$ with a radius smaller than
$\mu(\mathcal{I})$ as $S_j$ respectively, with $\partial S_j$ representing the boundary of $S_j$. Then we introduce a new transformation which
can remove the poles $k_j \in \mathcal{K} \backslash \mathcal{K}(I)$ and these residues change to near-identity jumps.
\begin{equation}
\widetilde{m}^{\Delta_{k_0}^-}(k|\mathcal{D})=\begin{cases}
m^{\Delta_{k_0}^-}(k|\mathcal{D})(I-\frac{\tau_j}{k-k_j}),  & k \in S_j, \\
m^{\Delta_{k_0}^-}(k|\mathcal{D})(I-\frac{\sigma_{2} \overline{\tau_j} \sigma_{2}}{k-\overline{k_j}}),  & k \in \overline{S}_j, \\
m^{\Delta_{k_0}^-}(k|\mathcal{D}),  & \text{elsewhere}.
\end{cases}\label{mk0}
\end{equation}
Comparing with $m^{\Delta_{k_0}^{-}}(k|\mathcal{D})$  the new matrix function $\widetilde{m}^{\Delta_{k_0}^-}(k|\mathcal{D})$  has a new jump in each $\partial S_j$  which we denote by $\widetilde{V}(k)$
\begin{equation}
\widetilde{m}_+^{\Delta_{k_0}^-}(k|\mathcal{D})=\widetilde{m}_-^{\Delta_{k_0}^-}(k| \mathcal{D}  )\widetilde{V}(k), \quad k\in \widetilde{\Sigma},\label{mk0m}
\end{equation}
where
$$
\widetilde{\Sigma}=\cup_{k_j\in \mathcal{K} \backslash \mathcal{K}(\mathcal{I})}(\partial S_j \cup \partial \overline{S}_j).
$$
Then using (\ref{mk0}), we have
\begin{equation}
\|\widetilde{V}(k)-I\|_{L^{\infty}(\widetilde{\Sigma})}=\mathcal{O}(e^{-4\mu t}). \label{Vk}
\end{equation}
Since $\widetilde{m}^{\Delta_{k_0}^-}(k|\mathcal{D})$ has the same poles and residue conditions with $m^{\Delta_{k_0}^-}(k|\mathcal{D})$, then
\begin{equation}
m_{0}(k)=\widetilde{m}^{\Delta^-_{k_0}}(k|\mathcal{D}) m^{\Delta^-_{k_0}(\mathcal{I})}(k|\mathcal{D}^\pm(\mathcal{I}))^{-1},\label{m0k}
\end{equation}
has no poles, but it has a jump matrix for $k\in\widetilde{\Sigma}$
\begin{equation}
m_{0}^+(k)=m_{0}^-(k) V_{m_{0}}(k).\label{vm0}
\end{equation}
Combine (\ref{mk0m}), (\ref{m0k}), (\ref{vm0}) and under the fact that the solution of  RH Problem 7.2  has no jump, the jump matrix $V_{m_0}(k)$ is given by
\begin{equation}
V_{m_0}(k)=m(k|\mathcal{D}^\pm(\mathcal{I}))\widetilde{V}(k) m(k|\mathcal{D}^\pm(\mathcal{I}))^{-1},
\end{equation}
which, by using (\ref{Vk}), also admits the same decaying estimate
\begin{equation}
\left\|V_{m_0}(k)-I\right\|_{L^{\infty}(\widetilde{\Sigma})}=\|\widetilde{V}(k)-I\|_{L^{\infty}(\widetilde{\Sigma})}=\mathcal{O}(e^{-4 \mu t}), \quad t \rightarrow  \infty.
\end{equation}

\end{proof}

Using the reconstruction formula to $m^{\Delta_{k_0}^-}(k|\mathcal{D})$, we immediately obtain the following result.

\begin{corollary}
Let $m_{sol}(x,t|\mathcal{D})$ and $m_{sol}(x,t|\mathcal{D}(\mathcal{I}))$ denote the N-soliton solution of (\ref{cs})
corresponding to discrete scattering data $S$ and $\mathcal{D}(\mathcal{I})$ respectively, with $\mathcal{I}$, $\mathcal{C}(x_1 ,x_2 ,v_1 ,v_2)$, $\mathcal{D}(\mathcal{I})$ given above. As $t\rightarrow \infty$
with $(x,t)\in C(x_1 ,x_2 ,v_1 ,v_2)$, we have
\begin{equation}
2i\lim _{k \rightarrow \infty} k(m(k|\mathcal{D}))_{12}=m_{sol}(x,t|\mathcal{D})=m_{sol}(x,t|\mathcal{D}(\mathcal{I}))+O(e^{-4\mu t}).
\end{equation}
\end{corollary}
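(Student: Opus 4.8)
The plan is to deduce the corollary directly from the preceding Lemma by applying the reconstruction functional $m\mapsto 2i\lim_{k\to\infty}k\,(m)_{12}$, which extracts the soliton component of the solution. First I would record that this functional is insensitive to the diagonal conjugation $m^{\Delta}(k|\mathcal{D})=m(k|\mathcal{S})\,a_{\Delta}(k)^{\sigma_3}$ introduced in (\ref{mkd}): since $a_{\Delta}(k)=1+\mathcal{O}(k^{-1})$ and hence $a_{\Delta}(k)^{-1}=1+\mathcal{O}(k^{-1})$, the entry $(m^{\Delta})_{12}=(m)_{12}\,a_{\Delta}(k)^{-1}$ has the same coefficient of $k^{-1}$ in its large-$k$ expansion as $(m)_{12}$. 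Thus the reconstructed object $m_{sol}(x,t|\mathcal{D})$ is genuinely independent of the normalization $\Delta$, which justifies calling it the $N$-soliton solution; it then suffices to propagate the matrix identity of the Lemma through the large-$k$ expansion.

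Next I would insert the factorization of the Lemma, writing $m^{\Delta_{k_0}^{\mp}}(k|\mathcal{D})=m_0(k)\,m^{\Delta_{k_0}^{\mp}}(k|\mathcal{D}^{\pm}(\mathcal{I}))$ with the near-identity factor $m_0(k)=I+\mathcal{O}(e^{-4\mu t})$ defined in (\ref{m0k}). The decisive point is not merely that $m_0(k)-I$ is uniformly small, but that the coefficient of $k^{-1}$ in its asymptotic expansion is itself $\mathcal{O}(e^{-4\mu t})$. Denoting by $M_0^{(1)}$ and $m_{\mathcal{I}}^{(1)}$ the $k^{-1}$ coefficients of $m_0$ and of $m^{\Delta_{k_0}^{\mp}}(k|\mathcal{D}^{\pm}(\mathcal{I}))$, the product expands as
\[
m^{\Delta_{k_0}^{\mp}}(k|\mathcal{D})=I+\frac{1}{k}\left(M_0^{(1)}+m_{\mathcal{I}}^{(1)}\right)+\mathcal{O}(k^{-2}),
\]
so taking the $(1,2)$ entry and multiplying by $2i$ gives
\[
m_{sol}(x,t|\mathcal{D})=2i\,(m_{\mathcal{I}}^{(1)})_{12}+2i\,(M_0^{(1)})_{12}=m_{sol}(x,t|\mathcal{D}(\mathcal{I}))+\mathcal{O}(e^{-4\mu t}),
\]
which is the claimed estimate once $(M_0^{(1)})_{12}=\mathcal{O}(e^{-4\mu t})$ is established.

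The step I expect to be the main obstacle is precisely this control of the expansion coefficient $M_0^{(1)}$, rather than just of the sup-norm of $m_0-I$. I would obtain it from the small-norm theory for the RH problem (\ref{vm0}) satisfied by $m_0(k)$: its jump $V_{m_0}(k)-I$ is supported on the compact contour $\widetilde{\Sigma}=\cup_{k_j\in\mathcal{K}\backslash\mathcal{K}(\mathcal{I})}(\partial S_j\cup\partial\overline{S}_j)$ and obeys $\|V_{m_0}(k)-I\|_{L^{\infty}(\widetilde{\Sigma})}=\mathcal{O}(e^{-4\mu t})$. A Beals--Coifman representation then yields
\[
m_0(k)=I+\frac{1}{2\pi i}\int_{\widetilde{\Sigma}}\frac{\mu_0(s)\,(V_{m_0}(s)-I)}{s-k}\,ds,
\]
where $\mu_0$ is the associated density, so that
\[
M_0^{(1)}=-\frac{1}{2\pi i}\int_{\widetilde{\Sigma}}\mu_0(s)\,(V_{m_0}(s)-I)\,ds.
\]
Because $\widetilde{\Sigma}$ is bounded and, for $t$ large, the resolvent $(I-C_{V_{m_0}})^{-1}$ is uniformly bounded, both $\mu_0$ and the integral are controlled by the jump, giving $M_0^{(1)}=\mathcal{O}(e^{-4\mu t})$. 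The uniform boundedness of $m^{\Delta_{k_0}^{\mp}}(k|\mathcal{D}^{\pm}(\mathcal{I}))$ and its inverse on $\widetilde{\Sigma}$ (needed to conjugate $\widetilde{V}$ into $V_{m_0}$, and automatic since $\widetilde{\Sigma}$ avoids the poles $\mathcal{K}(\mathcal{I})$) follows from the uniform bound (\ref{md1}) together with $\det m\equiv 1$, which closes the argument.
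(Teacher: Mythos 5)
Your proposal is correct and follows essentially the same route as the paper: the corollary is read off by applying the reconstruction formula to the factorization $m^{\Delta_{k_0}^{\mp}}(k|\mathcal{D})=m_0(k)\,m^{\Delta_{k_0}^{\mp}}(k|\mathcal{D}^{\pm}(\mathcal{I}))$ established in the preceding lemma, which the paper treats as immediate. Your Beals--Coifman small-norm argument on the compact contour $\widetilde{\Sigma}$ simply makes that elided step rigorous, showing that the $k^{-1}$ coefficient of $m_0(k)$ (and not merely its sup-norm) is $\mathcal{O}(e^{-4\mu t})$, together with the correct observation that the reconstruction functional is insensitive to the normalization $a_{\Delta}(k)^{\sigma_3}$.
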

From the outer model we arrive at the following corollary.
\begin{corollary}
The  RH Problem 7.1  has a unique solution $M^{(out)}$ with
\begin{equation}
\begin{aligned}
M^{(out)}(k)&=m^{\Delta_{k_0}^-}(k|\mathcal{D}^{(out)}) \\
&=m^{\Delta_{k_0}^-}(k| \mathcal{D}(\mathcal{I})) \prod_{\operatorname{Re} k_j \in I_+ \backslash \mathcal{I}}\left(\frac{k-k_j}{k-\overline{k}_j}\right)^{-\sigma_3} \delta^{-\sigma_3}+\mathcal{O}(e^{-4\mu t}),\label{mout}
\end{aligned}
\end{equation}
where $\mathcal{D}^{(out)}=\left\{k_j, c_j(z_n)\right\}_{j=1}^{2N}$, $n=1,2,3,4$,
\begin{equation}
c_j(z_n)=c_j \exp \left[-\frac{1}{\pi i} \int_{I_+} \frac{\log \left[1+r(\zeta)\overline{r(\overline{\zeta})}\right]}{\zeta-z_n} d \zeta\right].
\end{equation}
Then substituting (\ref{mout}) into (\ref{md1}) we immediately have
\begin{equation}
\|M^{(o u t)}(k)^{-1}\|_{L^{\infty}(\mathbb{C}\backslash\mathcal{K}\cup\mathcal{\overline{K}})} \lesssim 1,\label{Mout}
\end{equation}
Moreover, we have the reconstruction formula
\begin{equation}
 M^{(out)}(k)  =m(x,t|\mathcal{D}^{(out)})=m(x,t|\mathcal{D}^{(out)}(\mathcal{I}))+\mathcal{O}(e^{-4 \mu t}),
\end{equation}
and
\begin{equation}
m_{sol}(x,t|\mathcal{D}^{(out)})=m_{sol}(x,t|\mathcal{D}(\mathcal{I}))+\mathcal{O}(e^{-4\mu t}),\quad t\rightarrow \infty.\label{uxout}
\end{equation}
\end{corollary}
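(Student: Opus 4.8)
The plan is to assemble the representation from the three preparatory results just proved and then read off the error estimates; almost nothing new is needed beyond careful bookkeeping. Existence and uniqueness require no work at all: the corollary immediately preceding the long-time analysis already shows that RH Problem 7.1 is solved by $M^{(out)}(k)=m^{\Delta_{k_0}^-}(k|\widetilde{\mathcal{D}})$, the reflectionless solution of RH Problem 7.3 with normalization $\Delta=\Delta_{k_0}^-$ and renormalized data $\widetilde{\mathcal{D}}$ of (\ref{d}). So I would begin by identifying $\mathcal{D}^{(out)}$ with $\widetilde{\mathcal{D}}$ and then concentrate on extracting the factorized form (\ref{mout}).

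First I would invoke the preceding lemma, $m^{\Delta_{k_0}^\mp}(k|\mathcal{D})=(I+\mathcal{O}(e^{-4\mu t}))\,m^{\Delta_{k_0}^\mp}(k|\mathcal{D}^\pm(\mathcal{I}))$, which discards every pole $k_j\in\mathcal{K}\setminus\mathcal{K}(\mathcal{I})$ lying outside the tyre region $\mathcal{I}$. The justification rests on the Proposition bounding $\|\tau_j^{\Delta_{k_0}^\mp}\|$: each residue attached to such a $k_j$ is $\mathcal{O}(e^{-4\mu t})$ uniformly on the cone $\mathcal{C}(x_1,x_2,v_1,v_2)$, so the pole-removal transformation (\ref{mk0}) replaces those residues by near-identity jumps whose size is controlled in (\ref{Vk}), and the resulting small-norm correction is absorbed through the bounded inverse (\ref{md1}).

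Next, to produce the explicit diagonal factors I would unwind the renormalization (\ref{mkd}), $m^{\Delta}(k|\mathcal{D})=m(k|\mathcal{S})\,a_{\Delta}(k)^{\sigma_3}$, together with the factorization of $\delta(k)$ furnished by Proposition \ref{p}. Splitting the Blaschke product $a_{\Delta_{k_0}^-}$ into the part carried by poles with $\operatorname{Re}k_j\in I_+\setminus\mathcal{I}$ isolates the factor $\prod_{\operatorname{Re}k_j\in I_+\setminus\mathcal{I}}\left(\frac{k-k_j}{k-\overline{k}_j}\right)^{-\sigma_3}$, while the scalar $\delta(k)$ contributes the remaining $\delta^{-\sigma_3}$; evaluating the norming constants at the surviving poles and absorbing the factors coming from $\delta(k_j)^2$ and the reduction over $I_+$ reproduces $\mathcal{D}^{(out)}=\{k_j,c_j(z_n)\}$, the exponential in $c_j(z_n)$ arising from the boundary values of the scalar $\delta$-function along $I_+$. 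This establishes (\ref{mout}).

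The bound (\ref{Mout}) is then immediate from (\ref{md1}): $M^{(out)}$ differs from $m(k|\mathcal{D}^{(out)})$ only by the diagonal factors appearing in (\ref{mout}), which are uniformly bounded above and below on $\mathbb{C}\setminus(\mathcal{K}\cup\overline{\mathcal{K}})$, so the uniform bound on $m^{-1}$ transfers directly. Finally, applying the reconstruction functional $2i\lim_{k\to\infty}k(\cdot)_{12}$ to (\ref{mout}) and citing the earlier corollary on $m_{sol}$ yields the soliton reconstruction formula and the estimate (\ref{uxout}). I expect the only genuine obstacle to lie in the third paragraph: one must verify that restricting $a_{\Delta}$ and $\delta$ to the poles inside $\mathcal{I}$ produces precisely $\delta^{-\sigma_3}$ and the stated constants $c_j(z_n)$, and that the discarded contributions from $\mathcal{K}\setminus\mathcal{K}(\mathcal{I})$ are genuinely uniform---rather than merely pointwise---$\mathcal{O}(e^{-4\mu t})$ throughout the cone.
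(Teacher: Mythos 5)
Your proposal is correct and follows essentially the same route as the paper, which treats this corollary as an immediate assembly of the preceding results: existence and uniqueness from the corollary giving $M^{(out)}(k)=m^{\Delta_{k_0}^-}(k|\widetilde{\mathcal{D}})$, pole-stripping with uniform $\mathcal{O}(e^{-4\mu t})$ error via the lemma built on the $\|\tau_j^{\Delta_{k_0}^{\mp}}\|$ estimates and the small-norm argument (\ref{mk0})--(\ref{Vk}), the factorization through (\ref{mkd}) and $\delta(k)$, the inverse bound from (\ref{md1}), and the reconstruction functional $2i\lim_{k\to\infty}k(\cdot)_{12}$. The bookkeeping you flag in your final paragraph (matching $\delta^{-\sigma_3}$ and the constants $c_j(z_n)$, and uniformity on the cone) is indeed the only substantive verification, and it is exactly what the paper's cited lemma and Proposition \ref{p} supply.
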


\section{A local solvable RH model near phase points}
\label{sec:section7}
\hspace*{\parindent}
From Proposition 4, we find that $V^{(2)}(k)-I$ in the neighborhood $U_{ k_0}$ of $z_n$, $n=1,2,3,4$ does not have a uniformly small jump for $t\rightarrow\infty$. Therefore, we establish a local model $M^{(in)}(k)$ which exactly matches the jumps of $M^{rhp}(k)$ on $ \Sigma^{(2)}\cap U_{k_0}$ for the function $E(k)$. Then it has a uniform estimate on the decay of the jump.

\noindent\textbf{ RH Problem 8.1.}  Find a matrix-valued function
$M^{fl}(k)$ such that

(a) \textit{Analyticity}: $M^{fl}(k)$ is analytical in $\mathbb{C}\backslash \Sigma^{fl}$, $\Sigma^{fl}=\cup_{n,j=1}^4 \Sigma_{nj}$;

(b) \textit{Asymptotic conditions}:
\begin{equation}
\begin{cases}
M^{fl}(k)=I+\mathcal{O}\left(k^{-1}\right), & k \rightarrow \infty,\\
M^{fl}(k)=e^{-\frac{i}{2} c_- \sigma_{3}}\left(I+kU+\mathcal{O}\left(k^{2}\right)\right) e^{\frac{i}{2} d_0\sigma_3},  & k \rightarrow 0;
\end{cases}
\end{equation}

(c) \textit{Jump condition}: $M^{fl}(k)$ has continuous boundary values $M_\pm^{fl}(k)$ on $\Sigma^{fl}$ and
\begin{equation}
M_{+}^{fl}(k)=M_-^{fl}(k) V^{fl}(k), \quad k \in \Sigma^{fl},
\end{equation}
where the jump matrix $V^{fl}(k)$ is given by
\begin{equation}V^{fl}(k)=\begin{cases}
\begin{pmatrix}
1 & 0 \\
r(z_n) \delta^{-2}(z_n) (k-z_n)^{-2i\nu(z_n)} e^{2it\theta(k)} & 1
\end{pmatrix}, & k \in \Sigma_{n1},\ n=1,2,3,4,\\[12pt]
\begin{pmatrix}
1 &\frac{\overline{ r(\bar z_n)} }{1+r(z_n)\overline{ r(\bar z_n)}}\delta(z_n)^2 (k-z_n)^{2i\nu(z_n) } e^{-2 i t \theta(k)}  \\
0 & 1
\end{pmatrix}, & k \in \Sigma_{n2},\ n=1,2,3,4,\\[12pt]
\begin{pmatrix}
1 & 0 \\
\frac{r(z_n)}{1+r(z_n)\overline{ r(\bar z_n)}}\delta(z_n)^{-2}(k-z_n)^{-2i\nu(z_n)} e^{2 i t \theta(k)} & 1
\end{pmatrix}, & k \in \Sigma_{n3},\ n=1,2,3,4,\\[15pt]
\begin{pmatrix}
1 & \overline{ r(\bar z_n)}\delta(z_n)^2(k-z_n)^{2i\nu(z_n) } e^{-2it\theta(k)}  \\
0 & 1
\end{pmatrix}, & k \in \Sigma_{n4},\ n=1,2,3,4.
\end{cases}
\end{equation}

For the soliton-free case when there are no discrete spectrum, the formula (\ref{t0zn})
reduces to $T_{0}\left(z_n\right)=\delta\left(z_n\right)$. We take $z_3$ as an example and other three stationary-phase points can be handled in the same way. Expanding $\theta(k)$, we obtain
\begin{equation}
\begin{aligned}
\theta(k)&=\frac{\alpha\beta^2}{4}(\frac{k^2}{k_0^4}+\frac{1}{k^2})-\alpha\beta\\
&=\frac{\alpha\beta^2}{2k_0^2}-\alpha\beta+\frac{\alpha\beta^2}{k_0^4}(k-z_3)^2+(\frac{\alpha\beta^2}{4k_0^3 }-\frac{3\alpha\beta^2}{4k_0^4 })(k-z_3)^3.
\end{aligned}
\end{equation}
We define the following scaling transformation
\begin{equation}
N: f(k) \rightarrow(N f)(k)=f(\frac{k_{0}^{2}}{2 \beta \sqrt{\alpha t}} \zeta+z_3),\label{naf}
\end{equation}
which acts on $\delta(k) e^{-i t \theta(k)} $ and gives
\begin{equation}
\left(N\delta e^{-it\theta}\right)(k)=\delta^{0}(\zeta)\delta^{1}(\zeta),
\end{equation}
where
\begin{equation}
\delta^0(\zeta)=\frac{z_3^{i\nu(z_1)-2i\nu(z_2)}}{(\sqrt{\alpha t} \beta)^{i\nu(z_1)}} 2^{-i\nu(z_2)} e^{i(\alpha\beta- \frac{\alpha\beta^2}{2 z_3^2}) t} e^{\chi_3(z_3)} e^{\chi_3^{\prime}(z_3)},
\end{equation}
and
\begin{equation}
\begin{aligned}
\delta^{1}(\zeta)&=\zeta^{i \nu} e^{-i\frac{\zeta^{2}}{4}}e^{-i\frac{k_0^3 \zeta^2}{32\sqrt{\alpha t}\beta}(\frac{1}{\zeta}-\frac{3}{k_0})} \frac{z_3^{2i\nu(z_2)+i \nu(z_3)}}{2^{i \nu(z_3)-i\nu(z_2)}} \frac{(\frac{k_0^2}{2 \sqrt{\alpha t} \beta} \zeta+2 z_3)^{i \nu}}{(\frac{k_0^2}{2 \sqrt{\alpha t}\beta} \zeta+z_3)^{2i\nu}} \\
& \times((\frac{k_0^2}{2 \sqrt{\alpha t} \beta} \zeta+z_3+z_2)(\frac{k_0^2}{2 \sqrt{\alpha t} \beta} \zeta+z_3+z_4))^{-i \widetilde{\nu}} \\
& \times e^{\chi_3(\frac{k_{0}^{2}}{2 \sqrt{at}\beta} \zeta+z_3)-\chi_3(z_3)} e^{\tilde{\chi}_3^{\prime}(\frac{k_{0}^{2}}{2 \sqrt{at}\beta} \zeta+z_3)-\tilde{\chi}_3^{\prime}(z_3)},
\end{aligned}
\end{equation}
for which
\begin{equation}
\chi_n^{\prime}(k)=e^{-\frac{1}{2 \pi i} \int_{0}^{z_n} \ln \left|k-k^{\prime}\right| d \ln (1-r(k^{\prime})\overline{r(\overline{k}^{\prime})})},\quad n=1,2,3,4.
\end{equation}
The result here is based on \cite{Lta}. Instead of using the Taylor expansion of $\theta(k)$, we explicitly write the coefficients of the power terms in the form of splitting. The benefits of this method will be made clear in our subsequent calculations.

\begin{proposition}
As $t \rightarrow \infty$, for $\zeta\in\{\zeta=u k_0 e^{\pm \frac{i \pi}{4}}, -\varepsilon<u<\varepsilon\}$, by observing $\delta^{1}(\zeta)$, we have the following conclusion
\begin{equation}
\delta^{1}(\zeta)\sim \zeta^{i \nu} e^{-i\frac{\zeta^{2}}{4}},
\end{equation}
by using the result of
\begin{equation}
\left|e^{-i\frac{k_0^3 \zeta^2}{32\sqrt{\alpha t}\beta}(\frac{1}{\zeta}-\frac{3}{k_0})}\right|\rightarrow1, \quad as \ t\rightarrow \infty.
\end{equation}
\end{proposition}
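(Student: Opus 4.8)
The plan is to read off $\delta^1(\zeta)$ as the product of the target factor $\zeta^{i\nu}e^{-i\zeta^2/4}$ with a finite string of correction factors, and to show that, under the scaling $k=\frac{k_0^2}{2\sqrt{\alpha t}\beta}\zeta+z_3$, each correction factor tends to $1$ (or to an explicit $\zeta$-independent constant) as $t\to\infty$, uniformly for $\zeta$ on the two rays $\{\zeta=uk_0e^{\pm i\pi/4}:-\varepsilon<u<\varepsilon\}$. The single observation used everywhere is that the shift $\frac{k_0^2}{2\sqrt{\alpha t}\beta}\zeta$ is $\mathcal{O}(t^{-1/2})$ uniformly on this bounded $\zeta$-range, so every factor depending on $k$ only through the shift may be expanded about its value at $z_3$.

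First I would account for the phase. The factor $e^{-i\zeta^2/4}$ is exactly the scaled quadratic part of $e^{-it\theta(k)}$ (the constant part $e^{i(\alpha\beta-\alpha\beta^2/2z_3^2)t}$ and the $\zeta$-independent algebraic prefactors having been collected into $\delta^0$), while $\zeta^{i\nu}$ is the scaled form of the factor $(k-z_3)^{i\nu(z_3)}$ coming from the $n=3$ term of $\delta(k)$. The only remaining phase correction is $e^{-i\frac{k_0^3\zeta^2}{32\sqrt{\alpha t}\beta}(1/\zeta-3/k_0)}$, the contribution of the cubic term of $\theta$. Here I would invoke the estimate displayed in the statement: since $\zeta^2(1/\zeta-3/k_0)=\zeta-3\zeta^2/k_0$ stays bounded for bounded $\zeta$ — so no singularity develops at $\zeta=0$ — the exponent is $\mathcal{O}(t^{-1/2})$ and the factor tends to $1$.

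Next I would dispose of the algebraic and Cauchy-type factors. The ratio $\frac{(\frac{k_0^2}{2\sqrt{\alpha t}\beta}\zeta+2z_3)^{i\nu}}{(\frac{k_0^2}{2\sqrt{\alpha t}\beta}\zeta+z_3)^{2i\nu}}$ and the product $((\frac{k_0^2}{2\sqrt{\alpha t}\beta}\zeta+z_3+z_2)(\frac{k_0^2}{2\sqrt{\alpha t}\beta}\zeta+z_3+z_4))^{-i\widetilde\nu}$ converge to their values at shift $0$, namely $(2z_3)^{i\nu}z_3^{-2i\nu}$ and $((z_3+z_2)(z_3+z_4))^{-i\widetilde\nu}=(2k_0^2)^{-i\widetilde\nu}$, because the base points $2z_3,z_3,z_3\pm z_2$ are nonzero and stay off the principal branch cut, so $w\mapsto w^{i\nu}$ is continuous there. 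The two exponentials $e^{\chi_3(\cdot)-\chi_3(z_3)}$ and $e^{\widetilde\chi_3^{\prime}(\cdot)-\widetilde\chi_3^{\prime}(z_3)}$ tend to $1$ by continuity of $\chi_3$ and $\widetilde\chi_3^{\prime}$, a consequence of $r\in H^{1,1}(\Sigma)$ (Theorem \ref{111}) and the resulting H\"older regularity of the integrands defining $\chi_n,\chi_n^{\prime}$; the estimate (\ref{tk0}) of Proposition \ref{p} supplies the rate.

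Finally I would collect the surviving $\zeta$-independent constants — the explicit prefactor $\frac{z_3^{2i\nu(z_2)+i\nu(z_3)}}{2^{i\nu(z_3)-i\nu(z_2)}}$ together with the three limits above — and simplify using the symmetries $\nu(z_1)=\nu(z_3)=:\nu$ and $\nu(z_2)=\nu(z_4)=:\widetilde\nu$ (both real, following from the symmetries of $r$ in Proposition \ref{prop2} and the reduction $r(-k)=-r(k)$ used in Proposition \ref{ppl}) together with the explicit data $z_3=-k_0$, $(z_3+z_2)(z_3+z_4)=2k_0^2$. The main obstacle is precisely this constant bookkeeping: one must carry the principal branch of each power $w^{i\nu}$ through the negative real base $z_3=-k_0$, where $\ln(-k_0)=\ln k_0+i\pi$ produces factors $e^{-\pi\nu}$, and then verify that all powers of $2$, of $k_0$ and of $z_3$ cancel so that the total constant reduces to $1$ (any residual bounded, modulus-type constant being reassigned to $\delta^0$). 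Once this cancellation is checked, the factorization collapses to $\delta^1(\zeta)\sim\zeta^{i\nu}e^{-i\zeta^2/4}$ uniformly on the stated $\zeta$-range, and the remaining stationary-phase points $z_1,z_2,z_4$ are treated in the same way.
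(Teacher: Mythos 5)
Your proposal is correct and its decisive step coincides exactly with the paper's proof: the paper likewise parametrizes $\zeta=u k_0 e^{\pm i\pi/4}$, $|u|<\varepsilon$, computes the exponent of the cubic-correction factor $e^{-i\frac{k_0^3\zeta^2}{32\sqrt{\alpha t}\beta}(\frac{1}{\zeta}-\frac{3}{k_0})}$ explicitly in $u$, and bounds its modulus by $e^{\frac{k_0^4}{32\sqrt{\alpha t}\beta}(\frac{\sqrt{2}}{2}\varepsilon+3\varepsilon^2)}\rightarrow 1$, i.e.\ your observation that $\zeta-3\zeta^2/k_0$ is bounded so the exponent is $\mathcal{O}(t^{-1/2})$ uniformly on the rays. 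Your additional treatment of the remaining factors (continuity of the shifted algebraic powers and of $\chi_3,\tilde{\chi}_3^{\prime}$ under the $\mathcal{O}(t^{-1/2})$ shift, and the constant bookkeeping at the branch point $z_3=-k_0$) goes beyond the paper's proof, which handles only the cubic factor and leaves those routine limits implicit; just note that your asserted cancellation of the residual constant to $1$ is not actually carried out and, with principal branches at the negative real base $z_3=-k_0$, a bounded real factor of the form $e^{c\pi\nu}$ survives, which is harmless only because the loose ``$\sim$'' in the statement (and the later matching to the parabolic cylinder model) absorbs it.
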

\begin{proof}  For $\zeta=u k_0 e^{\pm \frac{i \pi}{4}}$, the index part becomes
\begin{equation}
  -i\frac{k_0^3 \zeta^2}{32\sqrt{\alpha t}\beta}(\frac{1}{\zeta}-\frac{3}{k_0}) = \frac{-i\sqrt{2}k_0^4}{64\sqrt{\alpha t}\beta}u+\frac{k_0^4}{32\sqrt{\alpha t}\beta}(\frac{\sqrt{2}}{2}u-3u^2),
\end{equation}
so we have
\begin{equation}
\begin{aligned}
&\left|e^{-i\frac{k_0^3 \zeta^2}{32\sqrt{\alpha t}\beta}(\frac{1}{\zeta}-\frac{3}{k_0})}\right|=e^{\text{Re}\left\{-i\frac{k_0^3 \zeta^2}{32\sqrt{\alpha t}\beta}(\frac{1}{k}-\frac{3}{k_0})\right\}} =e^{\frac{k_0^4}{32\sqrt{\alpha t}\beta}(\frac{\sqrt{2}}{2}u-3u^2)}\\
&\leq e^{\frac{k_0^4}{32\sqrt{\alpha t}\beta}(\frac{\sqrt{2}}{2}|u|+3|u|^2)} \leq e^{\frac{k_0^4}{32\sqrt{\alpha t}\beta}(\frac{\sqrt{2}}{2}\varepsilon+3\varepsilon^2)}\rightarrow 1, \ as\ t\rightarrow \infty.
\end{aligned}
\end{equation}
by which the effects of the third power can be ignored.
\end{proof}

To solve this problem, we need to begin from the PC-model of four stationary-phase points and their jump lines, as illustrated in Figure \ref{4tyx}.

\begin{figure}[H]
 \centering
  \begin{tikzpicture}[node distance=2cm]
  \draw [dashed] (-3,0)--(-1,0);
  \draw [dashed] (-1,2)--(1,2);
  \draw [dashed] (1,0)--(3,0);
  \draw [dashed] (-1,-2)--(1,-2);
  \draw[-latex](0,2)--(-0.5,2.5){};
  \draw[-latex](0,2)--(0.5,2.5){};
  \draw[-latex](0,2)--(-0.5,1.5){};
  \draw[-latex](0,2)--(0.5,1.5){};

  \draw[-latex](0,-2)--(0.5,-2.5){};
  \draw[-latex](0,-2)--(-0.5,-2.5){};
  \draw[-latex](0,-2)--(-0.5,-1.5){};
  \draw[-latex](0,-2)--(0.5,-1.5){};

  \draw[](-2,0)--(-2.5,0.5){};
  \draw[](-2,0)--(-2.5,-0.5){};
  \draw[](-2,0)--(-1.5,0.5){};
  \draw[](-2,0)--(-1.5,-0.5){};

  \draw[-latex](-2.5,0.5)--(-2.25,0.25){};
  \draw[-latex](-2.5,-0.5)--(-2.25,-0.25){};
  \draw[-latex](-1.5,0.5)--(-1.75,0.25){};
  \draw[-latex](-1.5,-0.5)--(-1.75,-0.25){};

  \draw[](2,0)--(1.5,0.5){};
  \draw[](2,0)--(2.5,0.5){};
  \draw[](2,0)--(1.5,-0.5){};
  \draw[](2,0)--(2.5,-0.5){};

  \draw[-latex](2.5,0.5)--(2.25,0.25){};
  \draw[-latex](2.5,-0.5)--(2.25,-0.25){};
  \draw[-latex](1.5,0.5)--(1.75,0.25){};
  \draw[-latex](1.5,-0.5)--(1.75,-0.25){};

 \node  [below]  at (2,0) {$\Sigma_{z_1}$};
 \node  [below]  at (-2,0) {$\Sigma_{z_3}$};
 \node  [below]  at (0,2) {$\Sigma_{z_2}$};
 \node  [below]  at (0,-2) {$\Sigma_{z_4}$};
 \draw[fill] (2,0) circle [radius=0.03];
\draw[fill] (-2,0) circle [radius=0.03];
\draw[fill] (0,2) circle [radius=0.03];
\draw[fill] (0,-2) circle [radius=0.03];
  \end{tikzpicture}
\caption{The jump contour for the local RH problem near the phase points $z_n$, $n=1,2,3,4$.}
\label{4tyx}
\end{figure}
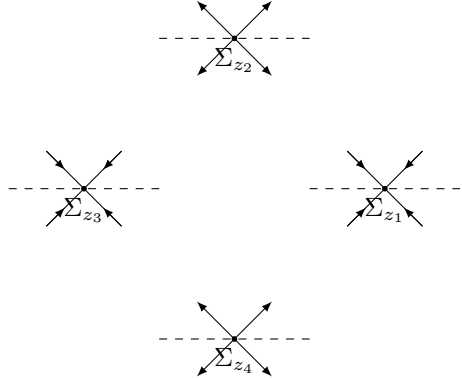

We set
\begin{equation}
r_0=r(z_3)\delta(z_3)^{-2} e^{i\nu(z_3)(\ln k_0^4-\ln 4\alpha\beta^2t)} e^{2it(\frac{\alpha\beta^2}{2k_0^2}-\alpha\beta)}.
\end{equation}
Moreover, with the notation in Proposition \ref{po3}, we have
\begin{equation}
M_{z_3}^{pc}(\zeta)=I+\frac{M_{1}^{pc}(z_3) }{i \zeta}+\mathcal{O}(\zeta^{-2}),
\end{equation}
where
\begin{equation}
M_{1}^{pc}(z_3) =\begin{pmatrix}
0 & \beta_{12}(r_{z_3}) \\
\beta_{21}(r_{z_3})  & 0
\end{pmatrix}.
\end{equation}
with
\begin{align}
&\beta_{12}(r_{z_3})=\displaystyle{\frac{ \sqrt{2 \pi} e^{i \pi / 4} e^{-\pi \nu(z_3) / 2}}{r_{z_3}\Gamma(-i \nu(z_3))} }. \\
&\beta_{21}(r_{z_3})=-\displaystyle{\frac{\sqrt{2 \pi} e^{-i \pi / 4} e^{-\pi \nu(z_3) / 2}}{\overline{r}_{z_3} \Gamma(i \nu(z_3))}}.
\end{align}
 For another three points, the calculation is proceeded in the same way.
 \begin{equation}
M_{z_n}^{pc}(\zeta)=I+\frac{M_{1}^{pc}(z_n) }{ i\zeta}+\mathcal{O}(\zeta^{-2}),\quad \zeta\rightarrow\infty,\quad n=1,2,4.
\end{equation}

 We note that in the model RH problem, the origin is the reference point from which the rays emanate.
In the following sections, we fix the notation $\zeta$ for convenience. Since $M^{fl}$ satisfies the asymptotic property
\begin{equation}
\begin{aligned}
M^{fl}&= I+ \frac{1}{i\zeta} \sum_{n=1}^4 M^{pc}_{1}(z_n) +\mathcal{O}(\zeta^{-2}),\quad \zeta\rightarrow\infty,\label{MFL}
\end{aligned}
\end{equation}
Substituting (\ref{naf}) into (\ref{MFL}), it becomes
\begin{equation}
M^{fl}=I+\frac{k_0^2}{2\beta\sqrt{\alpha t}} \sum_{n=1}^4 \frac{M^{pc}_{1}(z_n)}{k-z_n}+\mathcal{O}(\zeta^{-2}),\quad \zeta\rightarrow\infty.
\end{equation}
In the local circular domain of $z_n$
\begin{equation}
|\frac{1}{k-z_n}|\leq c,
\end{equation}
where $c$ is independent of $k$. The other three points can be controlled in the same way. We can also reach a consistent conclusion that
\begin{equation}
|M^{fl}-I|\lesssim \mathcal{O}(t^{-1/2}).\label{mflgj}
\end{equation}
Besides, it is shown that
\begin{equation}
\left\|M^{fl}(\zeta)\right\|_{\infty} \lesssim 1,
\end{equation}
We use $M^{fl}(\zeta)$ to define a local model
\begin{equation}
M^{(in)}(k)=M^{(out)}(k) M^{fl}(\zeta),\label{min}
\end{equation}
which is a bounded function in $U_{k_{0}}$ and has the same jump matrix as $M^{rhp}(k)$.

\section{Small-norm RH problem for error function}
\label{sec:section8}
\hspace*{\parindent}
From the definition (\ref{mrhp}) and (\ref{min}), we find a RH problem for the matrix function $E(k)$. In this section, we consider the error matrix-function $E(k)$.

\noindent\textbf{ RH Problem 9.1.} Find a matrix-valued function $E(k)$ with following properties:

(a) \textit{Analyticity}: $E(k)$ is analytical in $\mathbb{C} \backslash\Sigma^{(E)}$ and
$$
\Sigma^{(E)}=\cup_{n=1}^4\partial U_{z_n} \cup (\Sigma^{(2)} \backslash  U_{k_0}),
$$
where we orient $\partial U_{z_n}$ clockwise;

(b) \textit{Symmetry}:
\begin{equation}
\overline{E(\overline{k})}=\sigma_2 E(k) \sigma_2^{-1};
\end{equation}

(c) \textit{Asymptotic conditions}:
\begin{equation}
\begin{cases}
E(k)=I+\mathcal{O}\left(k^{-1}\right), & k \rightarrow \infty,\\
E(k)=e^{-\frac{i}{2} c_- \sigma_{3}}\left(I+kU+\mathcal{O}\left(k^{2}\right)\right) e^{\frac{i}{2} d_0\sigma_3},  & k \rightarrow 0;
\end{cases}
\end{equation}

(d) \textit{Jump condition}: $E(k)$ has continuous boundary values $E_{\pm}$ on $\Sigma^{(E)}$ satisfying
\begin{equation}
E_{+}(k)=E_{-}(k) V^{(E)},
\end{equation}
where the jump matrix $V^{(E)}$ is given by
\begin{equation}
V^{(E)}(k)=\begin{cases}
M^{(o u t)}(k) V^{(2)}(k) M^{(o u t)}(k)^{-1}, & k \in \Sigma^{(2)} \backslash U_{  k_0}, \\
M^{(o u t)}(k) M^{fl}(k) M^{(o u t)}(k)^{-1}, & k \in \cup_{n=1}^4\partial U_{z_n},
\end{cases}\label{vek1}
\end{equation}
which is shown in Figure  \ref{fig10}.

\begin{figure}[H]
\begin{center}
\begin{tikzpicture}[node distance=2cm]
\draw (-2,4)--(4,-2)  node[right, scale=1] {};
\draw (2,4)--(-4,-2)  node[above, scale=1] {};
\draw (-4,2)--(2,-4)  node[above, scale=1] {};
\draw (-2,-4)--(4,2)  node[above, scale=1] {};
\draw (0,0)--(1,-1)  node[right, scale=1] {};
\draw (0,0)--(-1,-1)  node[above, scale=1] {};
\draw (0,0)--(1,1)  node[above, scale=1] {};
\draw (0,0)--(-1,1)  node[above, scale=1] {};
\draw [-latex] (0,0) -- (0.7,0.7);
\draw [-latex] (0,0) -- (0.7,-0.7);
\draw [-latex] (0,0) -- (-0.7,0.7);
\draw [-latex] (0,0) -- (-0.7,-0.7);
\draw [-latex] (0,2) -- (0.5,1.5);
\draw [-latex] (0,2) -- (-0.5,1.5);
\draw [-latex] (1,1) -- (1.5,0.5);
\draw [-latex] (-1,1) -- (-1.5,0.5);
\draw [-latex] (0,-2) -- (0.5,-1.5);
\draw [-latex] (0,-2) -- (-0.5,-1.5);
\draw [-latex] (1,-1) -- (1.5,-0.5);
\draw [-latex] (-1,-1) -- (-1.5,-0.5);
\draw [-latex] (4,2) -- (3,1);
\draw [-latex] (4,-2) -- (3,-1);
\draw [-latex] (-4,2) -- (-3,1);
\draw [-latex] (-4,-2) -- (-3,-1);
\draw [-latex] (0,2) -- (1,3);
\draw [-latex] (0,2) -- (-1,3);
\draw [-latex] (0,-2) -- (1,-3);
\draw [-latex] (0,-2) -- (-1,-3);
\draw [fill=pink,ultra thick,white] (2,0) circle (0.5cm);
\draw [fill=pink,ultra thick,white] (-2,0) circle (0.5cm);
\draw [fill=pink,ultra thick,white] (0,2) circle (0.5cm);
\draw [fill=pink,ultra thick,white] (0,-2) circle (0.5cm);
\draw [dashed](-4,0)--(4,0)  node[right, scale=1] {Rek};
\draw [dashed](0,-4)--(0,4)  node[above, scale=1] {Imk};
\node  [right]  at (0,2) {$z_2$};
\node  [right]  at (0,-2) {$z_4$};
\node  [below]  at (2,0) {$z_1$};
\node  [below]  at (-2,0) {$z_3$};
\node  [above]  at (3,1.2) {$\Sigma_{11}$};
\node  [below]  at (3,-1.2) {$\Sigma_{14}$};
\node  [above]  at (1.5,0.6) {$\Sigma_{12}$};
\node  [below]  at (1.5,-0.6) {$\Sigma_{13}$};
\node  [below]  at (0.9,-1.2) {$\Sigma_{43}$};
\node  [below]  at (-0.9,-1.2) {$\Sigma_{42}$};
\node  [right]  at (0.7,1.4) {$\Sigma_{22}$};
\node  [left]  at (-0.7,1.4) {$\Sigma_{23}$};
\node  [below]  at (0.8,2.7) {$\Sigma_{21}$};
\node  [below]  at (-0.8,2.7) {$\Sigma_{24}$};
\node  [above]  at (-1.5,0.6) {$\Sigma_{33}$};
\node  [below]  at (-1.5,-0.6) {$\Sigma_{32}$};
\node  [above]  at (-3,1.2) {$\Sigma_{34}$};
\node  [below]  at (-3,-1.2) {$\Sigma_{31}$};
\node  [right]  at (0.5,0.5) {$\Sigma_{01}$};
\node  [left]  at (-0.5,0.5) {$\Sigma_{02}$};
\node  [left]  at (-0.5,-0.5) {$\Sigma_{03}$};
\node  [right]  at (0.5,-0.5) {$\Sigma_{04}$};
\node  [above]  at (1.3,-3) {$\Sigma_{44}$};
\node  [above]  at (-1.3,-3) {$\Sigma_{41}$};
\draw[fill] (2,0) circle [radius=0.04];
\draw[fill] (-2,0) circle [radius=0.04];
\draw[fill] (0,2) circle [radius=0.04];
\draw[fill] (0,-2) circle [radius=0.04];
\draw[thick,red](0,2)circle(0.5cm);
\draw [thick,red,  -> ]  (0.5,2) to  [out=90, in=0]  (0, 2.5);
\draw[thick,red](0,-2)circle(0.5cm);
\draw [thick,red,  -> ]  (0.5,-2) to  [out=90, in=0]  (0, -1.5);
\draw[thick,red](2,0)circle(0.5cm);
\draw [thick,red,  -> ]  (2.5,0) to  [out=90, in=0]  (2, 0.5);
\draw[thick,red](-2,0)circle(0.5cm);
\draw [thick,red,  -> ]  (-1.5,0) to  [out=90, in=0]  (-2, 0.5);
\end {tikzpicture}
\caption{The jump contour $\Sigma^{(E)}$ for the $E(k)$.}
\label{fig10}
\end{center}
\end{figure}
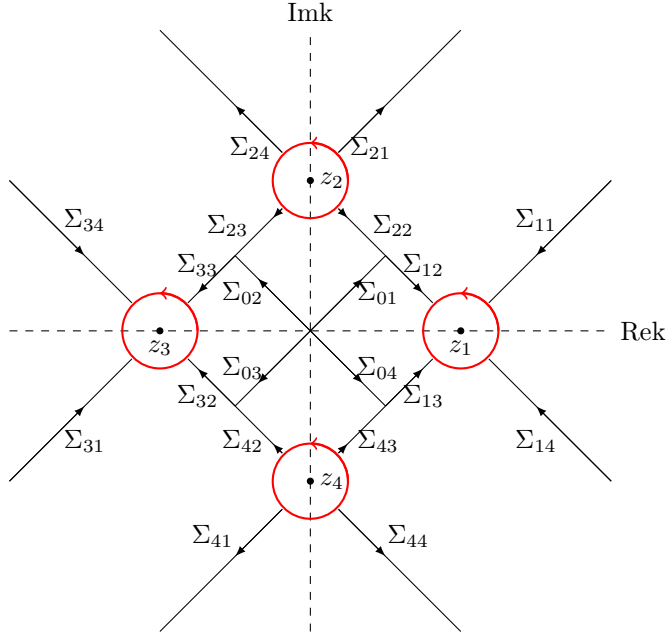

We will show that for large times, the error function $E(k)$ solves the following small-norm RH problem.

By using (\ref{Mout}) and Proposition 2, we have the following estimates
\begin{equation}
\left|V^{(E)}(k)-I\right| \lesssim\begin{cases}
\mathcal{O}\left(e^{-\frac{\alpha\beta^2}{6k_0^3}|k-z_n|^3t}\right) , & k \in \Sigma^{(2)} \backslash U_{k_0}, \\[8pt]
\mathcal{O}\left(e^{-\frac{3\alpha\beta^2}{8k_0^2}t}\right), & k \in \Sigma_{0}^{(2)}.
\end{cases}\label{VE}
\end{equation}

By using  (\ref{Mout}) with (\ref{mflgj}), we show that
\begin{proposition}
For $k \in \cup_{n=1}^4\partial U_{z_n}$,    we find that
\begin{equation}
\left|V^{(E)}(k)-I\right|=\left|M^{(o u t)}(k)^{-1}(M^{fl}(k)-I) M^{(o u t)}(k)\right|=\mathcal{O}(t^{-1 / 2}).\label{VEI}
\end{equation}
\end{proposition}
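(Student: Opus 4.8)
The plan is to read the bound off directly from the multiplicative structure of the jump on the circles, reducing the estimate to two facts already in hand: the uniform boundedness of the outer parametrix together with its inverse, and the $t^{-1/2}$ decay of $M^{fl}(k)-I$ on $\partial U_{z_n}$. On $k\in\cup_{n=1}^4\partial U_{z_n}$ the jump $V^{(E)}(k)$ from (\ref{vek1}) is a conjugation of $M^{fl}(k)$ by the outer model, so
\[
V^{(E)}(k)-I=M^{(out)}(k)^{-1}\left(M^{fl}(k)-I\right)M^{(out)}(k),
\]
and submultiplicativity of the matrix norm yields
\[
\left|V^{(E)}(k)-I\right|\leq \left|M^{(out)}(k)^{-1}\right|\,\left|M^{fl}(k)-I\right|\,\left|M^{(out)}(k)\right|.
\]

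First I would record the middle factor. By (\ref{mflgj}) we have $|M^{fl}(k)-I|\lesssim t^{-1/2}$; more precisely, since each circle $\partial U_{z_n}$ has the fixed radius $\varepsilon=\min\{k_0/2,\rho/3\}$, the quantity $|k-z_n|$ is bounded below on it, so the leading term $\frac{k_0^2}{2\beta\sqrt{\alpha t}}\sum_{n=1}^4\frac{M_1^{pc}(z_n)}{k-z_n}$ is uniformly $\mathcal{O}(t^{-1/2})$ and the remainder $\mathcal{O}(\zeta^{-2})=\mathcal{O}(t^{-1})$ is of lower order. For the conjugating factors I would invoke the bound (\ref{Mout}), $\|M^{(out)}(k)^{-1}\|_{L^\infty}\lesssim 1$; because $\det M^{(out)}(k)\equiv 1$, the cofactor identity (equivalently $M^{(out)}(k)^{-1}=\sigma_2 M^{(out)}(k)^{T}\sigma_2$, which preserves the norm) transfers this bound to $M^{(out)}(k)$ itself. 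Multiplying the three $\mathcal{O}(1)$, $\mathcal{O}(t^{-1/2})$, $\mathcal{O}(1)$ bounds then gives (\ref{VEI}).

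The single point that requires care, and which I regard as the main (if mild) obstacle, is guaranteeing that the $\mathcal{O}(1)$ control of $M^{(out)}(k)$ and $M^{(out)}(k)^{-1}$ is genuinely uniform on $\partial U_{z_n}$, that is, that these circles stay clear of the discrete spectrum where $M^{(out)}$ carries its poles. This is precisely why the radius is taken to satisfy $\varepsilon\leq\rho/3$: with $\rho=\tfrac12\min_{\lambda,\mu\in\mathcal{K}\cup\overline{\mathcal{K}}}|\lambda-\mu|\leq\mathrm{dist}(\mathcal{K}\cup\overline{\mathcal{K}},\Sigma)$, every $U_{z_n}$ is disjoint from $\mathcal{K}\cup\overline{\mathcal{K}}$, so $M^{(out)}$ is analytic and bounded on each $\partial U_{z_n}$ and the bound from (\ref{Mout}) applies there. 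I would finally remark that the implied constants depend only on the fixed cone $\mathcal{C}(x_1,x_2,v_1,v_2)$ through the controlled range of $k_0$ and the norming data, so the estimate holds uniformly for $(x,t)$ in the cone as $t\to\infty$, completing the proof.
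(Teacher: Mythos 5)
Your proposal is correct and follows essentially the same route as the paper: the paper likewise obtains (\ref{VEI}) directly by combining the conjugated form of $V^{(E)}$ on $\cup_{n=1}^4\partial U_{z_n}$ from (\ref{vek1}) with the uniform bound (\ref{Mout}) on the outer model and the estimate (\ref{mflgj}) giving $|M^{fl}(k)-I|\lesssim t^{-1/2}$. Your added remarks — transferring the bound to $M^{(out)}(k)$ via $\det M^{(out)}\equiv 1$ and noting that $\varepsilon\leq\rho/3$ keeps the circles away from the poles in $\mathcal{K}\cup\overline{\mathcal{K}}$ — only make explicit what the paper leaves implicit.
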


By using the Beal-Cofiman theory, the solution of  RH Problem 9.1 can be expressed as
\begin{equation}
E(k)=I+\frac{1}{2\pi i} \int_{\Sigma^{(E)}} \frac{  \rho(s) (V^{(E)}(s)-I)}{s-k} d s,\label{Ek}
\end{equation}
where $\rho(s) \in L^2(\Sigma^{(E)})$  is the  solution of the following equation
\begin{equation}
\left(1-C_{E}\right) \rho(s) =I. \label{ce1}
\end{equation}
The singular integral  operator $C_{E}$ is defined by
\begin{align}
&  C_{E}f= C_-(f(V^{(E)}-I)), \label{nlscauchy65}
\end{align}
and $C_-$ is the Cauchy projection operator  defined as
\begin{equation}\label{eq:5.23}
C_{-}f(k)=\operatorname*{lim}\limits_{k'\to k\in\Sigma^{(E)}}  \frac{1}{2\pi i}\int_{\Sigma^{(E)}}\frac{f(s)}{s-k'}ds.\nonumber
\end{equation}

By using estimates (\ref{VE}), (\ref{VEI}) and  (\ref{nlscauchy65}), we show that
\begin{align}
&\|C_{E}\|_{L^{2}(\Sigma^{(E)})\rightarrow L^{2}(\Sigma^{(E)})} \lesssim  t^{-1 / 2},\nonumber\\
& \|\rho(k) -I\|_{L^{2}\left(\Sigma^{(E)}\right)} \lesssim \frac{\left\|C_{E}\right\|}{1-\left\|C_{E}\right\|} \lesssim t^{-1 / 2},\label{ce5}
\end{align}
which implies that  the operator equation (\ref{ce1}) has a unique solution for a sufficiently large $t$.
Then the existence and the uniqueness of the  RH Problem 9.1  is shown by the theorem of a small-norm RH problem.

To reconstruct the solution
$u_x(x,t)$ of the FL  equation (\ref{cs2}), we need the asymptotic behavior of $E(k)$ as $k\rightarrow\infty$.
\begin{proposition}
As $k\rightarrow\infty$,  we have the asymptotic expansion
\begin{equation}
E(k)=I+\frac{E_1}{k}+\mathcal{O}\left(\frac{1}{k^2}\right),\label{Ek}
\end{equation}
where
\begin{equation}
\begin{aligned}
E_1=\frac{k_0^2}{2i\beta \sqrt{\alpha t}}\sum_{n=1}^4(-1)^{n+1}M^{(out)}(z_n)M^{pc}_1(z_n)M^{(out)}(z_n)^{-1}+\mathcal{O}(t^{-1}).
\end{aligned}
\end{equation}
\end{proposition}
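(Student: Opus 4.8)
The plan is to extract $E_1$ from the Beals--Coifman representation (\ref{Ek}) of $E(k)$ and then localize the resulting contour integral to the four circles $\partial U_{z_n}$. First I would expand the Cauchy kernel in (\ref{Ek}) for large $k$,
\[
\frac{1}{s-k}=-\frac{1}{k}-\frac{s}{k^{2}}-\cdots ,
\]
which identifies the coefficient
\[
E_1=-\frac{1}{2\pi i}\int_{\Sigma^{(E)}}\rho(s)\bigl(V^{(E)}(s)-I\bigr)\,ds .
\]
Since $\|\rho-I\|_{L^{2}(\Sigma^{(E)})}\lesssim t^{-1/2}$ by (\ref{ce5}) and $V^{(E)}-I$ is itself $\mathcal{O}(t^{-1/2})$, replacing $\rho$ by $I$ costs only $\mathcal{O}(t^{-1})$, so it suffices to evaluate $-\frac{1}{2\pi i}\int_{\Sigma^{(E)}}(V^{(E)}(s)-I)\,ds$.

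Next I would split $\Sigma^{(E)}=(\Sigma^{(2)}\setminus U_{k_0})\cup\bigcup_{n=1}^{4}\partial U_{z_n}$. On $\Sigma^{(2)}\setminus U_{k_0}$ the bounds (\ref{VE}) show $V^{(E)}-I$ is exponentially small, contributing $\mathcal{O}(e^{-ct})$. On each circle $\partial U_{z_n}$ I would undo the scaling (\ref{naf}) in the parabolic-cylinder asymptotics $M_{z_n}^{pc}(\zeta)=I+M_1^{pc}(z_n)/(i\zeta)+\mathcal{O}(\zeta^{-2})$ and conjugate by $M^{(out)}$ via (\ref{vek1}) to obtain
\[
V^{(E)}(k)-I=\frac{k_0^{2}}{2i\beta\sqrt{\alpha t}}\,(-1)^{n+1}\,\frac{M^{(out)}(k)M_1^{pc}(z_n)M^{(out)}(k)^{-1}}{k-z_n}+\mathcal{O}(t^{-1}),
\]
the alternating factor $(-1)^{n+1}$ being inherited from the sign of the local scaling variable $\zeta_n\propto(-1)^{n+1}(k-z_n)$, which in turn tracks the signature of $\mathrm{Re}(i\theta)$ in (\ref{Re}) at the successive stationary points.

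The integral over each circle is then a residue computation. Because $M^{(out)}(k)$ is analytic inside $U_{z_n}$ --- the radius $\varepsilon\le\rho/3$ excludes every point of $\mathcal{K}\cup\overline{\mathcal{K}}$ --- the integrand's only singularity is the simple pole at $k=z_n$, and since $\partial U_{z_n}$ is oriented clockwise the residue theorem gives
\[
-\frac{1}{2\pi i}\oint_{\partial U_{z_n}}\frac{M^{(out)}(s)M_1^{pc}(z_n)M^{(out)}(s)^{-1}}{s-z_n}\,ds=M^{(out)}(z_n)M_1^{pc}(z_n)M^{(out)}(z_n)^{-1}.
\]
Summing the four contributions together with the prefactor $\frac{k_0^{2}}{2i\beta\sqrt{\alpha t}}(-1)^{n+1}$ reproduces the asserted expression for $E_1$ modulo $\mathcal{O}(t^{-1})$.

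I expect the main obstacle to be bookkeeping rather than analysis: one must write the scaling (\ref{naf}) explicitly at $z_1,z_2,z_4$ as was done for $z_3$ and verify that it produces exactly the sign $(-1)^{n+1}$, and one must check that the three discarded errors --- the $\rho-I$ correction, the $\mathcal{O}(\zeta^{-2})$ tail of $M^{fl}$, and the cubic-and-higher terms of $\theta$ whose exponential is controlled as $t\to\infty$ (cf.\ the estimate giving $|M^{fl}-I|\lesssim t^{-1/2}$ in (\ref{mflgj})) --- all genuinely enter at order $t^{-1}$, so that the displayed $t^{-1/2}$ term is the true leading order.
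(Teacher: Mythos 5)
Your proposal is correct and takes essentially the same route as the paper: the paper likewise extracts $E_1$ from the Beals--Coifman representation, replaces $\rho$ by $I$ at cost $\mathcal{O}(t^{-1})$ via the bound (\ref{ce5}) together with the $\mathcal{O}(t^{-1/2})$ size of $V^{(E)}-I$, discards the exponentially small contribution of $\Sigma^{(2)}\setminus U_{k_0}$ by (\ref{VE}), and reduces the clockwise integrals over $\partial U_{z_n}$ to residues at $z_n$ using (\ref{VEI}), (\ref{vek1}) and the large-$\zeta$ expansion of $M^{fl}$. Your write-up is in fact more explicit than the paper's two-line computation; in particular the $(-1)^{n+1}$ sign bookkeeping from the orientation of the local scaling variable at each $z_n$, and the observation that the $\mathcal{O}(\zeta^{-2})$ tail is $\mathcal{O}(t^{-1})$ on circles of fixed radius, are details the paper passes over silently.
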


\begin{proof}

Combining (\ref{VE}), (\ref{VEI})  and (\ref{ce5}),  we have
\begin{equation}
\begin{aligned}
E_{1}&=-\frac{1}{2 \pi i} \oint_{\sum_{i=1}^4\partial U_{z_n}}\left(V^{(E)}(s)-I\right) ds+\mathcal{O}\left(t^{-1}\right)\\
&=\frac{k_0^2}{2i\beta\sqrt{\alpha t}}\sum_{n=1}^4(-1)^{n+1}M^{(out)}(z_n)M^{pc}_1(z_n)
M^{(out)}(z_n)^{-1}+\mathcal{O}\left(t^{-1}\right).
\end{aligned}
\end{equation}
Consider
\begin{equation}
2 i\left(E_{1}\right)_{12}=t^{-1 / 2} f(x, t)+\mathcal{O}(t^{-1}),\label{2ie}
\end{equation}
where
\begin{equation}
f(x, t)=\frac{k_0^2}{\beta\sqrt{\alpha}}\sum_{n=1}^4(-1)^{n+1}\left(\beta_{12}(r_{z_n}) M_{11}^{(out)}(z_n)^{2}+\beta_{21}(r_{z_n}) M_{12}^{(out)}(z_n)^2\right).\label{fxt}
\end{equation}
\end{proof}

\section{Asymptotic analysis on  the pure $\overline{\partial}$-problem}
\label{sec:section9}
\hspace*{\parindent}
Under the discussion of $M^{rhp} (k)$ in Section \ref{sec:section5}  and  \ref{sec:section6}, the part with $\overline{\partial}R^{(2)}(k)=0$ can be eliminated based on (\ref{m3}),
after which we consider the properties and the long-time asymptotics behavior of $M^{(3)} (k)$. The solution of  RH Problem 6.2 is equivalent to the integral equation
\begin{equation}
M^{(3)}(k) =I-\frac{1}{\pi} \int_{\mathbb{C}} \frac{M^{(3)}(s)W^{(3)}(s)}{k-s}dm(s),
\end{equation}
where $dm(s)$ is the Lebesgue measure on the $\mathbb{C}$. If we denote $C_k$ is the left Cauchy-Green integral operator,
\begin{equation}
f C_{k}(k)=-\frac{1}{\pi}\int_{\mathbb{C}}\frac{f(s)W^{(3)}(s)}{k-s} dm(s),
\end{equation}
then above equation can be rewritten as
\begin{equation}
M^{(3)}(k)=I\cdot\left(I-C_{k}\right)^{-1},
\end{equation}
To prove the existence of operator $(I-C_k)^{-1}$, we need the following lemma.
\begin{lemma}
The norm of the integral operator $C_k$ decays to zero as $t\rightarrow\infty$:
\begin{equation}
\left\|C_{k}\right\|_{L^{\infty} \rightarrow L^{\infty}}\leq ct^{-1/4},
\end{equation}
which implies that $(I-C_k)^{-1}$ exists.
\end{lemma}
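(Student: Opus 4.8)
The plan is to estimate the operator norm directly from the integral kernel. Write $W^{(3)}(s)=M^{rhp}(s)\,\overline{\partial}\mathcal{R}^{(2)}(s)\,M^{rhp}(s)^{-1}$. By (\ref{Rjk0}) the matrix $\overline{\partial}\mathcal{R}^{(2)}$ is supported in the sectors $D_{nj}$ with $j\neq 2,5$ and vanishes within distance $\rho/3$ of $\mathcal{K}\cup\overline{\mathcal{K}}$; on this support $M^{rhp}$ equals $EM^{(out)}$ or $EM^{(in)}$ and both it and its inverse are uniformly bounded, by (\ref{Mout}) together with the boundedness of $M^{fl}$ and of $E$ established in Sections \ref{sec:section7}--\ref{sec:section8}. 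Hence $|W^{(3)}(s)|\lesssim|\overline{\partial}\mathcal{R}^{(2)}(s)|$, and for any $f\in L^{\infty}$ one has
$$\|fC_k\|_{L^{\infty}}\le \|f\|_{L^{\infty}}\,\frac{1}{\pi}\sup_{k\in\mathbb{C}}\int_{\mathbb{C}}\frac{|\overline{\partial}\mathcal{R}^{(2)}(s)|}{|k-s|}\,dm(s),$$
so it suffices to bound the scalar integral on the right by $ct^{-1/4}$.

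First I would reduce to a single sector. Since $\overline{\partial}\mathcal{R}^{(2)}=0$ on $D_{n2}\cup D_{n5}$, and since the symmetry $\overline{M^{(3)}(\bar k)}=\sigma_2 M^{(3)}(k)\sigma_2$ together with the identical local structure at the four phase points makes all surviving sectors interchangeable, it is enough to estimate the integral over one region, say $D_{11}$, where $|\overline{\partial}\mathcal{R}^{(2)}|=|\overline{\partial}R_{1,1}(s)|\,|e^{2it\theta(s)}|$. I would then insert the pointwise bound (\ref{Rjk}),
$$|\overline{\partial}R_{1,1}(s)|\lesssim|\overline{\partial}\chi_{\mathcal{K}}(s)|+|r'(\operatorname{Re}s)|+|s-z_1|^{-1/2},$$
and split the integral into the three corresponding pieces $I_{\chi}$, $I_r$, $I_s$. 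For the oscillatory factor I would use (\ref{Re}): writing $s=z_1+u+iv$ inside $D_{11}$, where $z_1$ is a nondegenerate stationary point, gives $\operatorname{Re}(2it\theta(s))\le -c\,t\,uv$, i.e. $|e^{2it\theta(s)}|\le e^{-c t\,(\operatorname{Re}s-z_1)\,\operatorname{Im}s}$ with a definite sign throughout the sector.

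The three pieces are then handled by planar Cauchy-kernel estimates. The term $I_{\chi}$ is supported in a fixed annulus around the discrete spectrum, bounded away from both $\Sigma$ and the $z_n$, where $|e^{2it\theta}|$ is exponentially small; it is therefore $\mathcal{O}(e^{-ct})$. For $I_r$ and $I_s$ I would, on each slice of fixed $u$ (or fixed $v$), bound $\int \frac{dv}{|k-s|}$ by Cauchy--Schwarz using $\|(k-\cdot)^{-1}\|_{L^{2}(\text{slice})}\lesssim|u-\operatorname{Re}(k-z_1)|^{-1/2}$, and then integrate the remaining exponential decay in the transverse variable. Using $r'\in L^{2}(\mathbb{R})$ for $I_r$ yields $\mathcal{O}(t^{-1/2})$, while the locally singular weight $|s-z_1|^{-1/2}$, which lies in $L^{p}_{\mathrm{loc}}$ for $p<4$, produces after rescaling $u\sim t^{-1/2}$ the dominant decay $I_s=\mathcal{O}(t^{-1/4})$. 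Summing over the finitely many sectors gives $\|C_k\|_{L^{\infty}\to L^{\infty}}\lesssim t^{-1/4}$, so for $t$ sufficiently large this norm is strictly less than $1$ and the Neumann series $(I-C_k)^{-1}=\sum_{m\ge0}C_k^{m}$ converges in $L^{\infty}$.

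The main obstacle is the interplay between the nonlocal Cauchy kernel $|k-s|^{-1}$ and the locally non-integrable-looking weight $|s-z_1|^{-1/2}$: one cannot simply pull the supremum over $k$ through the integral. The delicate point is to arrange the Hölder/Cauchy--Schwarz splitting so that both $|s-z_1|^{-1/2}$ and $|k-s|^{-1}$ remain integrable on the sector while the exponential factor carries all the $t$-dependence, and to perform the transverse integration \emph{before} estimating, so that the sharp power $t^{-1/4}$ (rather than a weaker one) emerges from the scaling $u\sim t^{-1/2}$, exactly as in the $\overline{\partial}$-analysis of the focusing NLS and derivative NLS equations.
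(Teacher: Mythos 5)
Your proposal is correct and takes essentially the same route as the paper: bound $\|C_k\|_{L^\infty\to L^\infty}$ by $\sup_{k}\frac{1}{\pi}\int_{\mathbb{C}}|\bar{\partial}\mathcal{R}^{(2)}(s)|\,|k-s|^{-1}dm(s)$ using $|W^{(3)}|\lesssim|\bar{\partial}\mathcal{R}^{(2)}|$, restrict by symmetry to one sector such as $D_{11}$, split via (\ref{Rjk}) into the $\bar{\partial}\chi_{\mathcal{K}}$, $|r'|$ and $|s-z_1|^{-1/2}$ pieces, and estimate each by slice-wise Cauchy--Schwarz/H\"older bounds on the Cauchy kernel ($2<p<4$ for the singular weight) against the phase decay from (\ref{Re}), with the $|s-z_1|^{-1/2}$ term producing the dominant $t^{-1/4}$ under the scaling $u\sim t^{-1/2}$. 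Your only deviations are harmless within-method sharpenings: you bound the cutoff term by $\mathcal{O}(e^{-ct})$ (legitimate, since $\operatorname{supp}\bar{\partial}\chi_{\mathcal{K}}$ lies at distance at least $\rho/3$ from $\Sigma$, so $(u-z_1)v$ is bounded below there) and claim $\mathcal{O}(t^{-1/2})$ for the $r'$ term, whereas the paper estimates both of these pieces by $t^{-1/4}$ in the same way as the first.
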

\begin{proof}
For any $f\in L^{\infty}$
\begin{equation}
\begin{aligned}
\left\|f C_{k}\right\|_{L^{\infty}} & \leq\|f\|_{L^{\infty}} \frac{1}{\pi} \int_{\mathbb{C}} \frac{\left|W^{(3)}(s)\right|}{|k-s|} d m(s) \\
& \lesssim\|f\|_{L^{\infty}} \frac{1}{\pi} \int_{\mathbb{C}} \frac{\left|\overline{\partial} R^{(2)}(s)\right|}{|k-s|} d m(s),
\end{aligned}
\end{equation}
on account of
\begin{equation}
|W^{(3)}(s)| \leq\|M^{rhp}\|_{L^{\infty}}|\overline{\partial} R^{(2)}(s)|\|M^{rhp}\|_{L^{\infty}}^{-1} \lesssim|\overline{\partial} R^{(2)}(s)|.
\end{equation}
After simplifying, we only need to estimate
\begin{equation}
\frac{1}{\pi} \int_{\mathbb{C}} \frac{\left|\overline{\partial} R^{(2)}(s)\right|}{|k-s|} d m(s).
\end{equation}
As $\bar{\partial} R^{(2)}(s)$
is a piece-wise function, we detail the case in the region $D_{11}$, and the other regions can be handled in a similar way. From (\ref{Rjk}), we have
\begin{equation}
\int_{D_{11}} \frac{\left|\overline{\partial} R^{(2)}(s)\right|}{|k-s|} d m(s) \leq F_1+F_2+F_3,
\end{equation}
where
\begin{align}
&F_{1}=\int_{0}^{+\infty} \int_{z_1+v}^{+\infty} \frac{|\overline{\partial} X_{\mathcal{K}}(u)| e^{\alpha\beta^2uvt\left(\frac{1}{(u^2+v^2)^2}-\frac{1}{k_0^4}\right)}}{\sqrt{(u-x)^{2}+(v-y)^{2}}}dudv, \\
&F_{2}=\int_{0}^{+\infty} \int_{z_1+v}^{+\infty} \frac{\left|r^{\prime}(u)\right| e^{\alpha\beta^2uvt\left(\frac{1}{(u^2+v^2)^2}-\frac{1}{k_0^4}\right)}}{\sqrt{(u-x)^{2}+(v-y)^{2}}} d u d v, \\
&F_{3}=\int_{0}^{+\infty} \int_{z_1+v}^{+\infty} \frac{((u-z_1)^{2}+v^{2})^{-1/4}e^{\alpha\beta^2uvt\left(\frac{1}{(u^2+v^2)^2}-\frac{1}{k_0^4}\right)}}{\sqrt{(u-x)^{2}+(v-y)^{2}}} d u d v.
\end{align}
We denote $s=u+iv, k=x+iy$.

In the following calculation, we will use the inequality
\begin{equation}
\||s-k|^{-1}\|_{L^2(z_1,+\infty)}^{2}=\int_{z_1}^{+\infty} \frac{1}{|v-y|}[(\frac{u-x}{v-y})^{2}+1]^{-1} d(\frac{u-x}{|v-y|}) \leq \frac{\pi}{|v-y|}.
\end{equation}
Without loss of generality, we suppose $y>0$, because, if $y<0$, we can directly remove the absolute value sign and use the same way for estimation.

For $F_1$, noting that $\alpha\beta^2uvt\left(\frac{1}{(u^2+v^2)^2}-\frac{1}{k_0^4}\right)$ is a monotonic decreasing function of $u$, we have
\begin{equation}
\begin{aligned}
F_{1} & \leq \int_0^{+\infty}\||s-k|^{-1}\|_{L^{2}(v+z_1,+\infty)}\|\bar{\partial} X_{\mathcal{K}}(s)\|_{L^{2}(v+z_1,+\infty)}e^{\alpha\beta^2v(v+z_1)t\left(\frac{1}{(z_1^2+v^2)^2}-\frac{1}{k_0^4}\right)}dv\\
& \leq c\int_0^{+\infty}|v-y|^{-1 / 2} e^{\alpha\beta^2v(v+z_1)t\left(\frac{1}{(z_1^2+v^2)^2}-\frac{1}{k_0^4}\right)}dv\\
&\leq c \int_0^{\infty}|v-\beta|^{-1/2} e^{-\frac{\alpha\beta^2}{k_0^4}tv^2} d v \leq c t^{-1/4}.\nonumber
\end{aligned}\label{int1}
\end{equation}
The $F_2$ has the same estimate with $F_1$. For $F_3$, we choose $p>2$ and $q$ H\"{o}lder conjugate to $p$. Then
\begin{equation}
\begin{aligned}
&\|(u-z_1)^2+v^2)^{-1 / 4} \|_{L^p(z_1,+\infty)} \\
&=\left\{\int_{z_1}^{+\infty}[(u-z_1)^2+v^2]^{-p/4} d v\right\}^{1/p} \\
&=\left\{\int_{z_1}^{+\infty}[1+(\frac{u-z_1}{v})^2]^{-p/4} d(\frac{u-z_1}{v})\right\}^{1/p}v^{1/p-1/2}\\
&\leq c v^{1/p-1/2},
\end{aligned}\label{uk0}
\end{equation}
and
\begin{equation}
\begin{aligned}
\||s-k|^{-1}\|_{L^q(z_1,+\infty)} &=\left\{\int_{z_1}^{+\infty}[\left(\frac{u-x}{v-y}\right)^2+1]^{-q/2} d\left(\frac{u-x}{|v-y|}\right)\right\}^{1/q}|v-y|^{1/q-1} \\
& \leq|v-y|^{1/q-1},
\end{aligned}\label{sk1}
\end{equation}
where $p>2$ and $1/p+1/q=1$. Then we have
\begin{equation}
\begin{aligned}
F_{3} & \leq \int_{0}^{+\infty}\||s-k|^{-1}\|_{L^{q}}\|((u-z_1)^{2}+v^{2})^{-1 / 4}\|_{L^{p}} e^{\alpha\beta^2v(v+z_1)t\left(\frac{1}{(z_1^2+v^2)^2}-\frac{1}{k_0^4}\right)} d v \\
& \leq \int_0^{+\infty} v^{1 / p-1/2}|v-y|^{1/q-1} e^{\alpha\beta^2v(v+z_1)t\left(\frac{1}{(z_1^2+v^2)^2}-\frac{1}{k_0^4}\right)} d v \\
& \leq \int_0^y v^{1/p-1/2}(y-v)^{1 / q-1} e^{\alpha\beta^2v(v+z_1)t\left(\frac{1}{(z_1^2+v^2)^2}-\frac{1}{k_0^4}\right)} d v \\
&+\int_y^{+\infty} v^{1/p-1/2}(v-y)^{1/q-1} e^{\alpha\beta^2v(v+z_1)t\left(\frac{1}{(z_1^2+v^2)^2}-\frac{1}{k_0^4}\right)} d v.
\end{aligned}
\end{equation}
For the first term, using the inequality $e^{-z} \leq c z^{-1/4}$ for all $z>0$ leads to
\begin{equation}
\begin{aligned}
&\int_{0}^{y} v^{1 / p-1 / 2}(y-v)^{1 / q-1} e^{\alpha\beta^2v(v+z_1)t\left(\frac{1}{(z_1^2+v^2)^2}-\frac{1}{k_0^4}\right)} d v \\
&\leq ct^{-1/4} \int_{0}^{y} v^{1 / p-1}(y-v)^{1 / q-1} d v \leq c t^{-1/4}.
\end{aligned}
\end{equation}
Similarly, we estimate the second term in the same way of estimating $F_1$. Let $w=v-y$. Then
\begin{equation}
\begin{aligned}
&\int_{y}^{+\infty} v^{1 / p-1 / 2}(v-y)^{1 / q-1}e^{\alpha\beta^2v(v+z_1)t\left(\frac{1}{(z_1^2+v^2)^2}-\frac{1}{k_0^4}\right)} d v \\
&\leq \int_{0}^{+\infty} w^{1 / q-1}(w+y)^{1 / p-1 / 2} e^{-\frac{\alpha\beta^2}{k_0^4}t(w+y)^2}dw \\
&\leq c t^{-1/4}\int_{0}^{+\infty} w^{-1/2} e^{-\frac{\alpha\beta^2}{k_0^4}tw^2}dw  \leq c t^{-1/4}.
\end{aligned}
\end{equation}
Finally, we have
\begin{equation}
F_{3} \leq c t^{-1/4}.
\end{equation}
Based on the previous formula we arrive at the primary result.
\end{proof}

Consider  the asymptotic  expansion of $M^{(3)}(k)$ at $k=\infty$
\begin{equation}
M^{(3)}(k)= I+  \frac{M^{(3)}_1(x,t)}{k}+\mathcal{O}(k^{-2}),\ \ k\to \infty\label{expM3},
\end{equation}
where
\begin{align}
&M^{(3)}_1(x,t)= \frac{1}{\pi}\int_\mathbb{C} M^{(3)}(s)W^{(3)}(s)  dm(s), \label{m30}
\end{align}
To reconstruct the solution $u_x(x,t)$ of the  FL equation  (\ref{cs2}),  we need  the asymptotic behavior of    $M^{(3)}_1(x,t)$.

\begin{lemma}
For a large $t$, we have
\begin{equation}
|M_{1}^{(3)}(x, t)| \leq ct^{-3/4}.\label{m13}
\end{equation}
\end{lemma}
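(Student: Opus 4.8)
The plan is to start from the explicit representation (\ref{m30}) for $M_1^{(3)}(x,t)$ and bound it by an absolutely convergent area integral of $|\bar\partial R^{(2)}|$. First I would invoke the preceding lemma, which guarantees that $(I-C_k)^{-1}$ exists as a bounded operator for large $t$, so that $\|M^{(3)}\|_{L^\infty(\mathbb{C})}\lesssim 1$; combined with the uniform bounds $\|M^{(out)}\|_{L^\infty}\lesssim 1$ and $\|(M^{(out)})^{-1}\|_{L^\infty}\lesssim 1$ from (\ref{Mout}) and the boundedness of $M^{fl}$ (used in (\ref{mflgj})), this yields $|W^{(3)}(s)|=|M^{rhp}(s)\,\bar\partial R^{(2)}(s)\,M^{rhp}(s)^{-1}|\lesssim|\bar\partial R^{(2)}(s)|$. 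Hence (\ref{m30}) gives
\[
|M_1^{(3)}(x,t)|\lesssim \int_{\mathbb{C}}|\bar\partial R^{(2)}(s)|\,dm(s).
\]

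Next I would exploit the geometric decomposition. By (\ref{Rjk0}), $\bar\partial R^{(2)}$ vanishes on $D_{n2}\cup D_{n5}$ and on a neighborhood of the discrete spectrum, so the integral reduces to the sectors $D_{nj}$, $j=1,3,4,6$, around the four stationary points. Since the contours and domains near each $z_n$ coincide (Figure \ref{division4}), it suffices to estimate one representative region, say $D_{11}$, the remaining ones following by the same computation and the symmetry (b). On $D_{11}$ I would insert the pointwise bound (\ref{Rjk}), namely $|\bar\partial R_{1,1}(s)|\lesssim |\bar\partial\chi_{\mathcal{K}}(s)|+|r'(\operatorname{Re} s)|+|s-z_1|^{-1/2}$, splitting the integral into three pieces $\widetilde F_1,\widetilde F_2,\widetilde F_3$ that parallel the $F_1,F_2,F_3$ of the previous lemma, but now \emph{without} the Cauchy kernel $|k-s|^{-1}$. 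The decay in every piece comes from $|e^{2it\theta}|$, controlled via (\ref{Re}), which in $D_{11}$ is exponentially small (Figure \ref{cstx}).

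To estimate the three pieces I would argue as follows. The term $\widetilde F_1$ is supported where $\bar\partial\chi_{\mathcal{K}}\neq0$, that is in a fixed annulus around $\mathcal{K}\cup\overline{\mathcal{K}}$ bounded away from every $z_n$; there $\operatorname{Re}(2it\theta)\le -ct$, so $\widetilde F_1=\mathcal{O}(e^{-ct})$, far better than needed. For $\widetilde F_2$ I would apply Cauchy--Schwarz in $\operatorname{Re} s$, using $r'\in L^2(\mathbb{R})$ (since $r\in H^{1,1}(\Sigma)$ by Theorem \ref{111}) against the square root of the exponential integral, reducing to a single $v$-integral. For $\widetilde F_3$ the cleanest route is polar coordinates $s-z_1=\rho e^{i\phi}$, in which $dm(s)=\rho\,d\rho\,d\phi$ and, by (\ref{Re}), $\operatorname{Re}(2it\theta)\sim -c t\rho^2$ times a positive angular factor on the relevant sector; then, after the substitution $w=\sqrt{t}\,\rho$, the radial integral gives $\int \rho^{-1/2}e^{-ct\rho^2}\,\rho\,d\rho=\int \rho^{1/2}e^{-ct\rho^2}\,d\rho\lesssim t^{-3/4}$. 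The same conclusion follows within the paper's Cartesian/H\"older scheme (as in (\ref{uk0})--(\ref{sk1})), the exponents now summing to $t^{-3/4}$.

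The main obstacle is obtaining the sharp power $t^{-3/4}$ rather than the $t^{-1/4}$ of the $\|C_k\|$ estimate in the previous lemma. The gain of the extra factor $t^{-1/2}$ hinges precisely on the \emph{absence} of the Cauchy kernel here: the full two-dimensional area element contributes one additional power of $\rho=|s-z_n|$, which combined with the Gaussian concentration at the natural scale $\rho\sim t^{-1/2}$ upgrades the radial integral. The delicate technical point lies in $\widetilde F_3$, where one must handle the angular degeneracy of the exponent near the contour directions (where $\operatorname{Im}(s^2)$, hence $\operatorname{Re}(2it\theta)$, vanishes) while keeping the $|s-z_1|^{-1/2}$ singularity integrable, and verify that the H\"older splitting does not cost more than the allotted $t^{-3/4}$. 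Summing the four stationary-point contributions then yields the stated bound (\ref{m13}).
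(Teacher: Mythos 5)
Your proposal is correct and follows essentially the same route as the paper: the same reduction $|M_1^{(3)}(x,t)|\lesssim\int_{\mathbb{C}}|\bar{\partial}R^{(2)}(s)|e^{\operatorname{Re}(2it\theta)}\,dm(s)$ via the boundedness of $M^{(3)}$ and $M^{rhp}$, restriction to the representative sector $D_{11}$, the same three-way splitting by (\ref{Rjk}) into the pieces the paper calls $I_1,I_2,I_3$, and the same Cauchy--Schwarz/H\"older estimates producing $t^{-3/4}$, with your remark about the absence of the Cauchy kernel being exactly the mechanism that upgrades $t^{-1/4}$ to $t^{-3/4}$. Your two small deviations are harmless: bounding the $\bar{\partial}\chi_{\mathcal{K}}$ piece by $\mathcal{O}(e^{-ct})$ is legitimate (on its support inside the integration region one has $\operatorname{Im}s\geq\rho/3$ and $|s|$ uniformly larger than $k_0$, so the phase is uniformly negative, whereas the paper instead reuses Cauchy--Schwarz to get $t^{-3/4}$), and while your polar-coordinate shortcut for the third piece is not literally valid with a uniform Gaussian $e^{-ct\rho^2}$ (the angular factor degenerates at the real axis, so one really gets $\int e^{-ct\rho^{2}\sin 2\phi}d\phi\lesssim\min\{1,(t\rho^{2})^{-1}\}$, which still sums to $t^{-3/4}$), you explicitly flag this degeneracy and correctly defer to the paper's Cartesian H\"older scheme (\ref{uk0})--(\ref{sk1}) with $2<p<4$, which is what the paper's proof does.
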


\begin{proof}
By using (\ref{expM3}) and (\ref{m30}), also noting  the boundedness of  $M^{(3)}(k)$ and  $ M^{rhp}(k)$, we obtain that
\begin{align}
\parallel M^{(3)}_1(x,t) \parallel&\leq \frac{1}{\pi}  \iint_{D_{11}} | M^{(3)} M^{rhp}\bar\partial R^{(2)}   {M^{rhp}}^{-1}|dm(s)\nonumber \\
&\leq c\int^{+\infty}_0\int^{+\infty}_{z_1+v} |\bar{\partial}R_1(s)|e^{\alpha\beta^2uvt\left(\frac{1}{(u^2+v^2)^2}-\frac{1}{k_0^4}\right)}dudv\nonumber \\
& \leq c(I_1+I_2+I_3), \label{sfes}
\end{align}
with
\begin{align}
&I_{1}=\int_{0}^{+\infty} \int_{z_1+v}^{+\infty}  |\bar{\partial} X_{\mathcal{K}}(s)| e^{\alpha\beta^2uvt\left(\frac{1}{(u^2+v^2)^2}-\frac{1}{k_0^4}\right)}dudv, \\
&I_{2}=\int_0^{+\infty} \int_{z_1+v}^{+\infty} |r^{\prime}(u)| e^{\alpha\beta^2uvt\left(\frac{1}{(u^2+v^2)^2}-\frac{1}{k_0^4}\right)}d u d v, \\
&I_{3}=\int_0^{+\infty} \int_{z_1+v}^{+\infty} ((u-z_1)^2+v^2)^{-1/4}e^{\alpha\beta^2uvt\left(\frac{1}{(u^2+v^2)^2}-\frac{1}{k_0^4}\right)} dudv.
\end{align}
We bound $I_1$ by applying the Cauchy-Schwarz inequality:
\begin{equation}
\begin{aligned}
I_1=&\int_0^{+\infty}\int_{z_1+v}^{+\infty} |\bar{\partial} X_{\mathcal{K}}(s)| e^{\alpha\beta^2uvt\left(\frac{1}{(u^2+v^2)^2}-\frac{1}{k_0^4}\right)}dudv \\
&\leq \int_0^{+\infty} \|\bar{\partial} X_{\mathcal{K}}\|_{L_{u}^2(v+z_1, \infty)}\left(\int_{z_1+v}^{+\infty}e^{2\alpha\beta^2uvt\left(\frac{1}{(u^2+v^2)^2}-\frac{1}{k_0^4}\right)}du\right)^{1/2} d v \\
&\leq c\left(\int_0^{+\infty} \left(\int_{z_1+v}^{+\infty}e^{-\frac{2\alpha\beta^2}{k_0^4}uvt} d u\right)^{1/2}\right)dv \\
&\leq ct^{-1/2}\int_0^{+\infty} v^{-1/2}e ^{-\frac{2\alpha\beta^2}{k_0^4}(z_1+v)vt}dv\\
&\leq c t^{-3/4}.
\end{aligned}
\end{equation}
The bound for $I_2$ could be attained with the same method as for $I_1$. For $I_3$ we proceed as with (\ref{uk0}) applying the H\"{o}lder's inequality with $2<p<4$
\begin{equation}
\begin{aligned}
I_3&=\int_0^{+\infty} \int_{z_1+v}^{+\infty} ((u-z_1)^2+v^2)^{-1/4} e^{\alpha\beta^2uvt\left(\frac{1}{(u^2+v^2)^2}-\frac{1}{k_0^4}\right)} dudv \\
&\leq \int_0^{+\infty}\|((u-z_1)^{2}+v^{2})^{-1/4}\|_{L^p} \left(\int_{z_1+v}^{+\infty}e^{q\alpha\beta^2uvt\left(\frac{1}{(u^2+v^2)^2}-\frac{1}{k_0^4}\right)}\right)^{1/q} dv \\
&\leq \int_0^{+\infty} v^{1/p-1/2}\left(\int_{z_1+v}^{+\infty}e^{-\frac{q\alpha\beta^2}{k_0^4}uvt} du\right)^{1/q}dv\\
&\leq ct^{-1/q}\int_0^{+\infty} v^{2 / p-3/2}e ^{-\frac{\alpha\beta^2}{k_0^4}(z_1+v)vt}dv\\
&\leq ct^{-3/4} \int_0^{\infty} w^{2/p-3/2} e^{-\frac{\alpha\beta^2}{k_0^4}w^2} d w \leq ct^{-3/4},
\end{aligned}
\end{equation}
where we use the substitution $w=t^{1 / 2} v$ and the fact that $-1<2/p-3/2<-1/2$.
\end{proof}

\section{Large  time asymptotic behavior  for the   FL equation }
\label{sec:section10}
\hspace*{\parindent}
Now we begin to construct the long-time asymptotics of the FL equation (\ref{cs2}). Inverting
the sequence of transformations (\ref{mktk}), (\ref{M2}), (\ref{m3}) and (\ref{mrhp}), we have
\begin{equation}
M(k)=M^{(3)}(k) E(k) M^{(out)}(k) R^{(2)}(k)^{-1} T(k)^{\sigma_{3}}, \quad k \in \mathbb{C} \backslash U_{k_0},
\end{equation}
where $T(k)^{\sigma_3}$ is a diagonal matrix.

To reconstruct the solution $u_x(x,t)$, we take $k\rightarrow\infty$ along the straight line
$k\in D_{12}\cup D_{22}\cup D_{35}\cup D_{45}$, which means $R^{(2)}(k)=I$. From (\ref{tk}), (\ref{uxout}), (\ref{Ek}) and
(9.14), we have
\begin{equation}
M=(I+\frac{M_1 ^{(3)}}{k}+\ldots)(I+\frac{E_1}{k}+\ldots)(I+\frac{M_1^{(out)}}{k}+\ldots)(I+\frac{T_1^{\sigma_3}}{k}+\ldots),
\end{equation}
which means the coefficient of the $k^{-1}$ in the Laurent expansion of $M$ is
\begin{equation}
M_1=M_1^{(3)}+E_1+M_1^{(out)}+T_1^{\sigma_{3}}.
\end{equation}
We construct the solution $u_x(x,t)$ of (\ref{cs2}) with initial data $u_0$ by the transformation and the final result is as follows:

\begin{theorem}
Assume that  $u(x,t)$ be the solution for the initial-value problem (\ref{cs2})-(\ref{cz}) with the appropriate  generic
data $u_0 (x)$.  For fixed $x_1 ,x_2 ,v_1 ,v_2 \in \mathbb{R}$ with $x_1\leq x_2$ and $v_1\leq v_2\in \mathbb{R}^-$, we define two zones for the spectral variable $k$
\begin{equation}
\mathcal{I}=\{k:f(v_2)<|k|<f(v_1)\},
\end{equation}
with
$$
\mathcal{K}(\mathcal{I})=\{k_j \in \mathcal{K}: k_j \in \mathcal{I}\},\nonumber \quad \mathcal{N}(\mathcal{I})=|\mathcal{K}(\mathcal{I})|,\nonumber
$$
and a cone for variables $x,t$
\begin{equation}
\mathcal{C}(x_1, x_2, v_1, v_2)=\{(x, t) \in \mathbb{R}^2 \mid x=x_0+v t, \text { with } x_0 \in[x_1, x_2], v \in[v_1, v_2]\}
\end{equation}
as is shown in Figure \ref{zdtx}. Denote $m_{sol}(x,t|\mathcal{D}(\mathcal{I}))$ be the $\mathcal{N}(\mathcal{I})$-soliton solution corresponding to the scattering data $\{k_j, c_j(\mathcal{I})\}_{j=1}^{\mathcal{N}(\mathcal{I})}$ which is given in (\ref{cji}).  Then as $t\rightarrow \infty$ with $(x,t) \in \mathcal{C}(x_1, x_2, v_1, v_2)$, from (\ref{gs1}), (\ref{uxout}), (\ref{2ie}) and (\ref{m13}) we have
\begin{equation}
m(x,t)=m_{sol}(x,t|\mathcal{D}(\mathcal{I}))+t^{-1/2} f(x,t)+\mathcal{O}(t^{-3/4}).
\end{equation}
Thus
\begin{equation}
\begin{aligned}
|m(x,t)|^{2}&=|(m_{sol}(x,t|\mathcal{D}(\mathcal{I}))+t^{-1/2} f(x,t)+\mathcal{O}(t^{-3/4}))|^2\\
&=|m_{sol}(x,t|\mathcal{D}(\mathcal{I}))|^2+2t^{-1/2}f(x,t)m_{sol}(x,t|\mathcal{D}(\mathcal{I}))+\mathcal{O}(t^{-3/4}).
\end{aligned}
\end{equation}
Based on the above discussion we can obtain
\begin{equation}
u_x(x,t)=\left(m_{s o l}(x, t |\mathcal{D}(\mathcal{I}))+t^{-1/2} f(x, t)+\mathcal{O}(t^{-3/4})\right)e^{-i \int_{-\infty}^{x}|m|^2d x^{\prime}},
\end{equation}
where $f(x,t)$ has been established in (\ref{fxt}).
\end{theorem}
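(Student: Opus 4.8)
The plan is to assemble the asymptotics by inverting the full chain of transformations and reading off the coefficient of $k^{-1}$ in the Laurent expansion of $M(k)$ at $k=\infty$. Reversing the definitions (\ref{mktk}), (\ref{M2}), (\ref{m3}) and (\ref{mrhp}) in order gives
$$M(k)=M^{(3)}(k)E(k)M^{(out)}(k)R^{(2)}(k)^{-1}T(k)^{\sigma_3},\quad k\in\mathbb{C}\setminus U_{k_0}.$$
I would then let $k\to\infty$ along a ray contained in $D_{12}\cup D_{22}\cup D_{35}\cup D_{45}$; by the construction of $R^{(2)}$ this sector carries the identity matrix, so $R^{(2)}(k)\equiv I$ there and the factor $R^{(2)}(k)^{-1}$ drops out of the limit. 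This is the essential simplification: it lets me recover $m(x,t)$ through the single reconstruction formula $m(x,t)=2i\lim_{k\to\infty}(kM(x,t,k))_{12}$ without having to track the off-diagonal contribution of the sectorial matrix $R^{(2)}$.

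Next I would multiply the four asymptotic expansions. Each factor has the form $I+(\cdot)/k+\mathcal{O}(k^{-2})$, with leading coefficients $M_1^{(3)}$, $E_1$, $M_1^{(out)}$ and $T_1^{\sigma_3}$ supplied respectively by (\ref{m13}), the expansion of $E(k)$ in Section \ref{sec:section8}, the outer-model reconstruction (\ref{uxout}), and the large-$k$ behavior (\ref{tk}) of $T(k)$. Collecting the $k^{-1}$ term yields $M_1=M_1^{(3)}+E_1+M_1^{(out)}+T_1^{\sigma_3}$, and I would take its $(1,2)$ entry. Since $T(k)^{\sigma_3}$ is diagonal, $(T_1^{\sigma_3})_{12}=0$ contributes nothing. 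The outer term $2i(M_1^{(out)})_{12}$ produces the soliton component, which by the reduction established in Section \ref{sec:section6} equals $m_{sol}(x,t|\mathcal{D}(\mathcal{I}))+\mathcal{O}(e^{-4\mu t})$ inside the cone---this is exactly where the $N$-soliton collapses to the $N(\mathcal{I})$-soliton carried by the discrete spectrum lying in the annulus $\mathcal{I}$. The error contribution $2i(E_1)_{12}=t^{-1/2}f(x,t)+\mathcal{O}(t^{-1})$ is precisely (\ref{2ie})--(\ref{fxt}), and $2i(M_1^{(3)})_{12}$ is bounded by $\mathcal{O}(t^{-3/4})$ from (\ref{m13}).

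Combining these, and absorbing the subdominant $\mathcal{O}(t^{-1})$ and $\mathcal{O}(e^{-4\mu t})$ into the residual, gives $m(x,t)=m_{sol}(x,t|\mathcal{D}(\mathcal{I}))+t^{-1/2}f(x,t)+\mathcal{O}(t^{-3/4})$. Finally I would pass from $m$ to $u_x$ through the nonlinear reconstruction formula (\ref{gs1}): squaring the expansion of $m$ yields $|m|^2=|m_{sol}|^2+2t^{-1/2}f\,m_{sol}+\mathcal{O}(t^{-3/4})$, so the phase factor $\exp(-i\int_{-\infty}^x|m|^2\,dx')$ is controlled to the same order, and multiplying it back against the expansion of $m$ produces the stated formula for $u_x(x,t)$.

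The main obstacle is bookkeeping rather than analysis: all the deep work has been carried out in Sections \ref{sec:section6}--\ref{sec:section9}. The point requiring genuine care is the consistency of the error orders---verifying that the $\mathcal{O}(t^{-3/4})$ coming from the pure $\bar{\partial}$-problem truly dominates the cross terms generated when the four expansions are multiplied (products such as $M_1^{(3)}E_1$ are $\mathcal{O}(t^{-5/4})$ and hence harmless), and that the exponentially small soliton-reduction error together with the $\mathcal{O}(t^{-1})$ term from $E_1$ are absorbed without degrading the $t^{-3/4}$ bound. A secondary subtlety is that the reconstruction of $u_x$ is nonlinear, so I must confirm that the phase integral $\int_{-\infty}^x|m|^2\,dx'$ stays finite and that expanding the exponential does not generate, after multiplication by the leading soliton amplitude, any term larger than $\mathcal{O}(t^{-3/4})$.
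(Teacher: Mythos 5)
Your proposal is correct and follows essentially the same route as the paper's own argument: inverting the transformation chain to get $M(k)=M^{(3)}(k)E(k)M^{(out)}(k)R^{(2)}(k)^{-1}T(k)^{\sigma_3}$, sending $k\to\infty$ through $D_{12}\cup D_{22}\cup D_{35}\cup D_{45}$ where $R^{(2)}=I$, summing the $k^{-1}$ coefficients $M_1=M_1^{(3)}+E_1+M_1^{(out)}+T_1^{\sigma_3}$, and reading off the $(1,2)$ entry with the bounds from (\ref{uxout}), (\ref{2ie}) and (\ref{m13}) before passing through the reconstruction formula (\ref{gs1}). Your added remarks on the vanishing of the diagonal $T_1^{\sigma_3}$ contribution and the harmlessness of the $\mathcal{O}(t^{-5/4})$ cross terms are consistent with, and slightly more explicit than, the paper's bookkeeping.
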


\begin{appendix}
\section{ $\overline{\partial}$-steepest descent analysis for large negative times}
\label{sec:section11}
\hspace*{\parindent}
The steps in the steepest descent analysis of  RH Problem 3.1 for $t \rightarrow-\infty$ mirror those presented in Sections 3-10 for $t \rightarrow\infty$. The differences that appear can be traced back to the fact that the regions of growth and decay of the exponential factors $e^{2it\theta}$ are reversed when one considers $t \rightarrow-\infty$, as Figure \ref{cstx1} hows. In this part, we briefly sketch those changes, leaving the detailed calculations to the interested reader.
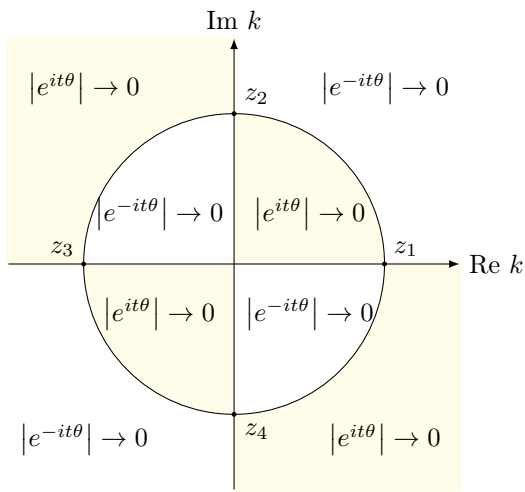
\begin{figure}[H]
\centering
\begin{tikzpicture}[node distance=2cm]
\draw [fill=pink,ultra thick,color=yellow!10] (0,0) rectangle (3,-3);
\draw [fill=pink,ultra thick,color=yellow!10] (0,0) rectangle (-3,3);
\filldraw[white](0,0)-- (0,-2) arc (270:360:2);
\filldraw[white](0,0)-- (0,2) arc (90:180:2);
\filldraw[color=yellow!10](0,0)-- (2,0) arc (0:90:2);
\filldraw[color=yellow!10](0,0)-- (-2,0) arc (180:270:2);
\draw [-latex](-3,0)--(3,0)  node[right, scale=1] {Re $k$};
\draw [-latex](0,-3)--(0,3)  node[above, scale=1] {Im $k$};
\draw(0,0)circle(2cm);
\draw[fill] (0,2) circle [radius=0.025];
\draw[fill] (0,-2) circle [radius=0.025];
\draw[fill] (2,0) circle [radius=0.025];
\draw[fill] (-2,0) circle [radius=0.025];
\node  [right]  at (2,0.2) {$z_1$};
\node  [above]  at (0.3,2) {$z_2$};
\node  [left]  at (-2,0.2) {$z_3$};
\node  [below]  at (0.3,-2) {$z_4$};
\node  [below]  at (1,1) {$\left|e^{ i t \theta}\right|\rightarrow 0$};
\node  [above]  at (1,-1) {$\left|e^{ -i t \theta}\right|\rightarrow 0$};
\node  [below]  at (-1,1) {$\left|e^{-it\theta}\right|\rightarrow 0$};
\node  [above]  at (-1,-1) {$\left|e^{ it  \theta}\right|\rightarrow 0$};
\node  [above]  at (2,2) {$\left|e^{-i  t\theta}\right|\rightarrow 0$};
\node  [below]  at (2,-2) {$\left|e^{ i t\theta}\right|\rightarrow 0$};
\node  [above]  at (-2,2) {$\left|e^{ i t \theta}\right|\rightarrow 0$};
\node  [below]  at (-2,-2) {$\left|e^{ -it \theta}\right|\rightarrow 0$};
\end {tikzpicture}
\caption{ In the shaded regions, $|e^{ it\theta} |\rightarrow 0$ when $t\rightarrow\infty$, while, in the blank regions, $|e^{-it\theta}|\rightarrow 0$ when $t\rightarrow\infty$.}
\label{cstx1}
\end{figure}
The first step in the analysis, as in Section 3, is a conjugation to well-condition the problem for large-time analysis. Similar to (\ref{mktk}) define
\begin{equation}
M^{(1)}(k)=M(k) T(k)^{-\sigma_3}
\end{equation}
$T(k)$ can be defined in Proposition \ref{p} with $t<0$. Non-analytic extensions of the jump matrices (\ref{V11}) are introduced to deform jump matrices onto contours along which they decay to the identity as was done in Section 4 whose contours and domains are shown below.

Accordingly, the definition of $R_j$ is also slightly different from $t>0$, i.e., we exchange the definition of $R_j$ on the two sides of steady-state phase points on the real and the imaginary axis. Once the functions are constructed, the transformation
\begin{equation}
M^{(2)}(k)=M^{(1)}(k) \mathcal{R}(k).
\end{equation}
where $\mathcal{R}(k)$ is defined in each sector in Figure \ref{fig10}. We define a new unknown $M^{(2)}(k)$ which satisfies

\noindent\textbf{RH Problem A.1.} Find a function $M^{(2)}(k)=M^{(2)}(k;x,t) $ with the following properties.

(a)\emph{Analyticity}: $M^{( 2)}(k)$  is continuous in $\mathbb{C}$, has sectionally continuous first partial derivatives in $\mathbb{C} \backslash (\Sigma^{(2)} \cup \mathcal{K}\cup \mathcal{\overline{K}} )$, and meromorphic in $D_{n2} \cup D_{n5}$, $n=1,2,3,4$;

(b)\emph{Symmetry}: $\overline{M^{(2)}(\bar{k})}=\sigma_2 M^{(2)}(k) \sigma_2$;

(c)\emph{Jump condition}: The boundary value $ M^{(2)}(k;x,t)$ at $ \Sigma^{(2)}$ satisfies the jump condition
\begin{equation}
M^{(2)}_+ (k) =M^{(2)}_- (k)V^{(2)}(k),\quad k\in \Sigma^{(2)};
\end{equation}

(d)\emph{Asymptotic condition}
\begin{equation}
\begin{cases}
M^{(2)}(k)=I+\mathcal{O}\left(k^{-1}\right), & k \rightarrow \infty,\\
M^{(2)}(k)=e^{-\frac{i}{2} c_- \sigma_{3}}\left(I+kU+\mathcal{O}\left(k^{2}\right)\right) e^{\frac{i}{2} d_0\sigma_3},  & k \rightarrow 0;
\end{cases}
\end{equation}

(e) Away from $\Sigma^{(2)}$ we have
\begin{equation}
\overline{\partial} M^{(2)}(k)= M^{(2)} (k)\overline{\partial} \mathcal{R}^{(2)}
\end{equation}
holds in $\mathbb{C}\backslash\Sigma^{(2)}$, where
\begin{equation}
\overline{\partial}\mathcal{R}^{(2)}(k)=\begin{cases}
\begin{pmatrix} 1&0\\ (-1)^j \overline{\partial}R_{n,j}(k) e^{2it\theta} &1\end{pmatrix},& k\in D_{nj},\ n=1,2,3,4;\ j=1,4,\\
\begin{pmatrix} 1&(-1)^j \overline{\partial}R_{n,j}(k) e^{-2it\theta}\\
0&1\end{pmatrix},& k\in D_{nj},\ n=1,2,3,4;\ j=3,6,\\
\begin{pmatrix} 0&0\\0&0\end{pmatrix},& k\in D_{nj}, \ n=1,2,3,4;\ j=2,5;
\end{cases}
\end{equation}

(f) \textit{Residue conditions}: $M^{(2)}$ has simple poles at each point in $\mathcal{K}\cup\mathcal{\overline{K}}$ with:
\begin{align}
&\underset{k=k_j}{\operatorname{Res}} M^{(2)}(k)=
\underset{k \rightarrow k_j}{\lim } M^{(2)}(k) R_{k_j,\mp}, \qquad \ k \in \Delta_{k_0}^\pm,  \\
&\underset{k=\overline{k}_j}{\operatorname{Res}} M^{(2)}(k)=
\underset{k \rightarrow \overline{k}_j}{\lim}  M^{(2)}(k)\sigma_2\overline{R}_{\overline{k}_j,\mp}\sigma_2, \ k \in \Delta_{k_0}^\pm,
\end{align}
where $R_{k_j,-}$ and $R_{k_j,+}$ is defined in (\ref{rkj}).

Mimicking Sections $5-10$, the final steps of the analysis are to first construct a solution $M^{rhp}(k)$ of the RH  components of RH problem A.1, and then to use the solid Cauchy integral operator to prove that the remainder $M^{(3)}(k)=M^{(2)}(k)(M^{rhp})^{-1}(k)$ is uniformly near the identity with estimates identical to (\ref{m13}). When $t\rightarrow-\infty$ with $(x,t)\in \mathcal{C}(x_1, x_2, v_1, v_2)$, the outer model takes the form
\begin{equation}
M^{(out)}(k)=\left[I+\mathcal{O}\left(e^{-4\mu t}\right)\right] m^{\Delta_{k_0}^+(\mathcal{I})}\left(k\mid \mathcal{D}^-(\mathcal{I})\right)
\end{equation}
The local model $M^{(in)}$ is constructed as in Section 8. Define
\begin{equation}
N: f(k) \rightarrow(N f)(k)=f(\frac{k_{0}^{2}}{2 \sqrt{\alpha} \beta \sqrt{-t}} \zeta-z_3).
\end{equation}

Then the local model $M^{(in)}$ is given by
$$
M^{(in)}(k)=M^{(out)}(k) \sigma_3 M^{fl}(-\zeta(k))\sigma_3
$$
where $M^{fl}(\zeta,r)$ is the solution of  RH Problem 8.1.
The residual error $E(k)$ now satisfies RH Problem 9.1  but with (\ref{vek1}) now given by
\begin{equation}
V^{(E)}(k)=\begin{cases}
M^{(o u t)}(k) V^{(2)}(k) M^{(o u t)}(k)^{-1}, & k \in \Sigma^{(2)} \backslash \Sigma_{n=1}^4U_{z_n}, \\
M^{(o u t)}(k)\sigma_3 M^{fl}(-\zeta(k))\sigma_3 M^{(o u t)}(k)^{-1}, & k \in \Sigma_{n=1}^4\partial U_{z_n}.
\end{cases}
\end{equation}
The small-norm theory again can be used to show that $E(k)$ exists and satisfies
\begin{equation}
E(k)=I+k^{-1}E_1+\mathcal{O}\left(k^{-2}\right),
\end{equation}
with
\begin{equation}
\begin{aligned}
E_{1}&=-\frac{1}{2 \pi i} \oint_{\sum_{i=1}^4\partial U_{z_n}}\left(V^{(E)}(s)-I\right) ds+\mathcal{O}\left(t^{-1}\right)\\
&=\frac{k_0^2}{2i\beta\sqrt{\alpha t}}\sum_{n=1}^4(-1)^{n+1}M^{(out)}(z_n)M^{pc}_1(z_n)
M^{(out)}(z_n)^{-1}+\mathcal{O}\left(t^{-1}\right).
\end{aligned}
\end{equation}
The rest of the results can be given in the same way as showed in Section 11.
\end{appendix}

\section{The parabolic cylinder model  }
\label{sec:section12}
\hspace*{\parindent}
Here we describe the solution of the parabolic cylinder model problem introduced by Its \cite{Its}, which was later widely used to study the long-time asymptotics of
 integrable systems in the literature \cite{asd13,rwo34}. For our FL equation,
 there are two   phase points $z_1, z_3 \in \mathbb{R}$ and two  $z_2, z_4 \in i\mathbb{R}$ respectively,
 which need two kinds of parabolic cylinder models to describe.

Therefore,
for  $r_0\in \mathbb{R},$ define
\begin{align}
& \ \nu=\nu(r_0)=-\frac{1}{2\pi}\ln(1 +\varepsilon_n | r_0|^2), \nonumber
\end{align}
where $ \varepsilon_n=1, \ {\rm for}\ n=1,3;  \varepsilon_n=-1, \ {\rm for}\ n=2,4.$  Further define contours
$$
\Sigma^{pc}= \cup_{j=1}^4\Sigma_j,   \ \Sigma_j=\{\zeta=\mathbb{R}^+e^{\frac{i(2j-1)\pi}{4}}, j=1,2,3,4\}.\nonumber
$$
Then we have the following parabolic cylinder model problem.

\noindent\textbf{RH Problem B.1.}  Find a $2\times2$ matrix-valued function $M^{pc}(\zeta,r_0)$ with the following properties:

(a) $M^{pc}(\zeta,r_0)$ is analytic for $\mathbb{C}\backslash \Sigma^{pc}$;

(b) The boundary value $M^{pc}(\zeta,r_0)$ at $\Sigma^{pc}$ satisfies the jump condition
\begin{equation}
M_+^{pc}(\zeta,r_0)=M_-^{pc}(\zeta,r_0)V^{pc}(\zeta),\quad \zeta\in \Sigma^{pc},
\end{equation}
where
\begin{equation}
V^{pc}(\zeta)=\begin{cases}
\begin{pmatrix}
1 & 0 \\
r_0 \zeta^{-2i\nu}e^{i\frac{\zeta^2}{2}} & 1
\end{pmatrix}, &  \zeta\in \Sigma_1,\\
\begin{pmatrix}
1 & \frac{\varepsilon_n\overline{r}_0}{1+  \varepsilon_n|r_0|^2 }\zeta^{2i\nu}e^{-i\frac{\zeta^2}{2}}  \\
0 & 1
\end{pmatrix}, & \zeta\in \Sigma_2,\\
\begin{pmatrix}
1 & 0 \\
\frac{r_0}{1+  \varepsilon_n|r_0|^2}\zeta^{-2i\nu}e^{i\frac{\zeta^2}{2}} & 1
\end{pmatrix}, & \zeta\in \Sigma_3,\\
\begin{pmatrix}
1 &\varepsilon_n \overline{r}_0\zeta^{2i\nu}e^{-i\frac{\zeta^2}{2}}  \\
0 & 1
\end{pmatrix}, & \zeta\in \Sigma_4.\\
\end{cases}
\end{equation}

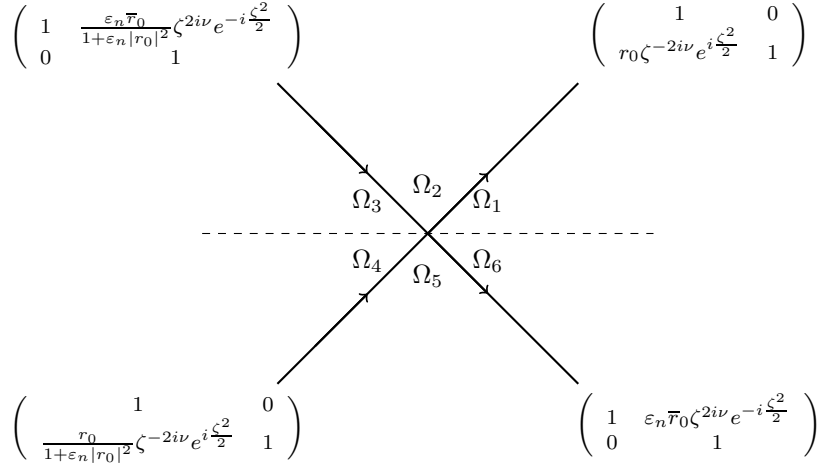
\begin{figure}[H]
\begin{center}
\begin{tikzpicture}
\draw [dashed] (-3,0)--(3,0);
\draw[thick ] (-2,-2)--(2,2);
\draw[thick,-> ](-1.5,-1.5)--(-0.8,-0.8);
\draw[thick,-> ](0,0)--(0.8,0.8);
\draw[thick ](-2,2)--(2,-2);
\draw[thick,-> ](-1.5,1.5)--(-0.8,0.8);
\draw[thick,-> ](0,0)--(0.8,-0.8);
\node  [below]  at (0.8,0.7) {$ \Omega_1$};
\node  [below]  at (0, 0.9) {$ \Omega_2$};
\node  [below]  at (-0.8, 0.7) {$ \Omega_3$};
\node  [below]  at (-0.8,-0.1) {$ \Omega_4$};
\node  [below]  at (0, -0.3) {$ \Omega_5$};
\node  [below]  at (0.8,-0.1) {$ \Omega_6$};
\node [thick ] [below]  at (3.6, 3.2) {\footnotesize $   \left(\begin{array}{cc} 1&0\\
r_0 \zeta^{-2i\nu}e^{i\frac{\zeta^2}{2}}&1\end{array}  \right) $};
\node [thick ] [below]  at (-3.6,3.2) {\footnotesize $  \left(\begin{array}{cc} 1&\frac{\varepsilon_n\overline{r}_0}{1+  \varepsilon_n|r_0|^2}\zeta^{2i\nu}e^{-i\frac{\zeta^2}{2}} \\
0&1\end{array}  \right) $};
\node [thick ] [below]  at (-3.6,-2) {\footnotesize $  \left(\begin{array}{cc} 1&0\\
\frac{r_0}{1+  \varepsilon_n|r_0|^2}\zeta^{-2i\nu}e^{i\frac{\zeta^2}{2}}&1\end{array}  \right)$};
\node [thick ] [below]  at (3.6,-2) {\footnotesize $  \left(\begin{array}{cc} 1& \varepsilon_n\overline{r}_0\zeta^{2i\nu}e^{-i\frac{\zeta^2}{2}}\\
0&1\end{array}  \right)$};
\end{tikzpicture}
\end{center}
\caption{ $ \Sigma^{pc}$ and domains $\Omega_j$, $j=1,2,3,4,5,6$.}
\label{vn5}
\end{figure}

It can be shown that the RH Problem B.1 admits a solution
\begin{equation}
M^{pc}(\zeta, r_0)=I+\frac{M^{pc}_1(  r_0)}{i\zeta}+\mathcal{O}(\zeta^{-2}),
\end{equation}
where
\begin{align}
&M^{pc}_1(  r_0)=\begin{pmatrix}
0 & \beta_{12}  \\
-\beta_{21}  & 0
\end{pmatrix}, \nonumber
\end{align}
with $\beta_{12}$ and $\beta_{21}$ being two complex constants
\begin{align}
&  \ \beta_{12} =\frac{\sqrt{2 \pi} e^{i \pi / 4} e^{-\pi \nu / 2}}{r_0 \Gamma(-i \nu)},\ \ \ \
\beta_{21} =-\frac{\sqrt{2 \pi} e^{-i \pi / 4} e^{-\pi \nu / 2}}{ \varepsilon_n\overline{r}_0  \Gamma(i \nu)}.\nonumber
\end{align}
\vspace{5mm}

\noindent\textbf{Acknowledgements}

This work is supported by  the National Natural Science
Foundation of China (Grant No. 11671095, 51879045).

\end{document}